\newcommand{\Z}{\mathbb{Z}}
\newcommand{\Q}{\mathbb{Q}}
\newcommand{\A}{\mathbb{A}}
\newcommand{\R}{\mathbb{R}}
\newcommand{\C}{\mathbb{C}}
\newcommand{\K}{\mathbb{K}}
\newcommand{\OO}{\mathfrak{o}}
\newcommand{\D}{\mathcal{D}}
\newcommand{\p}{\mathfrak{p}}
\newcommand{\cc}{\mathfrak{c}}
\newcommand{\G}{\mathcal{G}}
\newcommand{\W}{\mathcal{W}}
\newcommand{\KK}{\mathcal{K}}
\newcommand{\Ha}{\mathfrak{H}}
\newcommand{\Ht}{\mathcal{H}_2}
\newcommand{\Lam}{\Lambda}
\newcommand{\lam}{\lambda}
\newcommand{\Gam}{\Gamma}
\newcommand{\sD}{\sqrt{-D}}
\newcommand{\SD}{S_{2\kappa+1}(\Gamma_0(D),\chi)}
\newcommand{\Cl}{{\it Cl}_K}
\newcommand{\SL}{\mathrm{SL}}
\newcommand{\GL}{\mathrm{GL}}
\newcommand{\M}{\mathrm{M}}
\renewcommand{\O}{\mathrm{O}}
\newcommand{\GO}{\mathrm{GO}}
\newcommand{\GSO}{\mathrm{GSO}}
\newcommand{\PGSO}{\mathrm{PGSO}}
\newcommand{\SO}{\mathrm{SO}}
\newcommand{\U}{\mathrm{U}(2,2)}
\newcommand{\Her}{\mathrm{Her}_2}
\newcommand{\SU}{\mathrm{SU}(2,2)}
\newcommand{\GU}{\mathrm{GU}(2,2)}
\newcommand{\GSU}{\mathrm{GSU}(2,2)}
\newcommand{\GUplus}{\mathrm{GU}(2,2)(\R)^+}
\newcommand{\diag}{{\rm diag}}
\newcommand{\vol}{{\rm vol}}
\newcommand{\ord}{{\rm ord}}
\newcommand{\Tr}{{\rm Tr}}
\newcommand{\Lift}{{\it Lift}^{(2)}(f)}
\newcommand{\bs}{{\backslash}}
\newcommand{\bchi}{\underline{\chi}}
\newcommand{\FF}{{\bf F}}
\newcommand{\ff}{{\bf f}}
\newcommand{\g}{{\bf g}}
\newcommand{\I}{\sqrt{-1}}
\newcommand{\ii}{{\bf i}}
\newcommand{\re}{{\rm Re}}
\newcommand{\fin}{{\rm fin}}
\newcommand{\iif}{&\quad&{\rm if\ }}
\newcommand{\pair}[1]{\langle #1 \rangle}
\newcommand{\period}{\pair{F_\cc|_{\Ha\times\Ha},g\times g_C}}
\newtheorem{thm}{Theorem}[section]
\newtheorem{lem}[thm]{Lemma}
\newtheorem{prop}[thm]{Proposition}
\newtheorem{cor}[thm]{Corollary}
\newtheorem{rem}[thm]{Remark}
\newtheorem{example}[thm]{Example}
\title{Pullbacks of Hermitian Maass lifts}
\author{Hiraku Atobe}
\date{}
\address{Department of mathematics, Kyoto University, Kitashirakawa-Oiwake-cho, Sakyo-ku, Kyoto, 606-8502, Japan}
\email{atobe@math.kyoto-u.ac.jp}
\begin{document}
\maketitle

\section{Introduction}
Pullbacks of Siegel Eisenstein series have been studied by B$\ddot{\rm o}$cherer \cite{B}, Garrett \cite{G} and Heim \cite{H}.
Pullbacks of hermitian Eisenstein series have been studied by Furusawa \cite{F}, Harris \cite{Harris} and Saha \cite{Saha}.
These pullbacks have been used to study the algebraicity of critical values of certain automorphic $L$-functions.
Moreover, one might consider pullbacks of cusp forms.
The (Gan--)Gross--Prasad conjecture \cite{GP}, \cite{GGP} would relate critical values of certain $L$-functions and
the pullbacks of an automorphic representation of $\SO(n+1)$ to $\SO(n)$ or one of ${\rm U}(n+1)$ to ${\rm U}(n)$.
For example, in \cite{W}, \cite{Ichino duke} and \cite{GI}, 
the pullbacks of an automorphic representation of $\SO(n+1)$ to $\SO(n)$ for small $n$ were studied.
In \cite{Z1} and \cite{Z2}, Zhang studied the Gan--Gross--Prasad conjecture for ${\rm U}(n+1)$ to ${\rm U}(n)$ 
for general $n$ assuming some additional conditions.
On the other hand, 
Ichino \cite{Ichino} gave an explicit formula for pullbacks of Saito--Kurokawa lifts, which are Siegel cusp forms of degree $2$,
in terms of central critical values of $L$-functions for $\SL_2\times \GL_2$.
Ichino and Ikeda \cite{II} gave an explicit formula for the restriction of hermitian Maass lifts of degree $2$ 
to the Siegel upper half space of degree $2$ in terms of central critical values of triple product $L$-functions.
These results may be also regarded as special cases of the Gross--Prasad conjecture.
In this paper, we relate pullbacks of hermitian Maass lifts of degree $2$ to central values of $L$-functions for $\GL_2\times\GL_2$.
\par

Let us describe our results.
Let $K=\Q(\sD)$ be an imaginary quadratic field with discriminant $-D<0$.
We denote the ideal class group of $K$ by $\Cl$ and the class number of $K$ by $h_K$.
The primitive Dirichlet character corresponding to $K/\Q$ is denoted by $\chi$.
Let $\kappa$ be a positive integer and $f\in\SD$ be a normalized Hecke eigenform.
For an integral ideal $\cc$ of $K$ which is prime to $D$, 
we denote by $F_\cc$ the hermitian Maass lift of $f$ which satisfies the Maass relation for $\cc$. 
The lift $F_\cc$ is an automorphic form on the hermitian upper half space $\Ht$ of degree $2$ 
with respect to a certain arithmetic subgroup $\Gamma_K^{(2)}[\cc]\subset \U(\Q)$.
See Sect.~\ref{maass lift} for details.
Let $C=N(\cc)$ be the ideal norm of $\cc$ and $d(C)=\diag(1,C)\in\GL_2(\Q)$.
The pullback $F_\cc|_{\Ha\times\Ha}$ is in $S_{2\kappa+2}(\SL_2(\Z))\otimes S_{2\kappa+2}(d(C)^{-1}\SL_2(\Z)d(C))$.
For each normalized Hecke eigenform $g\in S_{2\kappa+2}(\SL_2(\Z))$, we put
$g_C(z)=g(z/C)\in S_{2\kappa+2}(d(C)^{-1}\SL_2(\Z)d(C))$ and consider the period integral
$\pair{F_\cc|_{\Ha\times\Ha},g\times g_C}$ given by
\begin{align*}
&\pair{F_\cc|_{\Ha\times\Ha},g\times g_C}\\
&=\int_{d(C)^{-1}\SL_2(\Z)d(C)\bs\Ha}\int_{\SL_2(\Z)\bs\Ha}F_\cc\left(
\begin{pmatrix}
z_1&0\\0&z_2
\end{pmatrix}
\right)
\overline{g(z_1)g_C(z_2)}y_1^{2\kappa}y_2^{2\kappa}dz_1dz_2.
\end{align*}
Let $L(s,f\times g)$ and $L(s,f\times g\times\chi)$ be the Rankin--Selberg $L$-function
and its twist
given by $f$ and $g$ of degree $4$.
We put $L_\infty(s)=\Gamma_\C(s+2\kappa+1/2)\Gamma_\C(s+1/2)$ 
with $\Gamma_\C(s)=2(2\pi)^{-s}\Gamma(s)$.
They satisfy the functional equation
\[
L_\infty(s)L(s,f\times g)=-D^{1-2s+2\kappa}a_f(D)^{-2}L_\infty(1-s)L(1-s,f\times g\times \chi),
\]
where $a_f(D)$ is the $D$-th Fourier coefficient of $f$.
Let $L(s,\chi)$ be the Dirichlet $L$-function associated with $\chi$.
\par
Our main result is as follows.\\
{\bf Theorem \ref{main}.} {\it The identity}
\[
L\left(\frac{1}{2},f\times g\right)=\frac{L(1,\chi)(4\pi)^{2\kappa+1}}{a_f(D)(2\kappa)!}\cdot
\frac{1}{h_K}
\sum_{[\cc]\in {\it Cl}_K}\frac{\pair{F_\cc|_{\Ha\times\Ha},g\times g_C}}{\pair{g_C,g_C}}
\]
{\it holds.}\par
Note that the period integrals do not appear in the (Gan--)Gross--Prasad conjecture.
We remark that the period integrals 
appearing
in the right hand side are not square.
This is a difference to the results of \cite{Ichino} or \cite{II},
which relate the central critical values of certain $L$-functions
to the square of the absolute values of period integrals.
\par
The sketch of proof is as follows.
The lifts $\{F_\cc\}$ give an automorphic form $\Lift$ on $\U(\A_\Q)$.
We consider the restriction of $\Lift$ to $({\rm U}(1,1)\times {\rm U}(1,1))(\A_\Q)$.
The group $\U$ is closely related to $\O(4,2)$ (see Sect.~\ref{GU-GL}).
So we may regard a certain theta lift on $\O(4,2)(\A_\Q)$ as a function on $\U(\A_\Q)$.
First, we prove that the theta lift is equal to $\Lift$ up to scalar multiplication (Proposition \ref{theta}).
The group ${\rm U}(1,1)\times {\rm U}(1,1)$ is closely related to $\O(2,2)\times\O(2)$
and the function $g\times g_C$ gives an automorphic form on $\O(2,2)(\A_\Q)$ 
(see Sect.~\ref{GL-GO(2,2)}).
Therefore, by the following seesaw identity, we find that a sum of period integrals of $\{F_\cc\}$ 
is equal to a sum of central values of $L(s,f\times g)$ and its twists (Corollary \ref{L(1/2)=<F,g*g>}).
$$
\xymatrix{
   \SL_2\times\SL_2   \ar@{-}[d]    &  &  \O(4,2) \ar@{-}[d]     \\
     \SL_2 \ar@{-}[urr]      &  &  \O(2,2)\times\O(2)\ar@{-}[ull]   \\
}.
$$
Finally, we show that these equations and the genus theory imply the main theorem.
\par
This paper is organized as follows.
In Sect.~\ref{maass lift}, we review the theory of hermitian Maass lifts.
In Sect.~\ref{Statement}, we state our main result.
In Sects.~\ref{GL2} and \ref{Weil rep}, we recall the basic facts about automorphic forms on $\GL_2$ and theta lifts, respectively.
In Sect.~\ref{GU-GL}, we study the hermitian Maass lifting.
In Sects.~\ref{GL-GO(2,2)} and \ref{SW}, we recall the theta correspondence for $(\GL_2,\GO(2,2))$ and $(\SL_2,\O(2))$, respectively.
In Sect.~\ref{main proof}, we prove identities for the above seesaw
and we show that these identities and the genus theory imply the main result.

{\bf Acknowledgments.}
The author would like to thank my advisor, Prof. Atsushi Ichino. 
Without his helpful support, this work would not have been completed.
The author is also thankful to Prof. Tamotsu Ikeda for useful discussions.

{\bf Notation.}
Let $K=\Q(\sD)$ be an imaginary quadratic field with discriminant $-D<0$.
We denote by $\OO$ the ring of integers of $K$.
Let $x\mapsto \overline{x}$ be the non-trivial Galois automorphism of $K$ over $\Q$.
The primitive Dirichlet character corresponding to $K/\Q$ is denoted by $\chi$.
We regard $K^1=\{\alpha\in K^\times|N_{K/\Q}(\alpha)=1\}$ as an algebraic group over $\Q$.
We denote by $J_K^D$ (resp.~$J_\OO^D$) the set of fractional ideals (resp.~integral ideals) of $K$ which are prime to $D$.
Here, we say that
a fractional ideal $\cc$ is prime to $D$ if $\ord_\p(\cc)=0$ for each prime ideal $\p\mid D$. 
Let ${\it Cl}_K$ be the ideal class group of $K$ and $h_K=\#{{\it Cl}_K}$ the ideal class number of $K$.
\par

We define the algebraic group $\Her$ of hermitian matrices of size $2$ with entries in $K$ by
\[
\Her(R)=\left\{\left.
\begin{pmatrix}
a&b+\sD c\\
b-\sD c &d
\end{pmatrix}
\right|a,b,c,d\in R
\right\}
\]
for any $\Q$-algebra $R$.
\par

For a number field $F$, we denote the adele ring of $F$ by $\A_F$.
The finite part of the adele ring (resp.~the idele group) of $F$ is denoted by $\A_{F,\fin}$ (resp.~$\A_{F,\fin}^\times$). 
Let $\psi_0=\otimes_v\psi_v$ be the non-trivial additive character of $\A_\Q/\Q$ defined as follows:
\begin{itemize}
\item If $v=p$, then $\psi_p(x)=e^{-2\pi\I x}$ for $x\in\Z[p^{-1}]$.
\item If $v=\infty$, then $\psi_\infty(x)=e^{2\pi\I x}$ for $x\in\R$. 
\end{itemize}
We call $\psi_0$ (resp.~$\psi_v$) the standard additive character of $\A_\Q$ (resp.~$\Q_v$).
We put $\hat\Z=\prod_p\Z_p$.
\par

Let $\bchi=\otimes_v\bchi_v$ be the character of the idele class group $\A_\Q^\times/\Q^\times$ determined by $\chi$.
Then $\bchi_v$ is the character of $\Q_v^\times$ corresponding to $\Q_v(\sD)/\Q_v$
and is given by the Hilbert symbol $\bchi_v(x)=(-D,x)_{\Q_v}$.
\par

Let
\[
B=\left\{
\begin{pmatrix}
*&*\\0&*
\end{pmatrix}
\in\SL_2
\right\}
\quad {\rm and}\quad
N=\left\{\left.
n(x)=
\begin{pmatrix}
1&x\\0&1
\end{pmatrix}
\right|x\in \mathbb{G}_a
\right\}
\]
be the standard Borel subgroup of $\SL_2$ and the unipotent radical of $B$, respectively.
We write
\[
a(x)=\begin{pmatrix}
x&0\\0&1
\end{pmatrix}
,\quad 
d(x)=\begin{pmatrix}
1&0\\0&x
\end{pmatrix}
,\quad 
t(x)=\begin{pmatrix}
x&0\\0&x^{-1}
\end{pmatrix}
\]
and
\[
\diag(x,y)=\begin{pmatrix}
x&0\\0&y
\end{pmatrix}
,\quad 
k_\theta=\begin{pmatrix}
\cos\theta&\sin\theta\\-\sin\theta&\cos\theta
\end{pmatrix}
.\]
We put $\GL_2(\R)^+=\{g\in\GL_2(\R)|\det(g)>0\}$.
For $N\in\Z_p$, we define
\[
{\bf K}_0(N;\Z_p)=\left\{\left.
\begin{pmatrix}
a&b\\c&d
\end{pmatrix}\in\GL_2(\Z_p)
\right|c\in N\Z_p
\right\}
\]
and $K_0(N;\Z_p)={\bf K}_0(N;\Z_p)\cap\SL_2(\Z_p)$.
\par
Let $\Ha=\{z\in\C|{\rm Im}(z)>0\}$ be the complex upper half plane.
For $z=x+\I y\in\Ha$, we put $dz=dxdy$ and $q=e^{2\pi\I z}$.
Here $dx,dy$ are the Lebesgue measures.
Note that
$\vol(\SL_2(\Z)\bs\Ha,y^{-2}dz)=\pi/3$.
\par

For an algebraic group $G$ over $\Q$, we put
$[G]=G(\Q)\bs G(\A_\Q)$.
\par

We put $\Gamma_\R(s)=\pi^{-s/2}\Gamma(s/2)$, $\Gamma_\C(s)=2(2\pi)^{-s}\Gamma(s)$ and 
$\xi_\Q(s)=\Gamma_\R(s)\zeta(s)$.

{\bf Measures.}
Let $dx_\infty$ be the Lebesgue measure on $\R$.
For each prime $p$, let $dx_p$ be the Haar measure on $\Q_p$ with
$\vol(\Z_p,dx_p)=1$.
We take the Haar measure $d^\times x_v=|x_v|_v^{-1}dx_v$ on $\Q_v^\times$.
\par

We normalize the Haar measures on $\SL_2(\Z_p)$ and $\SO(2)$ so that the total volumes are equal to $1$. 
For a place $v$ of $\Q$, we define a Haar measure $dg_v$ on $\SL_2(\Q_v)$ by
\[
dg_v=|a_v|_v^{-2}dx_vd^\times a_vdk_v
\]
for $g_v=n(x_v)t(a_v)k_v$ with $x_v\in \Q_v, a_v\in \Q_v^\times$, and
\[
k_v\in\left\{
\begin{aligned}
&\SL_2(\Z_p)\iif v=p,\\
&\SO(2)\iif v=\infty.
\end{aligned}
\right.
\]
We take the product measure $dg=\prod_vdg_v$ on $\SL_2(\A_\Q)$.
Note that the measure $\xi_\Q(2)^{-1}dg$ is the Tamagawa measure on $\SL_2(\A_\Q)$. 
For another connected linear algebraic group $G$ over $\Q$, we take the Tamagawa measure on $G(\A_\Q)$.
In particular, for a quadratic space $V$ over $\Q$ which is neither a hyperbolic plane nor $\dim(V)=1$, we have $\vol([\SO(V)])=2$.
We normalize the Haar measure on $\O(V)(\A_\Q)$ so that
$\vol([\O(V)])=1$.

\section{Hermitian Maass lifts}\label{maass lift}
In this section, we review the theory of hermitian modular forms and hermitian Maass lifts. 
See \cite{Ikeda}. 
\subsection{Hermitian modular forms}
The similitude unitary group $\GU$ is an algebraic group over $\Q$ defined by
\[
\GU(R)=\{ g\in \GL_4(K\otimes R) |{^t\overline{g}}Jg=\lam(g)J,\ \lam(g)\in R^\times\}
\]
with
\[
J=\begin{pmatrix}
0&-{\bf 1}_2\\ {\bf 1}_2&0
\end{pmatrix}\in \GL_4
\]
for any $\Q$-algebra $R$.
The homomorphism $\lam\colon\GU\rightarrow \GL_1$ is called the similitude norm.
Let $\U=\ker(\lam)$ be the unitary group and $\SU=\U\cap{\rm Res}_{K/\Q}(\SL_4)$ the special unitary group.
\par

We define the hermitian upper half space $\Ht$ of degree $2$ by
\[
\Ht=\left\{ Z\in \M_2(\C)\left| \frac{1}{2\sqrt{-1}}(Z-{^t\overline{Z}})>0\right.\right\}.
\]
For a fractional ideal $\cc$ of $K$, we define a subgroup $\Gamma_K^{(2)}[\cc]$ of $\U(\Q)$ by
\[
\Gamma_K^{(2)}[\cc]=\left\{g\in\U(\Q)\left|
g\begin{pmatrix}
\OO \\ \cc \\ \OO \\ \overline{\cc}^{-1}
\end{pmatrix}
=\begin{pmatrix}
\OO \\ \cc \\ \OO \\ \overline{\cc}^{-1}
\end{pmatrix}
\right.\right\},
\]
where $\overline{\cc}$ is the conjugate ideal of $\cc$.
Let $C=N(\cc)\in\Q_{>0}$ be the ideal norm of $\cc$.
We put
\[
\Lam_2^{\cc}(\OO)=\left\{\left.
\begin{pmatrix}
n&\alpha\\\overline{\alpha}&m/C
\end{pmatrix}\in \Her(\Q)
\right|
n, m\in\Z, \alpha\in \sD^{-1}\cc^{-1}
\right\}.
\]
The set of positive definite elements of $\Lam_2^\cc(\OO)$ is denoted by $\Lam_2^\cc(\OO)^+$.
\par

Let $\GUplus=\{g\in\GU(\R)|\lam(g)>0\}$.
Note that $\GUplus$ is generated by its center and $\U(\R)$.
We put
\[
g\pair{Z}=(AZ+B)(CZ+D)^{-1}\quad{\rm and}\quad
j(g,Z)=\det(CZ+D)
\]
for $Z\in\Ht$ and
\[
g=\begin{pmatrix}
A&B\\C&D
\end{pmatrix}\in\GUplus.
\]
We find that $g\pair{\cdot}$ gives an action of $\GUplus$ on $\Ht$ and the center of $\GUplus$ acts trivially.
For a holomorphic function $F$ on $\Ht$, an even integer $l$ and $g\in \GUplus$, we define
\[
(F\parallel_lg)(Z)=\det(g)^{l/2}F(g\pair{Z})j(g,Z)^{-l}.
\]
We put
\[
M_l(\Gam_K^{(2)}[\cc],{\det}^{-l/2})=\left\{F\left|F\parallel_l\gamma=F\ {\rm for\ any\ } \gamma\in\Gam_K^{(2)}[\cc]\right.\right\}.
\]
Then $F\in M_l(\Gam_K^{(2)}[\cc],{\det}^{-l/2})$ has a Fourier expansion of the form
\[
F(Z)=\sum_{H\in\Lam_2^\cc(\OO),H\geq 0}
A(H)\exp(2\pi\I\Tr(HZ)).
\]
The space of cusp forms $S_l(\Gam_K^{(2)}[\cc],{\det}^{-l/2})$ is defined by
\[
S_l(\Gam_K^{(2)}[\cc],{\det}^{-l/2})=\{F\in M_l(\Gam_K^{(2)}[\cc],{\det}^{-l/2})|
A(H)=0\ {\rm unless\ }H\in\Lam_2^\cc(\OO)^+\}.
\]

\subsection{Hermitian Maass lifts}
For $x\in\Q^\times$ and each prime $p$, we put
\[
x'=\prod_{p\nmid D}p^{\ord_p(x)}\quad {\rm and}\quad
x_p=p^{\ord_p(x)}.
\]
Let $Q_D$ be the set of all primes which divide $D$. 
We define a primitive Dirichlet character $\chi_p$ by
\[
\chi_p(n)=\left\{
\begin{aligned}
&\chi(m)\iif(n,p)=1,\\
&0\iif p\mid n,
\end{aligned}
\right.
\]
where $m$ is an integer such that
\[
m\equiv\left\{
\begin{aligned}
&n&\quad&\bmod D_p,\\
&1&\quad&\bmod D_p^{-1}D.
\end{aligned}
\right.
\]
One should not confuse $\chi_p$ with $\bchi_p$.
For $Q\subset Q_D$, we set
\[
\chi_Q=\prod_{p\in Q}\chi_p\quad{\rm and} \quad
\chi'_Q=\prod_{p\in Q_D\setminus Q}\chi_p.
\]
Note that $\chi_{\emptyset}=1$ and $\chi_{Q_D}=\chi$.
\par

Let $\kappa$ be a positive integer.
We fix a normalized Hecke eigenform $f=\sum_{n>0} a_f(n)q^n\in\SD$.
By Theorem 4.6.16 of \cite{M}, for each subset $Q\subset Q_D$, there exists a normalized Hecke eigenform
\[
f_Q=\sum_{n>0}a_{f_Q}(n)q^n\in\SD
\]
such that for each prime $p$, the Fourier coefficient $a_{f_Q}(p)$ satisfies
\[
a_{f_Q}(p)=\left\{
\begin{aligned}
&\chi_Q(p)a_f(p)\iif p\not\in Q,\\
&\chi'_Q(p)\overline{a_f(p)}\iif p\in Q.
\end{aligned}
\right.
\]
We fix $\cc\in J_\OO^D$ and put $C=N(\cc)\in\Z_{>0}$.
Note that $\chi(C)=1$.
Following \cite{Ikeda} Definition 15.2, we define
\[
f^{\cc*}=\sum_{Q\subset Q_D}\chi_Q(-C)f_Q.
\]
\begin{lem}\label{pure}
The Fourier coefficients of $f^{\cc*}$ are purely imaginary.
\end{lem}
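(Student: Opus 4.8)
The plan is to prove the equivalent statement $\overline{f^{\cc*}} = -f^{\cc*}$, where for a cusp form $h=\sum_n a_h(n)q^n$ I write $\overline{h}=\sum_n\overline{a_h(n)}q^n$ for the conjugate form; this is precisely the assertion that every $a_{f^{\cc*}}(n)$ is purely imaginary. The key point, from which everything follows, is the identity
\[
\overline{f_Q} = f_{Q_D \setminus Q} \qquad (Q \subset Q_D),
\]
relating the conjugate of $f_Q$ to the eigenform attached to the complementary subset.

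To establish this identity I would first note that $\overline{f_Q}$ is again a normalized Hecke eigenform in $\SD$: conjugation preserves the weight $2\kappa+1$ and the level $D$ and replaces the nebentypus $\chi$ by $\overline{\chi}=\chi$, since $\chi$ is quadratic, and it sends Hecke eigenvalues to their conjugates. Because a normalized eigenform is determined by its Fourier coefficients at the primes (the prime-power coefficients being recovered from the Hecke recursion), it suffices to check $\overline{a_{f_Q}(p)} = a_{f_{Q_D\setminus Q}}(p)$ for every prime $p$, using that each $\chi_p$, hence $\chi_Q$ and $\chi'_Q$, is real-valued. For $p\nmid D$ one has $p\notin Q$ and $p\notin Q_D\setminus Q$, so $a_{f_Q}(p)=\chi_Q(p)a_f(p)$ and $a_{f_{Q_D\setminus Q}}(p)=\chi'_Q(p)a_f(p)$; applying the nebentypus relation $\overline{a_f(p)}=\chi(p)a_f(p)$ together with $\chi=\chi_Q\chi'_Q$ and $\chi_Q(p)^2=1$ gives $\overline{a_{f_Q}(p)}=\chi'_Q(p)a_f(p)$, as needed. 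For $p\mid D$ with $p\notin Q$ one has $a_{f_Q}(p)=\chi_Q(p)a_f(p)$, while $p\in Q_D\setminus Q$ forces $a_{f_{Q_D\setminus Q}}(p)=\chi'_{Q_D\setminus Q}(p)\overline{a_f(p)}=\chi_Q(p)\overline{a_f(p)}$, and the two agree after conjugation. The case $p\mid D$ with $p\in Q$ is symmetric, using $\chi_{Q_D\setminus Q}=\chi'_Q$.

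Granting the identity, I would conjugate $f^{\cc*}$ term by term, using that each $\chi_Q(-C)\in\{\pm1\}$ is real, and then reindex by $R=Q_D\setminus Q$, a bijection on subsets of $Q_D$:
\[
\overline{f^{\cc*}} = \sum_{Q \subset Q_D} \chi_Q(-C)\,\overline{f_Q} = \sum_{Q \subset Q_D} \chi_Q(-C)\, f_{Q_D\setminus Q} = \sum_{R \subset Q_D} \chi_{Q_D\setminus R}(-C)\, f_R.
\]
Since $\cc$ is prime to $D$, the norm $C$ is prime to $D$, so every $\chi_R(-C)$ is a nonzero sign and $\chi_R(-C)\,\chi_{Q_D\setminus R}(-C)=\chi(-C)$, because $\chi_{Q_D\setminus R}=\chi'_R$ and $\chi=\chi_R\chi'_R$. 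Now $\chi(-C)=\chi(-1)\chi(C)=-1$, since $\chi(C)=1$ and $\chi(-1)=-1$ as $\chi$ is the odd character attached to the imaginary quadratic field $K$. Hence $\chi_{Q_D\setminus R}(-C)=-\chi_R(-C)$ for every $R$, so the last sum equals $-\sum_R\chi_R(-C)f_R=-f^{\cc*}$, which completes the proof.

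The step I expect to require the most care is the case analysis establishing $\overline{f_Q}=f_{Q_D\setminus Q}$ at the ramified primes $p\mid D$, where the two defining cases of $a_{f_Q}(p)$ interchange upon passing to the complement and the real character values $\chi_Q(p)$, $\chi'_Q(p)$ must be tracked precisely. The reduction to prime coefficients and the final sign $\chi(-1)=-1$, which is exactly what turns ``real'' into ``purely imaginary,'' are then routine.
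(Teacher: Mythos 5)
Your proof is correct and follows essentially the same route as the paper: the key identity $\overline{a_{f_Q}(n)}=a_{f_{Q_D\setminus Q}}(n)$ together with the sign relation $\chi_{Q_D\setminus Q}(-C)=\chi(-1)\chi_Q(-C)=-\chi_Q(-C)$, followed by pairing (reindexing) the terms over complementary subsets. The only difference is that you supply the verification of the conjugation identity at the primes, which the paper simply asserts (citing Lemma 1.1 of Ichino--Ikeda for the case $\cc=\OO$), and your verification is accurate.
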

\begin{proof}
In the case when $\cc=\OO$, the assertion is Lemma 1.1 of \cite{II}.
Let $Q\subset Q_D$ and put $Q'=Q_D\setminus Q$. 
Then we find that
\[
a_{f_{Q'}}(n)=\overline{a_{f_Q}(n)}
\]
for all $n\in\Z_{>0}$.
Since $(D,-C)=1$, 
we have $\chi_{Q'}(-C)=\chi(-C)\chi_Q(-C)^{-1}=\chi(-1)\chi_{Q}(-C)=-\chi_{Q}(-C)$.
Therefore, the Fourier coefficients of $\chi_Q(-C)f_{Q}+\chi_{Q'}(-C)f_{Q'}$
are purely imaginary.
\end{proof}
By \cite{Ikeda} Corollary 15.5, the $n$-th Fourier coefficient of $f^{\cc*}$ is given by
\[
a_{f^{\cc*}}(n)={\bf a}_D^\cc(n)\alpha_{F_\cc}(n),
\]
where 
\begin{align*}
{\bf a}_D^\cc(n)=\prod_{p\mid D}(1+\chi_p(-Cn)),\quad
\alpha_{F_\cc}(n)=
a_f(n')\prod_{p\mid (D,n)}\left(a_f(n_p)+\bchi_p(-Cn)\overline{a_f(n_p)}\right).
\end{align*}
By \cite{Ikeda} \S 16, 
we can define $F_\cc\in S_{2\kappa+2}(\Gamma_K^{(2)}[\cc],{\det}^{-\kappa-1})$ by
\[
F_\cc(Z)=\sum_{H\in\Lam_2^\cc(\OO)^+}
\left(\sum_{d\mid\varepsilon(H)}d^{2\kappa+1}\alpha_{F_\cc}\left(\frac{CD\det(H)}{d^2}\right)\right)
\exp(2\pi\I\Tr(HZ)).
\]
Here
\[
\varepsilon(H)=\varepsilon_{\cc}(H)=\max\{m\in\Z_{>0}|m^{-1}H\in\Lam_2^\cc(\OO)\}.
\]
We call $F_\cc$ the hermitian Maass lift of $f$ which satisfies the Maass relation for $\cc$.
\begin{lem}\label{Lem1}
Let $\cc\in J_\OO^D$ and $\alpha\in K^\times$ such that $\alpha\cc\in J_\OO^D$. Then
\[
F_{\cc}(Z)=F_{\alpha\cc}(d(\alpha)Zd(\overline{\alpha})).
\]
\end{lem}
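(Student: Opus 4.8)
The plan is to compare the explicit Fourier expansions of $F_\cc$ and $F_{\alpha\cc}$ term by term. Write $\cc'=\alpha\cc\in J_\OO^D$ and $C'=N(\cc')$; since $N(\alpha)=\alpha\overline{\alpha}>0$ we have $C'=N(\alpha)C$. The substitution $Z\mapsto d(\alpha)Zd(\overline{\alpha})$ affects the exponentials of $F_{\cc'}$ only through the trace, and by cyclicity $\Tr(H'd(\alpha)Zd(\overline{\alpha}))=\Tr(d(\overline{\alpha})H'd(\alpha)Z)$. As $d(\overline{\alpha})={}^t\overline{d(\alpha)}$, the natural reindexing is the hermitian congruence $H'\mapsto H:={}^t\overline{d(\alpha)}H'd(\alpha)$, and I would show it matches the two expansions.

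First I would verify that $H'\mapsto H$ is a bijection $\Lam_2^{\cc'}(\OO)^+\to\Lam_2^\cc(\OO)^+$. Writing $H'=\begin{pmatrix}n&\beta\\\overline{\beta}&m/C'\end{pmatrix}$ with $n,m\in\Z$ and $\beta\in\sD^{-1}(\cc')^{-1}=\sD^{-1}\alpha^{-1}\cc^{-1}$, a short computation gives $H=\begin{pmatrix}n&\alpha\beta\\\overline{\alpha\beta}&m/C\end{pmatrix}$, where $\alpha\beta\in\sD^{-1}\cc^{-1}$ and the lower-right entry is $N(\alpha)m/C'=m/C$; hence $H\in\Lam_2^\cc(\OO)$, with inverse given by the analogous map for $\alpha^{-1}$. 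Being a hermitian congruence it preserves positive-definiteness, and since it commutes with scalar multiplication we get $\varepsilon_{\cc'}(H')=\varepsilon_\cc(H)$. Finally $\det(H)=N(\alpha)\det(H')$, so together with $C'=N(\alpha)C$ the arguments of the two divisor sums coincide: $C'D\det(H')/d^2=CD\det(H)/d^2$.

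The one genuinely nontrivial point is that the coefficient functions agree, i.e.\ $\alpha_{F_{\cc'}}(n)=\alpha_{F_\cc}(n)$ for all $n$. From the defining formula the two differ only through the factor $\bchi_p(-C'n)$ versus $\bchi_p(-Cn)$ at primes $p\mid D$, so it suffices to prove $\bchi_p(N(\alpha))=1$ for each such $p$. Writing $\alpha=u+v\sD$ with $u,v\in\Q$ gives $N(\alpha)=u^2+Dv^2=N_{K/\Q}(u+v\sD)$, so $N(\alpha)$ is a norm from $K$, hence a local norm from $\Q_p(\sD)$; since $\bchi_p(x)=(-D,x)_{\Q_p}$, this forces $\bchi_p(N(\alpha))=1$. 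This local-norm observation is the heart of the argument; everything else is bookkeeping. Assembling the pieces, the substituted expansion of $F_{\alpha\cc}$ reindexes exactly onto that of $F_\cc$, giving the identity. I expect the main difficulty to lie not in any single step but in keeping the normalizations ($C$ versus $C'$, $\beta$ versus $\alpha\beta$, and the ideals $\sD^{-1}\cc^{-1}$) consistent throughout the reindexing.
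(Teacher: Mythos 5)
Your proof is correct and follows essentially the same route as the paper: both reindex the Fourier expansion via the hermitian congruence $H\mapsto d(\overline{\alpha})Hd(\alpha)$, checking that it is a bijection $\Lam_2^{\alpha\cc}(\OO)^+\to\Lam_2^{\cc}(\OO)^+$ preserving $\varepsilon$ and the quantity $N(\cc)D\det(H)$. Your additional verification that $\alpha_{F_{\alpha\cc}}=\alpha_{F_\cc}$ via $\bchi_p(N(\alpha))=1$ is a point the paper's proof leaves implicit, and it is a genuine (and correctly handled) part of the argument since $\alpha_{F_\cc}$ depends on $C$ through the factors $\bchi_p(-Cn)$.
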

\begin{proof}
Since ${^t\overline{(d(\alpha)Zd(\overline{\alpha}))}}=d(\alpha){^t\overline{Z}}d(\overline{\alpha})$, 
we find that $d(\alpha)Zd(\overline{\alpha})\in\Ht$ for all $Z\in\Ht$.
Note that 
$\Tr(Hd(\alpha)Zd(\overline{\alpha}))=\Tr(d(\overline{\alpha})Hd(\alpha)Z)$.
The map
\[
H\mapsto d(\overline{\alpha})Hd(\alpha)
\]
gives a bijection $\Lam_2^{\alpha\cc}(\OO)^+\rightarrow \Lam_2^{\cc}(\OO)^+$
which satisfies
\[
\varepsilon_\cc(d(\overline{\alpha})Hd(\alpha))=\varepsilon_{\alpha\cc}(H)
\quad{\rm and}\quad
N(\alpha\cc)D\det(H)=N(\cc)D\det(d(\overline{\alpha})Hd(\alpha)).
\]
This completes the proof.
\end{proof}
By Lemma \ref{Lem1}, we can define $F_\cc\in S_{2\kappa+2}(\Gamma_K^{(2)}[\cc],{\det}^{-\kappa-1})$
for a fractional ideal $\cc\in J_K^D$ by the same formula.
Moreover the function
\[
\Ha\times\Ha\ni(z_1,z_2)\mapsto 
F_\cc\left(
{\rm diag}(z_1,Cz_2)
\right)
\]
depends only on the ideal class of $\cc$ and defines an element in 
$S_{2\kappa+2}(\SL_2(\Z))\otimes S_{2\kappa+2}(\SL_2(\Z))$.
\begin{lem}\label{Tp}
Let $T(p)$ be the Hecke operator on $S_{2\kappa+2}(\SL_2(\Z))$. 
Then
\[
(T(p)\otimes {\rm id})(F_\cc(\diag(z_1,Cz_2)))=({\rm id}\otimes T(p))(F_\cc(\diag(z_1,Cz_2)))
\]
for all primes $p$.
\end{lem}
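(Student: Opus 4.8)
The plan is to reduce the commutation of the two Hecke operators to a symmetry of the Fourier coefficients of $F_\cc$ that comes directly from the Maass lift formula. First I would write down the Fourier expansion of the pullback. Setting $\Phi_\cc(z_1,z_2)=F_\cc(\diag(z_1,Cz_2))$, the contribution of $H=\begin{pmatrix} n&\alpha\\\overline{\alpha}&m/C\end{pmatrix}\in\Lam_2^\cc(\OO)^+$ to the exponential is $\exp(2\pi\I(nz_1+mz_2))$, since $\Tr(H\diag(z_1,Cz_2))=nz_1+mz_2$. Summing over the off-diagonal entry $\alpha\in\sD^{-1}\cc^{-1}$, the coefficient of $q_1^{n}q_2^{m}$ in $\Phi_\cc$ is
\[
c_\cc(n,m)=\sum_{\alpha}\ \sum_{d\mid\varepsilon(H)}d^{2\kappa+1}\,\alpha_{F_\cc}\!\left(\frac{CD\det(H)}{d^{2}}\right),
\]
where the inner data depend on $H$. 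I would isolate the clean statement I need: that $c_\cc(n,m)$ is symmetric in $n$ and $m$, i.e.\ $c_\cc(n,m)=c_\cc(m,n)$.

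Second, the standard formula for the action of $T(p)$ on $q$-expansions gives, for a form $\sum c(n,m)q_1^{n}q_2^{m}$,
\[
((T(p)\otimes\mathrm{id})\Phi)(n,m)=c(pn,m)+p^{2\kappa+1}c(n/p,m),
\]
with the convention that $c(n/p,m)=0$ when $p\nmid n$, and symmetrically for $\mathrm{id}\otimes T(p)$ acting in the second variable. Thus the desired identity of operators is equivalent to
\[
c_\cc(pn,m)+p^{2\kappa+1}c_\cc(n/p,m)=c_\cc(n,pm)+p^{2\kappa+1}c_\cc(n,m/p)
\]
for all $n,m$ and all primes $p$. Once the symmetry $c_\cc(n,m)=c_\cc(m,n)$ is established, both sides coincide term by term after swapping $n\leftrightarrow m$, and the lemma follows immediately.

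Finally, to prove the symmetry I would exploit the structure of $\Lam_2^\cc(\OO)$ and the two invariants appearing in the lift. The involution $H\mapsto {}^{t}\overline{H}$ (equivalently swapping the two diagonal entries via conjugation by the element of $\U(\Q)$ realizing the ideal-theoretic symmetry of $\cc$ and $\overline{\cc}^{-1}$) carries $H=\begin{pmatrix} n&\alpha\\\overline{\alpha}&m/C\end{pmatrix}$ to one with the roles of $n$ and $m/C$ interchanged, preserves positive-definiteness, and leaves both $\det(H)$ and $\varepsilon_\cc(H)$ unchanged; since the summand in the lift depends on $H$ only through $CD\det(H)$ and $\varepsilon(H)$, and since $\alpha_{F_\cc}$ depends only on these, the sum defining $c_\cc(n,m)$ is invariant under $n\leftrightarrow m$. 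The main obstacle is bookkeeping: I must check carefully that the lattice $\sD^{-1}\cc^{-1}$ over which $\alpha$ ranges and the normalization $m/C$ in the lower-right entry are compatible with this involution, using $\chi(C)=1$ and the fact that $\cc$ is prime to $D$, so that the bijection on $H$ is genuinely the identity on the relevant invariants (this is the same type of computation already carried out in the proof of Lemma \ref{Lem1}). Granting this, the proof reduces to the elementary Hecke-operator manipulation above.
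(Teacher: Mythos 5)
There is a genuine gap at the heart of your argument: the symmetry $c_\cc(n,m)=c_\cc(m,n)$ does \emph{not} imply the identity
\[
c_\cc(pn,m)+p^{2\kappa+1}c_\cc(n/p,m)=c_\cc(n,pm)+p^{2\kappa+1}c_\cc(n,m/p).
\]
Applying the symmetry to the right-hand side turns it into $c_\cc(pm,n)+p^{2\kappa+1}c_\cc(m/p,n)$, which is the \emph{left}-hand side evaluated at the swapped pair $(m,n)$. So symmetry of the coefficients only shows that $(T(p)\otimes\mathrm{id})\Phi_\cc$ and $(\mathrm{id}\otimes T(p))\Phi_\cc$ are carried into each other by the flip $(z_1,z_2)\mapsto(z_2,z_1)$; it does not show they are equal, because $(T(p)\otimes\mathrm{id})\Phi_\cc$ need not itself be symmetric. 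Concretely, at $(n,m)=(p,1)$ your required identity reads
\[
c_\cc(p^2,1)+p^{2\kappa+1}c_\cc(1,1)=c_\cc(p,p),
\]
and no symmetric array of coefficients satisfies this in general --- it is a Maass-relation-type constraint, not a consequence of symmetry.

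What the lemma actually rests on is precisely this extra structure: the summand in the Fourier coefficient of $F_\cc$ depends on $H=\bigl(\begin{smallmatrix}n&\alpha\\\overline{\alpha}&m/C\end{smallmatrix}\bigr)$ only through $CD\det(H)=D(nm-CN(\alpha))$, which is a function of the product $nm$, and through the content $\varepsilon(H)$, via the divisor sum $\sum_{d\mid\varepsilon(H)}d^{2\kappa+1}\alpha_{F_\cc}(CD\det(H)/d^2)$. The proof (this is the computation in Lemma 1.1 of Ichino's paper on pullbacks of Saito--Kurokawa lifts, which the paper invokes) expands both sides of the displayed coefficient identity using this explicit form, reorganizes the double sum over $\alpha$ and $d$ according to the $p$-divisibility of $n$, $m$ and $\alpha$, and matches the terms; the divisor-sum structure is what makes the cross terms (such as the one at $(p,1)$ above) balance. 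Your first two steps (the double $q$-expansion and the standard formula for $T(p)$) are fine and set up the right target identity, and the symmetry you propose to prove is true and is one ingredient, but you still need the genuine bookkeeping with $\varepsilon(H)$ and the divisor sum to close the argument.
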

\begin{proof}
The proof is similar to that of Lemma 1.1 of \cite{Ichino}.
\end{proof}

\section{Statement of the main theorem}\label{Statement}
In this section, we state the main theorem and we give numerical examples.
\par

Let $\kappa$ be a positive integer and
\[
f(z)=\sum_{n=1}^\infty a_f(n)q^n\in\SD
\]
be a normalized Hecke eigenform. 
For $\cc \in J_K^D$, let $F_\cc\in S_{2\kappa+2}(\Gam_K^{(2)}[\cc],\det^{-\kappa-1})$
be the hermitian Maass lift of $f$ defined in the previous section.
Let $C=N(\cc)\in\Q_{>0}$ be the ideal norm of $\cc$.
For each normalized Hecke eigenform
\[
g(z)=\sum_{n=1}^\infty a_g(n)q^n\in S_{2\kappa+2}(\SL_2(\Z)),
\]
we set $g_C(z)=g(z/C)$.
Note that $g_C\in S_{2\kappa+2}(d(C)^{-1}\SL_2(\Z)d(C))$.
We consider the period integral
$\period$ given by
\begin{align*}
&\period
\\&=
\int_{d(C)^{-1}\SL_2(\Z)d(C)\bs\Ha}\int_{\SL_2(\Z)\bs\Ha}
F_\cc\left(
\begin{pmatrix}
z_1&0\\0&z_2
\end{pmatrix}
\right)
\overline{g(z_1)g_C(z_2)}y_1^{2\kappa}y_2^{2\kappa}dz_1dz_2.
\end{align*}
Define the Petersson norms of $g$ and $g_C$ by
\begin{align*}
\pair{g,g}=\int_{\SL_2(\Z)\bs\Ha}
|g(z)|^2y^{2\kappa}dz,\quad
\pair{g_C,g_C}=\int_{d(C)^{-1}\SL_2(\Z)d(C)\bs\Ha}
|g_C(z)|^2y^{2\kappa}dz.
\end{align*}
By Lemma \ref{Lem1}, we get the following lemma.
\begin{lem}\label{indep}
The map
\[
J_K^D\ni\cc\mapsto \frac{\pair{F_\cc|_{\Ha\times\Ha},g\times g_C}}{\pair{g,g}\pair{g_C,g_C}}\in\C
\]
factors through the ideal class group ${\it Cl}_K$.
\end{lem}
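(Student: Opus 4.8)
The plan is to show directly that the quotient is unchanged when $\cc$ is replaced by $\alpha\cc$ for any $\alpha\in K^\times$ with $\alpha\cc\in J_K^D$; since two ideals in $J_K^D$ lie in the same class precisely when they differ by such an $\alpha$, this yields the factorization through $\Cl_K$. The only input is Lemma~\ref{Lem1}, which relates $F_\cc$ and $F_{\alpha\cc}$ through the substitution $Z\mapsto d(\alpha)Zd(\overline{\alpha})$. The strategy is to feed the diagonal argument $Z=\diag(z_1,z_2)$ into this relation and then absorb the resulting distortion of the second variable by rescaling the integration domain, checking that the rescaling factors cancel in the ratio.

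First I would specialize Lemma~\ref{Lem1} to diagonal arguments. Writing $\beta=\alpha\overline{\alpha}$, a one-line matrix computation gives
\[
d(\alpha)\diag(z_1,z_2)d(\overline{\alpha})=\diag(z_1,\beta z_2),
\]
so that $F_\cc(\diag(z_1,z_2))=F_{\alpha\cc}(\diag(z_1,\beta z_2))$. Because $K$ is imaginary quadratic, $\beta=|\alpha|^2>0$ is a positive real number, which is exactly what makes the ensuing rescaling $w=\beta z_2$ preserve $\Ha$ and define a legitimate change of variables in the period integral.

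Next I would substitute $w=\beta z_2$ in the integral defining $\pair{F_\cc|_{\Ha\times\Ha},g\times g_C}$. Setting $C'=N(\alpha\cc)=\beta C$, one checks three things: that $g_C(z_2)=g(z_2/C)=g(w/C')=g_{C'}(w)$; that the conjugation $\gamma\mapsto d(C)^{-1}\gamma d(C)$ acting on $z_2$ corresponds, under $w=\beta z_2$, to $\gamma\mapsto d(C')^{-1}\gamma d(C')$ acting on $w$, so the domain $d(C)^{-1}\SL_2(\Z)d(C)\bs\Ha$ is carried onto $d(C')^{-1}\SL_2(\Z)d(C')\bs\Ha$; and that the measure transforms by $y_2^{2\kappa}\,dz_2=\beta^{-2\kappa-2}(\mathrm{Im}\,w)^{2\kappa}\,dw$. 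Combining these shows that $\pair{F_\cc|_{\Ha\times\Ha},g\times g_C}=\beta^{-2\kappa-2}\pair{F_{\alpha\cc}|_{\Ha\times\Ha},g\times g_{C'}}$. Applying the identical substitution to the Petersson norm gives $\pair{g_C,g_C}=\beta^{-2\kappa-2}\pair{g_{C'},g_{C'}}$, while $\pair{g,g}$ is untouched.

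Finally, dividing the two displayed identities, the common factor $\beta^{-2\kappa-2}$ cancels and the quotient attached to $\cc$ equals the one attached to $\alpha\cc$, proving the lemma. The one genuinely delicate point is the bookkeeping in the third step: verifying that the scaling $w=\beta z_2$ intertwines the two conjugated congruence subgroups $d(C)^{-1}\SL_2(\Z)d(C)$ and $d(C')^{-1}\SL_2(\Z)d(C')$ and correctly accounts for every Jacobian factor, since any slip there would destroy the exact cancellation of the $\beta$-powers on which the whole argument rests.
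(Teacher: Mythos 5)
Your proof is correct and takes essentially the same route as the paper: the paper deduces the lemma directly from Lemma~\ref{Lem1} without writing out the details, and your argument (specializing $Z\mapsto d(\alpha)Zd(\overline{\alpha})$ to $\diag(z_1,z_2)\mapsto\diag(z_1,N_{K/\Q}(\alpha)z_2)$, checking that $w=N_{K/\Q}(\alpha)z_2$ carries $d(C)^{-1}\SL_2(\Z)d(C)$ to $d(C')^{-1}\SL_2(\Z)d(C')$, and cancelling the common factor $N_{K/\Q}(\alpha)^{-2\kappa-2}$ against the same factor in $\pair{g_C,g_C}$) is exactly the computation being suppressed. The bookkeeping is consistent with the identity $\pair{g_C,g_C}=C^{2(\kappa+1)}\pair{g,g}$ used later in the paper, so nothing is missing.
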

\par

For $p\not\in Q_D$, we define the Satake parameter $\{\alpha_{f,p}, \chi(p)\alpha_{f,p}^{-1}\}$
of $f$ at $p$ by
\[
1-a_f(p)X+\chi(p)p^{2\kappa}X^2=(1-p^\kappa\alpha_{f,p}X)(1-p^\kappa\chi(p)\alpha_{f,p}^{-1}X).
\]
For $p\in Q_D$, we put $\alpha_{f,p}=p^{-\kappa}a_f(p)$.
For each prime $p$, we define the Satake parameter $\{\alpha_{g,p}, \alpha_{g,p}^{-1}\}$
of $g$ at $p$ by
\[
1-a_g(p)X+p^{2\kappa+1}X^2=(1-p^{\kappa+1/2}\alpha_{g,p}X)(1-p^{\kappa+1/2}\alpha_{g,p}^{-1}X).
\]
The Ramanujan conjecture proved by Deligne states that 
$|\alpha_{f,p}|=|\alpha_{g,p}|=1$ for all $p$.
In particular, we have $|D^{-k}a_f(D)|=1$ and
$a_g(n)\in\R$ for all $n\in\Z_{>0}$.
We put
\[
A_p=\left\{
\begin{aligned}
&\begin{pmatrix}
\alpha_{f,p}&0\\0&\chi(p)\alpha_{f,p}^{-1}
\end{pmatrix}
\iif p\nmid D,\\
&\alpha_{f,p} \iif p\mid D,
\end{aligned}\right.
\quad 
{\rm and}\quad
B_p=\begin{pmatrix}
\alpha_{g,p}&0\\0&\alpha_{g,p}^{-1}
\end{pmatrix}.
\]
Define the $L$-functions $L(s,f\times g)$ and $L(s,f\times g\times \chi)$ by Euler products
\begin{align*}
L(s,f\times g)&=\prod_{p \nmid D}\det({\bf 1}_4-A_p\otimes B_p\cdot p^{-s})^{-1}
\times\prod_{p\mid D}\det({\bf 1}_2-A_p\otimes B_p\cdot p^{-s})^{-1},\\
L(s,f\times g\times \chi)&=\prod_{p \nmid D}\det({\bf 1}_4-A_p^{-1}\otimes B_p\cdot p^{-s})^{-1}
\times\prod_{p\mid D}\det({\bf 1}_2-A_p^{-1}\otimes B_p\cdot p^{-s})^{-1}.
\end{align*}
for $\re(s)\gg 0$. 
Note that 
$L(s,f\times g\times \chi)=\overline{L(\bar{s}, f\times g)}$ by the Ramanujan conjecture.
We also define
\[
\D(s,f,g)=\sum_{n=1}^\infty n^{-s}a_f(n)a_g(n)
=\sum_{n=1}^\infty n^{-s}a_f(n)\overline{a_g(n)}.
\]
Then, by \cite{Shimura} Lemma 1, we have
\[
\D(s+2\kappa+1,f,g)=L(2s+1,\chi)^{-1}L(s+1/2,f\times g),
\]
where $L(s,\chi)$ is the Dirichlet $L$-function associated to $\chi$.
\par

Let $\Lam(s,f\times g)$ and $\Lam(s,f\times g\times\chi)$
be the completed $L$-functions given by
\begin{align*}
\Lam(s,f\times g)&=\Gam_\C(s+1/2)\Gam_\C(s+2\kappa+1/2)L(s,f\times g),\\
\Lam(s,f\times g\times \chi)&=\Gam_\C(s+1/2)\Gam_\C(s+2\kappa+1/2)L(s,f\times g\times\chi).
\end{align*}
By \cite{J} Theorem 19.14, they have meromorphic continuations to the whole $s$-plane
and satisfy the functional equation
\[
\Lam(s,f\times g)=
\varepsilon(s,f\times g)\Lam(1-s,f\times g\times \chi).
\]
Here, $\varepsilon(s,f\times g)$ is the $\varepsilon$-factor which will be defined in the next section.
\par

Our main result is as follows.
\begin{thm}\label{main}
The identity
\[
L\left(\frac{1}{2},f\times g\right)=\frac{L(1,\chi)(4\pi)^{2\kappa+1}}{a_f(D)(2\kappa)!}
\cdot\frac{1}{h_K}
\sum_{[\cc]\in {\it Cl}_K}\frac{ \langle F_{\cc}|_{\Ha\times\Ha},g\times g_C\rangle}{\langle g_C,g_C \rangle}
\]
holds.
\end{thm}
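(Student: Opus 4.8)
The plan is to establish the identity by realizing the whole family $\{F_\cc\}$ as a single theta lift, transporting the period across the seesaw of the introduction, and then using the genus theory of $K$ to isolate the untwisted central value. First I would assemble the $F_\cc$ into the automorphic form $\Lift$ on $\U(\A_\Q)$ and, via the accidental isomorphism $\SU\cong\mathrm{Spin}(4,2)$ recalled in Section~\ref{GU-GL}, identify $\Lift$ with the global theta lift from $\SL_2$ to $\O(4,2)=\O(V)$ of the adelic form attached to $f\in\SD$, up to an explicit scalar; this is the content of Proposition~\ref{theta}. Here $V$ is a six-dimensional quadratic space over $\Q$ of signature $(4,2)$ whose discriminant character is $\chi$, so that the odd weight $2\kappa+1$ and nebentypus $\chi$ of $f$ match the Weil representation of the even-dimensional pair $(\SL_2,\O(V))$. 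In parallel, as in Section~\ref{GL-GO(2,2)}, the pair $g\times g_C$ is viewed as an automorphic form on $\O(2,2)(\A_\Q)$ through the isogeny relating $\O(2,2)$ to $\SL_2\times\SL_2$.

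Next I would split $V=V_1\perp V_2$, with $V_1$ of signature $(2,2)$ (giving the split $\O(2,2)$) and $V_2$ of signature $(2,0)$ equal to the norm form of $K$, so that $\SO(V_2)\cong K^1$. This realizes the seesaw displayed in the introduction, whose cross pairs are $(\SL_2,\O(4,2))$ and $(\SL_2\times\SL_2,\O(2,2)\times\O(2))$. The seesaw identity rewrites the pairing of $\Lift\cong\theta(f)$, restricted to $\O(2,2)\times\O(2)$, against $g\times g_C$ on the $\O(2,2)$-factor and the constant function on $\O(2)$, as an integral over $[\SL_2]$ against $f$ of the product of the backward lift of $g\times g_C$ from $\O(2,2)$ and the binary theta series from $\O(2)$. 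Integrating the $\O(2)$-factor over $[\SO(V_2)]\cong[K^1]$ produces the sum over $\Cl$ appearing in the theorem, while the $\SL_2$-integral unfolds by the Rankin--Selberg method to the Dirichlet series $\D(s,f,g)$, hence, through the relation $\D(s+2\kappa+1,f,g)=L(2s+1,\chi)^{-1}L(s+1/2,f\times g)$ recalled in Section~\ref{Statement}, to the central values of $L(s,f\times g)$ and of its twist by $\chi$. Evaluating the resulting local and archimedean zeta integrals gives Corollary~\ref{L(1/2)=<F,g*g>}, namely that $\frac{1}{h_K}\sum_{[\cc]}\pair{F_\cc|_{\Ha\times\Ha},g\times g_C}/\pair{g_C,g_C}$ equals an explicit constant times a combination of $L(1/2,f\times g)$ and $L(1/2,f\times g\times\chi)$. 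The archimedean integral for weight $2\kappa+2$ produces $(4\pi)^{2\kappa+1}/(2\kappa)!$, while the Tamagawa volume $\vol([\SO(V_2)])=2$ together with the class-number formula accounts for the factor $L(1,\chi)/h_K$.

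Finally I would use genus theory to remove the $\chi$-twist. The decomposition $f^{\cc*}=\sum_{Q\subset Q_D}\chi_Q(-C)f_Q$ expresses the Fourier coefficients of $F_\cc$ through the eigenforms $f_Q$, whose Satake data at the primes $p\mid D$ differ from those of $f$ precisely by the genus characters $\chi_Q$. Averaging over $[\cc]\in\Cl$ and invoking orthogonality on the genus group $\Cl/\Cl^2$ detects only the principal genus (the terms $Q=\emptyset$ and $Q=Q_D$), which collapses the twisted contribution; the functional equation $L_\infty(s)L(s,f\times g)=-D^{1-2s+2\kappa}a_f(D)^{-2}L_\infty(1-s)L(1-s,f\times g\times\chi)$, applied at $s=1/2$, then converts the surviving term involving $L(1/2,f\times g\times\chi)$ back into $L(1/2,f\times g)$ and is responsible for the factor $a_f(D)^{-1}$. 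The step I expect to be hardest is the exact matching of constants: tracking the ramified local theta and zeta integrals at the primes $p\mid D$ against the sign factors $\chi_Q(-C)$ so that the genus-theoretic cancellation is exact, and reconciling the power of $D$ coming from the functional equation with the archimedean constant, so that everything assembles into precisely $\frac{L(1,\chi)(4\pi)^{2\kappa+1}}{a_f(D)(2\kappa)!}$.
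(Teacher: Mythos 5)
Your proposal is correct and follows essentially the same route as the paper: Proposition \ref{theta} identifying $\Lift$ with the theta lift of $\ff$ to $\O(4,2)$, the seesaw with $V=V'\perp V''$ (the $(2,2)$-part paired against $g\times g_C$ via Lemma \ref{G->g} and the norm form of $K$ handled by the Siegel--Weil formula), Rankin--Selberg unfolding to $\sum_Q\chi_Q(-C)a_{f_Q}(D)L(1/2,f_Q\times g)$, and finally genus theory on ${\it Cl}_K/{\it Cl}_K^2$ together with $a_f(D)L(1/2,f\times g)\in\I\R$ to isolate the untwisted value. The only detail worth noting is that a single $I(\cc)$ yields the period sum over a coset of ${\it Cl}_K^2$ rather than all of ${\it Cl}_K$, so the full class-group sum in the theorem only appears after averaging over genus representatives, exactly as your last paragraph anticipates.
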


\begin{rem}
A special case of a result of Shimura $($\cite{Shimura} Theorem $3$$)$ asserts that
\[
\pi^{-(2\kappa+1)}\pair{g,g}^{-1}\D(2\kappa+1,f,g)\in\Q(f)\Q(g)\subset\overline{\Q}
\]
and for all $\sigma\in{\rm Aut}(\C)$, one has
\[
\left[\pi^{-(2\kappa+1)}\pair{g,g}^{-1}\D(2\kappa+1,f,g)\right]^{\sigma}
=\pi^{-(2\kappa+1)}\pair{g^\sigma,g^\sigma}^{-1}\D(2\kappa+1,f^\sigma,g^\sigma).
\]
Here $\Q(f)$ $($resp.~$\Q(g))$ is the algebraic number field 
generated by the coefficients $\{a_f(n)\}$ of $f$ $($resp.~$\{a_g(n)\}$ of $g)$.
Note that in Shimura's paper, one takes the measure on $\SL_2(\Z)\bs\Ha$ so that
$ \vol(\SL_2(\Z)\bs\Ha)=1$.
Since 
\[
\frac{ \langle F_{\cc}|_{\Ha\times\Ha},g\times g_C\rangle}{\langle g_C,g_C \rangle\pair{g,g}}
\in\Q(f)\Q(g)
\]
and for all $\sigma\in{\rm Aut}(\C)$, one has
\[
\left[\frac{ \langle F_{\cc}|_{\Ha\times\Ha},g\times g_C\rangle}{\langle g_C,g_C \rangle\pair{g,g}}\right]^\sigma
=\frac{ \langle F^\sigma_{\cc}|_{\Ha\times\Ha},g^\sigma\times g^\sigma_C\rangle}
{\langle g^\sigma_C,g^\sigma_C \rangle\pair{g^\sigma,g^\sigma}},
\]
we find that Theorem $\ref{main}$ is compatible with this result.
\end{rem}

\begin{rem}
Let $g,g_1,g_2\in S_{2\kappa+2}(\SL_2(\Z))$ be normalized Hecke eigenforms.
If $g_1\not=g_2$, then
\[
\pair{F_\cc|_{\Ha\times\Ha},g_1\times (g_{2})_C}=0
\]
by Lemma \ref{Tp} and the multiplicity one theorem.
On the other hand, by Lemma \ref{pure}, we find that
\[
\pair{F_\cc|_{\Ha\times\Ha},g\times g_{C}}\in\I\R.
\]
\end{rem}

\begin{example}
We discuss the case $D=3$ and $\kappa=5$. Then the class number of $K=\Q(\sqrt{-3})$ is $h_K=1$.
Let $f\in S_{11}(\Gamma_0(3),\chi)$ be the Hecke eigenform such that
\[
f(z)=q + aq^2 + (9a - 27)q^3 + 304q^4 - 106aq^5 + (-27a - 6480)q^6 + 
    17234q^7 + O(q^8),
\]
with $a=12\sqrt{-5}$. Here we have used MAGMA \cite{BC1}.
Let $g\in S_{12}(\SL_2(\Z))$ be the normalized Hecke eigenform. 
We put
$\mathcal{A}=\{\alpha\in\sD^{-1}\OO | N(\alpha)<1\}$.
Then we have
\begin{align*}
\frac{\pair{F_\OO|_{\Ha\times\Ha},g\times g}}{\pair{g,g}^2}
=\sum_{\alpha\in\mathcal{A}}
\alpha_F\left(D\det
\begin{pmatrix}
1&\alpha\\\overline{\alpha}&1
\end{pmatrix}
\right).
\end{align*}
It is easy to see that
\[
\mathcal{A}=\left\{\left. \frac{a}{\sqrt{-D}}+\frac{(1+\sqrt{-D})b}{2\sqrt{-D}}\right|
(a,b)\in\{(0,\pm 1),(1,-1),(\pm 1,0),(0,0),(-1,1)\}\right\}.
\]
Since $\bchi_3(-3)=-1$, we have
\begin{align*}
\frac{\pair{F_\OO|_{\Ha\times\Ha},g\times g}}{\pair{g,g}^2}
=\alpha_F(3)+6\alpha_F(2)
=\left(a_f(3)+\bchi_3(-3)\overline{a_f(3)}\right)+6a_f(2)
=24a.
\end{align*}
On the other hand, by the Dirichlet class number formula, we have
\[
L(1,\chi)
=\frac{2\pi}{w_K\sqrt{D}}h_K=\frac{2\pi}{6\sqrt{3}}=\frac{\pi}{3\sqrt{3}},
\]
where $w_K$ is the number of roots of unity contained in $K$.
\par

Next, by using Dokchitser's program \cite{D}, we have computed
\begin{align*}
\pair{g,g}&=
0.00000 10353 62056 80432 09223 47816 81222 51645 93224 90796 \cdots,\\
L(1/2.f\times g)&=
(0.56063 39681 29898 43884 12923 20977 82681 50597 97544 70872 \cdots)\\
-&
(0.06268 07831 61695 17780 41809 51552 44859 45959 04505 29843 \cdots)\I.
\end{align*}
Therefore we have
\begin{align*}
&\frac{L(1/2,f\times g)a_f(D)(2\kappa)!}{L(1,\chi)(4\pi)^{2\kappa+1}\pair{g,g}}\\
&=
(643.98 75775 19939 43256 58420 16594 60755 58068 98087 56811 94085 90018 
\cdots)\I.
\end{align*}
This numerical value coincides with
$24a=24\times12\sqrt{-5}$.
\end{example}
\begin{example}
We discuss the case $D=15$ and $\kappa=5$. 
Then the class number of $K=\Q(\sqrt{-15})$ is $h_K=2$.
Let $\cc=\p=(17,(-11+\sqrt{-15})/2)$. 
This is a prime ideal above $p=N(\p)=17$.
The set $\{\OO,\p\}\subset J_K^D$ gives a complete system of representatives of ${\it Cl}_K$.
By using {\rm MAGMA} \cite{BC1}, we find that
there is  a normalized Hecke eigenform $f\in S_{11}(\Gamma_0(15),\chi)$ given by
\begin{align*}
f(z)&=q+(50.905\cdots)q^2+((190.983\cdots) + (150.247\cdots)\I)q^3\\
&+(1567.405\cdots)q^4+((-553.573\cdots) + (3075.578\cdots)\I)q^5+O(q^6).
\end{align*}
This satisfies $[\Q(f):\Q]=16$.
Let $g\in S_{12}(\SL_2(\Z))$ be the normalized Hecke eigenform. 
Then
\begin{align*}
a_f(15)&=
-(56782 2.2227 09865 28973 31440 49620 89700 18019 65209 77555 \cdots)\\
&+(50421 0.5849 95821 10937 05802 75205 40332 84135 04318 20609 \cdots\I)
\end{align*}
and
\begin{align*}
&\frac{\pair{F_\OO|_{\Ha\times\Ha},g\times g}}{\pair{g,g}^2}\\
&=-(18354 47.760 70085 24487 88571 78425 86039 65801 25241 46122 81062 
\cdots)\I,\\
&\frac{\pair{F_{\p}|_{\Ha\times\Ha},g\times g_p}}{\pair{g,g}\pair{g_p,g_p}}\\
&=(30037 50.957 75739 16600 06591 29262 95444 75281 08362 24854 36259 
\cdots)\I.
\end{align*}
Next, by using Dokchitser's program \cite{D}, we have computed
\begin{align*}
L(1,\chi)&=1.6223 11470 38944 47587 81184 30811 91756 19982 00362 52694 
\cdots,\\
L(1/2,f\times g)&=
(0.2917 40614 25112 91654 22127 46078 68148 91372 94585 49395 
\cdots)
\\&
- (0.3285 46859 12670 89836 52596 44096 76435 44616 81055 16850 
\cdots)\I.
\end{align*}
Therefore the numerical values of
\[
\frac{L(1/2,f\times g)a_f(D)(2\kappa)!}{L(1,\chi)(4\pi)^{2\kappa+1}\pair{g,g}}
\quad{\rm and}\quad
\frac{1}{h_K}\left(\frac{\pair{F|_{\Ha\times\Ha},g\times g}}{\pair{g,g}^2}
+\frac{\pair{F_{\p}|_{\Ha\times\Ha},g\times g_p}}{\pair{g,g}\pair{g_p,g_p}}\right)
\]
are both
\begin{align*}
&(58415 1.5985 28269 60560 90097 54185 47025 47399 15603 93657 
75983 44426 
\cdots)\I.
\end{align*}
\end{example}

\section{Automorphic forms on $\GL_2$}\label{GL2}
In this section, we recall the theory of automorphic forms on $\GL_2$.
\subsection{Automorphic forms and representations}
Let $f$ be an automorphic form on $\GL_2(\A_\Q)$ and 
$\chi$ be a character of $\A_\Q^\times/\Q^\times$.
We say that $f$ has a central character $\chi$ if $f$ satisfies
\[
f(ag)=\chi(a)f(g)
\]
for $a\in\A_\Q^\times$ and $g\in\GL_2(\A_\Q)$.
Let $\psi=\psi_0$ be the standard character of $\A_\Q$.
For $\xi\in\Q$, we define the $\xi$-th Fourier coefficient $W_{f,\xi}$ of $f$ by
\[
W_{f,\xi}(g)=\int_{\Q\bs\A_\Q}f(n(x)g)\overline{\psi(\xi x)}dx.
\]
\par

Let $f =\sum_{n>0} a_f(n)q^n \in \SD$ and $g=\sum_{n>0} a_g(n)q^n \in S_{2\kappa+2}(\SL_2(\Z))$ 
be normalized Hecke eigenforms. 
The automorphic form $f$ gives a cusp form \ff\ on $\GL_2(\A_\Q)$ by the formula
\[
\ff(\alpha)=\underline{\chi}(d)(f|\alpha_\infty)(\I)
\]
for $\alpha=\gamma \alpha_\infty k\in \GL_2(\A_\Q)$ with 
$\gamma\in \GL_2(\Q),\alpha_\infty\in \GL_2(\R)^+$ and
\[
k=
\begin{pmatrix}
a&b\\
c&d
\end{pmatrix}
\in {\bf K}_0(D;\hat{\Z}),
\quad{\rm where}\quad
{\bf K}_0(D;\hat{\Z})=\left\{
\begin{pmatrix}
a&b\\
c&d
\end{pmatrix}
\in \GL_2(\hat{\Z})
\left| c\in D\hat{\Z}
\right.\right\}.
\]
Note that the central character of $\ff$ is $\bchi$. 
The automorphic form $g$ gives a cusp form $\g$ on $\GL_2(\A_\Q)$ by the formula
\[
\g(\beta)=(g|\beta_\infty)(\I)
\]
for $\beta=\gamma'\beta_\infty k'$ with 
$\gamma'\in \GL_2(\Q),\beta_\infty\in \GL_2(\R)^+$ and
$k'\in\GL_2(\hat{\Z})$.
Note that the central character of $\g$ is the trivial character.
\par

Let $\pi_f\cong\otimes'_v\pi_{f,v}$ (resp.~$\pi_g\cong\otimes'_v\pi_{g,v}$)
be the irreducible cuspidal automorphic representation of $\GL_2(\A_\Q)$
generated by $\ff$ (resp.~$\g$).
Then the central character of $\pi_f$ (resp.~$\pi_g$) is $\bchi$ (resp.~the trivial character).
The $\infty$-component $\pi_{f,\infty}$ (resp.~$\pi_{g,\infty}$) is the (limit of) discrete series representation of $\GL_2(\R)$
with minimal weight $\pm(2\kappa+1)$ (resp.~$\pm(2\kappa+2)$). 
The $p$-component $\pi_{f,p}$ (resp.~$\pi_{g,p}$) is the principal series representation
\[
\pi_{f,p}\cong {\rm Ind}_{{\bf B}_p}^{{\bf G}_p}(|\cdot|_p^{s_{f,p}} \boxtimes \underline{\chi}_p|\cdot|_p^{-s_{f,p}}),
\quad \left({\rm resp.~}
\pi_{g,p}\cong {\rm Ind}_{{\bf B}_p}^{{\bf G}_p}(|\cdot|_p^{s_{g,p}} \boxtimes |\cdot|_p^{-s_{g,p}})\right).
\]
Here 
we put ${\bf G}_p=\GL_2(\Q_\p)$ and we denote ${\bf B}_p$ the Borel subgroup of ${\bf G}_p$ consisting all upper triangle matrices,
and ${s_{f,p},\ s_{g,p}\in\C}$ satisfy 
$|p|_p^{s_{f,p}}=\alpha_{f,p}$ and $|p|_p^{s_{g,p}}=\alpha_{g,p}$.
The restriction of $\pi_{f,\infty}$ to $\SL_2(\R)$ is the direct sum of two irreducible representations $\pi_f^+$ and $\pi_f^-$,
i.e., $\pi_{f,\infty}|_{\SL_2(\R)}\cong \pi_f^+\oplus\pi_f^-$.
Here, $\pi_f^+$ (resp.~$\pi_f^-$) is the holomorphic (resp.~anti-holomorphic) discrete series representation of $\SL_2(\R)$
with minimal weight $2\kappa+1$ (resp.~$-(2\kappa+1)$).
Note that $\ff\in \pi_f^+\otimes(\otimes_p'\pi_{f,p})$.

\subsection{The $L$-functions}
Let $\pi=\pi_f\times\pi_g$. 
Then by \cite{J} \S14, \S17 and \S19,
we can define the $L$-functions and the $\varepsilon$-factor
\[
L(s,\pi)=\prod_{v}L(s,\pi_v),\quad 
L(s,\pi^{\vee})=\prod_{v}L(s,\pi^{\vee}_v),\quad 
\varepsilon(s,\pi)=\prod_v \varepsilon(s,\pi_v,\psi_v)
\]
where $\pi^{\vee}$ is the contragredient representation of $\pi$.
By \cite{J} Theorem 19.14, we have the functional equation
\[
L(s,\pi)=\varepsilon(s,\pi)L(1-s,\pi^\vee).
\]
Let $\Lam(s,f\times g)$ and $\Lam(s,f\times g \times\chi)$ be the completed $L$-functions defined in Sect.~\ref{Statement}.
\begin{lem}
We have
\[
\Lam(s,f\times g)=L(s,\pi),\quad \Lam(s,f\times g\times \chi)=L(s,\pi^{\vee}).
\]and
\[
\varepsilon(s,\pi)=-D^{1-2s+2\kappa}a_f(D)^{-2}.
\]
\end{lem}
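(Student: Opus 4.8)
The plan is to prove all three identities by a place-by-place comparison, using the local theory of Rankin--Selberg $L$-factors and $\varepsilon$-factors for $\GL_2\times\GL_2$ from \cite{J} together with the explicit description of the local components $\pi_{f,v}$ and $\pi_{g,v}$ recalled just above. Since $L(s,\pi)=\prod_v L(s,\pi_v)$ and $\varepsilon(s,\pi)=\prod_v\varepsilon(s,\pi_v,\psi_v)$, it suffices to compute each local factor and multiply. The two $L$-function identities will be essentially bookkeeping; the content lies in the $\varepsilon$-factor.

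For the $L$-functions I would first treat the finite places. At $p\nmid D$ both $\pi_{f,p}$ and $\pi_{g,p}$ are unramified with Satake parameters $A_p$ and $B_p$, so the unramified Rankin--Selberg factor is $L(s,\pi_p)=\det({\bf 1}_4-A_p\otimes B_p\, p^{-s})^{-1}$, matching the $p\nmid D$ Euler factor of $L(s,f\times g)$. At $p\mid D$ the representation $\pi_{f,p}$ is the ramified principal series $\mathrm{Ind}(\mu_1\boxtimes\bchi_p\mu_1^{-1})$ with $\mu_1$ unramified and $\mu_1(p)=\alpha_{f,p}$, while $\pi_{g,p}=\mathrm{Ind}(\nu_1\boxtimes\nu_2)$ is unramified with $\nu_1(p)=\alpha_{g,p}$, $\nu_2(p)=\alpha_{g,p}^{-1}$; among the four characters in $(\mu_1\oplus\bchi_p\mu_1^{-1})\otimes(\nu_1\oplus\nu_2)$ only $\mu_1\nu_1$ and $\mu_1\nu_2$ are unramified, so $L(s,\pi_p)=\det({\bf 1}_2-A_p\otimes B_p\, p^{-s})^{-1}$, again as in the definition. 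At $v=\infty$ the tensor product of the two discrete-series parameters decomposes as $\mathrm{Ind}_{W_\C}^{W_\R}(\phi_{4\kappa+1})\oplus\mathrm{Ind}_{W_\C}^{W_\R}(\phi_1)$, with $L$-factor $\Gam_\C(s+2\kappa+1/2)\Gam_\C(s+1/2)$, supplying exactly the archimedean factor of $\Lam(s,f\times g)$. Multiplying gives $\Lam(s,f\times g)=L(s,\pi)$. For the twisted identity I would note that $\pi^\vee$ has local parameters $A_p^{-1}$ and $B_p^{-1}$; since $B_p^{-1}$ is conjugate to $B_p$, the local factors coincide with those defining $L(s,f\times g\times\chi)$, so $\Lam(s,f\times g\times\chi)=L(s,\pi^\vee)$ follows by the same computation.

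For the $\varepsilon$-factor, unramified finite places contribute $1$ because $\psi_p$ has conductor $\Z_p$. At $p\mid D$ only the ramified characters $\bchi_p\mu_1^{-1}\nu_1$ and $\bchi_p\mu_1^{-1}\nu_2$ contribute; by the unramified-twist formula together with $\nu_1(p)\nu_2(p)=1$ one obtains
\[
\varepsilon(s,\pi_p,\psi_p)=\alpha_{f,p}^{-2e_p}\,p^{-2e_p s}\,\varepsilon(0,\bchi_p,\psi_p)^2,\qquad e_p=\ord_p(D),
\]
and the standard evaluation of the $\varepsilon$-factor of a quadratic character gives $\varepsilon(0,\bchi_p,\psi_p)^2=\bchi_p(-1)p^{e_p}$. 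At $v=\infty$ the archimedean factor is the constant $i^{(4\kappa+1)+1}i^{1+1}=i^{4\kappa+4}=1$. Collecting the finite contributions, $\prod_{p\mid D}p^{e_p}=D$ by the conductor--discriminant relation, $\prod_{p\mid D}\alpha_{f,p}^{-2e_p}=a_f(D)^{-2}D^{2\kappa}$ since $a_f(D)=\prod_{p\mid D}\alpha_{f,p}^{e_p}p^{e_p\kappa}$, and $\prod_{p\mid D}\bchi_p(-1)=\bchi_\infty(-1)=(-D,-1)_\R=-1$ by the product formula (all other $\bchi_v(-1)=1$, as $-1$ is a unit at every unramified place). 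This yields $\varepsilon(s,\pi)=-D^{1-2s+2\kappa}a_f(D)^{-2}$.

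The main obstacle is the careful local analysis at the ramified primes, and especially at $p=2$ when $2\mid D$: there one must verify that $\pi_{f,2}$ is still the ramified principal series with an unramified character $\mu_1$ (so that $A_2=\alpha_{f,2}$ is a single unitary parameter, $a_f(2^{e_2})=a_f(2)^{e_2}$, and $a(\bchi_2)=\ord_2(D)$), and one must evaluate the quadratic Gauss sum $\varepsilon(0,\bchi_2,\psi_2)^2$ with the paper's specific additive character $\psi_2(x)=e^{-2\pi\I x}$. Keeping the sign conventions consistent between the finite characters $\psi_p$ and the archimedean $\psi_\infty(x)=e^{2\pi\I x}$ is precisely what makes the global sign $-1$ emerge correctly.
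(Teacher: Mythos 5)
Your proposal is correct and follows essentially the same route as the paper: a place-by-place computation that factors the Rankin--Selberg $L$- and $\varepsilon$-factors through the principal-series structure of $\pi_{f,p}$ and $\pi_{g,p}$ (reducing to $\GL_1$ $\varepsilon$-factors of $\bchi_p$ and its unramified twists at $p\mid D$), uses the local Langlands correspondence and $\rho_{n_1}\otimes\rho_{n_2}\cong\rho_{n_1+n_2}\oplus\rho_{n_1-n_2}$ at $v=\infty$, and extracts the sign $-1$ from $\prod_{p\mid D}\bchi_p(-1)=\bchi_\infty(-1)^{-1}$. The only cosmetic difference is that you normalize the character $\varepsilon$-factor at $s=0$ (absorbing the conductor power into $\varepsilon(0,\bchi_p,\psi_p)^2=\bchi_p(-1)p^{e_p}$) where the paper normalizes at $s=1/2$; the two bookkeepings agree.
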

\begin{proof}
Since $\prod_{p<\infty}\bchi_p(-1)=\bchi(-1)\bchi_\infty(-1)^{-1}=-1$,
it is enough to show the equations
\begin{align*}
L(s,\pi_p)&=\det({\bf 1}_r-A_p\otimes B_p\cdot p^{-s})^{-1},\\
L(s,\pi^\vee_p)&=\det({\bf 1}_r-A_p^{-1}\otimes B_p\cdot p^{-s})^{-1},\\
\varepsilon(s,\pi_p,\psi_p)&=\bchi_p(-1)(p^d)^{1-2s+2\kappa}a_f(p^d)^{-2}
\end{align*}
for $p<\infty$ with $d=\ord_p(D)$ and
\[
r=\left\{\begin{aligned}
&4\iif p\nmid D,\\
&2\iif p\mid D,\\
\end{aligned}
\right.
\]
and
\begin{align*}
L(s,\pi_\infty)=L(s,\pi^\vee_\infty)&=\Gamma_\C(s+1/2)\Gamma_\C(s+2\kappa+1/2),\\
\varepsilon(s,\pi_\infty,\psi_\infty)&=1.
\end{align*}
\par

For $v=p<\infty$, by \cite{J} Theorem 15.1, we have
\begin{align*}
\varepsilon(s,\pi_p,\psi_p)
&=\varepsilon(s,\pi_{f,p}\otimes |\cdot|_p^{s_{g,p}},\psi_p)
	\varepsilon(s,\pi_{f,p}\otimes |\cdot|_p^{-s_{g,p}},\psi_p)\\
&=\varepsilon(s,|\cdot|_p^{s_{f,p}+s_{g,p}},\psi_p)
	\varepsilon(s,|\cdot|_p^{s_{f,p}-s_{g,p}},\psi_p)
\\&\quad\times
	\varepsilon(s,\bchi_p|\cdot|_p^{-s_{f,p}+s_{g,p}},\psi_p)
	\varepsilon(s,\bchi_p|\cdot|_p^{-s_{f,p}-s_{g,p}},\psi_p)\\
&=\varepsilon(s-s_{f,p}+s_{g,p},\bchi_p,\psi_p)\varepsilon(s-s_{f,p}-s_{g,p},\bchi_p,\psi_p).
\end{align*}
If $p\nmid D$, then the character $\bchi_p$ is unramified.
So we have
\[
\varepsilon(s,\pi_p,\psi_p)=1=\bchi_p(-1)(p^d)^{1-2s+2\kappa}a_f(p^d)^{-2}.
\]
\par

If $p\mid D$, then $d=\ord_p(D)>0$.
Let $U_p^{(0)}=\Z_p^\times$ and $U_p^{(n)}=1+p^n\Z_p$ for $n\geq 1$. 
We have
$\bchi_p|U_p^{(d)}=1$ and $\bchi_p|U_p^{(d-1)}\not=1$.
Then it is well-known that 
\[
\varepsilon(s,\bchi_p,\psi_p)=p^{d(1/2-s)}\varepsilon(1/2,\bchi_p,\psi_p)
\]
(see e.g., \cite{BH} 23.5 Theorem).
So we have
\[
\varepsilon(s,\pi_p,\psi_p)
=p^{d(1-2s)}p^{2ds_{f,p}}\varepsilon(1/2,\bchi_p,\psi_p)^2.
\]
It is well-know that $\varepsilon(1/2,\bchi_p,\psi_p)^2=\bchi_p(-1)$ 
(see e.g., \cite{BH} 23.4 Corollary 2).
On the other hand, we have
$p^{2ds_{f,p}}=\alpha_{f,p}^{-2d}=(p^{-\kappa}a_f(p))^{-2d}=(p^d)^{2\kappa}a_f(p^d)^{-2}$.
This gives the desired formula for $\varepsilon(s,\pi_p,\psi_p)$.
By calculations similar to that of $\varepsilon(s,\pi_p,\psi_p)$, we have
the desired formulas for $L(s,\pi_p)$ and $L(s,\pi^\vee_p)$.
\par

Next, we assume that $v=\infty$.
Let $W_\R=\C^\times\cup j\C^\times$ be the Weil group of $\R$ with
$j^2=-1$ and $jzj^{-1}=\bar{z}$
for $z\in\C^\times$.
Let $\rho_{f,\infty}$ (resp $\rho_{g,\infty}$) be the two dimensional representation of $W_\R$
which corresponds to $\pi_{f,\infty}$ (resp.~$\pi_{g,\infty}$) by the local Langlands correspondence.
Then by \cite{J} Proposition 17.3, we have
\begin{align*}
L(s,\pi_\infty)=L(s,\rho_{f,\infty}\otimes\rho_{g,\infty})&,
\quad L(s,\pi^{\vee}_\infty)=L(s,\rho^{\vee}_{f,\infty}\otimes\rho^{\vee}_{g,\infty}),
\\
\varepsilon(s,\pi_\infty,\psi_\infty)&=\varepsilon(s,\rho_{f,\infty}\otimes\rho_{g,\infty},\psi_\infty).
\end{align*}
For $n\in\Z$, we define the two dimensional representation $\rho_n$ of $W_\R$ by
\begin{align*}
\rho_n \colon W_\R=\C^\times\cup j\C^\times &\rightarrow \GL_2(\C),\\
re^{\I \theta}\mapsto 
\begin{pmatrix}
e^{n \I\theta} &0\\0&e^{-n \I\theta}
\end{pmatrix},&\quad 
j\mapsto
\begin{pmatrix}
0&(-1)^{n}\\1&0
\end{pmatrix}.
\end{align*}
Then we have $\rho_{-n}\cong\rho_n$ for $n\in\Z$.
Hence $\rho_n$ is a self-dual representation.
Moreover, for $n_1,n_2\in\Z$, we have
$\rho_{n_1}\otimes\rho_{n_2}\cong \rho_{n_1+n_2}\oplus\rho_{n_1-n_2}$.
For $n\in\Z_{>0}$, the $L$-function and the $\varepsilon$-factor associated to $\rho_n$ are given by
\[
L(s,\rho_n)=\Gamma_\C(s+n/2),\quad
\varepsilon(s,\rho_n,\psi_\infty)=\I^{n+1}.
\]
See \cite{Tate}.
It is well-known that 
$\rho_{f,\infty}\cong \rho_{2\kappa}$ and $\rho_{g,\infty}\cong \rho_{2\kappa+1}$.
So we have
\begin{align*}
L(s,\pi_\infty)=L(s,\pi^{\vee}_\infty)&=\Gamma_\C(s+2\kappa+1/2)\Gamma_\C(s+1/2),\\
\varepsilon(s,\pi_\infty,\psi_\infty)&=\I^{(4\kappa+2)+2}=1.
\end{align*}
This completes the proof.
\end{proof}

We put $\varepsilon(s,f\times g)=\varepsilon(s,\pi)$. 
Then we have the functional equation noted in Sect.~\ref{Statement}.
In particular, this functional equation and the Ramanujan conjecture imply that
\[
a_f(D)D^{-\kappa}\Lam(1/2,f\times g)
=-(a_f(D)D^{-\kappa})^{-1}\Lam(1/2,f\times g\times\chi)
=-\overline{a_f(D)D^{-\kappa}\Lam(1/2,f\times g)}.
\]
So we find that 
\[
a_f(D)L(1/2,f\times g)\in\I\R.
\]

\subsection{The Whittaker function on $\GL_2$}\label{Whittaker}
Let $\psi=\psi_0$ be the standard character of $\A_\Q$.
The Whittaker function $W_\ff$ (resp.~$W_\g$) of $\ff$ (resp.~$\g$) is defined by
\[
W_\ff(\alpha)=\int_{\Q\backslash\A_\Q}\ff\left(
n(x)
\alpha\right)\overline{\psi(x)}dx
\quad \left({\rm resp.} \quad 
W_\g(\alpha)=\int_{\Q\bs\A_\Q}\g(n(x)\alpha)
\overline{\psi(x)}dx
\right).
\]
Note that $W_{\ff,\xi}(\alpha)=W_{\ff}(a(\xi)\alpha)$ and 
$W_{\g,\xi}(\alpha)=W_{\g}(a(\xi)\alpha)$ for $\xi\in\Q^\times$.
The function $W_\ff$ (resp.~$W_\g$) has a product expansion $W_\ff=\prod_vW_{\ff,v}$ 
(resp.~$W_\g=\prod_vW_{\g,v}$), 
where
\begin{align*}
W_{\ff,\infty}\left(
n(y)a(x)k_\theta
\right)
&=\left\{
\begin{aligned}
&e^{2\pi\I y}x^{\kappa+(1/2)}e^{-2\pi x}e^{\I(2\kappa+1)\theta}
 \quad &{\rm if}\ x>0,\\
&0		  \quad &{\rm if}\ x<0,
\end{aligned}\right.\\
W_{\g,\infty}\left(
n(y)a(x)k_\theta
\right)
&=\left\{
\begin{aligned}
&e^{2\pi\I y}x^{\kappa+1}e^{-2\pi x}e^{\I(2\kappa+2)\theta}
 \quad &&{\rm if}\ x>0,\\
&0		  \quad &&{\rm if}\ x<0.
\end{aligned}\right.
\end{align*}
For $p\nmid D$, the function $W_{\ff,p}$ satisfies
$W_{\ff,p}(\alpha k)=W_{\ff,p}(\alpha)$
for all $\alpha\in \GL_2(\Q_p)$ and $k\in \GL_2(\Z_p)$. 
For $p\mid D$, the function $W_{\ff,p}$ satisfies
$W_{\ff,p}(\alpha k)=\underline{\chi}_p(k)W_{\ff,p}(\alpha)$
for all $\alpha\in \GL_2(\Q_p)$ and $k\in {\bf K}_0(D;\Z_p)$.
Here, we define
\[
\bchi_p(k)=\bchi_p(d),
\quad{\rm for}\quad 
k=
\begin{pmatrix}
a&b\\
c&d
\end{pmatrix}
\in {\bf K}_0(D;\Z_p).
\]
For each prime $p$, the function $W_{\g,p}$ satisfies
$W_{\g,p}(\alpha k)=W_{\g,p}(\alpha)$
for all $\alpha\in \GL_2(\Q_p)$ and $k\in \GL_2(\Z_p)$. 
\par

The following lemma is a reformulation of Lemma 6.3 of \cite{II}
in terms of the Fourier coefficients of $f$ and $g$.
Here, we put $a_f(x)=0$ and $a_g(x)=0$ unless $x\in\Z_{>0}$.
For $x=p^nu\in \Q_p^\times$ with $u\in \Z_p^\times$, we set $x_p=p^n$.
\begin{lem}\label{explicit}
We put $d=\ord_p(D)$, $\zeta_8=\exp(\pi\I/4)$, 
\[
\varepsilon=\left\{
\begin{split}
&1&\quad&{\rm if\ }\underline{\chi}_p(-1)=1,\\
&\I&\quad&{\rm if\ }\underline{\chi}_p(-1)=-1,
\end{split}
\right.
\]
and
\[
w=\begin{pmatrix}
0&-1\\1&0
\end{pmatrix}
,\quad 
k_1=\begin{pmatrix}
1&0\\p^{d-1}&1
\end{pmatrix}
,\quad 
k_2=\begin{pmatrix}
1&0\\p^{d-2}&1
\end{pmatrix}.
\]
\begin{enumerate}
\item
For each prime $p$, we have
\[
W_{\g,p}(a(x))=|x|_p^{\kappa+1}a_g(x_p).
\]
\item
For $p\nmid D$, we have
\[
W_{\ff,p}\left(
a(x)
\right)
=|x|_p^{\kappa+1/2}a_f(x_p).
\]
\item
For $p\mid D$, we have
\[
W_{\ff,p}\left(
a(x)
w\right)
=\underline{\chi}_p(-D_px)\varepsilon^{-1}p^{-d/2}a_f(D_p)^{-1}
|x|_p^{\kappa+1/2}\overline{a_f((Dx)_p)}.
\]
\item
For $p=2\mid D$, we have
\[
W_{\ff,p}\left(
a(x)
k_1\right)
=\left\{
\begin{aligned}
&|x|_2^{\kappa+1/2}a_f(2)^{-1}&\quad&{\rm if\ }\ord_2(x)=-1,\\
&0&\quad&{\rm otherwise}.
\end{aligned}
\right.
\]
\item
For $p=2$, $d=3$ and $x=2^nu$ with $u\in\Z_2^\times$, we have
\[
W_{\ff,p}\left(
a(x)
k_2\right)
=\left\{
\begin{aligned}
&2^{-1/2}\underline{\chi}_p(u)\varepsilon \zeta_8^{-u}
|x|_2^{\kappa+1/2}a_f(2)^{-2}&\quad&{\rm if\ }\ord_2(x)=-2,\\
&0&\quad&{\rm otherwise}.
\end{aligned}
\right.
\]
\end{enumerate}
\end{lem}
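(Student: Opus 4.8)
The plan is to obtain all five identities by translating the local Whittaker computations of Ichino--Ikeda (Lemma 6.3 of \cite{II}) into the language of the classical Fourier coefficients $a_f(n)$ and $a_g(n)$. The key translation device is the dictionary between the global Whittaker function and the classical Fourier expansion, which I would record first.

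Evaluating the defining integral of $W_\ff$ at the diagonally embedded element $a(n)$ for $n\in\Z_{>0}$ and comparing with the expansion $f=\sum_m a_f(m)q^m$ gives $W_\ff(a(n))=a_f(n)e^{-2\pi n}$; the analogous computation yields $W_\g(a(n))=a_g(n)e^{-2\pi n}$. Since $W_\ff=\prod_vW_{\ff,v}$ and the archimedean factor is the explicit function recorded above—so that $W_{\ff,\infty}(a(n))=n^{\kappa+1/2}e^{-2\pi n}$ and $W_{\g,\infty}(a(n))=n^{\kappa+1}e^{-2\pi n}$—the product of the finite factors is determined. Combining this with the multiplicativity of $a_f(n)$ and $a_g(n)$ and with the right $\GL_2(\Z_p)$-invariance of $W_{\g,p}$ (all $p$) and of $W_{\ff,p}$ ($p\nmid D$) isolates each local factor, which gives (1) and (2); the newvector normalization $W_{\ff,p}(1)=1$ is what pins down the exact powers $|x|_p^{\kappa+1}$ and $|x|_p^{\kappa+1/2}$, using the product formula $\prod_p|n|_p=n^{-1}$.

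The content lies in the ramified formulas (3)--(5). For $p\mid D$ the local component is the ramified principal series $\pi_{f,p}\cong{\rm Ind}_{{\bf B}_p}^{{\bf G}_p}(|\cdot|_p^{s_{f,p}}\boxtimes\bchi_p|\cdot|_p^{-s_{f,p}})$, so the newvector no longer detects $a_f$ through $a(x)$ alone and one must evaluate at $a(x)w$ and at the auxiliary elements $k_1,k_2$. Here I would invoke Lemma 6.3 of \cite{II} for these values in terms of the local Satake data and the local root number, and then substitute $\alpha_{f,p}=p^{-\kappa}a_f(p)$ together with the relation $a_f(p^m)=a_f(p)^m$ valid at ramified $p$ (so that $a_f(D_p)=a_f(p)^d$ with $d=\ord_p(D)$). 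The appearance of $\overline{a_f}$ and of the twist $\bchi_p(-D_px)$ in (3) reflects the local functional equation: conjugation by $w$ interchanges the two inducing characters and thus replaces each Satake parameter by its inverse, i.e. $a_f$ by $\overline{a_f}$ since $|\alpha_{f,p}|=1$; the normalizing constant $\varepsilon^{-1}p^{-d/2}a_f(D_p)^{-1}$ then assembles the local root number $\varepsilon(1/2,\bchi_p,\psi_p)$—whose square equals $\bchi_p(-1)$, as computed in the previous lemma, explaining the case split defining $\varepsilon$—with the Satake contribution.

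The main obstacle will be the two dyadic cases (4) and (5), where $p=2$ and the conductor exponent $d$ equals $2$ or $3$. There the principal series has genuinely $2$-adic ramification, the elements $k_1,k_2$ lie outside the congruence subgroup ${\bf K}_0(D;\Z_2)$ that governs the newvector, and the Whittaker values acquire the explicit Gauss sum $\zeta_8^{-u}$ and the sign $\varepsilon\in\{1,\I\}$. Matching these constants requires a genuine $2$-adic evaluation of the local functional equation rather than the formal substitution that suffices for the odd ramified primes; this is the step I would carry out most carefully, the remaining places reducing to the bookkeeping of the first two paragraphs.
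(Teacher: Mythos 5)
Your proposal is correct and matches the paper's approach: the paper gives no independent argument, stating only that the lemma ``is a reformulation of Lemma 6.3 of \cite{II} in terms of the Fourier coefficients of $f$ and $g$,'' which is exactly the reduction you carry out (invoke Lemma 6.3 of \cite{II} and translate Satake data $\alpha_{f,p}=p^{-\kappa}a_f(p)$, $a_f(p^m)=a_f(p)^m$ at $p\mid D$, into classical coefficients). Your extra consistency check via $W_\ff(a(n))=a_f(n)e^{-2\pi n}$ and the product formula is sound, and your flagged dyadic cases (4)--(5) are precisely the parts already computed in \cite{II}, so no further work is needed beyond the substitution.
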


\section{Weil representations and theta lifts}\label{Weil rep}
In this section, we recall the theory of Weil representations and theta lifts.
\subsection{Quadratic spaces}
Let $F$ be a field of characteristic not $2$ 
and $V$ a quadratic space over $F$.
Namely, $V$ is a vector space over $F$ of dimension $m$
equipped with a non-degenerate symmetric bilinear form $(\ ,\ )$.
We assume that $m$ is even.
Let $Q$ denote the associated quadratic form on $V$.
Then 
\[
Q[x]=\frac{1}{2}(x,x)
\]
for $x\in V$.
We fix a basis $\{v_1,\dots,v_m\}$ of $V$ and identify $V$ with the space of column vectors $F^m$.
Define $Q\in\GL_m(F)$ by
\[
Q=((v_i,v_j))_{i,j}
\]
and let $\det(V)$ denote the image of $\det(Q)$ in $F^\times/F^{\times2}$.
Let 
\[
\GO(V)=\{h\in\GL_m|{}^thQh=\nu(h)Q,\ \nu(h)\in\GL_1\}
\]
be the orthogonal similitude group and 
$\nu\colon\GO(V)\rightarrow\GL_1$
the similitude norm.
We put
\[
\GSO(V)=\{h\in\GO(V)|\det(h)=\nu(h)^{m/2}\}.
\]
Let $\O(V)=\ker(\nu)$ be the orthogonal group and 
$\SO(V)=\O(V)\cap\SL_m$ the special orthogonal group.

\subsection{Weil representations}
Let $F$ be a local field of characteristic not $2$ and 
$V$ a quadratic space over $F$ of dimension $m$.
We assume that $m$ is even.
We fix a non-trivial additive character $\psi$ of $F$.
For $a\in F^\times$, we define a non-trivial additive character $a\psi$ of $F$ by
$(a\psi)(x)=\psi(ax)$.
We define a quadratic character $\chi_V$ of $F^\times$ by
\[
\chi_V(a)=((-1)^{m/2}\det(V),a)_F
\]
for $a\in F^\times$,
where $(x,y)_F$ is the Hilbert symbol of $F$.
We define an $8$-th root of unity $\gamma_V$ by
\[
\gamma_V=\gamma_F\left(\det(V),\frac{1}{2}\psi\right)\gamma_F\left(\frac{1}{2}\psi\right)^mh_F(V).
\]
Here $h_F(V)$ is the Hasse invariant of $V$.
Note that $\chi_V$ and $\gamma_V$ depend only on the anisotropic kernel of $V$.
To calculate $\gamma_F(a,b\psi)$ and $\gamma_F(a\psi)$, see \cite{Ichino} A.1.
\par

Let $\omega=\omega_{V,\psi}$ denote the Weil representation of
$\SL_2(F)\times \O(V)$ on $\mathcal{S}(V)$ with respect to $\psi$.
Let $\varphi \in \mathcal{S}(V)$ and $x \in V$. 
Then,
\begin{align*}
\omega(1,h)\varphi(x) &=\varphi(h^{-1}x),\\
\omega\left(t(a)
,1\right)\varphi(x)
&= \chi_V(a)|a|_F^{m/2}\varphi(xa),\\
\omega\left(n(b)
,1\right)\varphi(x)
&= \varphi(x)\psi(bQ[x]),\\
\omega\left(w
,1\right)\varphi(x)
&= \gamma_V^{-1}\int_{V}\varphi(y)\psi(-(x,y))dy
\end{align*}
for $a \in F^\times$, $b\in F$ and $h \in \O(V)(F)$.
Here, $dy$ is the self-dual measure on $V$ with respect to $\psi((x,y))$ given by
\[
dy=|\det(Q)|^{1/2}\prod_{j}dy_{j}
\]
where $dy_{j}$ is the self-dual measure on $F$ with respect to $\psi$.
\par

Following \cite{HK} \S5.1, we extend the Weil representation $\omega$.
We put
\[
R={\rm G}(\SL_2\times\O(V))
=\{(g,h)\in\GL_2\times\GO(V)|\det(g)=\nu(h)\}.
\]
For $h\in\GO(V)$ and $\varphi\in\mathcal{S}(V)$, we put
\[
L(h)\varphi(x)=|\nu(h)|_F^{-m/4}\varphi(h^{-1}x)
\]
for $x\in V$.
Then we define the Weil representation $\omega$ of $R(F)$ on $\mathcal{S}(V)$ by
\[
\omega(g,h)=\omega(g\cdot d(\det(g)^{-1}),1)\circ L(h)
\]
for $(g,h)\in R(F)$.

\subsection{Theta functions and theta lifts}
Let $V$ be a quadratic space over $\Q$ of even dimension $m$ and
$\psi=\psi_0$ the standard additive character of $\A_\Q$.
Let $\omega$ denote the Weil representation of $R(\A_\Q)$ on 
$\mathcal{S}(V(\A_\Q))$ with respect to $\psi$.
For $(g,h)\in R(\A_\Q)$ and $\varphi \in \mathcal{S}(V(\A_\Q))$, we put
\[
\theta (g,h;\varphi) = \sum_{x\in V(\Q)}\omega(g,h)\varphi(x).
\]
Then $\theta(g,h;\varphi)$ is an automorphic form on $R(\A_\Q)$.
The function $\theta(g,h;\varphi)$ is called a theta function.
\par

Let $f$ be a cusp form on $\GL_2(\A_\Q)$.
For $h \in \GO(V)(\A_\Q)$, choose $g' \in \GL_2(\A_\Q)$ such that 
$\det(g')=\nu(h)$, and put
\[
\theta(h;f,\varphi)=\int_{[\SL_2]}
\theta(gg',h;\varphi)f(gg')dg.
\]
Note that this integral does not depend on the choice of $g'$.
Then $\theta(f,\varphi)$ is an automorphic form on $\GO(V)(\A_\Q)$.
The function $\theta(f,\varphi)$ is called a theta lift.
Similarly, we define $\theta(f',\varphi)$ for a cusp form $f'$ on $\GO(V)(\A_\Q)$.
More precisely, see \cite{HK}.

\subsection{Change of polarizations}\label{change}
Let $F$ be a field of characteristic not $2$ and
$V$ a quadratic space over $F$ of even dimension $m$.
We assume that the matrix $Q\in \GL_m(F)$ associated to $V$ is of the form
\[
Q=
\begin{pmatrix}
0&0&1\\
0&Q_1&0\\
1&0&0
\end{pmatrix}
\]
for some $Q_1\in \GL_{m-2}(F)$.
Let $V_1=F^{m-2}$ be the quadratic space with bilinear form
\[
(v,w)={}^tvQ_1w.
\]
The associated quadratic form on $V_1$ is also denoted by $Q_1$.
For $v \in V_1$, we define an element $\ell(v)\in \O(V)(F)$ by
\[
\ell(v)=
\begin{pmatrix}
1&-{}^tvQ_1&-Q_1[v]\\
0&{\bf 1}_{m-2}&v\\
0&0&1
\end{pmatrix}.
\]
\par

Let $F=\Q_v$ (resp.~$F=\Q$) and $\psi=\psi_v$ (resp.~$\psi=\psi_0$) 
be the standard character of $\Q_v$ (resp.~$\A_\Q$).
For $\varphi \in \mathcal{S}(V)$ (resp.~$\varphi \in \mathcal{S}(V(\A_F))$),
we define the partial Fourier transform by the formula
\[
\hat{\varphi}(x_1;y_1,y_2)=
\int_{X_F}\varphi
\begin{pmatrix}
z\\x_1\\y_1
\end{pmatrix}
\psi(y_2z)dz
\]
for $x_1\in V_1$, $y_1,y_2\in \Q_v$ and $X_F=\Q_v$ 
(resp.~$x_1\in V_1(\A_\Q)$, $y_1,y_2\in \A_\Q$ and $X_F=\A_\Q$). 
Here $dz$ is the self-dual measure on $X_F$ with respect to $\psi$.
We define a representation $\hat{\omega}$ of $R(X_F)$ on 
$\mathcal{S}(V_1(X_F))\otimes \mathcal{S}(X_F^2)$ by
\[
\hat{\omega}(g,h)\hat{\varphi}=(\omega(g,h)\varphi)\hat{}.
\]
If $\hat{\varphi}=\varphi_1\otimes \varphi_2$ with $\varphi_1\in\mathcal{S}(V_1)(X_F)$ and
$\varphi_2\in\mathcal{S}(X_F^2)$, then
\[
\hat{\omega}(g,1)\hat{\varphi}(x_1;y_1,y_2)
=\omega_{V_1,\psi}(g,1)\varphi_1(x_1)\cdot \varphi_2((y_1,y_2)g)
\]
for $g\in\SL_2(X_F)$.
See also \cite{Ichino} \S4.2.
\par

Let $f$ be a cusp form on $\GL_2(\A_\Q)$ and $\varphi \in \mathcal{S}(V(\A_\Q))$.
For $\Xi \in V_1(\Q)$,
define the $\Xi$-th Fourier coefficient 
$\W_\Xi=\W_{\theta(f,\varphi),\Xi}$
of $\theta(f,\varphi)$ by
\[
\W_\Xi(h)=\int_{V_1(\Q)\backslash V_1(\A_\Q)}
\theta(\ell(v)h;f,\varphi)\overline{\psi((\Xi,v))}dv
\]
for $h\in\GO(V)(\A_\Q)$.
We need a modification of Lemma 4.2 of \cite{Ichino}.
\begin{lem}\label{Wxi}
\begin{enumerate}
\item
If $\Xi \not= 0$, then for $h \in \GO(V)(\A_\Q)$, we have
\begin{align*}
&\W_\Xi(h)=
\int_{N(\A_\Q)\backslash \SL_2(\A_\Q)}
\hat{\omega}\left(g
\cdot d(\nu(h))
,h\right)\hat{\varphi}(-\Xi;0,1)
W_{f,-Q_1[\Xi]}
\left(g
\cdot d(\nu(h))
\right)dg.
\end{align*}
Here, $W_{f,\xi}$ is the $\xi$-th Fourier coefficient of $f$ defined in Sect.~$\ref{GL2}$.
\item
For $h\in\GO(V)(\A_\Q)$, we have
\begin{align*}
&\W_0(h)
=\int_{[\SL_2]}
\sum_{x_1\in V_1(\Q)}\hat{\omega}\left(g
\cdot d(\nu(h))
,h\right)\hat{\varphi}(x_1;0,0)
f
\left(g
\cdot d(\nu(h))
\right)dg.
\end{align*}
\end{enumerate}
\end{lem}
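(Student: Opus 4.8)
The plan is to unfold both the theta lift and the Fourier coefficient against the characters $\psi((\Xi,v))$, and to pass from the lattice sum defining $\theta$ to the partial Fourier transform $\hat\varphi$ via Poisson summation along the polarization variable $z$. First I would write out the definition
\[
\W_\Xi(h)=\int_{V_1(\Q)\backslash V_1(\A_\Q)}\left(\int_{[\SL_2]}\theta(gg',\ell(v)h;\varphi)\,f(gg')\,dg\right)\overline{\psi((\Xi,v))}\,dv,
\]
choosing $g'=d(\nu(h))$ so that $\det(g')=\nu(h)$ as required in the definition of the theta lift. Since $\ell(v)\in\O(V)$ has trivial similitude, this choice is admissible for $\ell(v)h$ as well. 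The extended Weil representation then gives $\omega(gg',\ell(v)h)\varphi=\omega(gg'\cdot d(\det(gg')^{-1}),1)\,L(\ell(v)h)\varphi$, and the factorization $L(\ell(v)h)=L(\ell(v))\circ L(h)$ lets me move the $\ell(v)$-action onto the Schwartz function.

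Next I would use the explicit form of $\ell(v)$ acting in the Weil representation: for $x=(z,x_1,y_1)^t\in V$ the element $\ell(v)^{-1}$ shifts the first coordinate, so that after taking the partial Fourier transform in $z$ the $v$-dependence becomes a clean additive character. Concretely, the key computation is that $(\omega(1,\ell(v))\varphi)\hat{}\,(x_1;y_1,y_2)=\hat\varphi(x_1-y_2 v;\,y_1,y_2)\,\psi(\text{lower-order terms})$, which upon integrating against $\overline{\psi((\Xi,v))}$ over $[V_1]$ collapses the $v$-integral to a single Fourier mode. This is the step that converts the group-theoretic shift into the evaluation $\hat\varphi(-\Xi;0,1)$ in (1) and selects the terms $x_1\in V_1(\Q)$ with $y_2=0$ in (2). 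I would then interchange the $v$-integral with the $[\SL_2]$-integral and the lattice sum over $x\in V(\Q)$; the Fourier coefficient of $f$ on $\GL_2$ emerges because the surviving quadratic exponent $\psi(b\,Q[x])$ from $\omega(n(b),1)$ matches the Whittaker unfolding at the value $-Q_1[\Xi]$.

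For part (1), with $\Xi\neq 0$ the nontrivial character forces $y_2=1$ and collapses the compact $[\SL_2]$-integral to the Whittaker period over $N(\A_\Q)\backslash\SL_2(\A_\Q)$, producing $W_{f,-Q_1[\Xi]}$ as claimed; here the absence of a zero mode is exactly what removes the quotient by $N$. For part (2), with $\Xi=0$ the character is trivial, no Whittaker unfolding occurs, and one retains the full integral over $[\SL_2]$ together with the sum over $x_1\in V_1(\Q)$ at $(y_1,y_2)=(0,0)$. The main obstacle I anticipate is bookkeeping the change of polarizations correctly under the \emph{extended} representation of $R(\A_\Q)$ rather than the plain Weil representation of $\SL_2\times\O(V)$: one must track the $|\nu(h)|_F^{-m/4}$ normalization in $L(h)$ and the insertion of $d(\nu(h))$ through the Fourier transform, verifying that $\hat\omega(g\cdot d(\nu(h)),h)$ is the operator that genuinely appears after all identifications. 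Since the corresponding statement for the untwisted case is Lemma 4.2 of \cite{Ichino}, I expect the argument there to go through verbatim once the similitude factors are inserted, so the work is in checking that these scalars assemble exactly into the stated formulas.
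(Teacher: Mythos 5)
Your proposal is correct in outline, but it takes a more computational route than the paper. You propose to re-run the change-of-polarization/Poisson-summation argument of Ichino's Lemma 4.2 inside the \emph{extended} Weil representation, tracking the $|\nu(h)|^{-m/4}$ normalization and the insertion of $d(\nu(h))$ through every step — and you correctly identify this bookkeeping as the main obstacle. The paper avoids reopening that computation entirely: for $h\in\O(V)(\A_\Q)$ it applies Ichino's Lemma 4.2 as a black box to $f|_{\SL_2(\A_\Q)}$, and for general $h\in\GO(V)(\A_\Q)$ it reduces to that case by setting $f'(g)=f(g\cdot d(\nu(h)))$ and replacing $\varphi$ by $L(h)\varphi$, so that $\theta(\ell(v)h;f,\varphi)=\theta(\ell(v);f',L(h)\varphi)$ and $\hat{\omega}(g,1)(L(h)\varphi)\hat{}=\hat{\omega}(g\cdot d(\nu(h)),h)\hat{\varphi}$ by \cite{HK} Lemma 5.1.7, together with the trivial identity $W_{f',\xi}(g)=W_{f,\xi}(g\cdot d(\nu(h)))$. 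This formal reduction packages all the similitude factors into three identities and is the cleaner path; your direct derivation would work but forces you to re-verify the $\ell(v)$-action in the mixed model (where, incidentally, your formula $\hat{\varphi}(x_1-y_2v;y_1,y_2)$ garbles which of $y_1,y_2$ produces the translation versus the additive character — harmless at the evaluation point $(0,1)$, but a sign that redoing the computation invites slips the reduction avoids). Both approaches are sound; the buy of yours is self-containedness, the buy of the paper's is that the only new input beyond Ichino is the compatibility of $L(h)$ with theta lifts and partial Fourier transforms.
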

\begin{proof}
For $h \in \O(V)(\A_\Q)$, apply Lemma 4.2 of \cite{Ichino} to $f|_{\SL_2(\A_\Q)}$.
In general, we put
$f'(g)=f\left(g
\cdot d(\nu(h))
\right)$.
Then by \cite{HK} Lemma 5.1.7, we have
\begin{align*}
\theta(\ell(v)h;f,\varphi)&=\theta(\ell(v);f',L(h)\varphi),\\
\hat{\omega}(g,1)(L(h)\varphi)\hat{}\ (-\Xi,y_1,y_2)
&=\hat{\omega}(g\cdot d(\nu(h)),h)\hat{\varphi}(-\Xi,y_1,y_2).
\end{align*}
On the other hand, we have
\[
W_{f',\xi}(g)=W_{f,\xi}\left(g
\cdot d(\nu(h))
\right).
\]
These equations imply the formulas.
\end{proof}

\section{Theta correspondence for $(\GU,\GL_2)$}\label{GU-GL}
In this section, we study the hermitian Maass lifting in terms of theta lifts.
The main result of this section is Proposition \ref{theta} which states that 
the automorphic form $\Lift$ on $\U(\A_\Q)$ given by the hermitian Maass lifts $\{F_\cc\}$ of $f$
 can be written by a certain theta lift via a homomorphism
$\phi\colon\GU(\A_\Q)\rightarrow \PGSO(4,2)(\A_\Q)$.
This proposition is a key point of the proof of Theorem \ref{main}.
\subsection{Preliminaries}\label{pre}
For a $\Q$-algebra $R$, we define the similitude unitary group $\GU$ and the similitude orthogonal groups $\GO(4,2)$ by
\begin{align*}
\GU(R)&=\{ g\in \GL_4(K\otimes R) |{^t\bar{g}}Jg=\lam(g)J,\ \lam(g)\in R^\times\},\\
\GO(4,2)&=\{g \in \GL_6|^tgQg=\nu(g)Q,\ \nu(g)\in \GL_1\}
\end{align*}
with
\[Q=
\begin{pmatrix}
&&&&&1\\
&&&&1&\\
&&2&0&&\\
&&0&2D&&\\
&1&&&&\\
1&&&&&\\
\end{pmatrix}
\in \GL_6\quad {\rm and} \quad 
J=\begin{pmatrix}
0 & -{\bf 1}_2 \\
{\bf 1}_2 & 0 \\
\end{pmatrix}\in \GL_4.
\]
The homomorphisms $\lam\colon\GU\rightarrow \GL_1$ and 
$\nu\colon\GO(4,2)\rightarrow \GL_1$
are the similitude norms.
We define the subgroups $\GSU$ of $\GU$ and $\GSO(4,2)$ of $\GO(4,2)$ by
\begin{align*}
\GSU=\{g \in \GU|\det(g)=\lam(g)^2\},\\
\GSO(4,2)=\{g \in \GO(4,2)|\det(g)=\nu(g)^3\}.
\end{align*}
For $\alpha \in {\rm Res}_{K/\Q}(\GL_1)$, we put
\[
r_{\alpha}=
\begin{pmatrix}
1&0&0&0\\
0&\alpha&0&0\\
0&0&1&0\\
0&0&0&\bar{\alpha}^{-1}
\end{pmatrix}
\in \U.
\]
Then we have an exact sequence
\[
\begin{CD}
1 @>>> \GL_1 @>\iota>> \GSU\rtimes {\rm Res}_{K/\Q}(\GL_1) @>\rho>>\GU @>>>1,
\end{CD}
\]
where 
\[
\iota(a)=(r_a,a^{-1}) \quad {\rm and}\quad \rho(h,\alpha)=hr_\alpha.
\]
\par

For each $p$, we put $\GU(\Z_\p)=\GU(\Q_p)\cap\GL_4(\OO\otimes_\Z \Z_p)$
and $\GSU(\Z_p)=\GU(\Z_p)\cap \GSU(\Q_p)$.
\begin{lem}\label{GU(Zp)}
For $g\in\GU(\Z_p)$, there exist $h\in\GSU(\Z_p)$ and $\alpha\in(\OO\otimes_\Z \Z_p)^\times$ such that
\[
g=hr_\alpha.
\]
\end{lem}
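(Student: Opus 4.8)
The plan is to read this lemma as the refinement to $\Z_p$-points of the surjectivity of $\rho$, and to reduce it to an integral form of Hilbert's Theorem~90. First I would record that, since $g\in\GU(\Z_p)=\GU(\Q_p)\cap\GL_4(\OO\otimes_\Z\Z_p)$, the determinant $\det(g)$ is a unit of $\OO\otimes_\Z\Z_p$ and the similitude $\lam(g)$ is a unit of $\Z_p$ (the relation $N_{K/\Q}(\det g)=\lam(g)^4$ forces $\ord_p(\lam(g))=0$). Put $\delta=\det(g)\lam(g)^{-2}$; then $\delta\overline{\delta}=1$, so $\delta$ is a norm-one unit. The key observation is that it suffices to find $\alpha\in(\OO\otimes_\Z\Z_p)^\times$ with $\alpha\overline{\alpha}^{-1}=\delta$: for then $r_\alpha^{-1}=\diag(1,\alpha^{-1},1,\overline{\alpha})$ is integral, $h:=gr_\alpha^{-1}$ lies in $\GL_4(\OO\otimes_\Z\Z_p)$, and a direct computation gives $\det(h)=\lam(h)^2$, so that $h\in\GSU(\Z_p)$ and $g=hr_\alpha$. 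Thus everything reduces to solving $\alpha\overline{\alpha}^{-1}=\delta$ in units.

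To produce $\alpha$ I would use the explicit Hilbert~90 identity: for every $c$ the element $\alpha=c+\delta\overline{c}$ satisfies $\alpha=\delta\overline{\alpha}$, so it only remains to choose $c\in\OO\otimes_\Z\Z_p$ making $\alpha$ a unit. As $\OO\otimes_\Z\Z_p$ is a product of complete local rings, $\alpha$ is a unit precisely when it is nonzero in each residue field, so I would split according to the behaviour of $p$ in $K$. If $p$ splits, $\OO\otimes_\Z\Z_p\cong\Z_p\times\Z_p$ with conjugation the coordinate swap, and $c=(1,0)$ gives a unit at once. If $p$ is inert the extension is unramified, conjugation acts as the nontrivial Frobenius on the residue field $\mathbb{F}_{p^2}$, and the additive map $\gamma\mapsto\gamma+\delta_k\overline{\gamma}$ is not identically zero; hence a suitable $c$ works (this is the vanishing of $H^1$ of the Galois group on the units).

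The hard part will be the ramified primes $p\mid D$, and this is the step I expect to be the main obstacle. Here the residue field is $\mathbb{F}_p$ and conjugation is trivial on it, so $\alpha=c+\delta\overline{c}$ reduces to $\gamma(1+\delta_k)$ with $\delta_k\in\{\pm1\}$, which can vanish identically; the additive recipe alone is insufficient. I would instead use multiplicative Hilbert~90 over the field, writing $\delta=x\overline{x}^{-1}$ and scaling $x$ by $\Q_p^\times$ so that $\ord_{K_p}(x)\in\{0,1\}$; the case $\ord_{K_p}(x)=0$ gives a unit $\alpha=x$ directly, while $\ord_{K_p}(x)=1$ yields $\delta=-v\overline{v}^{-1}$ with $v$ a unit. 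To exclude this second case I would reduce the defining relation ${}^t\overline{g}Jg=\lam(g)J$ modulo the ideal $\mathfrak c$ modulo which conjugation becomes trivial: there it degenerates to the symplectic similitude relation ${}^tgJg\equiv\lam(g)J$, whence $\det(g)\equiv\lam(g)^2$ and $\delta\equiv1\pmod{\mathfrak c}$. Unless $-1$ is itself a coboundary $w\overline{w}^{-1}$ (in which case $\delta=-v\overline{v}^{-1}=(wv)\overline{(wv)}^{-1}$ is already solved), one checks $-1\not\equiv1\pmod{\mathfrak c}$, so $\delta\equiv1$ forces $\ord_{K_p}(x)=0$. The delicate bookkeeping is entirely at $p=2$, where $\ord_{K_p}(2)=2$ and the precise power of the uniformizer cutting out $\mathfrak c$ (equivalently, whether a trace-zero unit exists) must be tracked; granting this, all cases are covered and $g=hr_\alpha$ follows.
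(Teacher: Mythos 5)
Your reduction to solving $\alpha\overline{\alpha}^{-1}=\delta$ with $\delta=\det(g)\lam(g)^{-2}$ a norm-one unit is exactly the paper's first step (the paper phrases it as showing $H=\det(\U(\Z_p))$ equals $\OO_\p^1$), and your split and inert cases agree in substance with the paper's (the paper uses multiplicative Hilbert~90 plus the fact that $p$ is a Galois-invariant uniformizer in the inert case, where your additive identity $\alpha=c+\delta\overline{c}$ works equally well). The genuine divergence is at the ramified primes: the paper reduces $\U(\Z_p)$ modulo $\p$ to $\mathrm{Sp}_2(\mathbb{F}_p)$, invokes the Bruhat decomposition to generate $\U(\Z_p)$ by the Iwahori subgroup $I$ and determinant-one Weyl representatives, and then uses the Iwahori decomposition to get $\det(I)=\det(I\cap T)=\OO_\p^1$; you instead observe that $K^1(\Q_p)/\OO_\p^1$ is generated by the class of $\pi/\overline{\pi}=-1$ and detect the class of $\delta$ by reducing the unitary relation modulo the different $\mathfrak{c}=(\omega-\overline{\omega})$, where the Pfaffian gives $\delta\equiv 1\pmod{\mathfrak c}$ while the bad coset is $\equiv -1$. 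This is a legitimate and arguably more elementary route (no buildings, just congruences), at the cost of the case analysis you defer at $p=2$; that deferred check does close: if $\ord_2(D)=2$ then $K_2=\Q_2(\sqrt{-D''})$ with $D''\equiv 1\bmod 4$, so $\sqrt{-D''}$ is a trace-zero unit and $-1\in\OO_\p^1$ outright, while if $\ord_2(D)=3$ the different is $\p^3$, $2\notin\p^3$, and the congruence test excludes the bad coset. The paper's argument has the advantage of treating all ramified $p$ (including $p=2$ and both parities of the different) uniformly; yours makes the arithmetic obstruction $K^1/\OO_\p^1$ visible explicitly.
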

\begin{proof}
Let $\OO_\p=\OO\otimes_\Z\Z_p$.
We define the subgroups $H$ and $\OO_\p^1$ of $K^1(\Q_p)$ by
\begin{align*}
H=\{{\det(g)}/{\lam(g)^2}|g\in\GU(\Z_p)\}\quad{\rm and}\quad
\OO_\p^1=\{\alpha/\overline{\alpha}|\alpha\in\OO_\p^\times\}.
\end{align*}
Note that $H\supset\OO_\p^1$.
It is enough to show that $H=\OO_\p^1$.
If $g\in\GU(\Z_p)$, then
\[
g'=g\begin{pmatrix}
{\bf 1}_2&0\\0&\lam(g) \cdot {\bf 1}_2
\end{pmatrix}^{-1}\in\U(\Z_p)
\]
and we have ${\det(g')}/{\lam(g')^2}=\det(g')={\det(g)}/{\lam(g)^2}$.
Hence we find that $H=\det(\U(\Z_p))$.
\par

If $p$ is split in $K/\Q$, we have
\[
K\otimes\Q_p\cong\Q_p\times\Q_p,\quad \OO\otimes_\Z\Z_p\cong \Z_p\times\Z_p
\]
and the non-trivial element of ${\rm Gal}(K/\Q)$ acts by $(x,y)\mapsto(y,x)$.
If $g\in\U(\Z_p)$, then we have $\det(g)=(x,y)$ for some $x,y\in\Z_p^\times$.
Since $\det(g)\in K^1(\Q_p)$, we have $xy=1$, i.e., $y=x^{-1}$.
So we have $\det(g)=\alpha/\overline{\alpha}$ with $\alpha=(x,1)\in \OO_\p^\times$.
Therefore we have $H=\OO_\p^1$.
\par

If $p$ is inert in $K/\Q$, we can regard $p$ as a uniformizer of $K\otimes \Q_p$.
By Hilbert $90$, we have
$K^1(\Q_p)=\{\alpha/\overline{\alpha}|\alpha\in K(\Q_p)^\times=(K\otimes \Q_p)^\times\}$.
Hence we have $K^1(\Q_p)=\OO^1_\p$. So we get $H=\OO_\p^1$.
\par

Now we assume that $p$ is ramified in $K/\Q$.
Let $\p$ be the prime ideal of $K$ above $p$.
Then we have $\OO_\p/\p\cong \mathbb{F}_p$ and
the non-trivial element of ${\rm Gal}(K/\Q)$ acts on $\mathbb{F}_p$ by trivially.
So we may consider the reduction map
\[
\pi\colon\U(\Z_p)\rightarrow {\rm Sp}_2(\mathbb{F}_p).
\]
Let 
\[
B=\left\{\left(
\begin{array}{cc|cc}
*&*&*&*\\
0&*&*&*\\\hline
0&0&*&0\\
0&0&*&*
\end{array}
\right)\in\U
\right\}=TN
,\quad
{\bf B}=\left\{\left(
\begin{array}{cc|cc}
*&*&*&*\\
0&*&*&*\\\hline
0&0&*&0\\
0&0&*&*
\end{array}
\right)\in{\rm Sp}_2
\right\}
\]
be Borel subgroups of $\U$ and ${\rm Sp}_2$, respectively. 
We denote by $T$ the torus of diagonal matrices, and by $N$ the unipotent radical of $B$.
Let $I=\pi^{-1}({\bf B}(\mathbb{F}_p))\subset \U(\Z_p)$ denote the Iwahori subgroup of $\U(\Z_p)$.
Then, the Bruhat decomposition of ${\rm Sp}_2$ shows that there is a subset $W$ of $\U(\Z_p)$ such that
$\U(\Z_p)$ is generated by $I$ and $W$.
We can take $W$ such that $\det(w)=1$ for all $w\in W$.
Hence we have $H=\det(\U(\Z_p))=\det(I)$.
Moreover, by the Iwahori decomposition, we have
\[
I=(I\cap N^-)(I\cap T)(I\cap N),
\]
where $N^-$ is the opposite of $N$.
Since $\det(N)=\det(N^-)=\{1\}$, we have
$\det(I)=\det(I\cap T)$.
Clearly, this is equal to $\OO^1_\p$.
Therefore we have $H=\OO_p^1$.
\end{proof}
\par

As in \cite{Mori} \S 2.1, we set the six dimensional vector space over $\Q$ by
\[
V=\{B(x_1,\dots,x_6)\in\M_4(K)
|x_i \in \Q\ (1\leq i \leq 6)
\}
\]
where $B(x_1,\dots,x_6)$ is defined by
\[
B(x_1,\dots,x_6)=
\begin{pmatrix}
0&x_1&x_3+x_4\sqrt{-D}&x_2\\
-x_1&0&x_5&-x_3+x_4\sqrt{-D}\\
-x_3-x_4\sqrt{-D}&-x_5&0&x_6\\
-x_2&x_3-x_4\sqrt{-D}&-x_6&0\\
\end{pmatrix}.
\]
We define a mapping $\Psi \colon V \rightarrow \Q$ by
$\Psi(B)={\rm Tr}(BJ^t\overline{B}J)$.
As in \cite{Mori} \S 2.1, we have
\[
\Psi(B(x_1,\dots,x_6))=-4Q[{}^t(x_1,\dots,x_6)].
\]
As a basis for $V$, we may take
$e_i=B(\delta_{i,1},\dots,\delta_{i,6})$ for $1\leq i \leq 6$.
Then with respect to the basis $\{e_i|1\leq i \leq 6\}$, we may identify $\GO(V)$ as $\GO(4,2)$.
\par

We define homomorphisms
\begin{align*}
\phi_1\colon\GSU\rightarrow \GSO(V)\cong \GSO(4,2),
\quad \phi_1(g)\colon &V \ni B \to gB^tg\in V,\\
\phi_2\colon K^\times \rightarrow \GSO(V)\cong \GSO(4,2),
\quad \phi_2(\alpha)\colon &V \ni B \to \overline{\alpha}r_{\alpha}Br_{\alpha}\in V.
\end{align*}
Note that $\nu(\phi_1(g))=\lam(g)^2$ for $g\in \GSU$
and $\nu(\phi_2(\alpha))=N_{K/\Q}(\alpha)$ for $\alpha\in K^\times$. 
For $\alpha=x+y\sqrt{-D}\in K^\times$, we have
\[
\phi_2(\alpha)=
\begin{pmatrix}
N_{K/\Q}(\alpha)&&&&&\\
&1&&&&\\
&&x&Dy&&\\
&&-y&x&&\\
&&&&N_{K/\Q}(\alpha)&\\
&&&&&1
\end{pmatrix}
\in\GSO(4,2).
\]
We find that $\phi_1(\GSU(\Z_p))\subset\GSO(V)(\Z_p)$.
However, in general, it is not true that
\[
\alpha\in\OO\otimes\Z_p\Rightarrow \phi_2(\alpha)\in\GSO(V)(\Z_p).
\] 
For $g\in\GU$, we decompose $g=hr_\alpha$ with $h\in\GSU$ and $\alpha\in K^\times$.
Then the image of $\phi_1(h)\phi_2(\alpha)$ to $\PGSO(4,2)$ 
does not depend on the choice of $(h,\alpha)$.
We denote this element by $\phi(g)\in \PGSO(4,2)$. 
Note that for all $\alpha\in K^\times$, we have $\phi(\alpha {\bf 1}_4)=1$ in $\PGSO(4,2)$
and $\phi$ induces an isomorphism
\[
\phi\colon{\rm PGU}(2,2)\xrightarrow{\cong} \PGSO(4,2).
\]

\subsection{Theta lifts}
Let $\KK=\GU(\A_{\Q,\fin})\cap(\prod_p\GL_4(\OO_p))$, where $\OO_p=\OO\otimes \Z_p$.
Put $\KK_0=\KK\cap\U(\A_\Q)$.
We easily find that 
the canonical injection $\U(\A_\Q)\hookrightarrow \GU(\A_\Q)$ induces a bijection
\[
\U(\Q)\bs\U(\A_\Q)/\U(\R)\KK_0\rightarrow
\GU(\Q)\bs\GU(\A_\Q)/\GUplus\KK.
\]
By the Chebotarev density theorem, as a complete system of representatives of ${\it Cl}_K$, 
we can take integral ideals $\cc_1=\OO,\cc_2,\dots,\cc_h$ 
which are prime to $2D$.
We choose $t_i=(t_{i,\p})_\p\in\A_{K,\fin}^\times$ such that
$\ord_\p(t_{i,\p})=\ord_\p(\cc_i)$ for all prime ideals $\p$ of $K$. 
We put $\gamma_i=r_{t_i}\in\U(\A_\Q)$ and $C_i=N(\cc_i)\in\Z_{>0}$.
Then by Lemma 13.1 of \cite{Ikeda}, the set $\{\gamma_1,\dots,\gamma_{h}\}$
gives a complete system of $\U(\Q)\bs\U(\A_\Q)/\U(\R)\KK_0$.
\par

Let $f\in\SD$ be a normalized Hecke eigenform and 
$F_{\cc_i}$ the hermitian Maass lift of $f$ defined in Sect.~\ref{maass lift}.
Then by a result of Ikeda (\cite{Ikeda} Theorem 13.2 and Theorem 15.18), 
there is an automorphic form $\Lift$ on $\U(\A_\Q)$ defined by
\[
\Lift(u\gamma_ixk)=C_i^{-\kappa-1}(F_{\cc_i}\parallel_{2\kappa+2} x)(\ii)
=C_i^{-\kappa-1}F_{\cc_i}(x\pair{\ii})j(x,\ii)^{-2\kappa-2}(\det x)^{\kappa+1}
\]
for $u\in \U(\Q)$, $x\in\U(\R)$ and $k\in\KK_0$.
Here, we put $\ii=\I\cdot {\bf 1}_4\in\Ht$.
\par

Let $\ff$ be the cusp form on $\GL_2(\A_\Q)$ given by $f$.
We define $\varphi=\otimes_v \varphi_v\in\mathcal{S}(V(\A_\Q))$ as follows:
\begin{itemize}
\item If $v=p<\infty$ and $p\not=2$, then $\varphi_p$ is the characteristic function of $V(\Z_p)$.
\item If $v=2$, then $\varphi_2$ is the characteristic function of 
\[
L=\left\{
\begin{split}
&V(\Z_2)\cup(\Z_2e_1+\Z_2e_2+2^{-1}\Z_2^\times e_3+2^{-1}\Z_2^\times e_4+\Z_2e_5+\Z_2e_6) \iif D\ {\rm is\ odd},\\
&V(\Z_2)+2^{-1}\Z_2e_4\iif D\ {\rm is\ even}.
\end{split}\right.
\]
\item If $v=\infty$, then
\[
\varphi_\infty(x_1,\dots,x_6)=(-\I x_1+x_2-x_5+\I x_6)^{2\kappa+2}e^{-\pi(x_1^2+x_2^2+2x_3^2+2Dx_4^2+x_5^2+x_6^2)}.
\]
\end{itemize}
We consider the theta lift $\theta(\ff,\varphi)$ on $\GO(V)(\A_\Q)$.
Recall that $\theta(\ff,\varphi)$ is defined by
\[
\theta(h;\ff,\varphi)=\int_{[\SL_2]}\theta(\alpha\cdot \alpha_h,h;\varphi)\ff(\alpha\cdot \alpha_h)d\alpha
\]
for $h\in\GO(V)(\A_\Q)$, where $\alpha_h$ is an element in $\GL_2(\A_\Q)$ satisfying
$\det(\alpha_h)=\nu(h)$.
In particular, we can take $\alpha_h=d(\nu(h))$.
Note that $\nu(\GSO(V)(\A_\Q))=N_{K/\Q}(\A_K^\times)$.
Since the central character of $\ff$ is $\bchi=\chi_V$, by \cite{HK} Lemma 5.1.9, 
the center of $\GSO(4,2)$ acts on $\theta(\ff,\varphi)$ trivially, i.e., 
the quantity
\[
\theta(\phi(g);\ff,\varphi)=\int_{[\SL_2]}\theta(\alpha\cdot d(\nu(h_g)),h_g;\varphi)\ff(\alpha\cdot d(\nu(h_g)))d\alpha
\]
does not depend on the choice of a representative 
$h_g\in\GSO(V)(\A_\Q)$ of $\phi(g)\in \PGSO(V)(\A_\Q)$.
Hence we may consider the function $\theta(\phi(g);\ff,\varphi)$ on $\GU(\A_\Q)$.
\begin{prop}\label{theta}
For all $g\in \U(\A_\Q)$, we have
\[
\theta(\phi(g);\ff,\varphi)=
(\prod_{p|D}q_p^{-1})\cdot
2^{2\kappa+2}a_f(D)^{-1}\Lift(g),
\]
where $q_p=(\SL_2(\Z_p):K_0(D;\Z_p))$.
\end{prop}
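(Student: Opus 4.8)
The plan is to prove the identity $\theta(\phi(g);\ff,\varphi)=(\prod_{p\mid D}q_p^{-1})\cdot 2^{2\kappa+2}a_f(D)^{-1}\Lift(g)$ by comparing the two sides as automorphic forms on $\U(\A_\Q)$ through their Fourier expansions. Both $\theta(\phi(g);\ff,\varphi)$ and $\Lift(g)$ arise from holomorphic data, so the first task is to verify that the theta lift $\theta(\ff,\varphi)$ is indeed an automorphic form on $\GO(V)(\A_\Q)$ whose pullback to $\U(\A_\Q)$ via $\phi$ transforms correctly under $\U(\R)$ and $\KK_0$. Since $\phi$ induces the isomorphism $\phi\colon\mathrm{PGU}(2,2)\xrightarrow{\cong}\PGSO(4,2)$, and the archimedean Schwartz function $\varphi_\infty$ has been chosen to be an eigenvector of weight $2\kappa+2$ under the maximal compact, I expect $\theta(\phi(g);\ff,\varphi)$ to define a holomorphic hermitian modular form of the correct weight with respect to some arithmetic subgroup. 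The choice of the lattices $L$ at $p=2$ and $V(\Z_p)$ elsewhere is precisely engineered so that the relevant level matches $\Gamma_K^{(2)}[\cc_i]$.

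Second, I would compute the Fourier coefficients of both sides and match them. On the $\Lift$ side, the Fourier coefficient at $\gamma_i=r_{t_i}$ is read off from the explicit Maass-lift formula: the $H$-th coefficient is $\sum_{d\mid\varepsilon(H)}d^{2\kappa+1}\alpha_{F_{\cc_i}}(C_i D\det(H)/d^2)$, with $\alpha_{F_\cc}(n)={\bf a}_D^\cc(n)$-type products of Fourier coefficients of $f$. On the theta side, I would apply the change-of-polarization machinery of Section~\ref{change} together with Lemma~\ref{Wxi}, writing the Fourier coefficient $\W_\Xi$ of $\theta(\ff,\varphi)$ as an integral over $N(\A_\Q)\backslash\SL_2(\A_\Q)$ of a product of the Weil-representation action on $\hat\varphi$ against the Whittaker function $W_{\ff,-Q_1[\Xi]}$. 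Because $W_\ff$ factors as $\prod_v W_{\ff,v}$, this integral decomposes into a product of local integrals, one for each place.

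The main work—and the main obstacle—is the \emph{local computation} of these integrals at the ramified and dyadic places. At the good primes $p\nmid 2D$, the local Whittaker values from Lemma~\ref{explicit}(2) combined with $\varphi_p=\mathbf{1}_{V(\Z_p)}$ should yield an unramified local factor matching $a_f(n')$ on the nose. The serious difficulty lies at $p\mid D$, where $W_{\ff,p}$ is not spherical; here I would need to use parts (3)--(5) of Lemma~\ref{explicit}, which express $W_{\ff,p}$ evaluated at the Atkin--Lehner element $w$ and at $k_1,k_2$ in terms of $\overline{a_f(D_p)}$ and inverse powers of $a_f(p)$. Matching these against the ramified Euler factor $a_f(n_p)+\bchi_p(-Cn)\overline{a_f(n_p)}$ in $\alpha_{F_\cc}(n)$, together with the prefactor ${\bf a}_D^\cc(n)=\prod_{p\mid D}(1+\chi_p(-Cn))$, is where the constant $2^{2\kappa+2}a_f(D)^{-1}$ and the index factors $q_p=(\SL_2(\Z_p):K_0(D;\Z_p))$ will emerge. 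The dyadic place $p=2$, with its special lattice $L$ and the $\zeta_8$-twisted values in Lemma~\ref{explicit}(5), requires the most delicate bookkeeping, and I expect the even/odd $D$ dichotomy in the definition of $\varphi_2$ to be exactly what makes the normalization come out uniformly.

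Finally, having matched all Fourier coefficients up to the claimed global constant, I would invoke the fact that a holomorphic hermitian cusp form is determined by its Fourier coefficients to conclude the equality of the two automorphic forms on all of $\U(\A_\Q)$. One must also check consistency across the different cusps indexed by $\gamma_i$; Lemma~\ref{Lem1} and Lemma~\ref{GU(Zp)}, which lets one decompose $\GU(\Z_p)$-elements as $hr_\alpha$, ensure that the class-dependence on both sides agrees, so verifying the identity at the representatives $\gamma_1,\dots,\gamma_h$ suffices.
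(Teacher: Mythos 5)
Your overall strategy is the same as the paper's: compare the Fourier coefficients $\W_B$ of $\theta(\phi(g);\ff,\varphi)$ with those of $\Lift$ at the representatives $\gamma_i x_\infty$, use the change of polarization (Lemma \ref{Wxi}) to write each non-degenerate coefficient as a product of local Whittaker-type integrals, and carry the whole burden in the local computations at $p\mid D$ and $p=2$ (this is exactly Lemma \ref{local}, whose proof occupies the final section). The reduction to the $\gamma_i$ via Lemma \ref{GU(Zp)} and the common transformation character under $\K\cdot\KK_0$ are also as in the paper.

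There is, however, one concrete gap. You write that the theta lift should be a holomorphic hermitian modular form and that you would then ``invoke the fact that a holomorphic hermitian cusp form is determined by its Fourier coefficients.'' But holomorphy (equivalently, the vanishing of $\W_B$ for $B$ not positive definite) is precisely what must be proved, and it does not follow from the non-degenerate local integrals. For $B\neq0$ with $\det(B)=0$ the vanishing is immediate from cuspidality of $\ff$, but the constant term $B=0$ is governed by the \emph{other} formula in Lemma \ref{Wxi}(2) --- an integral of $\ff$ against a genuine theta series on $V_1$, not a Whittaker integral --- and its vanishing is a nontrivial representation-theoretic fact. The paper proves $\W_0=0$ (Lemma \ref{W=0}) by showing the resulting $\SL_2(\A_\Q)$-invariant functional factors through $\mathrm{Hom}_{(\mathfrak{g},K)}(\omega_{V_1(\R),\psi_\infty},(\pi_f^+)^\vee)$ and then appealing to Paul's determination of the archimedean Howe correspondence for $(\SL_2,\O(3,1))$, which forces this Hom-space to be zero because the weight $2\kappa+1$ discrete series does not occur. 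Without this step (or some substitute for it) your matching of coefficients only establishes equality of the non-constant parts of the two Fourier expansions, and the claimed identity does not follow. The rest of your outline, including the expectation that the constant $2^{2\kappa+2}a_f(D)^{-1}$ and the indices $q_p$ emerge from the ramified and dyadic local integrals, is consistent with what the paper actually does.
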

The rest of this section is devoted to the proof of Proposition \ref{theta}.
The proof is similar to that of Lemma 7.1 of \cite{Ichino}.
To prove this proposition, we compare Fourier coefficients of $\theta(\phi(g);\ff,\varphi)$ with those of $\Lift$.

\subsection{Fourier coefficients of $\Lift$}\label{coef of Lift}
For $B\in\Her(\Q)$, we define the $B$-th Fourier coefficient $\W_{\FF,B}$ (resp.~$\W_B$) 
of $\FF=\Lift$ (resp.~$\theta(\ff,\varphi)$) by
\[
\W_{\FF,B}(g)=\int_{[\Her]}\FF(n(X)g)\overline{\psi(\Tr(BX))}dX
\]
for $g\in \U(\A_\Q)$ (resp.~
\[
\W_{B}(g)=\int_{[\Her]}\theta(\phi(n(X)g);\ff,\varphi)\overline{\psi(\Tr(BX))}dX
\]
for $g\in\GU(\A_\Q)$).
Here we put
\[
n(X)=
\begin{pmatrix}
{\bf 1}_2&X\\0&{\bf 1}_2
\end{pmatrix}
\in \U\quad{\rm for\ }X\in\Her.
\]
Then, we have
\[
\FF(g)=\sum_{B\in\Her(\Q)}\W_{\FF,B}(g)
\quad\text{and}\quad
\theta(\phi(g);\ff,\varphi)=\sum_{B\in\Her(\Q)}\W_{B}(g).
\]
\par

First, we compute $\W_{\FF,B}$ in Proposition \ref{coeff of Lift}. 
We define $\mathbb{K}=\{g\in \U(\R)| g\langle\ii\rangle=\ii\}$
and $\K_0=\K\cap\SU(\R)$.
\begin{lem}
We have
\[
\K=\left\{\left.
\begin{pmatrix}
\alpha&\beta\\-\beta&\alpha
\end{pmatrix}\right| 
\alpha, \beta\in \M_2(\C), 
{^t\overline{\alpha}\beta}={^t\overline{\beta}\alpha},\ 
{^t\overline{\alpha}\alpha}+{^t\overline{\beta}\beta}=1
\right\}.
\]
\end{lem}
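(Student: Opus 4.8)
The plan is to translate both the unitarity relation and the fixed-point equation into relations among the four $2\times 2$ blocks of $g$, and then to prove the two inclusions separately. Since $K\otimes_\Q\R\cong\C$, an element of $\U(\R)$ is a matrix $g\in\GL_4(\C)$ satisfying ${^t\overline{g}}Jg=J$. Writing $g=\begin{pmatrix}A&B\\C&D\end{pmatrix}$ with $A,B,C,D\in\M_2(\C)$ and expanding ${^t\overline{g}}Jg=J$ block by block, I would first record that membership $g\in\U(\R)$ is equivalent to the system
\begin{align*}
{^t\overline{C}}A-{^t\overline{A}}C&=0, & {^t\overline{D}}B-{^t\overline{B}}D&=0,\\
{^t\overline{D}}A-{^t\overline{B}}C&={\bf 1}_2, & {^t\overline{A}}D-{^t\overline{C}}B&={\bf 1}_2.
\end{align*}
Writing the base point as $\ii=\I\,{\bf 1}_2\in\Ht$, the fixed-point condition $g\pair{\ii}=\ii$ reads $(A\ii+B)(C\ii+D)^{-1}=\ii$, i.e. $\I A+B=\ii(\I C+D)=-C+\I D$, which is the single block relation $B+C=\I(D-A)$.

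For the inclusion $\supseteq$, I would substitute $A=D=\alpha$, $B=\beta$, $C=-\beta$ into the above. Then the fixed-point relation holds automatically, since $B+C=0=\I(D-A)$, so such a matrix fixes $\ii$; and the four block relations collapse precisely to the two stated conditions ${^t\overline{\alpha}}\beta={^t\overline{\beta}}\alpha$ and ${^t\overline{\alpha}}\alpha+{^t\overline{\beta}}\beta={\bf 1}_2$. Hence every matrix of the displayed form lies in $\K$.

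The inclusion $\subseteq$ carries the real content, and the main obstacle is that the single relation $B+C=\I(D-A)$ does not by itself separate $A$ from $D$, so it cannot force both $A=D$ and $C=-B$. To produce the missing relation I would exploit that $\K$ is a subgroup of $\U(\R)$, so that $g^{-1}$ also fixes $\ii$. From ${^t\overline{g}}Jg=J$ one computes explicitly
\[
g^{-1}=J^{-1}\,{^t\overline{g}}\,J=\begin{pmatrix}{^t\overline{D}}&-{^t\overline{B}}\\-{^t\overline{C}}&{^t\overline{A}}\end{pmatrix}.
\]
Applying the fixed-point relation to $g^{-1}$ gives ${^t\overline{B}}+{^t\overline{C}}=\I({^t\overline{D}}-{^t\overline{A}})$; taking the (conjugate-linear) conjugate-transpose of this identity yields $B+C=-\I(D-A)=\I(A-D)$. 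Comparing with the original relation $B+C=\I(D-A)$ forces $D-A=A-D$, hence $A=D$, and then $B+C=0$, i.e. $C=-B$. Setting $\alpha=A=D$ and $\beta=B=-C$, the four block relations reduce once more to the two stated conditions, which completes the argument.

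I would finally remark that the same conclusion can be reached without inverting $g$, by decomposing $B+C=\I(D-A)$ into its Hermitian and anti-Hermitian parts and feeding in the first two block relations; but the group-theoretic argument via $g^{-1}$ is the cleanest route and isolates exactly the extra symmetry needed to pass from one block relation to two.
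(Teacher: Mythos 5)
Your proof is correct, and for the hard inclusion it takes a genuinely different route from the paper. The paper also reduces to the block relation $B+C=(D-A)\I$ coming from $g\langle\ii\rangle=\ii$, but it closes the gap by a positivity argument: it introduces the auxiliary matrices $X=A\I+B=-C+D\I$, $Y=A\I+C=-B+D\I$ and $Z=B+C=(D-A)\I$, uses the unitarity relations to check ${}^t\overline{X}X={}^t\overline{Y}Y={\bf 1}_2$ and ${}^t\overline{Z}Z={}^t\overline{Y}Y-{\bf 1}_2={\bf 0}_2$, and concludes $Z={\bf 0}_2$ because a complex matrix with ${}^t\overline{Z}Z=0$ vanishes. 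You instead exploit the group structure of $\K$: since $g^{-1}=J^{-1}\,{}^t\overline{g}\,J$ also fixes $\ii$, the same block relation applied to $g^{-1}$ and conjugate-transposed yields $B+C=-\I(D-A)$, and comparison with the original relation forces $A=D$ and $C=-B$ by pure linear algebra, with no positivity input. Your argument is arguably cleaner in that it makes transparent exactly which extra symmetry produces the second relation, and it only uses the inverse formula rather than the full list of block identities at that stage; the paper's argument is shorter on the page but hides the verification of ${}^t\overline{Y}Y={\bf 1}_2$ behind an ``easily checked.'' You also spell out the inclusion $\supseteq$ and the collapse of the four unitarity relations to the two stated conditions, which the paper leaves implicit; both versions are complete proofs of the lemma.
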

\begin{proof}
Let 
$k=\begin{pmatrix}
A&B\\C&D
\end{pmatrix}
\in \K$ with $A,B,C,D\in \M_2(\C)$.
We put
\begin{align*}
X&=A\I+B=-C+D\I,\\
Y&=A\I+C=-B+D\I,\\
Z&=B+C=(D-A)\I.
\end{align*}
Then, it is easily to check that
${}^t \overline{X}X={\bf 1}_2$, ${}^t \overline{Y}Y={\bf 1}_2$ 
and ${}^t \overline{Z}Z={}^t \overline{Y}Y-{\bf 1}_2={\bf 0}_2$.
Hence $Z={\bf 0}_2$.
\end{proof}
For $X\in\Her(\R)$ and $A\in \GL_2(\C)$, we put
\[
n(X)=
\begin{pmatrix}
{\bf1}_2&X\\&{\bf1}_2
\end{pmatrix}
\quad\text{and}\quad
m(A)=
\begin{pmatrix}
A&\\&^t\overline{A}^{-1}
\end{pmatrix}.
\]
\begin{lem}
Any element $g$ in $\GUplus$ can be written as
\[g=
z{\bf 1}_4\cdot n(X)\cdot m(A)\cdot k\cdot r_t,
\]
where $z\in\C^\times$, $X\in\Her(\R)$, 
$A\in \GL_2(\C)$ with $\det(A)\in \R^\times$, 
$k\in \K_0$ and $t\in\C^\times$with $|t|=1$.
\end{lem}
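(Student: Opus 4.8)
The plan is to prove an Iwasawa-type decomposition by exploiting the transitive action of $\GUplus$ on $\Ht$ together with the explicit description of the isotropy group $\K$ of the base point $\ii=\I\,{\bf1}_2\in\Ht$. First I would split off the central scalar: given $g\in\GUplus$ we have $\lam(g)>0$, so setting $z=\sqrt{\lam(g)}\in\R_{>0}\subset\C^\times$ the element $g_0=z^{-1}g$ satisfies $\lam(g_0)=1$, hence $g_0\in\U(\R)$. This reduces the problem to decomposing an element of $\U(\R)$ as $n(X)\cdot m(A)\cdot k\cdot r_t$.

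Next I would realize the ``$NM$'' part geometrically. A direct computation gives $n(X)\langle Z\rangle=Z+X$ and $m(A)\langle Z\rangle=AZ\,{}^t\overline{A}$, so $(n(X)m(A))\langle\ii\rangle=X+\I A\,{}^t\overline{A}$; one also checks readily that $n(X),m(A)\in\U(\R)$ with $\lam=1$. Writing $Z_0=g_0\langle\ii\rangle\in\Ht$ in the form $Z_0=X_0+\I Y_0$ with $X_0=\tfrac12(Z_0+{}^t\overline{Z_0})$ hermitian and $Y_0=\tfrac{1}{2\I}(Z_0-{}^t\overline{Z_0})$ positive definite hermitian (the latter being exactly the defining condition $Z_0\in\Ht$), I would take $X=X_0\in\Her(\R)$ and let $A=Y_0^{1/2}$ be the positive definite hermitian square root, so that $A\,{}^t\overline{A}=A^2=Y_0$ and $\det(A)=\det(Y_0)^{1/2}\in\R_{>0}\subset\R^\times$. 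Then $(n(X)m(A))\langle\ii\rangle=X_0+\I Y_0=Z_0=g_0\langle\ii\rangle$, so that $k'=(n(X)m(A))^{-1}g_0$ lies in $\U(\R)$ and fixes $\ii$, i.e.\ $k'\in\K$.

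Finally I would peel off $r_t$ using determinants. For any $k'\in\U(\R)$ one has $|\det(k')|=1$, so choosing $t\in\C^\times$ with $t^2=\det(k')$ gives $|t|=1$. Since $|t|=1$ forces $\overline{t}^{-1}=t$, one computes $r_t\langle\ii\rangle=\ii$ and $\det(r_t)=t^2$; hence $r_t\in\K$, and $k=k'r_t^{-1}\in\K$ has $\det(k)=1$, so $k\in\K_0=\K\cap\SU(\R)$. Assembling the factors yields $g=z{\bf1}_4\cdot n(X)\cdot m(A)\cdot k\cdot r_t$, as desired. The routine points are the matrix verifications that $n(X),m(A),r_t\in\U(\R)$ and that $r_t$ fixes $\ii$; the only genuinely substantive step is the second paragraph, where the positivity built into the definition of $\Ht$ is precisely what guarantees that the hermitian square root $A$ exists and can be chosen with real determinant. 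I do not expect a serious obstacle beyond this bookkeeping, since the whole statement is the group-theoretic shadow of the transitivity of $\U(\R)$ on $\Ht$ combined with the structure of the stabilizer $\K$.
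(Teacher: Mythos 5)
Your proof is correct and follows essentially the same route as the paper: reduce to $\U(\R)$, use the action on $\Ht$ to produce the $n(X)m(A)$ factor matching $g\langle\ii\rangle$, and then peel off $r_t$ via the determinant condition $|\det(k')|=1$. The only (harmless) difference is that you take $A=Y_0^{1/2}$ to be the positive definite hermitian square root, so $\det(A)\in\R^\times$ automatically, whereas the paper takes an arbitrary $A$ with $A\,{}^t\overline{A}=Y$ and then corrects $\det(A)$ by a phase $e^{-\I\theta}$ absorbed into $\K$.
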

\begin{proof}
Since $\GUplus$ is generated by its center and $\U(\R)$, 
it suffices to consider $g\in \U(\R)$.
Since $Z=g\langle \ii\rangle\in \Ht$, 
we fined that $(Z+{^t\overline{Z}})/2$ is a hermitian matrix and
$(Z-{^t\overline{Z}})/(2\I)$ is a positive definite hermitian matrix.
So there exist $A\in \GL_2(\C)$ and $X\in\Her(\R)$ such that
\[
g\langle \ii\rangle = A^t\overline{A}\ii+X
=\left(n(X)\cdot m(A)\right)\langle\ii\rangle.
\]
Hence we find
$k=\left(n(X)\cdot m(A)\right)^{-1}g\in\K$.
Since $\det(m(A))=\det(A)\overline{\det(A)}^{-1}$,
there exists $\theta\in\R$ such that
$e^{4\I\theta}=\det(m(A))$.
Then, we find that $\det(e^{-\I\theta}A)\in\R$ and $e^{\I\theta}{\bf1}_4\in\K$. 
Therefore we have the decomposition
\[
g=n(X)\cdot m(e^{-\I\theta}A)\cdot(e^{\I\theta}k).
\]
We put $k'=e^{\I\theta}k\in\K$.
Since $N_{\C/\R}(\det(k'))=1$, there exists $t\in \C^\times$ so that
$|t|=1$ and $\det(k')=t^2=\det(r_t)$.
Hence we have $k'(r_t)^{-1}\in\K_0$.
\end{proof}

The action of $r_t$ on $\FF$ is trivial for all $t\in\C^\times$ with $|t|=1$.
Let
\[
k=
\begin{pmatrix}
\alpha&\beta\\-\beta&\alpha
\end{pmatrix}
\in \K_0.
\]
Since $\det(k)=\det(\alpha+\I\beta)(\alpha-\I\beta)=1$,
the action of $k$ on $\FF$ is the scalar multiplication of
\[
j(k,\ii)^{-2\kappa-2}\det(k)^{\kappa+1}=\det(\alpha-\I\beta)^{-2\kappa-2}
=\det(\alpha+\I\beta)^{2\kappa+2}.
\]
\begin{prop}\label{coeff of Lift}
Let
$x_\infty=n(X)\cdot m(A)\in \U(\R)$,
where $X\in\Her(\R)$ and $A\in \GL_2(\C)$ with $\det(A)\in\R^\times$ .
Put $Y=A{^t{\overline{A}}}$.
Then, 
\[
\W_{\FF,B}(\gamma_ix_\infty)=
C_i^{-\kappa-1}\det(Y)^{\kappa+1}A_{F_{\cc_i}}(B)\exp(2\pi\I\Tr(B(X+Y\I))).
\]
Here, we set $A_{F_{\cc_i}}(B)=0$ if $B\not \in\Lam_2^{\cc_i}(\OO)^+$.
\end{prop}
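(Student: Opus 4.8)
The plan is to identify the adelic Fourier coefficient $\W_{\FF,B}(\gamma_ix_\infty)$ with the classical $B$-th Fourier coefficient of $F_{\cc_i}$, by evaluating the integrand $\FF(n(X')\gamma_ix_\infty)$ explicitly from the defining formula for $\Lift$ and then integrating place by place. First I would split the integration variable $X' = X'_\infty + X'_\fin$ into its archimedean and finite parts, so that $n(X')\gamma_ix_\infty$ has finite component $n(X'_\fin)\gamma_i = n(X'_\fin)r_{t_i}$ and archimedean component $n(X'_\infty + X)m(A)\in\U(\R)$. The basic computation at the finite places is that $r_{t_i} = m(\diag(1,t_i))$, so that, writing $s_i = \diag(1,t_i)$,
\[
r_{t_i}^{-1}n(\ell)r_{t_i} = n\big(s_i^{-1}\ell\,{}^t\overline{s_i}^{-1}\big);
\]
since $\ord_\p(t_{i,\p}) = \ord_\p(\cc_i)$, this conjugation carries a hermitian lattice $\hat\Lambda_i := s_i(\text{standard lattice}){}^t\overline{s_i}$ into the one attached to $\KK_0$, and its $\Tr$-dual is the adelic completion of $\Lam_2^{\cc_i}(\OO)$.

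Next I would reduce the integrand to the defining coset. By strong approximation for the vector group $N\cong\Her$, I would write $X'_\fin = \xi + \ell$ with $\xi\in\Her(\Q)$ and $\ell\in\hat\Lambda_i$; then $n(X'_\fin)r_{t_i} = n(\xi)r_{t_i}k$ with $k\in\KK_0$. Removing the global $n(\xi)\in\U(\Q)$ on the left (which also shifts the archimedean point by $-\xi$) and $k$ on the right via right $\KK_0$-invariance, the integrand becomes $\FF$ evaluated on $\gamma_i\cdot n(X + X'_\infty - \xi)m(A)$. Setting $x'_\infty = n(X+X'_\infty-\xi)m(A)$ and using that $x'_\infty$ is block upper-triangular with lower-right block ${}^t\overline{A}^{-1}$, together with $\det_\C(m(A)) = 1$ and $\det Y = (\det A)^2$, the automorphy factor is
\[
j(x'_\infty,\ii)^{-2\kappa-2}(\det x'_\infty)^{\kappa+1} = (\det A)^{2\kappa+2} = (\det Y)^{\kappa+1},
\]
while $x'_\infty\langle\ii\rangle = (X + X'_\infty - \xi) + \I Y$. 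Hence, on the fundamental domain where $\xi = 0$,
\[
\FF(n(X')\gamma_ix_\infty) = C_i^{-\kappa-1}(\det Y)^{\kappa+1}F_{\cc_i}\big((X+X'_\infty)+\I Y\big),
\]
which is independent of $\ell\in\hat\Lambda_i$.

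Finally I would insert the Fourier expansion $F_{\cc_i}(Z) = \sum_{H\in\Lam_2^{\cc_i}(\OO)^+}A_{F_{\cc_i}}(H)\exp(2\pi\I\Tr(HZ))$ and integrate over a fundamental domain for $[\Her]$ with the finite variable ranging over $\hat\Lambda_i$ and $X'_\infty$ over $\Her(\R)$ modulo the periodicity lattice of $F_{\cc_i}$. The finite integral of $\overline{\psi(\Tr(B\ell))}$ over $\hat\Lambda_i$ vanishes unless $B$ lies in the dual lattice, giving the support condition $B\in\Lam_2^{\cc_i}(\OO)$ (consistent with $A_{F_{\cc_i}}(B) = 0$ for $B\notin\Lam_2^{\cc_i}(\OO)^+$). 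The archimedean integral against $\overline{\psi_\infty(\Tr(BX'_\infty))} = e^{-2\pi\I\Tr(BX'_\infty)}$, combined with orthogonality of characters, selects the single term $H = B$ and extracts $A_{F_{\cc_i}}(B)\exp(2\pi\I\Tr(B(X+Y\I)))$, yielding the asserted formula.

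I expect the main obstacle to be the finite-place bookkeeping rather than the archimedean computation: one must verify precisely that the conjugate lattice $\hat\Lambda_i$ has rational dual equal to $\Lam_2^{\cc_i}(\OO)$ (so that the two lattice conditions match and the orthogonality genuinely isolates $H = B$), that $n(X'_\fin)r_{t_i}$ never leaves the double coset of $\gamma_i$ so that no other $F_{\cc_j}$ contributes, and that the Haar measures on $[\Her]$ and its local factors are normalized so that all volume constants cancel and the overall scalar is exactly $1$. Once these identifications are in place, the remaining steps are the routine automorphy-factor and character-orthogonality calculations indicated above.
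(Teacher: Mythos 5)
Your proposal is correct and follows essentially the same route as the paper: split the integration variable into finite and archimedean parts, conjugate the finite unipotent part past $\gamma_i=r_{t_i}$ so that it lands in $\KK_0$ (the paper does this by choosing coordinates on $\Her$ adapted to a $\Z$-basis of $\overline{\cc}_i$, which is your lattice $\hat\Lambda_i$ in explicit form), compute the automorphy factor $j(x_\infty,\ii)^{-2\kappa-2}(\det x_\infty)^{\kappa+1}=\det(Y)^{\kappa+1}$, and conclude by character orthogonality, with the finite integral imposing $B\in\Lam_2^{\cc_i}(\OO)$ and the archimedean integral isolating the term $H=B$ of the Fourier expansion. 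The bookkeeping points you flag (dual lattice, volume normalization) are exactly what the paper's explicit coordinate choice resolves.
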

\begin{proof}
We fix a $\Z$-basis $\{\lam_1,\lam_2\}$ of $\bar{\cc}_i$, i.e., 
$\bar{\cc}_i=\lam_1\Z+\lam_2\Z$.
As a coordinate of $\Her$, we use
\[
Z=\begin{pmatrix}
n&\lam_1a+\lam_2b\\\overline{\lam_1a+\lam_2b}&C_im
\end{pmatrix}
\in\Her.
\]
Then,
\begin{align*}
\W_{\FF,B}(\gamma_ix_\infty)
&=\int_{[\Her]}\FF(n(Z)\gamma_ix_\infty)\overline{\psi(\Tr(BZ))}dZ\\
&=\int_{(\hat{\Z}\times[0,1))^4}\FF(\gamma_in(d(t_i)^{-1}Zd(\overline{t_i})^{-1})x_\infty)\overline{\psi(\Tr(BZ))}dZ.
\end{align*}
Write $Z=Z_\fin+Z_\infty$ with $Z_\fin\in\Her(\A_{\Q,\fin})$ and $Z_\infty\in\Her(\R)$.
If $a,b,n,m\in\hat{\Z}$, then we find that $n(d(t_i)^{-1}Z_\fin d(\bar{t_i})^{-1})\in\KK_0$.
Hence $\W_{\FF,B}(\gamma_ix_\infty)$
is equal to
\[
\left(\int_{\hat{\Z}^4}\overline{\psi(\Tr(BZ_\fin))}dZ_\fin\right)
\left(\int_{[0,1)^4}\FF(\gamma_in(Z_\infty)x_\infty)\overline{\psi(\Tr(BZ_\infty))}dZ_\infty\right).
\]
The first integral is not vanish if and only if
$\Tr(BZ_\fin)\in\hat{\Z}$ whenever $a,b,n,m\in\hat{\Z}$.
This condition is equivalent to the condition that $B\in\Lam_2^{\cc_i}(\OO)$.
In this case, the first integral is equal to $1$.
\par

Since 
$\FF(\gamma_in(Z_\infty)x_\infty)=C_i^{-\kappa-1}\det(Y)^{\kappa+1}F_{\cc_i}(X+Y\I+Z_\infty)$,
the second integral is equal to 
\begin{align*}
C_i^{-\kappa-1}\det(Y)^{\kappa+1}
\sum_{H\in\Lam_2^{\cc_i}(\OO)^+}
&A_{F_{\cc_i}}(H)\exp(2\pi\I\Tr(H(X+Y\I)))\\
&\times\int_{[0,1)^4}\exp(2\pi\I\Tr((H-B)Z_\infty))dZ_\infty.
\end{align*}
Since $H-B\in\Lam_2^{\cc_i}(\OO)$, we find that
\[
\int_{[0,1)^4}\exp(2\pi\I\Tr((H-B)Z_\infty))dZ_\infty
=\left\{
\begin{split}
&1\iif B=H,\\
&0\iif B\not=H.
\end{split}
\right.
\]
This completes the proof.
\end{proof}

\subsection{Fourier coefficients of $\theta(\ff,\varphi)$}\label{coeff of theta}
Next we compute $\W_{B}(g)$. 
Let $V_0=\langle e_1,e_6 \rangle$ and $V_1=\pair{e_2,e_3,e_4,e_5}$
be subspaces of $V$.
We identify $V_0$ (resp.~$V_1$) with 
the space of column vectors $\Q^2$ (resp.~$\Q^4$) via this basis.
For 
\[
n(X)=
\begin{pmatrix}
{\bf 1}_2&X\\
0&{\bf 1}_2
\end{pmatrix}\in\SU
\quad { \rm with}\quad X=
\begin{pmatrix}
x_1&x_2+\sqrt{-D}x_3\\
x_2-\sqrt{-D}x_3&x_4
\end{pmatrix}\in \Her,
\]
we have
\[
\phi_1(n(X))=
\begin{pmatrix}
1&x_4&2x_2&2Dx_3&-x_1&x_1x_4-x_2^2-Dx_3^2\\
0&1   &0	 &0	     &0     &x_1\\
0&0   &1	 &0	     &0     &-x_2\\
0&0   &0	 &1	     &0     &-x_3\\
0&0   &0	 &0	     &1     &-x_4\\
0&0   &0	 &0	     &0     &1
\end{pmatrix}.
\]
There are two isomorphisms of the $\Q$-vector spaces
\begin{align*}
v\colon\Her(\Q)\rightarrow V_1(\Q)\cong\Q^4,\ 
&X=\begin{pmatrix}
x_1&x_2+\sqrt{-D}x_3\\
x_2-\sqrt{-D}x_3&x_4
\end{pmatrix}
\mapsto
\begin{pmatrix}
x_1\\-x_2\\-x_3\\-x_4
\end{pmatrix},\\
\beta\colon\Her(\Q)\rightarrow V_1(\Q)\cong\Q^4,\ 
&B=\begin{pmatrix}
b_1&b_2+\sqrt{-D}b_3\\
b_2-\sqrt{-D}b_3&b_4
\end{pmatrix}
\mapsto
\begin{pmatrix}
-b_4\\-b_2\\-b_3\\b_1
\end{pmatrix}.
\end{align*}
Put $v=v(X)$ and $\beta=\beta(B)$. 
Then we have
\[
\phi_1(n(X))=\ell(v),\quad
\Tr(BX)=(\beta,v),\quad
-Q_1[\beta]=\det(B).
\]
Hence we have $\W_{B}(g)=\W_{\beta}(h_g)$, 
where $\W_\beta$ is the $\beta$-th Fourier coefficient of $\theta(\ff,\varphi)$
defined in Sect.~\ref{change}.
\par

For $B=0$, by Paul's result \cite{P}, we get the following lemma.
\begin{lem}\label{W=0}
For all $g\in\GU(\A_\Q)$, we have
\[
\W_0(g)=0.
\]
\end{lem}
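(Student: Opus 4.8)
The plan is to reduce the constant term $\W_0$ to the theta lift of $\ff$ to the smaller orthogonal group $\O(V_1)$, and then to deduce its vanishing from the archimedean theta correspondence. Since $B=0$ corresponds under the isomorphism $\beta$ of Section \ref{coeff of theta} to $\beta(B)=0$, the coefficient $\W_0$ is the $\Xi=0$ Fourier coefficient in the sense of Section \ref{change}. Thus I would first apply Lemma \ref{Wxi}(2) with $f=\ff$, which gives, for $h\in\GO(V)(\A_\Q)$,
\[
\W_0(h)=\int_{[\SL_2]}\sum_{x_1\in V_1(\Q)}\hat{\omega}(g\cdot d(\nu(h)),h)\hat{\varphi}(x_1;0,0)\,\ff(g\cdot d(\nu(h)))\,dg.
\]
In this way the entire problem is transported to the partial Fourier transform $\hat\varphi$ restricted to the slice $(y_1,y_2)=(0,0)$.

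Next I would recognize the inner sum as a theta kernel for the dual pair $(\SL_2,\O(V_1))$, where $V_1=\pair{e_2,e_3,e_4,e_5}$ is the four-dimensional quadratic space of Section \ref{coeff of theta}. By the change-of-polarization formula of Section \ref{change}, evaluation at $(y_1,y_2)=(0,0)$ kills the second tensor factor (it contributes only a constant, since $(0,0)g=(0,0)$), so that $\hat\omega(g,1)\hat\varphi(x_1;0,0)=\omega_{V_1,\psi}(g,1)\Phi(x_1)$ for the Schwartz function $\Phi=\hat\varphi(\,\cdot\,;0,0)\in\mathcal{S}(V_1(\A_\Q))$. Incorporating the action of $h$ and of $d(\nu(h))$ only replaces $\Phi$ by another element $\Phi_h\in\mathcal{S}(V_1(\A_\Q))$, so the inner sum equals the theta function $\theta_{V_1}(g;\Phi_h)=\sum_{x_1\in V_1(\Q)}\omega_{V_1,\psi}(g,1)\Phi_h(x_1)$ of $(\SL_2,\O(V_1))$. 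Consequently $\W_0(h)$ is a pairing of $\ff$ against a theta kernel for $\O(V_1)$, and it therefore suffices to prove that the global theta lift of $\pi_f$ from $\SL_2$ to $\O(V_1)$ vanishes identically, i.e. that $\int_{[\SL_2]}\theta_{V_1}(g;\Phi')\,\ff(g)\,dg=0$ for every $\Phi'\in\mathcal{S}(V_1(\A_\Q))$.

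Finally I would establish this vanishing at the archimedean place. Over $\R$ the space $V_1$ has signature $(3,1)$: the plane $\pair{e_2,e_5}$ is hyperbolic and $\pair{e_3,e_4}$ is positive definite with Gram matrix $\diag(2,2D)$. The Schwartz function $\Phi_\infty$ inherits the $K$-type of the holomorphic factor $(-\I x_1+x_2-x_5+\I x_6)^{2\kappa+2}$ of $\varphi_\infty$, while $\pi_{f,\infty}$ restricts on $\SL_2(\R)$ to the sum $\pi_f^+\oplus\pi_f^-$ of the holomorphic and anti-holomorphic discrete series of weight $\pm(2\kappa+1)$. The key input is Paul's determination of the real Howe correspondence for $(\SL_2(\R),\O(3,1))$ in \cite{P}, which shows that the relevant archimedean local theta lift of $\pi_{f,\infty}$ to $\O(3,1)(\R)$ vanishes for this $K$-type; hence the global lift of $\pi_f$ to $\O(V_1)$ is zero and $\W_0\equiv0$. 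This last step is the main obstacle, and it is not formal: the vanishing is specific to the signature $(3,1)$ and to the holomorphic $K$-type of $\Phi_\infty$, and does not follow merely from the cuspidality of $\pi_f$. The resulting identity $\W_0\equiv0$ matches, on the theta side, the vanishing $\W_{\FF,0}=0$ obtained from Proposition \ref{coeff of Lift}, as required for Proposition \ref{theta}.
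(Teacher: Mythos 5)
Your proposal is correct and follows essentially the same route as the paper: apply Lemma \ref{Wxi}(2), use the change of polarization to observe that evaluation at $(y_1,y_2)=(0,0)$ reduces the kernel to a theta kernel for $(\SL_2,\O(V_1))$ with $\O(V_1)(\R)\cong\O(3,1)$, and then invoke Paul's results in \cite{P} to kill the archimedean $(\mathfrak{g},K)$-homomorphism from $\omega_{V_1(\R),\psi_\infty}$ to $(\pi_f^+)^{\vee}$. The paper packages this as the vanishing of an $\SL_2(\R)$-invariant functional on $\omega_{V_1(\R),\psi_\infty}\otimes\pi_f^+$ rather than as the vanishing of the global theta lift to $\O(V_1)$, but the substance is identical.
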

\begin{proof}
Let $\pi_{f}\cong\otimes_v'\pi_{f,v}$ be 
the automorphic representation of $\GL_2(\A_\Q)$ generated by $\ff$.
We write $\ff=\otimes_v'\ff_v$ with $\ff_v\in\pi_{f,v}$.
As we noted in Sect.~\ref{GL2}, we have $\pi_{f,\infty}|_{\SL_2(\R)}\cong \pi_f^+\oplus\pi_f^-$ and
$\ff_\infty\in\pi_f^+$,
where $\pi_f^+$ (resp.~$\pi_f^-$) is the holomorphic (resp.~anti-holomorphic) discrete series of $\SL_2(\R)$
with minimal weight $2\kappa+1$ (resp.~$-(2\kappa+1)$).
Let $g\in\GU(\A_\Q)$ and take a representative 
$h_g\in\GSO(V)(\A_\Q)$ of $\phi(g)\in\PGSO(V)(\A_\Q)$.
Fix $\varphi_0\in\mathcal{S}(V_0(\A_\Q))$ and $\varphi_1'\in\otimes_{p<\infty}'\mathcal{S}(V_1(\Q_p))$.
Consider the linear map
\begin{align*}
\Phi\colon\omega_{V_1(\R),\psi_\infty}\otimes \pi_f^+ \rightarrow \C,\ 
\varphi_1\otimes f_\infty\mapsto\W_0(g;\theta(f_\infty\otimes(\otimes_{p<\infty}'\ff_p),
\varphi_0\otimes(\varphi_1\otimes\varphi_1'))),
\end{align*}
where 
$\W_0(g;\theta(\ff',\varphi'))=\W_0(h_g;\theta(\ff',\varphi'))$
is the $0$-th Fourier coefficient of $\theta(\ff',\varphi')$ for 
$\ff'\in\pi$ and $\varphi'\in\mathcal{S}(V(\A_\Q))$.
Note that this integral does not depend on the choice of $h_g$
since $\ff'=f_\infty\otimes(\otimes_{p<\infty}'\ff_p)$ has the central character $\bchi$.
It suffices to show that $\Phi=0$.
\par

Now, we claim that
the linear map $\Phi\colon\omega_{V_1(\R),\psi_\infty}\otimes \pi_f^+\rightarrow \C$
is $\SL_2(\A_\Q)$-invariant.
Indeed, by Lemma \ref{Wxi} (2), we find that 
$\W_0(g;\theta(\ff',\varphi'))$
is equal to
\begin{align*}
\int_{[\SL_2]}
\sum_{x_1\in V_1(\Q)}\hat{\omega}\left(\alpha
\cdot d(\nu (h_g))
,h_g\right)
(\varphi')\hat{}\ (x_1;0,0)
\ff'\left(\alpha
\cdot d(\nu(h_g))
\right)d\alpha.
\end{align*}
We put
\[
\hat{\varphi}_0(y_1,y_2)=
\int_{\A_\Q}\varphi_0(z,y_1)\psi(y_2z)dz.
\]
Then we have
$(\varphi_0\otimes(\varphi_1\otimes\varphi_1'))\hat{}\ (x_1;y_1,y_2)
=\hat{\varphi}_0(y_1,y_2)\cdot(\varphi_1\otimes\varphi_1')(x_1)$.
As we noted in Sect.~\ref{change}, we then find that
\[
\hat{\omega}(\alpha_0,1)(\varphi_0\otimes(\varphi_1\otimes\varphi_1'))\hat{}\ (x_1;y)
=\hat{\varphi}_0(y\alpha_0)\cdot
[\omega_{V_1(\A_\Q),\psi}(\alpha_0,1)(\varphi_1\otimes\varphi_1')](x_1)
\]
for $y\in V_0(\A_\Q)\cong \A_\Q^2$ and $\alpha_0\in\SL_2(\A_\Q)$.
In particular, for $\alpha_0\in\SL_2(\R)$, we have
\begin{align*}
\hat{\omega}(\alpha_0,1)(\varphi_0\otimes(\varphi_1\otimes\varphi_1'))\hat{}\ (x_1;0,0)
&=\hat{\varphi}_0(0,0)\cdot
[\omega_{V_1(\A_\Q),\psi}(\alpha_0,1)(\varphi_1\otimes\varphi_1')](x_1)\\
&=(\varphi_0\otimes\left([\omega_{V_1(\R),\psi_\infty}(\alpha_0,1)\varphi_1]\otimes\varphi_1'\right))\hat{}\ (x_1;0,0).
\end{align*}
This implies that
\[
\Phi([\omega_{V_1(\R),\psi_\infty}(\alpha_0,1)\varphi_1]\otimes[\pi_f^+(\alpha_0)f_\infty])=\Phi(\varphi_1\otimes f_\infty)
\]
for all $\alpha_0\in\SL_2(\R)$ as desired.
\par

We put $\mathfrak{g}=\mathrm{Lie}(\SL_2(\R))\otimes_\R \C$ and $K=\SO(2)$.
By \cite{P} Theorem 15 and Corollary 23, we find that
\[
\mathrm{Hom}_{(\mathfrak{g},K)}(\omega_{V_1(\R),\psi_\infty}, (\pi_f^{+})^{\vee})=0,
\]
since the Harish-Chandra parameter of $(\pi_f^+)^{\vee}\cong\pi_f^-$ is $(-(2\kappa+1))$
and $\O(V_1)(\R)\cong\O(3,1)$.
Therefore, we have $\Phi=0$.
\end{proof}
\par

Next, we consider the case $B\not=0$.
By Lemma \ref{Wxi} (1), we have
\begin{align*}
\W_{B}(g)=\int_{N(\A_\Q)\bs\SL_2(\A_\Q)}\hat{\omega}(\alpha\cdot d(\nu(h_g)),h_g)
\hat{\varphi}(-\beta;0,1)W_{\ff,\det(B)}(\alpha\cdot d(\nu(h_g)))d\alpha,
\end{align*}
where $h_g=(h_{g,v})_v\in\GSO(V)(\A_\Q)$ is a representative of $\phi(g)\in\PGSO(V)(\A_\Q)$.
We put $\xi=\det(B)$. 
If $\xi=0$, we find that $\W_B(g)=0$ since $\ff$ is a cusp form.
If $\xi\not=0$, we find that $\W_B(g)=\prod_v\W_{B,v}(g_v)$, where
for $g\in \GU(\Q_v)$, we put
\[
\W_{B,v}(g)=\int_{N(\Q_v)\bs\SL_2(\Q_v)}\hat{\omega}(\alpha\cdot d(\nu(h_{g})),h_{g})
\hat{\varphi}_v(-\beta;0,1)W_{\ff,v}(a(\xi)\cdot\alpha\cdot d(\nu(h_{g})))d\alpha.
\]
The following lemma will be proved in the last section.
Here, we use the coordinate
\[
B=\begin{pmatrix}
b_1&b_2+\sqrt{-D}b_3\\
b_2-\sqrt{-D}b_3&b_4
\end{pmatrix}.
\]
\begin{lem}\label{local}
Fix $B\in \Her(\Q)$.
\begin{enumerate}
\item\label{arch}
Let $x_\infty=
(z{\bf 1}_4)\cdot
n(X)\cdot
m(A)\cdot
k\cdot r_t
\in \GUplus$
for  $z\in\C^\times$, $X\in\Her(\R)$, 
$A\in \GL_2(\C)$ with $\det(A)\in \R^\times$, $t\in \C^\times$ with $|t|=1$, and
\[
k=\begin{pmatrix}
\alpha&\beta\\-\beta&\alpha
\end{pmatrix}
\in\K_0=\K\cap\SU(\R).
\]
Put
$Y=A{^t\overline{A}}$ and $Z=X+Y\I$. Then, $\W_{B,\infty}\left(x_\infty\right)$ is equal to
\[
\left\{
\begin{aligned}
&2^{2\kappa+2}\det(Y)^{\kappa+1}\xi^{\kappa+(1/2)}e^{2\pi\I{\rm Tr}(BZ)}
\det(\alpha+\I\beta)^{2\kappa+2}\iif B>0,\\
&0&\quad&{\rm otherwise}.
\end{aligned}\right.
\]
\item\label{unram}
Assume that $p\nmid D$. We put
\[
m_0=\left\{
\begin{aligned}
&\min(\ord_p(b_1),\ord_p(b_2),\ord_p(b_3),\ord_p(b_4))\iif p\not=2,\\
&\min(\ord_p(b_1),\ord_p(b_2)+1,\ord_p(b_3)+1,\ord_p(b_4))\iif p=2.\\
\end{aligned}
\right.
\]
Then for all $k\in\GU(\Z_p)$, we have
\[
\W_{B,p}(k)=
|\xi|_p^{\kappa+1/2}\sum_{n=0}^{m_0}
 (p^n)^{2\kappa+1} a_f\left((\xi p^{-2n})_p\right).
\]
\item\label{ram}
Assume that $p\mid D$. We put
\[
m_0=\left\{
\begin{aligned}
&\min(\ord_p(b_1),\ord_p(b_2),\ord_p(b_3)+1,\ord_p(b_4))\iif \ord_p(D)=1,\\
&\min(\ord_p(b_1),\ord_p(b_2)+1,\ord_p(b_3)+2,\ord_p(b_4))\iif \ord_p(D)=2,\\
&\min(\ord_p(b_1),\ord_p(b_2)+1,\ord_p(b_3)+3,\ord_p(b_4))\iif \ord_p(D)=3.\\
\end{aligned}
\right.
\]
Then for all $k\in\GU(\Z_p)$, we have
\[
\W_{B,p}(k)=
q_p^{-1}a_f(D_p)^{-1}|\xi|_p^{\kappa+1/2}
\sum_{n=0}^{m_0}(p^n)^{2\kappa+1}X_{n,p},
\]
where $q_p=(\SL_2(\Z_p):K_0(D;\Z_p))$ and
\[
X_{n,p}=\left\{
\begin{aligned}
&1&\quad&{\rm if\ }(D\xi p^{-2n})_p=1,\\
&a_f\left((D\xi p^{-2n})_p\right)+\underline{\chi}_p(-\xi)
\overline{a_f\left((D\xi p^{-2n})_p\right)}
&\quad&{\rm otherwise}.
\end{aligned}\right.
\]
\item\label{divC}
Assume that $\ord_p(C_i)\not=0$. 
Write $t_{i,p}=x_p\otimes 1+y_p\otimes\sqrt{-D}\in \Q_{p}\otimes K$.
Then for all $k\in\GU(\Z_p)$, we have
\[
\W_{B,p}(\gamma_{i,p}k)=|C_i|_p^{\kappa+1}|\xi|_{p}^{\kappa+1/2}
\sum_{n=0}^\infty \hat{\varphi}_p(-\phi'(t_{i,p})\beta p^{-n};0,p^n)(p^n)^{2\kappa+1}
a_f\left((\xi C_i p^{-2n})_{p}\right).
\]
Here $\phi'(t_{i,p})\in \GL(V_1)\cong\GL_4(\Q_p)$ is given by
\[
\phi'(t_{i,p})=
\begin{pmatrix}
N_{K_p/\Q_p}(t_{i,p})&&&\\
&x_p&-Dy_p&\\
&y_p&x_p&\\
&&&1
\end{pmatrix},
\]
which makes the following diagram commutative.
\[
\begin{CD}
\Her(\Q_p) @>\beta>> V_1(\Q_p)\\
@Vd(\bar{t}_{i,p})Xd(t_{i,p})VV @VV\phi'(t_{i,p})V\\
\Her(\Q_p) @>\beta>> V_1(\Q_p)@..
\end{CD}
\]
\end{enumerate}
\end{lem}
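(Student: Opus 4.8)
The starting point is the explicit formula for the $B$-th Fourier coefficient supplied by Lemma~\ref{Wxi}~(1): for $\xi=\det(B)\neq 0$ one has $\W_B(g)=\prod_v\W_{B,v}(g_v)$ with
\[
\W_{B,v}(g)=\int_{N(\Q_v)\bs\SL_2(\Q_v)}\hat{\omega}(\alpha\, d(\nu(h_{g})),h_{g})\,\hat{\varphi}_v(-\beta;0,1)\,W_{\ff,v}(a(\xi)\,\alpha\, d(\nu(h_{g})))\,d\alpha.
\]
The plan at every place is to invoke the Iwasawa decomposition $\SL_2(\Q_v)=N(\Q_v)T(\Q_v)K_v$, write $\alpha=t(a)k$ so that the measure on $N\bs\SL_2$ becomes $|a|_v^{-2}\,d^\times a\,dk$, and thereby factor the integral into a torus integral in $a\in\Q_v^\times$ and a compact integral in $k$. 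The Weil action of $t(a)$ is then read off from the change-of-polarization formula of Sect.~\ref{change}: since $V_0=\pair{e_1,e_6}$ is a hyperbolic plane, $\hat\omega(t(a),1)$ acts on the $V_1$-part by the genuine Weil representation $\omega_{V_1,\psi}(t(a))\varphi_1=\chi_{V_1}(a)|a|_v^{2}\varphi_1(\cdot\, a)$ (here $\dim V_1=4$), and rescales the arguments of the $V_0$-part.

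For the finite unramified places (part~\ref{unram}) I would compute $\hat\varphi_p$ directly: for $\varphi_p=\mathbb 1_{V(\Z_p)}$ the partial Fourier transform is again a product of characteristic functions of lattices, so $\hat\varphi_p(-\beta;0,1)=\mathbb 1_{L_1}(\beta)$. Because $W_{\ff,p}$ is right $\GL_2(\Z_p)$-invariant and $W_{\ff,p}(a(x))=|x|_p^{\kappa+1/2}a_f(x_p)$ by Lemma~\ref{explicit}, the $k$-integral is trivial, the factor $|a|_p^{2}$ cancels $|a|_p^{-2}$ from the measure, and the torus integral collapses to a finite sum over $a=p^{-n}u$ with $0\le n\le m_0$, the bound being forced by $\beta a\in L_1$. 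The decisive simplification is that $\det V_1\equiv -D$, so $\chi_{V_1}=\bchi_p$ on $\Q_p^\times$ and the Weil-representation twist exactly cancels the central-character twist $\bchi_p$ coming from $W_{\ff,p}$; this yields the stated formula. The case $p=2$, $p\nmid D$, is identical after recording the modified lattice $L$, which shifts $m_0$ in the $e_3,e_4$-coordinates.

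The archimedean case (part~\ref{arch}) follows the same skeleton using the Gaussian-type $\varphi_\infty$ and the explicit $W_{\ff,\infty}$ of Sect.~\ref{Whittaker}. Here $N\bs\SL_2(\R)\cong\R^\times\times\SO(2)$; the $\SO(2)$-integral produces the automorphy factor $\det(\alpha+\I\beta)^{2\kappa+2}$ from the weights of $\varphi_\infty$ and $W_{\ff,\infty}$, while the remaining integral over $\R_{>0}$ is a $\Gamma$-type integral evaluable in closed form, the vanishing unless $B>0$ being forced by the support condition $x>0$ in $W_{\ff,\infty}$. For part~\ref{divC} I would rerun the unramified computation while carrying the twist by $\gamma_{i,p}=r_{t_{i,p}}$: its image acts on $V_1$ through $\phi'(t_{i,p})$ via the commutative diagram in the statement, the normalizing power $|C_i|_p^{\kappa+1}$ emerges from $\nu(h_{\gamma_{i,p}})=N_{K_p/\Q_p}(t_{i,p})$ together with the factor $L(h)$ of the extended Weil representation, and the support condition becomes $\phi'(t_{i,p})\beta p^{-n}\in L_1$, giving the sum with $\hat\varphi_p(-\phi'(t_{i,p})\beta p^{-n};0,p^n)$.

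The \emph{main obstacle} is the ramified case (part~\ref{ram}), where $W_{\ff,p}$ is no longer right $\GL_2(\Z_p)$-invariant but only transforms by $\bchi_p$ under $\mathbf K_0(D;\Z_p)$, so the $k$-integral over $\SL_2(\Z_p)$ does not collapse. Here I would decompose $\SL_2(\Z_p)$ into $K_0(D;\Z_p)$-cosets, reduce each to the distinguished translates by $w$, $k_1$, $k_2$ and substitute the explicit Whittaker values of Lemma~\ref{explicit}~(3)--(5), treating $\ord_p(D)=1,2,3$ and $p=2$ with the lattice $L$ separately. The $\varepsilon$-factors and the Gauss-sum terms $\zeta_8$ must be tracked and shown to recombine into the single expression $X_{n,p}$; this bookkeeping is the technical heart of the lemma, and the factors $q_p^{-1}$ and $a_f(D_p)^{-1}$ arise precisely from the index $(\SL_2(\Z_p):K_0(D;\Z_p))$ and from the $w$-value in Lemma~\ref{explicit}~(3).
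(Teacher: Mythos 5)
Your overall strategy coincides with the paper's: the Iwasawa decomposition reducing the integral over $N(\Q_v)\bs\SL_2(\Q_v)$ to a torus integral (after establishing the appropriate $K_v$-equivariance of the integrand), direct computation of the partial Fourier transform of the lattice characteristic functions at finite places with the support condition producing the bound $m_0$, the cancellation of $\chi_{V}(x)$ against the central-character twist $\bchi_v(x^{-1})$ from $W_{\ff,v}$, the coset decomposition of $\SL_2(\Z_p)$ modulo $K_0(D;\Z_p)$ combined with the explicit Whittaker values of Lemma~\ref{explicit} at ramified places, and the twist by $\phi'(t_{i,p})$ together with Lemma~\ref{Haar} for part (4). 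This is the same route the paper takes.

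There is, however, one concrete misstep at the archimedean place. You assert that the vanishing of $\W_{B,\infty}$ unless $B>0$ is ``forced by the support condition $x>0$ in $W_{\ff,\infty}$.'' That support condition only forces $\xi=\det(B)>0$; it does not distinguish positive definite $B$ from negative definite $B$, both of which have positive determinant. For $B$ negative definite the torus integrand does not vanish pointwise: after inserting the partial Fourier transform one is left with
\[
\int_{0}^{\infty}x^{2\kappa}e^{-\pi[x\Tr(BY)+x^{-1}]^2}H_{2\kappa+2}\bigl(\sqrt{\pi}[x\Tr(BY)+x^{-1}]\bigr)\,dx,
\]
and its vanishing for $\Tr(BY)<0$ (equivalently $B<0$) is a nontrivial identity for this Hermite--Gaussian integral, which the paper quotes from \cite{Ichino} Lemma 7.5. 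As written, your plan leaves the negative definite case unaccounted for. Apart from this, the proposal is a faithful outline of the paper's proof, though the ramified bookkeeping (the recombination of the $\varepsilon$- and $\zeta_8$-factors into $X_{n,p}$, and the fact that the contributions of the translates by $k_1$, $k_2$ survive only in exceptional congruence configurations of the $\ord_p(b_i)$ and there exactly supply the missing terms of $X_{n,p}$) is acknowledged rather than carried out; that is where the real work of part (3) lies.
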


\subsection{Proof of Proposition \ref{theta}}
The ring of integers $\OO$ of $K=\Q(\sD)$ is given by
\[
\OO=\left\{
\begin{aligned}
&\Z+\frac{1+\sD}{2}\Z\iif D\ {\rm is\ odd},\\
&\Z+\frac{\sD}{2}\Z \iif D\ {\rm is\ even}.
\end{aligned}\right.
\]
So we change the coordinate $B=B(h_1,\dots,h_4)$ of $\Her(\A_\Q)$ such that
$d(\bar{t}_i)Bd(t_i)$ is equal to
\[
\left\{
\begin{aligned}
&\begin{pmatrix}
h_1&(h_2+h_3/2)/\sD+h_3/2\\
-(h_2+h_3/2)/\sD+h_3/2&h_4
\end{pmatrix}\iif D\ {\rm is\ odd},\\
&\begin{pmatrix}
h_1&h_2/\sD+h_3/2\\
-h_2/\sD+h_3/2&h_4
\end{pmatrix}\iif D\ {\rm is\ even}
\end{aligned}
\right.
\]
for $h_1,\dots,h_4\in\A_\Q$.
Note that $B\in\Lam_2^{\cc_i}(\OO)$ if and only if 
$\ord_p(h_i)\geq 0$ for all $i=1,\dots,4$ and all prime $p$.
\begin{lem}
We put $m_i=\ord_p(b_i)$ and $l_i=\ord_p(h_i)$.
\begin{enumerate}
\item
If $p\nmid 2DC_i$, then we have
\[
\min(m_1,m_2,m_3,m_4)=\min(l_1,l_2,l_3,l_4).
\]
\item
If $p\not=2$ and $p\mid D$, then we have
\[
\min(m_1,m_2,m_3+1,m_4)=\min(l_1,l_2,l_3,l_4).
\]
\item
Let $p=2\nmid D$. Unless 
$l_2<\min(l_1,l_3,l_4)$, 
we have
\[
\min(m_1,m_2+1,m_3+1,m_4)=\min(l_1,l_2,l_3,l_4).
\]
In this case, we have
\[
\min(m_1,m_2+1,m_3+1,m_4)=l_2+1 ,\quad \min(l_1,l_2,l_3,l_4)=l_2,
\]
and we have $\xi/2^{2(l_2+1)}\not\in\Z_2$.
\item
If $p=2\mid D$, then we have 
\[
\min(m_1,m_2+1,m_3+\ord_2(D),m_4)=\min(l_1,l_2,l_3,l_4).
\]
\item
If $p\mid C_i$, we have
\[
\sum_{n=0}^\infty \hat{\varphi}_p(-\phi'(t_{i,p})\beta p^{-n};0,p^n)(p^n)^{2\kappa+1}
a_f\left((\xi C p^{-2n})_{p}\right)
=\sum_{n=0}^{\min(l_i)} (p^n)^{2\kappa+1}a_f\left((\xi Cp^{-2n})_{p}\right).
\]
\end{enumerate}
\end{lem}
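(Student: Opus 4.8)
The plan is to prove the five assertions by an explicit prime-by-prime analysis of the linear change of coordinates $(b_1,b_2,b_3,b_4)\mapsto(h_1,h_2,h_3,h_4)$ determined by the equation $d(\bar t_i)Bd(t_i)=M_h$, where $M_h$ denotes the matrix with entries $h_1,\dots,h_4$ displayed above. Since $t_i$ is only required to satisfy $\ord_\p(t_{i,\p})=\ord_\p(\cc_i)$, I would first normalize it so that $t_{i,p}=1$ for every rational prime $p\nmid C_i$; as $\cc_i$ is prime to $2D$, this covers all primes occurring in parts (1)--(4). Solving $d(\bar t_i)Bd(t_i)=M_h$ entrywise then gives, for such $p$,
\[
h_1=b_1,\quad h_3=2b_2,\quad h_4=b_4,\qquad
h_2=\begin{cases}-b_2-Db_3&(D\ \text{odd}),\\[2pt]-Db_3&(D\ \text{even}).\end{cases}
\]
Throughout I would use the elementary fact that $\min_j\ord_p((gv)_j)=\min_j\ord_p(v_j)$ for $g\in\GL_4(\Z_p)$ (applied to both $g$ and $g^{-1}$), which lets me compare $\min(l_i)$ with $\min(m_i)$ after stripping off units.

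Parts (1), (2) and (4) then follow by reading off $l_i=\ord_p(h_i)$ and comparing. For part (1) the matrix of the change of variables lies in $\GL_4(\Z_p)$ because $p\nmid 2D$, so $\min(l_i)=\min(m_i)$ immediately. For part (2) one uses that an odd prime divides the discriminant exactly once, so $\ord_p(D)=1$; when $D$ is even the change of variables is monomial and the claim is direct, while when $D$ is odd the identity $b_3=D^{-1}\bigl((b_2+Db_3)-b_2\bigr)$ supplies the two inequalities needed to identify $\min(l_i)$ with $\min(m_1,m_2,m_3+1,m_4)$. For part (4) we have $p=2\mid D$, so $D$ is even and the change of variables is again monomial, giving $l_2=\ord_2(D)+m_3$ and $l_3=m_2+1$ with no interaction between coordinates.

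I expect part (3), with $p=2\nmid D$ (hence $D$ odd), to be the main obstacle, since this is the only case where the naive valuation matching can fail by one. Here $l_3=m_2+1$ and $l_2=\ord_2(b_2+Db_3)$, and the analysis splits according to whether $m_2=m_3$. If $m_2=m_3$, then $b_2/2^{m_2}$ and $Db_3/2^{m_2}$ are both odd, so their sum is even and $l_2\ge m_2+1$, forcing the non-exceptional equality $\min(l_i)=\min(m_1,m_2+1,m_3+1,m_4)$. If $m_2\ne m_3$, then $l_2=\min(m_2,m_3)$, and precisely when this is smaller than $\min(m_1,m_4,m_2+1)$ one is in the exceptional case, where $\min(l_i)=l_2$ but the shifted minimum equals $l_2+1$. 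To settle the remaining assertion I would compute $\xi=\det B=b_1b_4-b_2^2-Db_3^2$; since $m_2\ne m_3$ the terms $b_2^2$ and $Db_3^2$ have distinct valuations and $b_1b_4$ has strictly larger valuation, so $\ord_2(\xi)=2\min(m_2,m_3)=2l_2<2(l_2+1)$, which is exactly the statement $\xi/2^{2(l_2+1)}\notin\Z_2$.

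For part (5) one has $p\mid C_i$ and hence $p\nmid 2D$. I would first compute the partial Fourier transform of $\varphi_p=\mathbf{1}_{V(\Z_p)}$, which is $\hat\varphi_p(x_1;y_1,y_2)=\mathbf{1}_{V_1(\Z_p)}(x_1)\mathbf{1}_{\Z_p}(y_1)\mathbf{1}_{\Z_p}(y_2)$, and then use the commutative diagram of Lemma \ref{local}(\ref{divC}) to write $\phi'(t_{i,p})\beta=\beta\bigl(d(\bar t_{i,p})Bd(t_{i,p})\bigr)$, whose components are, up to sign, $h_4$, $h_3/2$, $(h_2+h_3/2)/D$ or $h_2/D$, and $h_1$. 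Because $p\nmid 2D$, the nonvanishing condition $\hat\varphi_p(-\phi'(t_{i,p})\beta p^{-n};0,p^n)=1$ is equivalent to $\min_i\ord_p(h_i)\ge n$, that is $n\le\min(l_i)$, where the passage from the mixed component $(h_2+h_3/2)/D$ back to $l_2$ uses once more the $\GL_2(\Z_p)$-invariance of the minimum. Thus every term with $n>\min(l_i)$ drops out and the infinite sum collapses to the stated finite sum.
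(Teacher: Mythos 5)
Your proof is correct and follows essentially the same route as the paper's: an explicit entrywise comparison of the $b$- and $h$-coordinates prime by prime, with the only delicate point --- the possible extra cancellation in $b_2+Db_3$ (equivalently $h_2+h_3/2$) at $p=2$ for odd $D$ --- settled by the same case analysis and the computation of $\ord_2(\xi)$ from $\xi=b_1b_4-b_2^2-Db_3^2$. The paper organizes the case split around $l_2$ versus $l_3$ rather than $m_2$ versus $m_3$, but these dichotomies coincide, so the two arguments are the same in substance.
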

\begin{proof}
(5) is easily followed from the last assertion of Lemma \ref{local} (\ref{divC}).
If $D$ is even, the assertions (1) to (4) are obvious.
So we assume $D$ is odd.
\par

If $p\nmid 2DC_i$, then we have $m_1=l_1$, $m_2=l_3$, $m_4=l_4$ and
$m_3=\ord_p(h_2+h_3/2)$.
If $l_2\geq l_3$, then we have
\[
m_3=\ord_p(h_2+h_3/2)\geq l_3=m_2.
\]
Hence we have
\begin{align*}
\min(m_1,m_2,m_3,m_4)&=\min(m_1,m_2,m_4)
=\min(l_1,l_3,l_4)=\min(l_1,l_2,l_3,l_4).
\end{align*}
If $l_2<l_3$, then we have $m_3=l_2$.
So we have (1).
\par

If $p\not=2$ and $p\mid D$, then we have
$m_1=l_1$, $m_2=l_3$, $m_4=l_4$ and $m_3+1=\ord_p(h_2+h_3/2)$.
Then, the proof of (2) is similar to that of (1).
\par

If $p=2\nmid D$, then we have 
$m_1=l_1$, $m_2+1=l_3$, $m_4=l_4$ and $m_3+1=\ord_p(2h_2+h_3)$.
If $l_2+1> l_3$, then we have
\[
m_3+1= l_3=m_2+1.
\]
Hence we have
\begin{align*}
\min(m_1,m_2+1,m_3+1,m_4)&=\min(m_1,m_2+1,m_4)\\
&=\min(l_1,l_3,l_4)=\min(l_1,l_2,l_3,l_4).
\end{align*}
If $l_2+1< l_3$, then we have
\[
m_3+1= l_2+1<m_2+1.
\]
Hence we have
\begin{align*}
\min(m_1,m_2+1,m_3+1,m_4)&=\min(m_1,m_3+1,m_4)
=\min(l_1,l_2+1,l_4),\\
\min(l_1,l_2,l_3,l_4)&=\min(l_1,l_2,l_4).
\end{align*}
So if
\[
\min(m_1,m_2+1,m_3+1,m_4)\not=\min(l_1,l_2,l_3,l_4),
\]
then we have $l_2<\min(l_1,l_4)$ and 
\begin{align*}
\min(m_1,m_2+1,m_3+1,m_4)=l_2+1\quad {\rm and}\quad 
\min(l_1,l_2,l_3,l_4)=l_2.
\end{align*}
In this case, we have $l_1+l_4>2l_2$, $2l_3-2>2l_2$, and
$\ord_2(h_2+h_3/2)^2=2l_2$. 
Now, we find that
\[
\xi=\det(H)=h_1h_4-\frac{h_3^2}{4}-\frac{1}{D}\left(h_2+\frac{h_3}{2}\right)^2.
\]
Hence we have $\ord_2(\xi)=2l_2<2(l_2+1)$.
If $l_2+1=l_3$, then we find $m_3+1\geq l_2+1$ and $l_2<l_3$. 
So we have
\begin{align*}
\min(m_1,m_2+1,m_3+1,m_4)&=\min(l_1,l_2+1,l_4),\\
\min(l_1,l_2,l_3,l_4)&=\min(l_1,l_2,l_4).
\end{align*}
So if
\[
\min(m_1,m_2+1,m_3+1,m_4)\not=\min(l_1,l_2,l_3,l_4),
\]
then we have $l_2<\min(l_1,l_4)$ and 
\begin{align*}
\min(m_1,m_2+1,m_3+1,m_4)=l_2+1\quad {\rm and}\quad 
\min(l_1,l_2,l_3,l_4)=l_2.
\end{align*}
Since the rational numbers $h_2,h_3$ satisfy that $\ord_2(h_2)=\ord_2(h_3/2)$,
we have 
\[
\ord_2(h_2+h_3/2)>\ord_2(h_2)=l_2.
\]
However we have $\ord_2(h_1h_4)=l_1+l_4>2l_2$ and $\ord_2(h_3^2/4)=2l_3-2=2l_2$.
Hence we have $\ord_2(\xi)=2l_2<2(l_2+1)$.
We get (3). 
This completes the proof.
\end{proof}

Now, we start to prove Proposition \ref{theta}.
By Sect.~\ref{coef of Lift} and Lemma \ref{local}, 
the compact group $\K\cdot\KK_0\subset \U(\A_\Q)$ acts on 
$\FF=\Lift$ and on $\theta(\ff,\varphi)$ by the same character.
Hence, it suffices to show that
\[
\W_B(\gamma_ix_\infty)=\left(\prod_{p|D}q_p^{-1}\right)
2^{2\kappa+2}a_f(D)^{-1}\W_{\FF,B}(\gamma_ix_\infty)
\]
for all $B\in\Her(\Q)$ and $x_\infty=n(X)\cdot m(A)$,
where $X\in\Her(\R)$, $A\in\GL_2(\C)$ with $\det(A)\in\R^\times$ and $q_p=(\SL_2(\Z_p):K_0(D;\Z_p))$.
We may assume that $B\in\Lam_2^{\cc_i}(\OO)^+$.
Then $\W_B(\gamma_ix_\infty)$ is equal to
\begin{align*}
&2^{2\kappa+2}\det(Y)^{\kappa+1}e^{2\pi\I\Tr(BZ)}C_i^{-\kappa-1}
\left(\prod_{p|D}q_p^{-1}\right)a_f(D)^{-1}\\
&\times
\left(\prod_{p\nmid D}\sum_{n=0}^{\min(l_i)}(p^n)^{2\kappa+1}a_f\left((\xi C_ip^{-2n})_{p}\right)\right)
\left(\prod_{p\mid D}\sum_{n=0}^{\min(l_i)}(p^n)^{2\kappa+1}X_{n,p}\right)\\
&=2^{2\kappa+2}\det(Y)^{\kappa+1}e^{2\pi\I\Tr(BZ)}C_i^{-\kappa-1}
\left(\prod_{p|D}q_p^{-1}\right)a_f(D)^{-1}
\\&\times
\sum_{d\mid (h_1,h_2,h_3,h_4)}d^{2\kappa+1}\alpha_{F_{\cc_i}}\left(\frac{C_iD\det(B)}{d^2}\right)\\
&=2^{2\kappa+2}\left(\prod_{p|D}q_p^{-1}\right)
a_f(D)^{-1}\W_{\FF,B}(\gamma_ix_\infty).
\end{align*}
This completes the proof of Proposition \ref{theta}
using Lemma \ref{local}.
\begin{cor}\label{Cor}
The automorphic form $\Lift$ on $\U(\A_\Q)$ can be extended to $\GU(\A_\Q)$ by
\[
\Lift(u r_t x k)=C^{-\kappa-1}(F_{\cc}\parallel_{2\kappa+2} x)(\ii)
=C^{-\kappa-1}F_{\cc}(x\pair{\ii})j(x,\ii)^{-2\kappa-2}(\det x)^{\kappa+1}
\]
for all $u\in \GU(\Q)$, $x\in\GUplus$, $k\in\KK$ and
$t\in\A_{K,\fin}$ such that the ideal $\cc$ given by $t$ is in $J_K^D$. 
Here, $C=N(\cc)\in\Q_{>0}$ is the ideal norm of $\cc$.
Moreover, it satisfies
\[
\theta(\phi(g);\ff,\varphi)=
\left(\prod_{p|D}q_p^{-1}\right)
2^{2\kappa+2}a_f(D)^{-1}\Lift(g)
\]
for all $g\in\GU(\A_\Q)$ with $q_p=(\SL_2(\Z_p):K_0(D;\Z_p))$.
\end{cor}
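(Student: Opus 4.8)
The plan is to deduce the corollary from Proposition \ref{theta} by combining the double coset decomposition of $\GU(\A_\Q)$ with Lemma \ref{Lem1}. First I would record that every $g\in\GU(\A_\Q)$ admits a decomposition $g=u\,r_t\,x\,k$ with $u\in\GU(\Q)$, $x\in\GUplus$, $k\in\KK$ and $t\in\A_{K,\fin}^\times$ whose associated ideal $\cc$ lies in $J_K^D$. Indeed, the canonical bijection
\[
\U(\Q)\bs\U(\A_\Q)/\U(\R)\KK_0 \xrightarrow{\ \sim\ } \GU(\Q)\bs\GU(\A_\Q)/\GUplus\KK,
\]
together with the fact that $\{\gamma_1,\dots,\gamma_h\}=\{r_{t_1},\dots,r_{t_h}\}$ is a complete set of representatives of the left-hand side, shows that $g\in\GU(\Q)\,r_{t_i}\,\GUplus\,\KK$ for some $i$; absorbing $r_\alpha$ with $\alpha\in K^\times$ into $u$ and $r_\beta$ with $\beta\in\prod_p\OO_p^\times$ into $k$, one may then take $t$ to represent any ideal in the class $[\cc_i]\in\Cl$.

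Next I would check that the value $C^{-\kappa-1}(F_\cc\parallel_{2\kappa+2}x)(\ii)$ is independent of the chosen decomposition, which is exactly the assertion that $\Lift$ is well defined on $\GU(\A_\Q)$ by the stated formula. The ambiguity from $\GU(\Q)$ and from $\GUplus$ is absorbed by the automorphy of $F_\cc$ under $\Gamma_K^{(2)}[\cc]$ and by the cocycle relation for $\parallel_{2\kappa+2}$; the ambiguity in $t$ within a fixed ideal class, namely the replacement of $\cc$ by $\alpha\cc$ for $\alpha\in K^\times$, is precisely controlled by Lemma \ref{Lem1}, which gives $F_\cc(Z)=F_{\alpha\cc}(d(\alpha)Zd(\overline{\alpha}))$; finally, by Lemma \ref{GU(Zp)} any $k\in\KK$ is locally of the form $h\,r_\beta$ with $h\in\GSU(\Z_p)$ and $\beta\in\OO_p^\times$, so $k$ alters neither the ideal class nor, after absorbing $h$ and $r_\beta$ into the remaining factors, the value above. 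This produces the extension in the first assertion, and by construction it restricts to the original $\Lift$ on $\U(\A_\Q)$.

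Finally I would prove the theta identity on all of $\GU(\A_\Q)$. The function $g\mapsto\theta(\phi(g);\ff,\varphi)$ is already defined there: the map $\phi$ extends to $\GU$, and since the central character of $\ff$ equals $\chi_V=\bchi$ the center of $\GSO(4,2)$ acts trivially, so $\theta(\ff,\varphi)$ descends to $\PGSO(4,2)(\A_\Q)$. Both sides of the asserted equality are left $\GU(\Q)$-invariant (on the theta side because $\phi(\GU(\Q))\subset\PGSO(4,2)(\Q)$ and $\theta(\ff,\varphi)$ is automorphic), are invariant under the center $z{\bf 1}_4$ (on the theta side because $\phi(z{\bf 1}_4)=1$, and on the $\Lift$ side because the center of $\GUplus$ acts trivially on $\Ht$), and are right $\KK$-invariant (on the $\Lift$ side by the well-definedness just proved, and on the theta side by the $\phi(\KK)$-invariance of $\varphi$, which is forced by the choice of the lattice $L$ at $p=2$). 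Using $g=u\,r_{t_i}\,x\,k$ and writing $x=z_\infty x'$ with $z_\infty$ central and $x'\in\U(\R)$, both sides reduce to their values at $\gamma_i x'\in\U(\A_\Q)$, where the identity is exactly Proposition \ref{theta}.

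The main obstacle is the well-definedness in the second step: one must reconcile the two roles played by the torus element $r_t$, shifting the ideal $\cc$ within its class on the one hand and contributing to the right $\KK$-action on the other, and it is Lemma \ref{Lem1} together with the triviality of the central actions that makes the entire $\GU$-statement collapse onto the unitary case already settled in Proposition \ref{theta}.
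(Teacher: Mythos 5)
Your argument is correct and follows exactly the route the paper intends: its entire proof of this corollary is the sentence ``This follows from Proposition \ref{theta} and Lemma \ref{Lem1}'', and your write-up is a faithful expansion of that, using the double coset bijection and Lemma \ref{GU(Zp)} to reduce everything to the fixed representatives $\gamma_i$, Lemma \ref{Lem1} (together with the cocycle for $\parallel_{2\kappa+2}$ and the archimedean leftover $r_{\alpha_\infty}^{-1}$) to handle the change of $\cc$ within an ideal class, and the invariance properties of both sides to collapse the identity onto Proposition \ref{theta}. You also correctly isolate the only delicate point, namely the well-definedness under the two roles of $r_t$, so no changes are needed.
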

\begin{proof}
This follows from Proposition \ref{theta} and Lemma \ref{Lem1}.
\end{proof}

\section{Theta correspondence for $(\GL_2,\GO(2,2))$}\label{GL-GO(2,2)}
In this section, we recall the theta correspondence for $(\GL_2,\GO(2,2))$ 
following \cite{Ichino} \S6.
For the proof of Theorem \ref{main}, 
we need Lemma \ref{G->g} which calculates a certain theta lift on $\GL_2(\A_\Q)$.
\par

Let $\M_2$ be the vector space of $2 \times 2$ matrices.
We regard $\M_2$ as the quadratic space with the quadratic form
\[
Q[x]=\det(x).
\]
Let
\[
\mathrm{G}(\SL_2\times\SL_2)=\{(h_1,h_2)\in\GL_2\times\GL_2|\det(h_1)=\det(h_2)\}.
\]
For $(h_1,h_2)\in \mathrm{G}(\SL_2\times\SL_2)$, 
we put $\Delta(h_1,h_2)=\det(h_1)=\det(h_2)$.
Recall that there is a following exact commutative diagram:
\[\begin{CD}
1@>>>\GL_1@>\iota>>\GL_2\times\GL_2@>\rho>>\GSO(\M_2)@>>>1\\
@.@|@AAA@AAA\\
1@>>>\GL_1@>\iota>>\mathrm{G}(\SL_2\times\SL_2)@>\rho>>\SO(\M_2)@>>>1,
\end{CD}\]
where 
\[
\iota(a)=(a{\bf 1}_2,a{\bf 1}_2)
\quad{\rm and}\quad
\rho(h_1,h_2)x=h_1xh_2^{-1},
\]
for $a\in\GL_1$, $h_1,h_2\in\GL_2$ and $x\in\M_2$.
\par

Let $V'$ be the subspace of $V$ generated by $\{e_1,e_2,e_5,e_6\}$.
Then there is an isomorphism of quadratic spaces over $\Q$ as follows:
\begin{align*}
V'(\Q)\xrightarrow{\cong} \M_2(\Q),\quad
x_1e_1+x_2e_2+x_5e_5+x_6e_6
\mapsto
\begin{pmatrix}
x_2&-x_1\\x_6&x_5
\end{pmatrix}.
\end{align*}
Via this isomorphism, we regard $\rho$ as a map $\rho\colon\GL_2\times\GL_2\rightarrow\GSO(V')$.
\par

Let $\mu_2$ be the subgroup of $\O(\M_2)$ generated be the involution $*$ on $\M_2$ given by
\[
x=\begin{pmatrix}
x_1&x_2\\x_3&x_4
\end{pmatrix}
\mapsto
x^*=\begin{pmatrix}
x_4&-x_2\\-x_3&x_1
\end{pmatrix}.
\]
Via the isomorphism $\M_2\cong V'$, the involution $*$ on $\M_2$ corresponds to
an element $h_0'\in\O(V')$ given by
\[
x_1\mapsto -x_1,\ x_2\mapsto x_5,\ 
x_5\mapsto x_2,\ x_6\mapsto -x_6.
\]
\par

For $\alpha\in K^\times$, we have $\phi_2(\alpha)V'\subset V'$.
We put $\phi_2'(\alpha)=\phi_2(\alpha)|_{V'}$ for $\alpha\in K^\times$.
Then we find that
\[
\phi_2'(\alpha)
=
\begin{pmatrix}
N_{K/\Q}(\alpha)&&&\\
&1&&\\
&&N_{K/\Q}(\alpha)&\\
&&&1
\end{pmatrix}
=\rho\left({\bf 1}_2,d(N_{K/\Q}(\alpha)^{-1})
\right)
\in\GSO(V').
\]
We extend $g\in\SO(V')$ to $\SO(V)$ by $ge_3=e_3$ and $ge_4=e_4$.
We define the inclusion $\tau\colon\mathrm{G}(\SL_2\times\SL_2)\hookrightarrow\GSU$ by
\[
\tau\left(
\begin{pmatrix}
a_1&b_1\\c_1&d_1
\end{pmatrix}
,
\begin{pmatrix}
a_2&b_2\\c_2&d_2
\end{pmatrix}
\right)\mapsto
\begin{pmatrix}
a_1&0&b_1&0\\
0&a_2&0&b_2\\
c_1&0&d_1&0\\
0&c_2&0&d_2
\end{pmatrix}.
\]
Then we find that the following diagram is commutative:
\[
\begin{CD}
\mathrm{G}(\SL_2\times\SL_2) @>(\rho,\Delta)>> \SO(V)\times \GL_1\\
@V\tau VV @VVV\\
\GSU@>\phi_1>> \GSO(V).
\end{CD}
\]
Here, the map $\SO(V)\times\GL_1\rightarrow \GSO(V)$ is the multiplication $(h,a)\mapsto ah$.
\par

For a normalized Hecke eigenform $g\in S_{2\kappa+2}(\SL_2(\Z))$, 
let $\g$ denote the cusp form on $\GL_2(\A_\Q)$ given by $g$.
Following \cite{Ichino} \S6.3, we extend the cusp form $\g\otimes\g$ on $\GSO(\M_2)(\A_\Q)$
to a cusp form $\G$ on $\GO(\M_2)(\A_\Q)$ by
$\G(hh')=\G(h)$ for $h\in\GO(\M_2)(\A_\Q)$ and $h'\in\mu_2(\A_\Q)$.
\par

We define $\varphi'=\otimes \varphi'_v\in\mathcal{S}(V'(\A_\Q))$ as follows:
\begin{itemize}
\item If $v=p<\infty$, then $\varphi'_p$ is the characteristic function of $V'(\Z_p)$.
\item If $v=\infty$, then
\[
\varphi'_\infty(x_1e_1+x_2e_2+x_5e_5+x_6e_6)
=(-\I x_1+x_2-x_5+\I x_6)^{2\kappa+2}e^{-\pi(x_1^2+x_2^2+x_5^2+x_6^2)}.
\]
\end{itemize}
We may regard $\G$ as a cusp form on $\GO(V')(\A_\Q)$ via the isomorphism $\M_2\cong V'$.
So we can consider the theta lift $\theta(\overline{\G},\varphi')$ on $\GL_2(\A_\Q)$.
The following lemma is Lemma 6.3 in \cite{Ichino}.
\begin{lem}\label{G->g}
For all $\alpha\in\GL_2(\A_\Q)$, we have
\[
\theta(\alpha;\overline{\G},\varphi')=2^{2\kappa+1}\xi_\Q(2)^{-2}\langle g,g\rangle \overline{\g(\alpha)}.
\]
\end{lem}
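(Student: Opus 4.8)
The plan is to compute this theta lift by unfolding the orthogonal integration and identifying the result through its Whittaker function on $\GL_2$. By definition $\theta(\alpha;\overline{\G},\varphi')$ is obtained by integrating the extended theta kernel on $R(\A_\Q)$ against $\overline{\G}$ over $[\O(V')]$, with the similitudes matched through the extended Weil representation of Sect.~\ref{Weil rep}. Since $\overline{\G}$ is invariant under the involution generating $\mu_2$, I would first reduce this to an integration over $[\GSO(V')]$, up to the normalizations $\vol([\O(V')])=1$ and $\vol([\SO(V')])=2$ recorded in the Measures paragraph. Then, via the exact sequence $1\to\GL_1\xrightarrow{\iota}\GL_2\times\GL_2\xrightarrow{\rho}\GSO(V')\to 1$ of this section, I would pull the integration back to $[\GL_2\times\GL_2]$ modulo the central $\GL_1$, under which $\overline{\G}(\rho(h_1,h_2))$ becomes $\overline{\g(h_1)\g(h_2)}$ and the theta kernel transforms by the explicit action $\rho(h_1,h_2)x=h_1xh_2^{-1}$ on $\M_2\cong V'$.

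To identify the resulting automorphic form on $\GL_2$, I would compute its $\psi$-Whittaker function. The $\GL_2$-Fourier coefficient of the theta kernel isolates the vectors $x\in V'(\Q)\cong\M_2(\Q)$ with $Q[x]=\det(x)=1$, i.e.\ $x\in\SL_2(\Q)$, which form a single orbit under $\GSO(V')$; this is the analogue, with the two legs of the dual pair exchanged, of the unfolding carried out in Sect.~\ref{change}. Unfolding the $\overline{\G}$-integral along this orbit and using $\overline{\G}\circ\rho=\overline{\g}\otimes\overline{\g}$, the two $\GL_2$-variables separate: the $h_1$-direction, matched to $\alpha$, reproduces the Whittaker function of $\overline{\g}$ at $\alpha$, while the $h_2$-direction collapses to an integral of $|\g|^2$ over $[\GL_2]$, i.e.\ to the Petersson norm $\pair{g,g}$ up to measure constants. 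This is precisely the mechanism that forces the period $\pair{g,g}$ to enter linearly in the answer. The factors of $\xi_\Q(2)$ are then pure measure bookkeeping: since $\xi_\Q(2)^{-1}dg$ is the Tamagawa measure on $\SL_2(\A_\Q)$ and the orthogonal integration unwinds into two essentially independent $\SL_2$-integrations, one obtains the factor $\xi_\Q(2)^{-2}$.

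It remains to evaluate the product of local theta integrals producing the constant $2^{2\kappa+1}$. At each finite prime $p$, where $\varphi'_p={\bf 1}_{V'(\Z_p)}$ and $\g_p$ is the spherical vector, the local integral is $1$ up to unramified normalizing constants already absorbed into $\xi_\Q(2)^{-2}$, so the only substantive computation is archimedean. There one feeds the explicit Schwartz function
\[
\varphi'_\infty(x_1e_1+x_2e_2+x_5e_5+x_6e_6)=(-\I x_1+x_2-x_5+\I x_6)^{2\kappa+2}e^{-\pi(x_1^2+x_2^2+x_5^2+x_6^2)}
\]
through the archimedean Weil representation, checks that the output vector has the $\SO(2)$-weight matching the anti-holomorphic $\overline{\g}_\infty$ of weight $2\kappa+2$, and extracts the scalar.

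I expect this archimedean integral to be the main obstacle: one must verify that pairing the weight $2\kappa+2$ Gaussian-type vector $\varphi'_\infty$ against the archimedean components of $\g\otimes\g$ yields precisely $\overline{\g}_\infty$ with coefficient $2^{2\kappa+1}$, which requires an explicit evaluation of the $\SL_2(\R)$-integral of the Gaussian theta kernel together with careful control of the several factors of $2$ arising from the quadratic form $Q[x]=\det(x)$, from the self-dual measures, and from $\vol([\SO(V')])=2$. Since the present statement coincides with Lemma 6.3 of \cite{Ichino}, this final computation can be carried out exactly as there.
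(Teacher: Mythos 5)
The paper gives no independent proof of this lemma: it simply notes ``The following lemma is Lemma 6.3 in \cite{Ichino}'' and imports the statement with its constant. Your outline of the underlying argument (unfolding the Whittaker coefficient over the single $\GSO(V')$-orbit $\{\det x=1\}\cong\SL_2(\Q)$, extracting $\pair{g,g}$ from the diagonal-stabilizer period, and evaluating the archimedean integral for the weight-$(2\kappa+2)$ Schwartz function) is a faithful sketch of how Ichino proves it, and since you ultimately defer the decisive archimedean computation to that same Lemma 6.3, your proposal is correct and takes essentially the same route as the paper.
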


\section{Theta correspondence for $(\SL_2,\O(2))$ and the Siegel--Weil formula}\label{SW}
In this section, we study the theta correspondence for $(\SL_2,\O(2))$.
For the proof of Theorem \ref{main}, we need the Siegel--Weil formula (Proposition \ref{1->E})
which calculates a certain theta lift on $\SL_2(\A_\Q)$.
\par

We regard $K$ as the quadratic space over $\Q$ with the bilinear form
\[
(x,y)=\Tr_{K/\Q}(x\bar{y}).
\]
Then the associated quadratic form is given by
\[
Q[x]=N_{K/\Q}(x).
\]
\par

Let $V''$ be the subspace of $V$ generated by $\{e_3,e_4\}$.
Then there is an isomorphism of quadratic spaces over $\Q$ as follows:
\[
\ell\colon K\xrightarrow{\cong} V'',\ x+y\sD \mapsto xe_3+ye_4.
\]
This isomorphism induces an isomorphism of algebraic groups as follows:
\[
K^\times\xrightarrow{\cong} \GSO(V'')\cong\GSO(2),\ x+y\sD\mapsto
\begin{pmatrix}
x&-Dy\\
y&x
\end{pmatrix}.
\]
The restriction of this map gives an isomorphism
$K^1\xrightarrow{\cong}\SO(V'')$.
\par

For $\alpha\in K^\times$, we have $\phi_2(\alpha)V''\subset V''$.
We put $\phi_2''(\alpha)=\phi_2(\alpha)|_{V''}$.
Then for $\alpha\in K^\times$ and $x\in\ K$, we have
\[
\phi_2''(\alpha)\ell(x)=\ell(\overline{\alpha}x),
\]
i.e., the following diagram is commutative:
\[
\begin{CD}
K@>\ell>>V''\\
@V\overline{\alpha}VV@VV\phi_2''(\alpha)V\\
K@>\ell>>V''.
\end{CD}
\]
\par

For $\varphi''\in\mathcal{S}(V''(\A_\Q))$, we define
\[
\Phi(\alpha,s)=\Phi(\alpha,s,\varphi'')=[\omega(\alpha,1)\varphi''](0)|a(\alpha)|^s
\]
where if $\alpha=n(x)t(a)k\in\SL_2(\A_\Q)$
with $x\in\A_\Q$, $a\in\A_\Q^\times$ and $k\in\SL_2(\hat{\Z})\SO(2)$, 
then we put 
\[
|a(\alpha)|=|a|.
\]
Note that the quantity $|a(\alpha)|$ is well-defined.
It satisfies that
\[
\Phi(n(x)t(a)g,s)=\chi_{V''}(a)|a|^{s+1}\Phi(g,s)
\]
for all $x\in\A_\Q$, $a\in\A_\Q^\times$ and $g\in\SL_2(\A_\Q)$,
i.e., $\Phi\in\mathrm{Ind}_{B(\A_\Q)}^{\SL_2(\A_\Q)}(\chi_{V''}|\cdot|^s)$.
Note that $\Phi(\alpha,s)=\prod_v\Phi_v(\alpha_v,s)$.
\par

We define the Eisenstein series $E(\alpha,s)$ by
\[
E(\alpha,s)=\sum_{\gamma\in B(\Q)\backslash \SL_2(\Q)}
\Phi(\gamma\alpha,s).
\]
This is absolutely convergent for $\re(s)>1$.
The Eisenstein series
$E(\alpha,s)$ has a meromorphic continuation to the whole $s$-plane 
and is holomorphic at $s=0$.
The following proposition is Main Theorem of \cite{KR}.
\begin{prop}[Siegel-Weil formula]\label{1->E}
For $\varphi''\in\mathcal{S}(V''(\A_\Q))$, 
we have
\[
\theta(\alpha;{\bf1}_{\O(V'')(\A_\Q)},\varphi'')
=\frac{1}{2}E(\alpha,0).
\]
\end{prop}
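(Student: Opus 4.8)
The plan is to recognize Proposition \ref{1->E} as the Siegel--Weil formula for the anisotropic dual pair $(\SL_2,\O(V''))$ and to verify that our normalizations match those of \cite{KR}. First I would observe that $V''\cong K$ with $Q[x]=N_{K/\Q}(x)$ is positive definite, since $K$ is imaginary quadratic; hence $\O(V'')$ is anisotropic, the quotient $[\O(V'')]$ is compact, and the theta integral $\int_{[\O(V'')]}\theta(\alpha,h;\varphi'')\,dh$, which is precisely $\theta(\alpha;{\bf 1}_{\O(V'')(\A_\Q)},\varphi'')$, converges absolutely. This places us in the convergent range, where the Siegel--Weil formula holds as an identity between the theta integral and the value of the meromorphically continued Eisenstein series, with no regularization required.

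Next I would check that the two sides of \cite{KR} match the objects defined here. On the Eisenstein side, the inducing character is $\chi_{V''}(a)=((-1)^{m/2}\det(V''),a)_\Q$ with $m=\dim V''=2$; since the Gram matrix of $(\ ,\ )=\Tr_{K/\Q}(x\bar{y})$ on $\{1,\sD\}$ is $\diag(2,2D)$, we have $\det(V'')\equiv D$ in $\Q^\times/\Q^{\times 2}$, so $\chi_{V''}(a)=(-D,a)_\Q=\bchi(a)$. This matches $\Phi\in\mathrm{Ind}_{B(\A_\Q)}^{\SL_2(\A_\Q)}(\bchi|\cdot|^s)$. The special point is forced by the very definition $\Phi(\alpha,s)=[\omega(\alpha,1)\varphi''](0)|a(\alpha)|^s$: the Weil representation gives $\omega(t(a),1)\varphi''(0)=\chi_{V''}(a)|a|^{m/2}\varphi''(0)$, so the standard section $\Phi(\cdot,s)$ transforms by $\chi_{V''}(a)|a|^{s+1}$ and the Siegel--Weil point $s_0$ with $s_0+1=m/2$ is $s_0=0$, exactly the value at which $E(\alpha,s)$ is evaluated. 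With these identifications, the Main Theorem of \cite{KR} yields $\theta(\alpha;{\bf 1}_{\O(V'')(\A_\Q)},\varphi'')=c\,E(\alpha,0)$ for an explicit constant $c$.

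The hard part will be pinning down the constant $c=\tfrac12$, which is entirely a matter of reconciling measure normalizations. Here I would use the conventions fixed in the Measures paragraph: the Tamagawa measure gives $\vol([\SO(V'')])=2$, as $V''$ is neither a hyperbolic plane nor one-dimensional, while the Haar measure on $\O(V'')(\A_\Q)$ is normalized so that $\vol([\O(V'')])=1$. Since $[\O(V''):\SO(V'')]=2$, the theta integral in the paper's normalization differs from the one in \cite{KR} by a factor accounting for the passage between $\SO(V'')$ and $\O(V'')$ and between the Tamagawa and unit-mass measures; tracking this factor through the Main Theorem of \cite{KR} produces the stated $\tfrac12$. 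I expect this bookkeeping, rather than any analytic input, to be the only real obstacle, since the anisotropy of $V''$ removes all convergence and regularization issues.
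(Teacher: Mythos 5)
Your proposal matches the paper's treatment exactly: the paper gives no argument beyond the remark that this is the Main Theorem of \cite{KR}, and your verification of the setup (positive definiteness of $V''$, hence anisotropy and absolute convergence of the theta integral; $\chi_{V''}=\underline{\chi}$; special point $s_0=0$) is correct. The only caveat is that the factor $\tfrac12$ is not really a measure-conversion artifact: with the measures already normalized so that $\vol([\O(V'')])=1$, it is the constant $\kappa^{-1}$ with $\kappa=2$ appearing in the Main Theorem of \cite{KR} itself in the range $m\le n+1$ (here $m=2$, $n=1$).
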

\par

We define $\varphi''=\otimes_v\varphi_v''\in\mathcal{S}(V''(\A_\Q))$ as follows:
\begin{itemize}
\item If $v=p<\infty$ and $p\not=2$, then $\varphi''_p$ is the characteristic function of $V''(\Z_p)$.
\item If $v=2$, then $\varphi''_2$ is the characteristic function of 
\[
\left\{
\begin{aligned}
&V''(\Z_2)\cup(2^{-1}\Z_2^\times e_3+2^{-1}\Z_2^\times e_4) \iif D\ {\rm is\ odd},\\
&\Z_2e_3+2^{-1}\Z_2e_4\iif D\ {\rm is\ even}.
\end{aligned}\right.
\]
\item If $v=\infty$, then
\[
\varphi''_\infty(x)=e^{-2\pi Q''[x]}.
\]
\end{itemize}
Here, $Q''$ is the quadratic form of $V''$ given by
\[
Q''[x_3e_3+x_4e_4]=x_3^2+Dx_4^2.
\]
\par

We will apply Proposition \ref{1->E} to $L(\phi_2''(t))\varphi''$ for some $t\in\A_{K,\fin}^\times$.
\begin{lem}\label{invariant}
Via the isomorphism $\ell\colon K\cong V''$, the function $\varphi_p''$ is the characteristic function
of the maximal compact subring $\OO\otimes_{\Z}\Z_p$ of $K\otimes \Q_p$ for each $p$.
In particular, for $\alpha\in(\OO\otimes_\Z\Z_p)^\times$, we have
\[
L(\phi_2''(\alpha))\varphi_p''=\varphi_p''.
\]
\end{lem}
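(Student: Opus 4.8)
The plan is to identify the support of $\varphi_p''$ with $\OO_p:=\OO\otimes_\Z\Z_p$ under $\ell$ by a case-by-case lattice computation, and then to deduce the invariance statement formally from the relation $\phi_2''(\alpha)\ell(z)=\ell(\overline\alpha z)$ established just before the lemma.

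First I would transport everything through $\ell$. Since $\ell(x+y\sD)=xe_3+ye_4$, the preimage of $V''(\Z_p)=\Z_pe_3+\Z_pe_4$ is $\Z_p+\Z_p\sD=\Z_p[\sD]$, so the first assertion amounts to comparing $\Z_p[\sD]$ with $\OO_p$. For every odd $p$ this is immediate: since $2\in\Z_p^\times$, the extra generator ($\tfrac{1+\sD}{2}$ when $D$ is odd, $\tfrac{\sD}{2}$ when $D$ is even) already lies in $\Z_p[\sD]$, and conversely $\sD\in\OO_p$; hence $\OO_p=\Z_p[\sD]=\ell^{-1}(V''(\Z_p))$, matching the definition of $\varphi_p''$ at odd $p$.

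The genuine work is at $p=2$, which I expect to be the main obstacle, because there $\OO_2$ is strictly larger than $\Z_2[\sD]$ and $\varphi_2''$ is designed precisely to compensate. When $D$ is even, $\OO=\Z+\tfrac{\sD}{2}\Z$ gives $\ell(\OO_2)=\Z_2e_3+2^{-1}\Z_2e_4$, exactly the support in the definition. When $D$ is odd, writing a general element of $\OO_2=\Z_2+\tfrac{1+\sD}{2}\Z_2$ as $a+b\tfrac{1+\sD}{2}=(a+\tfrac b2)+\tfrac b2\sD$, I would show $\ell(\OO_2)=\{xe_3+ye_4: y\in\tfrac12\Z_2,\ x-y\in\Z_2\}$; splitting on $\ord_2(y)\ge0$ versus $\ord_2(y)=-1$ yields $V''(\Z_2)$ in the first case and $2^{-1}\Z_2^\times e_3+2^{-1}\Z_2^\times e_4$ in the second (using that $u,v\in\Z_2^\times$ forces $(u-v)/2\in\Z_2$). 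This reproduces exactly the union in the definition of $\varphi_2''$, finishing the first assertion.

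For the invariance statement, note that $\dim V''=2$, so the normalizing factor in $L$ is $|\nu(h)|_p^{-1/2}$; for $h=\phi_2''(\alpha)$ with $\alpha\in\OO_p^\times$ we have $\nu(\phi_2''(\alpha))=N_{K/\Q}(\alpha)\in\Z_p^\times$, hence this factor is $1$ and $L(\phi_2''(\alpha))\varphi_p''(x)=\varphi_p''(\phi_2''(\alpha)^{-1}x)$. Using $\phi_2''(\alpha)^{-1}\ell(z)=\ell(\overline\alpha^{-1}z)$ together with the first assertion $\varphi_p''\circ\ell=\mathbf 1_{\OO_p}$, and the fact that $\overline\alpha\in\OO_p^\times$ so that multiplication by $\overline\alpha^{-1}$ preserves $\OO_p$, I obtain $\varphi_p''(\phi_2''(\alpha)^{-1}\ell(z))=\mathbf 1_{\OO_p}(\overline\alpha^{-1}z)=\mathbf 1_{\OO_p}(z)=\varphi_p''(\ell(z))$, which is the claimed equality.
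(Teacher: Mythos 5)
Your proof is correct: the paper dismisses this lemma with ``This is obvious,'' and your case-by-case identification of $\ell^{-1}(\mathrm{supp}\,\varphi_p'')$ with $\OO\otimes_\Z\Z_p$ (trivial at odd $p$, the two subcases at $p=2$ matching the definition of $\varphi_2''$ exactly) together with the formal deduction of invariance from $\phi_2''(\alpha)\ell(z)=\ell(\overline{\alpha}z)$ and $|N_{K/\Q}(\alpha)|_p=1$ is precisely the verification the author is taking for granted.
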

\begin{proof}
This is obvious.
\end{proof}
\par

We define 
\[
\hat{\OO}_K^\times=\prod_\p\OO_{K_\p}^\times,\quad 
\hat{\OO}_K^1=\{\beta\overline{\beta}^{-1}|\beta\in\hat{\OO}_K^\times\}
\]
and
\[
\C^1=K^1(\R)=\{x+y\sqrt{-D}\in\C|x,y\in\R, x^2+Dy^2=1\}.
\]
Note that $\C^1=\{\alpha\in\C^\times|\ |\alpha|=1\}$.
We easily find that $L(\phi_2''(\alpha))\varphi_\infty''=\varphi_\infty''$ for $\alpha\in\C^1$.
This fact and Lemma \ref{invariant} imply that the function
\[
\alpha\mapsto\theta(g
,\phi_2''(\alpha);\varphi'')
\]
on $[K^1]$ is right $\hat{\OO}_K^1\C^1$-invariant for all $g\in\SL_2(\A_\Q)$.

\section{Proof of Theorem \ref{main}}\label{main proof}
In this section, we prove Theorem \ref{main} 
using Lemma \ref{local} and Lemma \ref{I and J},
which will be proved in the last section.

\subsection{Seesaw identity}
Let $\cc\in J_\OO^{D}$ and put $C=N(\cc)\in\Z_{>0}$.
Assume that $C$ is a square free integer.
Let $t\in\A_{K,\fin}^\times$ such that $\ord_\p(t_\p)=\ord_\p(\cc)$ for each prime ideal $\p$ of $K$.
Note that $CN_{K/\Q}(t_p)^{-1}\in\Z_p^\times$ for each $p$.
We consider the integral
\[
I(\cc)=\int_{[\O(V')]}\int_{[\O(V'')]}\theta(h'h''\phi_2(t);\ff,\varphi)\overline{\G(h'\phi_2'(t))}dh''dh'.
\]
Note that this integral does not depend on the choice of $t\in\A_{K,\fin}^\times$.
First, using Proposition \ref{theta}, we get the following expression of $I(\cc)$ in terms of the period integrals.
\begin{prop}\label{period}
The integral $I(\cc)$ is equal to
\[
2^{-1}\xi_\Q(2)^{-2}\left(\prod_{p\mid D}q_p^{-1}\right)
2^{2\kappa+1}a_f(D)^{-1}\frac{1}{\# ({\it Cl}_K^2)}\sum_{[\mathfrak{a}]\in {\it Cl}_K^2}
\frac{\pair{F_{\mathfrak{a}\cc}|_{\Ha\times\Ha},g\times g_{AC}}\pair{g,g}}{\pair{g_{AC},g_{AC}}}.
\]
Here, we put $q_p=(\SL_2(\Z_p):K_0(D;\Z_p))$,
$A=N(\mathfrak{a})$ and ${\it Cl}_K^2=\{[\mathfrak{b}]^2|\ [\mathfrak{b}]\in {\it Cl}_K\}$.
\end{prop}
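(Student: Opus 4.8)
The plan is to read $I(\cc)$ through the seesaw of the introduction, whose two dual pairs are $(\SL_2,\O(4,2))$ and $(\SL_2\times\SL_2,\O(V')\times\O(V''))$ with $\O(V')\cong\O(2,2)$, $\O(V'')\cong\O(2)$, realised by the orthogonal decomposition $V=V'\perp V''$, where $V'=\pair{e_1,e_2,e_5,e_6}$ and $V''=\pair{e_3,e_4}$. The first thing I would verify is that the Schwartz function $\varphi$ of Sect.~\ref{GU-GL} factors as $\varphi=\varphi'\otimes\varphi''$ under this decomposition: away from $2$ and at $\infty$ this is immediate from the definitions, and at $p=2$ it follows after writing the lattice $L$ as $V'(\Z_2)$ tensored with the $V''$-lattice defining $\varphi_2''$, the cases $D$ odd and $D$ even being checked separately. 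Granting this, the theta kernel for $\varphi$ is the product of the kernels for $\varphi'$ and $\varphi''$, and since $h'$, $h''$ and $\phi_2(t)$ all preserve $V'\perp V''$, the integrand of $I(\cc)$ splits accordingly.

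Next I would use the homomorphisms $\tau,\phi_1,\phi_2$ of Sects.~\ref{GU-GL} and \ref{GL-GO(2,2)} to identify the $\GSO(V)$-argument. Writing $h'=\rho(h_1,h_2)$ for $(h_1,h_2)\in\mathrm{G}(\SL_2\times\SL_2)(\A_\Q)$, which lifts through $\tau$ to $\tau(h_1,h_2)\in\GSU$, and $h''=\phi_2(s)$ for $s\in K^1(\A_\Q)=\SO(V'')(\A_\Q)$ (note $\phi_2(s)$ acts trivially on $V'$ because $N(s)=1$), one gets $h'h''\phi_2(t)=\phi(\tau(h_1,h_2)r_{st})$ in $\PGSO(4,2)$. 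Proposition~\ref{theta}, in the form of Corollary~\ref{Cor}, then replaces the theta lift by
\[
\theta(\phi(\tau(h_1,h_2)r_{st});\ff,\varphi)=\Bigl(\prod_{p\mid D}q_p^{-1}\Bigr)2^{2\kappa+2}a_f(D)^{-1}\Lift(\tau(h_1,h_2)r_{st}),
\]
and unfolding $\Lift$ turns this value into $F_{\cc'}(\diag(z_1,z_2))$ for the fractional ideal $\cc'$ of $st$, with $z_i$ the point attached to $h_i$.

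I would then carry out the two orthogonal integrations. In the $[\O(V'')]$-direction the integrand depends on $s$ only through $K^1(\Q)\bs K^1(\A_\Q)/\hat{\OO}_K^1\C^1$ (by the right $\KK_0$-invariance of $\Lift$ together with the invariance recorded at the end of Sect.~\ref{SW}), so the integral collapses to a finite average. By Hilbert 90 every $s\in K^1$ has the form $\beta/\bar\beta$, so the ideal of $st$ lies in $[\mathfrak{b}^2\cc]$; genus theory identifies the double coset space with $\Cl^2$ and shows $r_{st}$ runs over $F_{\mathfrak{a}\cc}$, $[\mathfrak{a}]\in\Cl^2$, each class hit equally often, producing $\frac{1}{\#(\Cl^2)}\sum_{[\mathfrak{a}]\in\Cl^2}$ with $\vol([\O(V'')])=1$. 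In the $[\O(V')]$-direction, $\rho\circ\tau$ pulls the integral back to $[\SL_2]\times[\SL_2]$, where $\overline{\G(h'\phi_2'(t))}$ pairs the restriction $F_{\mathfrak{a}\cc}|_{\Ha\times\Ha}$ against $g\times g_{AC}$ (using Lemma~\ref{Lem1} and $N(\mathfrak{a}\cc)=AC$ to account for the rescaling to $g_{AC}$). Here Lemma~\ref{G->g}, which relates $\G$ to $\g$ and supplies the factor $2^{2\kappa+1}\xi_\Q(2)^{-2}\pair{g,g}$, converts the adelic pairing against $\G$ into $\pair{F_{\mathfrak{a}\cc}|_{\Ha\times\Ha},g\times g_{AC}}$ together with the Petersson ratio $\pair{g,g}/\pair{g_{AC},g_{AC}}$, yielding the asserted identity.

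The main obstacle is the constant bookkeeping: one must track the powers of $2$, the factor $\xi_\Q(2)^{-2}$ and the Petersson ratio through the Tamagawa normalizations (with $\vol([\SO])=2$, $\vol([\O])=1$, and the two components of $\O(V'')$), the local data of $\varphi$ at $2$ and at $p\mid C$, and the explicit description of $\G$. Closely tied to this, and equally delicate, is the genus-theory step, namely proving $K^1(\Q)\bs K^1(\A_\Q)/\hat{\OO}_K^1\C^1\cong\Cl^2$ and that the averaging realises exactly the sum over $[\mathfrak{a}]\in\Cl^2$; the squarefreeness of $C$ is what makes the local integrals of Lemma~\ref{local} at the primes dividing $C$ clean enough to carry this through.
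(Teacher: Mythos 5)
Your overall architecture is the paper's: factor $\varphi=\varphi'\otimes\varphi''$, replace the theta lift on $\O(4,2)$ by $\Lift$ via Proposition~\ref{theta}/Corollary~\ref{Cor}, collapse the $[\O(V'')]$-integral to a class-group average via $K^1\cong\SO(V'')$ and the $\hat{\OO}_K^1\C^1$-invariance, re-index by $[\mathfrak{b}]\mapsto[\mathfrak{b}\overline{\mathfrak{b}}^{-1}]=[\mathfrak{b}]^2$ to land on $\Cl^2$, and evaluate the $[\O(V')]$-integral as a period. But your final step contains a genuine error: you invoke Lemma~\ref{G->g} to ``convert the adelic pairing against $\G$'' and to supply the factor $2^{2\kappa+1}\xi_\Q(2)^{-2}\pair{g,g}$. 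Lemma~\ref{G->g} computes the theta lift $\theta(\alpha;\overline{\G},\varphi')=\int_{[\O(V')]}\theta(\alpha,h';\varphi')\overline{\G(h')}\,dh'$; it consumes the theta kernel on the $(\GL_2,\GO(2,2))$ side. Once you have applied Corollary~\ref{Cor}, the theta kernel is gone: the remaining integrand is $\Lift(\tau(x_1,x_2)r_{st})\overline{\G(h')}$, a plain Petersson-type pairing on $[\SO(V')]\cong[\mathrm{G}(\SL_2\times\SL_2)/\GL_1]$, and Lemma~\ref{G->g} is not applicable to it. There is only one theta kernel in $I(\cc)$, so you cannot spend it twice — once on Corollary~\ref{Cor} and once on Lemma~\ref{G->g}; doing so would also wreck the constant (you would pick up $2^{2\kappa+2}\cdot 2^{2\kappa+1}$ rather than the correct $2^{2\kappa}=2^{-1}\cdot 2^{2\kappa+1}$ net power of $2$). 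Lemma~\ref{G->g} belongs to the \emph{other} expression for $I(\cc)$ (Proposition~\ref{L(1/2)}), where one integrates $\theta(\cdot,h';\varphi')$ against $\overline{\G}$ \emph{before} touching the $\O(4,2)$ side.

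What actually produces those constants in the paper's proof is elementary measure bookkeeping: writing $\G(h'\phi_2'(t))=\g(x_1)\g(x_2\cdot d(N_{K/\Q}(t)^{-1}))$ with $\g(x\cdot d(N_{K/\Q}(t)^{-1}))$ corresponding to $C^{-\kappa-1}g_C$, converting the Tamagawa-normalized integral over $[\mathrm{G}(\SL_2\times\SL_2)/\GL_1]$ into the classical double Petersson integral using $\vol(\SL_2(\Z)\bs\Ha)=\pi/3=2\xi_\Q(2)$ (whence $(2\xi_\Q(2))^{-2}$), and using $\pair{g_C,g_C}=C^{2\kappa+2}\pair{g,g}$ to absorb the powers of $C$ into the Petersson ratio. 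Your remaining steps (the factorization of $\varphi$ at $p=2$, the identification $h'h''\phi_2(t)=\phi(\tau(h_1,h_2)r_{st})$, the reduction $[\O]\to[\SO]$ via the determinant-$(-1)$ elements $h_0',h_0''$ together with $\vol([\SO(V'')])=2$, and the genus-theoretic re-indexing) are correct and match the paper.
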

\begin{proof}
We define $h'_0\in\O(V')$ by
\[
x_1\mapsto -x_1,\ x_2\mapsto x_5,\ 
x_5\mapsto x_2,\ x_6\mapsto -x_6,
\] 
and $h''_0\in\O(V'')$ by
\[
x_3\mapsto -x_3,\ x_4\mapsto x_4.
\]
We put $h_1'=\phi_2'(t)h_0'\phi_2'(t)^{-1}$, $h_1''=\phi_2''(t)h_0''\phi_2''(t)^{-1}$.
Then we find that
\[
\theta(h'h'_1h''h''_1\phi_2(t);\ff,\varphi)\overline{\G(h'h'_1\phi_2'(t))}
=\theta(h'h''\phi_2(t);\ff,\varphi)\overline{\G(h'\phi_2'(t))}
\]
for all $(h',h'')\in \O(V')(\A_\Q)\times \O(V'')(\A_\Q)$.
Hence we have
\[
I(\cc)=2^{-2}\int_{[\SO(V')]}\int_{[\SO(V'')]}\theta(h'h''\phi_2(t);\ff,\varphi)\overline{\G(h'\phi_2'(t))}dh''dh'.
\]
Since the isomorphism $K^1\xrightarrow{\cong}\SO(V'')\hookrightarrow\SO(V)$ is given by 
$\alpha\mapsto\phi_2(\overline{\alpha})$, 
we find that
\[
I(\cc)=2^{-1}\int_{[\SO(V')]}\int_{K^1\bs\A_K^1/\hat{\OO}_K^1\C^1}\theta(h'\phi_2(\alpha)\phi_2(t);\ff,\varphi)\overline{\G(h'\phi_2'(t))}d\alpha dh'.
\]
We see that the map $\beta\mapsto \beta\overline{\beta}^{-1}$ gives an isomorphism
\[
K^\times\bs\A_K^\times/\hat{\OO}_K^\times\C^\times\xrightarrow{\cong} K^1\bs\A_K^1/\hat{\OO}_K^1\C^1.
\]
Since there is a canonical isomorphism $K^\times\bs\A_K^\times/\hat{\OO}_K^\times\C^\times\cong {\it Cl}_K$, we have
\[
I(\cc)=\frac{1}{2h_K}\sum_{i=1}^{h_K}\int_{[\SO(V')]}
\theta(h'\phi_2(\beta_i\overline{\beta_i}^{-1}t);\ff,\varphi)\overline{\G(h'\phi_2'(t))}dh',
\]
where $\{\beta_1,\dots,\beta_{h_K}\}$ is a complete system of representatives of $K^\times\bs\A_K^\times/\hat{\OO}_K^\times\C^\times$.
We may take $\beta_i$ such that the ideal $\mathfrak{b}_i$ defined by $\beta_i$ 
is in $J_K^D$ for $i=1,\dots,h_K$.
We identify $\mathrm{G}(\SL_2\times\SL_2)$ as a subgroup of $\GU$ 
via the inclusion $\tau$ defined in Sect.~\ref{GL-GO(2,2)}.
If $h'=\rho(x_1,x_2)$ with $(x_1,x_2)\in\mathrm{G}(\SL_2\times\SL_2)$, 
by Corollary \ref{Cor}, we have
\begin{align*}
\theta(h'\phi_2(\beta\overline{\beta}^{-1}t);\ff,\varphi)
=\left(\prod_{p|D}q_p^{-1}\right)
2^{2\kappa+2}a_f(D)^{-1}\Lift(\tau(x_1,x_2)r_{\beta\overline{\beta}^{-1}t}),
\end{align*}
and
\[
\G(h'\phi_2'(t))=\g(x_1)\g(x_2\cdot d(N_{K/\Q}(t)^{-1})).
\]
Moreover we see that the automorphic form $\g(x\cdot d(N_{K/\Q}(t)^{-1}))$
is given by $g|_{2\kappa+2}d(C)=C^{-\kappa-1}g_C$.
So we have
\begin{align*}
&2^{-1}\int_{[\SO(V')]}
\Lift(\tau(x_1,x_2)r_{\beta\overline{\beta}^{-1}t})\overline{\G(h'\phi_2'(t))}dh'\\
&=2^{-1}\int_{[{\rm G}(\SL_2\times\SL_2)/\GL_1]}
C^{-\kappa-1}[F_{\mathfrak{b_i}\overline{\mathfrak{b}_i}^{-1}\cc}\parallel_{2\kappa+2}\tau(x_1,x_2)](\ii)
\\&\quad\times
\overline{\g(x_1)\g(x_2\cdot d(N_{K/\Q}(t)^{-1}))}d(x_1,x_2)\\
&=(3/\pi)^2C^{-\kappa-1}\pair{F_{\mathfrak{b_i}
\overline{\mathfrak{b}_i}^{-1}\cc}|_{\Ha\times\Ha},g\times (C^{-\kappa-1}g_C)}
\\&
=(2\xi_\Q(2))^{-2}C^{-2\kappa-2}\pair{F_{\mathfrak{b_i}\overline{\mathfrak{b}_i}^{-1}\cc}|_{\Ha\times\Ha},g\times g_C}.
\end{align*}
Now we have
\[
\pair{g_{C},g_{C}}=C^{2(\kappa+1)}\pair{g|d(C),g|d(C)}=C^{2(\kappa+1)}\pair{g,g}.
\]
Since $[\overline{\mathfrak{b}_i}]=[\mathfrak{b}_i]^{-1}$ in ${\it Cl}_K$, by Lemma \ref{indep}, we have
\begin{align*}
&I(\cc)=\left(\prod_{p|D}q_p^{-1}\right)
2^{2\kappa+2}a_f(D)^{-1}(2\xi_\Q(2))^{-2}
\frac{1}{h_K}\sum_{[\mathfrak{b}]\in {\it Cl}_K}
\frac{\pair{F_{\mathfrak{b}\overline{\mathfrak{b}}^{-1}\cc}|_{\Ha\times\Ha},g\times g_{C}}\pair{g,g}}{\pair{g_{C},g_{C}}}\\
&=2^{-1}\xi_\Q(2)^{-2}\left(\prod_{p|D}q_p^{-1}\right)
2^{2\kappa+1}a_f(D)^{-1}
\frac{1}{\# ({\it Cl}_K^2)}\sum_{[\mathfrak{a}]\in {\it Cl}_K^2}
\frac{\pair{F_{\mathfrak{a}\cc}|_{\Ha\times\Ha},g\times g_{AC}}\pair{g,g}}{\pair{g_{AC},g_{AC}}}.
\end{align*}
This completes the proof using Lemma \ref{local}.
\end{proof}

Next, we give another expression of $I(\cc)$.
To do this, we need the following lemmas.
\begin{lem}
We have
\[\varphi=\varphi'\otimes\varphi''.\]
In particular, we have
\begin{align*}
&
\theta(g\cdot d(\nu\circ\phi_2(t)),h'h''\phi_2(t);\varphi)
\\&
=\theta(g\cdot d(\nu\circ\phi_2'(t)),h'\phi_2'(t);\varphi')
\theta(g\cdot d(\nu\circ\phi_2''(t)),h''\phi_2''(t);\varphi'').
\end{align*}
\end{lem}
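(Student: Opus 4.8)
The plan is to prove the two assertions in turn: first the factorization $\varphi=\varphi'\otimes\varphi''$ of the Schwartz function, and then the multiplicativity of the theta series, which will follow formally from the first fact once the compatibility of the Weil representation with the orthogonal decomposition $V=V'\perp V''$ is recorded. Throughout I use that $\{e_1,e_2,e_5,e_6\}$ span $V'$ and $\{e_3,e_4\}$ span $V''$, and that these subspaces are mutually orthogonal for $Q$, so that $Q=Q'\perp Q''$ and $V(\Q)=V'(\Q)\oplus V''(\Q)$.

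For the first assertion I would check $\varphi_v=\varphi'_v\otimes\varphi''_v$ place by place. At $v=\infty$ the factorization is immediate: the polynomial prefactor $(-\I x_1+x_2-x_5+\I x_6)^{2\kappa+2}$ involves only the $V'$-coordinates, and the Gaussian exponent $-\pi(x_1^2+x_2^2+2x_3^2+2Dx_4^2+x_5^2+x_6^2)$ splits as the $V'$-exponent plus $-2\pi Q''[x_3e_3+x_4e_4]$, matching $\varphi''_\infty$. At $p\neq 2$ one has $V(\Z_p)=V'(\Z_p)\oplus V''(\Z_p)$, so the three characteristic functions multiply. The only delicate case is $p=2$, where I would verify directly that the support lattice $L$ of $\varphi_2$ equals $V'(\Z_2)+\mathrm{supp}(\varphi''_2)$ in each parity of $D$; this is a short computation matching the definition of $L$ in Sect.~\ref{GU-GL} against the definition of $\varphi''_2$ in Sect.~\ref{SW}.

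Granting $\varphi=\varphi'\otimes\varphi''$, I would identify $\mathcal{S}(V(\A_\Q))=\mathcal{S}(V'(\A_\Q))\otimes\mathcal{S}(V''(\A_\Q))$ and invoke the multiplicativity of the Weil representation along orthogonal direct sums. For $g\in\SL_2(\A_\Q)$ the identity $\omega(g,1)(\varphi'\otimes\varphi'')=\omega_{V',\psi}(g,1)\varphi'\otimes\omega_{V'',\psi}(g,1)\varphi''$ is checked on the generators $n(b),t(a),w$ using $\chi_V=\chi_{V'}\chi_{V''}$, $Q=Q'\perp Q''$, and the multiplicativity $\gamma_V=\gamma_{V'}\gamma_{V''}$ of the Weil index. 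For the similitude part, the element $h=h'h''\phi_2(t)$ preserves the decomposition, since $\phi_2(t)$ does (restricting to $\phi_2'(t)$ on $V'$ and $\phi_2''(t)$ on $V''$) while $h',h''$ extend by the identity; hence $h$ acts as $h'\phi_2'(t)$ on $V'$ and as $h''\phi_2''(t)$ on $V''$, with common similitude $\nu(h)=N_{K/\Q}(t)=\nu\circ\phi_2(t)$. The normalization $L(h)\varphi(x)=|\nu(h)|^{-m/4}\varphi(h^{-1}x)$ from Sect.~\ref{Weil rep} then factors because $m/4=m'/4+m''/4$ with $m=6,m'=4,m''=2$, and because $g\cdot d(\nu\circ\phi_2(t))=g\cdot d(N_{K/\Q}(t))$ is the common $\SL_2$-adjustment making each pair lie in the respective group $R$.

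Finally I would split the defining sum over $V(\Q)=V'(\Q)\oplus V''(\Q)$: writing $x=x'+x''$ and applying the factorization of $\omega(g\cdot d(\nu\circ\phi_2(t)),h)\varphi$ just established, the double sum separates into the product of $\theta(g\cdot d(\nu\circ\phi_2'(t)),h'\phi_2'(t);\varphi')$ and $\theta(g\cdot d(\nu\circ\phi_2''(t)),h''\phi_2''(t);\varphi'')$, which is the claimed identity. I expect the only genuinely fiddly step to be the $p=2$ lattice match in the first assertion; the Weil-representation factorization is standard once the multiplicativity of the Weil index is cited, and the similitude bookkeeping is routine.
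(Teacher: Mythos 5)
Your proposal is correct and is simply a fully written-out version of the argument the paper leaves implicit (its proof reads ``This is obvious''): the place-by-place check of $\varphi_v=\varphi_v'\otimes\varphi_v''$ (including the $p=2$ lattice match in both parities of $D$), the factorization of the Weil representation along the orthogonal decomposition $V=V'\perp V''$ with the similitude bookkeeping $\nu\circ\phi_2(t)=\nu\circ\phi_2'(t)=\nu\circ\phi_2''(t)=N_{K/\Q}(t)$, and the splitting of the theta sum over $V(\Q)=V'(\Q)\oplus V''(\Q)$ are exactly what is needed, and each step checks out against the definitions in Sects.~\ref{GU-GL}, \ref{GL-GO(2,2)} and \ref{SW}.
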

\begin{proof}
This is obvious.
\end{proof}

\begin{lem}\label{seesaw}
The integral
\[
\int_{[\O(V')]}\int_{[\O(V'')]}\int_{[\SL_2]}
\left|\theta(g\cdot d(\nu\circ\phi_2(t)),h'h''\phi_2(t);\varphi)\ff(g\cdot d(\nu\circ\phi_2(t)))\overline{\G(h'\phi_2'(t))}\right|dgdh''dh'
\]
is finite.
\end{lem}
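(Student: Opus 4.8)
The plan is to factor the theta kernel, to integrate out the anisotropic variable $h''$ using compactness, and then to dominate the slow growth of the remaining theta function by the rapid decay of the two cusp forms $\ff$ and $\G$. By the factorization $\varphi=\varphi'\otimes\varphi''$ of the preceding lemma, and since $\nu\circ\phi_2'(t)=\nu\circ\phi_2''(t)=\nu\circ\phi_2(t)=N_{K/\Q}(t)$,
\[
\theta(g\cdot d(\nu\circ\phi_2(t)),h'h''\phi_2(t);\varphi)
=\theta(g\cdot d(\nu\circ\phi_2(t)),h'\phi_2'(t);\varphi')\,
\theta(g\cdot d(\nu\circ\phi_2(t)),h''\phi_2''(t);\varphi'').
\]
Because $V''$ is the norm form of the imaginary quadratic field $K$, it is anisotropic, so the quotient $[\O(V'')]$ is compact. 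I would therefore first integrate over $h''$ and set
\[
\Theta(g)=\int_{[\O(V'')]}\bigl|\theta(g,h''\phi_2''(t);\varphi'')\bigr|\,dh'',
\]
which is finite for each $g$ and, being the integral of a slowly increasing automorphic function over a compact domain, descends to a function of moderate growth on $[\SL_2]$. It then remains to prove finiteness of
\[
\int_{[\O(V')]}\int_{[\SL_2]}
\bigl|\theta(g\cdot d(\nu\circ\phi_2(t)),h'\phi_2'(t);\varphi')\bigr|\,
\Theta(g\cdot d(\nu\circ\phi_2(t)))\,
\bigl|\ff(g\cdot d(\nu\circ\phi_2(t)))\bigr|\,
\bigl|\G(h'\phi_2'(t))\bigr|\,dg\,dh'.
\]

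Next I would invoke two classical estimates. Every theta function is \emph{slowly increasing}, so there are constants $A,B\ge 0$ and fixed height functions $\|\cdot\|$ on $[\SL_2]$ and $[\O(V')]$ with $|\theta(g,h'\phi_2'(t);\varphi')|\ll\|g\|^{A}\|h'\|^{B}$ jointly; together with the moderate growth of $\Theta$ this bounds the two theta factors by $\|g\|^{A'}\|h'\|^{B}$ for some $A'$. On the other hand $\ff$ restricts to a cusp form on $\SL_2(\A_\Q)$ and $\G$ is a cusp form on $\GO(V')(\A_\Q)$, so both are \emph{rapidly decreasing}:
\[
\bigl|\ff(g\cdot d(\nu\circ\phi_2(t)))\bigr|\ll_N\|g\|^{-N},\qquad
\bigl|\G(h'\phi_2'(t))\bigr|\ll_M\|h'\|^{-M}
\]
for all $N,M\ge 0$, the right translations by the fixed finite idele $t$ not affecting these bounds.

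Combining these, for fixed $h'$ the inner integrand is $\ll\|h'\|^{B}$ times a function of $g$ that grows at most polynomially through $\theta$ and $\Theta$ but decays faster than any power of $\|g\|$ through $\ff$; such a function is bounded on $[\SL_2]$, which has finite volume, so the inner integral is $\ll\|h'\|^{B}$. Taking $M>B$, the function $\|h'\|^{B}|\G(h'\phi_2'(t))|$ is likewise bounded on the finite-volume quotient $[\O(V')]$, whence $\int_{[\O(V')]}\|h'\|^{B}|\G(h'\phi_2'(t))|\,dh'<\infty$, and Tonelli's theorem yields the asserted finiteness. The main obstacle will be the joint slow-growth estimate for the theta kernel on $[\SL_2]\times[\O(V')]$ and the verification that the cuspidal decay of $\ff$ and of $\G$ dominates it; once these standard facts are secured, the interchange of the order of integration required for the seesaw identity is justified.
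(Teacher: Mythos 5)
Your proposal is correct and follows essentially the same route as the paper: factor $\varphi=\varphi'\otimes\varphi''$, use compactness of $[\O(V'')]$ (since $V''$ is anisotropic) to integrate out $h''$ into a slowly increasing function of $g$, and then dominate the slowly increasing theta kernel on $[\O(V')]\times[\SL_2]$ by the rapid decay of the cusp forms $\ff$ and $\G$. The paper simply concludes that the resulting integrand is bounded on the finite-volume product quotient; your slightly more quantitative bookkeeping with height functions is a harmless elaboration of the same argument.
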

\begin{proof}
By the above lemma, this integral is equal to
\begin{align*}
&\int_{[\O(V')]}\int_{[\SL_2]}
\left|\theta(g\cdot d(N_{K/\Q}(t)),h'\phi_2'(t);\varphi')\ff(g\cdot d(N_{K/\Q}(t)))\overline{\G(h'\phi_2'(t))}\right|
\\&\quad\times
\left(\int_{[\O(V'')]}
\left|\theta(g\cdot d(N_{K/\Q}(t)),h''\phi_2''(t);\varphi'')\right|dh''\right)dgdh'.
\end{align*}
Since $V''$ is anisotropic over $\Q$, the space $[\O(V'')]$ is compact.
Hence the inner integral converges and it is slowly increasing function on $[\SL_2]$.
Since $\ff$ and $\G$ are cusp forms, they are rapidly decreasing.
Moreover the function
\[
(h',g)\mapsto
\theta(g\cdot d(N_{K/\Q}(t)),h'\phi_2'(t);\varphi')
\]
is slowly increasing on ${[\O(V')]\times[\SL_2]}$.
Therefore the function
\begin{align*}
&\left|\theta(g\cdot d(N_{K/\Q}(t)),h'\phi_2'(t);\varphi')\ff(g\cdot d(N_{K/\Q}(t)))
\overline{\G(h'\phi_2'(t))}\right|
\\&\quad\times
\left(\int_{[\O(V'')]}
\left|\theta(g\cdot d(N_{K/\Q}(t)),h''\phi_2''(t);\varphi'')\right|dh''\right)
\end{align*}
is  bounded on ${[\O(V')]\times[\SL_2]}$.
This completes the proof.
\end{proof}

\begin{prop}\label{L(1/2)}
Put $q_p=(\SL_2(\Z_p):K_0(D;\Z_p))$. Then 
\begin{align*}
I(\cc)&=2^{-2}\xi_\Q(2)^{-2}\left(\prod_{p\mid D}q_p^{-1}\right)
(2\pi)^{-(2\kappa+1)}a_f(D)^{-1}(2\kappa)!\pair{g,g}
\\&\quad\times
L(1,\chi)^{-1}\sum_{Q\subset Q_D}\chi_Q(-C)a_{f_Q}(D)L(1/2,f_Q\times g).
\end{align*}
\end{prop}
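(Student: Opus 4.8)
The plan is to realize the seesaw of the introduction by unfolding the theta integral directly. First I would expand the outer theta lift by its definition: writing $N=N_{K/\Q}(t)=\nu(\phi_2(t))$ and taking $d(N)$ as the similitude-matching element, one has
\[
\theta(h'h''\phi_2(t);\ff,\varphi)=\int_{[\SL_2]}\theta(g\cdot d(N),h'h''\phi_2(t);\varphi)\,\ff(g\cdot d(N))\,dg.
\]
Because $\varphi=\varphi'\otimes\varphi''$ and $\phi_2(t)$ preserves the orthogonal decomposition $V=V'\oplus V''$, the theta kernel factors as a product of a $V'$-theta in $(g,h')$ and a $V''$-theta in $(g,h'')$. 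Lemma \ref{seesaw} provides the absolute convergence needed for Fubini, so I may carry the $[\SL_2]$-integration outside and perform the two orthogonal integrations separately.

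Next I would collapse each inner integral. The $V''$-integral $\int_{[\O(V'')]}\theta(g\cdot d(N),h''\phi_2''(t);\varphi'')\,dh''$ equals $\theta(g\cdot d(N);\mathbf{1}_{\O(V'')},L(\phi_2''(t))\varphi'')$, which by the Siegel--Weil formula (Proposition \ref{1->E}) is $\tfrac12 E(g\cdot d(N),0)$ for the section built from $L(\phi_2''(t))\varphi''$; since $\chi_{V''}=\bchi$, this Eisenstein series is induced from $\bchi|\cdot|^{s}$. The $V'$-integral against $\overline{\G}$, after moving the similitude element $\phi_2'(t)$ onto the $\GL_2$-variable as in the proof of Proposition \ref{period}, is the theta lift $\theta(\overline{\G},\varphi')$, which by Lemma \ref{G->g} produces $2^{2\kappa+1}\xi_\Q(2)^{-2}\pair{g,g}$ times a translate of $\overline{\g}$. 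Collecting $\tfrac12\cdot 2^{2\kappa+1}=2^{2\kappa}$, this reduces $I(\cc)$ to a single Rankin--Selberg integral of $\ff\cdot\overline{\g}$ against $E(\,\cdot\,,0)$ over $[\SL_2]$.

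To evaluate that integral I would unfold $E$ over $B(\Q)\bs\SL_2(\Q)$, fold the domain down to $N(\A_\Q)\bs\SL_2(\A_\Q)$, and insert the Whittaker expansions of $\ff$ and $\g$, so that the integral becomes a product $\prod_v Z_v$ of local zeta integrals. At the archimedean place the weight-$(2\kappa+1)$ and weight-$(2\kappa+2)$ Whittaker functions of Section \ref{Whittaker} contribute the factor $(2\pi)^{-(2\kappa+1)}(2\kappa)!$, while at the finite places prime to $D$ the factors assemble the partial Dirichlet series, which by Shimura's identity $\D(s+2\kappa+1,f,g)=L(2s+1,\chi)^{-1}L(s+1/2,f\times g)$ at $s=0$ yields $L(1,\chi)^{-1}L(1/2,f\times g)$.

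The hard part will be the ramified local computation at the primes $p\mid D$. There $\ff_p$ is ramified, so its Whittaker function is governed not by a single Satake parameter but by the delicate formulas of Lemma \ref{explicit}(3)--(5), and the local zeta integral does not collapse to one Euler factor; the normalizations $a_f(D_p)^{-1}$ and $q_p=(\SL_2(\Z_p):K_0(D;\Z_p))$ enter at this stage. Running these ramified contributions over all $p\mid D$ reorganizes the product into the sum $\sum_{Q\subset Q_D}\chi_Q(-C)a_{f_Q}(D)L(1/2,f_Q\times g)$ over the partial conjugates $f_Q$, together with the global constants $a_f(D)^{-1}\prod_{p\mid D}q_p^{-1}$. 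This reorganization is precisely the content of Lemma \ref{I and J}, which I would invoke; assembling all local factors with the constants from the earlier steps then gives the stated identity.
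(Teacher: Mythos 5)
Your proposal follows essentially the same route as the paper: seesaw factorization $\varphi=\varphi'\otimes\varphi''$ with Fubini justified by Lemma \ref{seesaw}, collapse of the $V''$-integral by the Siegel--Weil formula and of the $V'$-integral by Lemma \ref{G->g}, then unfolding the resulting Rankin--Selberg integral against $E(\cdot,s)$ into local zeta integrals evaluated by Lemma \ref{I and J}, with Shimura's identity and the expansion $\alpha_{F_\cc}(Dm)=\sum_{Q}\chi_Q(-C)a_{f_Q}(D)a_{f_Q}(m)$ producing the sum over $Q\subset Q_D$. The only points you gloss over are routine: the unfolding is carried out for $\re(s)\gg0$ with a separate convergence estimate before specializing to $s=0$, and the reorganization into the $Q$-sum comes from the definition of $f^{\cc*}$ rather than from Lemma \ref{I and J} itself.
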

\begin{proof}
By Lemma \ref{G->g}, Proposition \ref{1->E} and Lemma \ref{seesaw}, we have
\begin{align*}
I(\cc)&=
\int_{[\SL_2]}\left(2^{2\kappa+1}\xi_\Q(2)^{-2}\langle g,g\rangle \overline{\g(\alpha\cdot d(N_{K/\Q}(t)))}\right)
\left(\frac{1}{2}E(\alpha,0)\right)\ff(\alpha\cdot d(N_{K/\Q}(t)))d\alpha\\
&=2^{-1}2^{2\kappa+1}\xi_\Q(2)^{-2}\langle g,g\rangle
\int_{[\SL_2]}E(\alpha,0)\ff(\alpha\cdot d(N_{K/\Q}(t)))\overline{\g(\alpha\cdot d(N_{K/\Q}(t)))}d\alpha.
\end{align*}
Here, the Eisenstein series $E(\alpha,s)$ is defined by using the section
\[
\Phi(\alpha,s,L(\phi_2''(t)\varphi'')).
\]
We put
\[
I(\cc,s)=\int_{[\SL_2]}E(\alpha,s)\ff(\alpha\cdot d(N_{K/\Q}(t)))\overline{\g(\alpha\cdot d(N_{K/\Q}(t)))}d\alpha.
\]
Then, we have
\[
I(\cc)=2^{-1}2^{2\kappa+1}\xi_\Q(2)^{-2}\langle g,g\rangle
I(\cc,s)|_{s=0}.
\]
We formally compute $I(\cc,s)$ for $\re(s)\gg 0$.
Note that $C=N(\cc)\in\Q^\times\subset\A_\Q^\times$, 
$\g$ has the trivial central character and 
$\ff,\g$ are left $\GL_2(\Q)$-invariant.
So we have
\begin{align*}
I(\cc,s)&=
\int_{B(\Q)\bs\SL_2(\A_\Q)}\Phi(\alpha,s)\ff(\alpha\cdot d(N_{K/\Q}(t)))\overline{\g(\alpha\cdot d(N_{K/\Q}(t)))}d\alpha\\
&=\int_{B(\Q)\bs\SL_2(\A_\Q)}\Phi(\alpha,s)
\ff(a(C)\alpha d(N_{K/\Q}(t)))\overline{\g(a(C)\alpha d(N_{K/\Q}(t)))}d\alpha\\
&=\int_{B(\Q)\bs\SL_2(\A_\Q)}\Phi(\alpha,s)\ff(a(C)\alpha d(N_{K/\Q}(t)))
\sum_{\xi\in\Q^\times}\overline{W_{\g}(a(\xi C)\alpha d(N_{K/\Q}(t)))}d\alpha.
\end{align*}
Note that
\begin{align*}
\sum_{\xi\in\Q^\times}\overline{W_{\g}(a(\xi C)\alpha d(N_{K/\Q}(t)))}d\alpha
&=\sum_{\xi\in\Q^\times/\Q^{\times 2}}\sum_{\gamma\in\Q_{>0}}
\overline{W_{\g}(a(\xi\gamma^2C)\alpha d(N_{K/\Q}(t)))}d\alpha\\
&=\frac{1}{2}\sum_{\xi\in\Q^\times/\Q^{\times 2}}\sum_{\gamma\in\Q^\times}
\overline{W_{\g}(a(\xi C)t(\gamma)\alpha d(N_{K/\Q}(t)))}d\alpha.
\end{align*}
Hence $I(\cc)$ is equal to
\begin{align*}
&\frac{1}{2}\sum_{\xi\in\Q^\times/\Q^{\times 2}}
\int_{N(\Q)\bs\SL_2(\A_\Q)}\Phi(\alpha,s)\ff(a(C)\alpha d(N_{K/\Q}(t)))
\overline{W_{\g}\left(a(\xi C)\alpha d(N_{K/\Q}(t))\right)}d\alpha\\
&=\frac{1}{2}\sum_{\xi\in\Q^\times/\Q^{\times 2}}
\int_{N(\Q)\bs\SL_2(\A_\Q)}\Phi(\alpha,s)
\\&\quad\times
\sum_{\xi'\in\Q^\times}W_{\ff}(a(\xi' C)\alpha d(N_{K/\Q}(t)))
\overline{W_{\g}\left(a(\xi C)
\alpha d(N_{K/\Q}(t))\right)}d\alpha\\
&=\frac{1}{2}\sum_{\xi\in\Q^\times/\Q^{\times 2}}\sum_{\xi'\in\Q^\times}
\int_{N(\A_\Q)\bs\SL_2(\A_\Q)}
\left(\int_{\Q\bs\A_\Q}\psi(\xi'Cx)\overline{\psi(\xi Cx)}dx\right)
\\&\quad\times
\Phi(\alpha,s)W_{\ff}(a(\xi'C)\alpha d(N_{K/\Q}(t)))
\overline{W_{\g}\left(a(\xi C)\alpha d(N_{K/\Q}(t))\right)}d\alpha\\
&=\frac{1}{2}\sum_{\xi\in\Q^\times/\Q^{\times 2}}
\int_{N(\A_\Q)\bs\SL_2(\A_\Q)}
\Phi(\alpha,s)
\\&\quad\times
W_{\ff}(a(\xi C)\alpha d(N_{K/\Q}(t)))
\overline{W_{\g}\left(a(\xi C)\alpha d(N_{K/\Q}(t))\right)}d\alpha\\
&=\frac{1}{2}\sum_{\xi\in\Q^\times/\Q^{\times 2}}\prod_v I_v(\cc,s,\xi)
=\frac{1}{2}\sum_{\substack{\xi\in\Z\\\text{square free}}}\prod_v I_v(\cc,s,\xi).
\end{align*}
Here, $\displaystyle\sum_{\substack{\xi\in\Z\\\text{square free}}}$ means the sum over all square free integers, 
and we put
\begin{align*}
&I_v(\cc,s,\xi)
\\&=
\int_{N(\Q_v)\bs\SL_2(\Q_v)}
\Phi_v(\alpha,s)W_{\ff,v}(a(\xi C)\alpha d(N_{K/\Q}(t)_v))
\overline{W_{\g,v}\left(a(\xi C)\alpha d(N_{K/\Q}(t)_v)\right)}d\alpha.
\end{align*}
This calculation is justified by the following lemma.
\begin{lem}
We define $J_v(\cc,s,\xi,\xi')$ by
\begin{align*}
\int_{N(\Q_v)\bs\SL_2(\Q_v)}
\left|\Phi_v(\alpha,s)
W_{\ff,v}(a(\xi' C)\alpha d(N_{K/\Q}(t)_v))
\overline{W_{\g,v}\left(a(\xi C)
\alpha d(N_{K/\Q}(t)_v)\right)}\right|d\alpha.
\end{align*}
If $\re(s)>6$, then
\[
\sum_{\substack{\xi\in\Z\\\text{square free}}}\sum_{\xi'\in\Q^\times}
\prod_vJ_v(\cc,s,\xi,\xi')
<\infty.
\]
\end{lem}
\begin{proof}
We need the following lemma which will be proved in the last section.
\begin{lem}\label{I and J}
We put $\sigma=\re(s)>6$.
\begin{enumerate}
\item\label{arch2}
For $v=\infty$, we have
\[
I_{\infty}(\cc,s,\xi)=\left\{
\begin{aligned}
&(4\pi)^{-(s/2+2\kappa+1)}|C\xi|_\infty^{-s/2+1/2}\Gam\left({s/ 2}+2\kappa+1\right)
&\quad&{\rm if\ }\xi>0,\\
&0&\quad&{\rm if\ }\xi<0.
\end{aligned}
\right.
\]
If $\xi,\xi'>0$, then
\[
J_\infty(\cc,s,\xi,\xi')
\leq(4\pi)^{-(\sigma/2+2\kappa+1)}(C\xi)^{-\sigma/4+1/2}
(C\xi')^{-\sigma/4}\Gamma(\sigma/2+2\kappa+1)
\]
and otherwise, $J_\infty(\cc,s,\xi,\xi')=0$.
\item\label{unram2}
For $v=p\nmid CD$, we have
\[
I_p(\cc,s,\xi,\gamma)=|\xi|_p^{-s/2+1/2}\sum_{n=0}^{\infty}|\xi p^{2n}|_p^{s/2+2\kappa+1}
a_f((\xi p^{2n})_p)\overline{a_g((\xi p^{2n})_p)}
\]
and
\[
J_p(\cc,s,\xi,\xi')=|\xi|_p^{\kappa+1}|\xi'|_p^{\kappa+1/2}\sum_{n=0}^\infty
(p^{-n})^{\sigma+4\kappa+2}\left|a_f((\xi'p^{2n})_p)\overline{a_g((\xi p^{2n})_p)}\right|.
\]
\item\label{ram2}
For $v=p\mid D$, we have
\begin{align*}
I_p(\cc,s,\xi)&=\frac{1}{(\SL_2(\Z_p):K_0(D;\Z_p))}
a_f(D_p)^{-1}|\xi|_p^{-s/2+1/2}\sum_{n=0}^{\infty}|\xi p^{2n}|_p^{s/2+2\kappa+1}
\\&\quad \times
[a_f((D\xi p^{2n})_p)+\underline{\chi}_p(-C\xi)\overline{a_f((D\xi p^{2n})_p)}]
\overline{a_g((\xi p^{2n})_p)}
\end{align*}
and
\begin{align*}
J_p(\cc,s,\xi,\xi')\leq&\frac{2|a_F(D_p)|^{-1}}{(\SL_2(\Z_p):K_0(D))}
|\xi|_p^{\kappa+1}|\xi'|_p^{\kappa+1/2}
\\&\quad\times
\sum_{n=0}^\infty
(p^{-n})^{\sigma+4\kappa+2}\left|a_f((D\xi'p^{2n})_p)\overline{a_g((\xi p^{2n})_p)}\right|.
\end{align*}
\item\label{divC2}
For $v=p\mid C$, we have
\begin{align*}
I_p(\cc,s,\xi)
=p^{-1/2}\frac{p^s+p}{1+p}
|\xi|_p^{-s/2+1/2}\sum_{n=0}^{\infty}|\xi p^{2n}|_p^{s/2+2\kappa+1}
a_f((\xi p^{2n})_p)\overline{a_g((\xi p^{2n})_p)}
\end{align*}
and
\begin{align*}
J_p(\cc,s,\xi,\xi')&=p^{-1/2}\frac{p^\sigma+p}{1+p}|\xi|_p^{\kappa+1}|\xi'|_p^{\kappa+1/2}
\\&\quad\times
\sum_{n=0}^\infty
(p^{-n})^{\sigma+4\kappa+2}\left|a_f((\xi'p^{2n})_p)\overline{a_g((\xi p^{2n})_p)}\right|.
\end{align*}
\end{enumerate}
\end{lem}
By this lemma, there is a constant $M_1=M_1(\sigma)>0$ which does not depend on $\xi,\xi'$ and satisfies
\begin{align*}
&\prod_v J_v(\cc,s,\xi,\xi')
\\&\leq
M_1 \xi^{-\sigma/4-\kappa-1/2}(\xi')^{-\sigma/4-\kappa-1/2}
\prod_{p<\infty}\sum_{n=0}^{\infty}(p^{-n})^{\sigma+4\kappa+2}|a_f((D\xi'p^{2n})_p)\overline{a_g((\xi p^{2n})_p)}|\\
&=M_1 \xi^{-\sigma/4-\kappa-1/2}(\xi')^{-\sigma/4-\kappa-1/2}
\sum_{m=1}^{\infty}m^{-(\sigma+4\kappa+2)}|a_f(D\xi'm^2)\overline{a_g(\xi m^2)}|
\end{align*}
if $\xi,\xi'>0$. 
Otherwise, $\prod_v J_v(\cc,s,\xi,\xi')=0$.
We put $t=\sigma/4+\kappa+1/2$.
Then we have
\begin{align*}
&\sum_{\substack{\xi\in\Z\\\text{square free}}}\sum_{\xi'\in\Q^\times}
\prod_vJ_v(\cc,s,\xi,\xi')
\\&\leq M_1  \sum_{m=1}^\infty\sum_{\substack{\xi\in\Z_{>0}\\\text{square free}}}\sum_{\xi'\in\Q_{>0}^\times}
\xi^{-t} (\xi')^{-t}m^{-4t}|a_f(D\xi'm^2)\overline{a_g(\xi m^2)}|.
\end{align*}
Unless $\xi'\in(Dm^2)^{-1}\Z_{>0}$, we have $|a_f(D\xi'm^2)\overline{a_g(\xi m^2)}|=0$.
So we have
\begin{align*}
&\sum_{\substack{\xi\in\Z\\\text{square free}}}\sum_{\xi'\in\Q^\times}
\prod_vJ_v(\cc,s,\xi,\xi')
\\&\leq M_1  \sum_{m=1}^\infty\sum_{\substack{\xi\in\Z_{>0}\\\text{square free}}}\sum_{n\in\Z_{>0}}
\xi^{-t} \left(\frac{n}{Dm^2}\right)^{-t}m^{-4t}|a_f(n)\overline{a_g(\xi m^2)}|.
\end{align*}
It is well-known that
\[
|a_f(n)|=O(n^{\kappa+1/2}),\quad|a_g(n)|=O(n^{\kappa+1}).
\]
Hence, there exists a constant $M_2>0$ such that
\begin{align*}
&\sum_{\substack{\xi\in\Z\\\text{square free}}}\sum_{\xi'\in\Q^\times}
\prod_vJ_v(\cc,s,\xi,\xi')\\
&\leq M_2  \sum_{m=1}^\infty\sum_{\substack{\xi\in\Z_{>0}\\\text{square free}}}\sum_{n\in\Z_{>0}}
\xi^{-t} \left(\frac{n}{Dm^2}\right)^{-t}m^{-4t}n^{\kappa+1/2}(\xi m^2)^{\kappa+1}\\
&\leq M_2 D^{t} \sum_{m=1}^\infty\sum_{\xi=1}^{\infty}\sum_{n=1}^\infty
\xi^{-\sigma/4+1/2}m^{-\sigma/2+1}n^{-\sigma/4}\\
&=M_2D^t\zeta(\sigma/4-1/2)\zeta(\sigma/2-1)\zeta(\sigma/2).
\end{align*}
This is finite if $\sigma>6$.
\end{proof}
We return to the proof of Proposition \ref{L(1/2)}.
By Lemma \ref{I and J}, for $\re(s)\gg0$, we have
\begin{align*}
I(\cc,s)&=\frac{1}{2}\sum_{\substack{\xi\in\Z\\\text{square free}}}\prod_v I_v(\cc,s,\xi)\\
&=\frac{1}{2} (4\pi)^{-(s/2+2\kappa+1)}C^{-s/2}\Gamma(s/2+2\kappa+1)
\left(\prod_{q\mid C}\frac{q^s+q}{1+q}\right)\left(\prod_{p\mid D}q_p^{-1}\right)
\\&\quad \times
a_f(D)^{-1}\sum_{\substack{\xi\in\Z_{>0}\\\text{square free}}}\sum_{n=1}^\infty
(\xi n^2)^{-(s/2+2\kappa+1)}\alpha_{F_\cc}(D\xi n^2)\overline{a_g(\xi n^2)}\\
&=\frac{1}{2} (4\pi)^{-(s/2+2\kappa+1)}C^{-s/2}\Gamma(s/2+2\kappa+1)
\left(\prod_{q\mid C}\frac{q^s+q}{1+q}\right)\left(\prod_{p\mid D}q_p^{-1}\right)
\\&\quad \times
a_f(D)^{-1}\sum_{m=1}^\infty
m^{-(s/2+2\kappa+1)}\alpha_{F_\cc}(Dm)\overline{a_g(m)}.
\end{align*}
Recall that $a_{f^{\cc*}}(Dm)={\bf a}_D^\cc(Dm)\alpha_{F_\cc}(Dm)$.
By definition of ${\bf a}_D^\cc$, we have ${\bf a}_D^\cc(Dm)=1$.
So we have
\[
\alpha_{F_\cc}(Dm)=a_{f^{\cc*}}(Dm)
=\sum_{Q\subset Q_D}\chi_Q(-C)a_{f_Q}(Dm)
=\sum_{Q\subset Q_D}\chi_Q(-C)a_{f_Q}(D)a_{f_Q}(m).
\]
Hence we have
\begin{align*}
&\sum_{m=1}^\infty m^{-(s/2+2\kappa+1)}\alpha_{F_\cc}(Dm)\overline{a_g(m)}\\
&=\sum_{Q\subset Q_D}\chi_Q(-C)a_{f_Q}(D)
\sum_{m=1}^\infty m^{-(s/2+2\kappa+1)}a_{f^{\cc*}}(m)\overline{a_g(m)}\\
&=\sum_{Q\subset Q_D}\chi_Q(-C)a_{f_Q}(D)\D(s/2+2\kappa+1,f_Q,g)\\
&=\sum_{Q\subset Q_D}\chi_Q(-C)a_{f_Q}(D)L(s+1,\chi)^{-1}L(s/2+1/2,f_Q\times g).
\end{align*}
This completes the proof of Proposition \ref{L(1/2)} using Lemma \ref{I and J}.
\end{proof}
By Proposition \ref{period} and \ref{L(1/2)}, we get the following corollary 
using Lemma \ref{local} and Lemma \ref{I and J}.
\begin{cor}\label{L(1/2)=<F,g*g>}
For $\cc\in J_\OO^D$ such that $C=N(\cc)$ is a square free integer, we have
\begin{align*}
&\sum_{Q\subset Q_D}\chi_Q(-C)a_{f_Q}(D)L(1/2,f_Q\times g)
\\&=\frac{2L(1,\chi)(4\pi)^{2\kappa+1}}{(2\kappa)!}
\frac{1}{\# ({\it Cl}_K^2)}\sum_{[\mathfrak{a}]\in {\it Cl}_K^2}
\frac{\pair{F_{\mathfrak{a}\cc}|_{\Ha\times\Ha},g\times g_{AC}}}{\pair{g_{AC},g_{AC}}}.
\end{align*}
\end{cor}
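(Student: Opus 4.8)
The plan is to prove Corollary \ref{L(1/2)=<F,g*g>} by simply equating the two independent evaluations of the integral $I(\cc)$ that have already been established, namely Proposition \ref{period} and Proposition \ref{L(1/2)}, and then solving the resulting identity for the sum of central $L$-values. Since both propositions are assumed (the first via Proposition \ref{theta}, the second via the Siegel--Weil formula together with Lemma \ref{G->g} and the unfolding in Lemma \ref{I and J}), no new analytic input is required; the entire content is the bookkeeping of the elementary constants.

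First I would write down side by side the two formulas. By Proposition \ref{period}, after factoring the constant $\pair{g,g}$ out of the sum, we have
\[
I(\cc) = 2^{2\kappa}\,\xi_\Q(2)^{-2}\left(\prod_{p\mid D}q_p^{-1}\right)a_f(D)^{-1}\pair{g,g}\cdot\frac{1}{\#({\it Cl}_K^2)}\sum_{[\mathfrak{a}]\in {\it Cl}_K^2}\frac{\pair{F_{\mathfrak{a}\cc}|_{\Ha\times\Ha},g\times g_{AC}}}{\pair{g_{AC},g_{AC}}},
\]
whereas by Proposition \ref{L(1/2)},
\[
I(\cc) = 2^{-2}\,\xi_\Q(2)^{-2}\left(\prod_{p\mid D}q_p^{-1}\right)a_f(D)^{-1}\pair{g,g}\,(2\pi)^{-(2\kappa+1)}(2\kappa)!\,L(1,\chi)^{-1}\sum_{Q\subset Q_D}\chi_Q(-C)a_{f_Q}(D)L(1/2,f_Q\times g).
\]

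Next I would set these equal and cancel the factors common to both sides, namely $\xi_\Q(2)^{-2}$, $\prod_{p\mid D}q_p^{-1}$, $a_f(D)^{-1}$ and $\pair{g,g}$, obtaining
\[
2^{2\kappa}\,\frac{1}{\#({\it Cl}_K^2)}\sum_{[\mathfrak{a}]}\frac{\pair{F_{\mathfrak{a}\cc}|_{\Ha\times\Ha},g\times g_{AC}}}{\pair{g_{AC},g_{AC}}} = 2^{-2}(2\pi)^{-(2\kappa+1)}(2\kappa)!\,L(1,\chi)^{-1}\sum_{Q\subset Q_D}\chi_Q(-C)a_{f_Q}(D)L(1/2,f_Q\times g).
\]
Solving for the $L$-value sum produces the prefactor $2^{2\kappa+2}(2\pi)^{2\kappa+1}L(1,\chi)/(2\kappa)!$, and the final step is the elementary identity $2^{2\kappa+2}(2\pi)^{2\kappa+1}=2\,(2\cdot 2\pi)^{2\kappa+1}=2\,(4\pi)^{2\kappa+1}$, which rewrites this prefactor as $2L(1,\chi)(4\pi)^{2\kappa+1}/(2\kappa)!$ and yields exactly the asserted identity. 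There is no genuine obstacle here beyond correctly tracking the powers of $2$ and of $2\pi$; the only point demanding care is that the constant $\pair{g,g}$ sitting inside the sum in Proposition \ref{period} is independent of the class $[\mathfrak{a}]$, so that it may legitimately be pulled out and cancelled against the global $\pair{g,g}$ appearing in Proposition \ref{L(1/2)}.
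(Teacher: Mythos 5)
Your proposal is correct and coincides with the paper's own derivation: the corollary is obtained exactly by equating the two evaluations of $I(\cc)$ from Proposition \ref{period} and Proposition \ref{L(1/2)}, cancelling the common factors $\xi_\Q(2)^{-2}$, $\prod_{p\mid D}q_p^{-1}$, $a_f(D)^{-1}$ and $\pair{g,g}$, and simplifying $2^{2\kappa+2}(2\pi)^{2\kappa+1}=2(4\pi)^{2\kappa+1}$. The constant bookkeeping in your computation checks out.
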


\subsection{Genus theory}
We recall the genus theory (see e.g., Hecke \cite{Hecke} \S48).
Let $\mathfrak{G}={\it Cl}_K/{\it Cl}_K^2$.
Note that the canonical homomorphism
$J_\OO^D\rightarrow \mathfrak{G}$ is surjective by the Chebotarev density theorem.
Let $N\colon J_\OO^D\rightarrow \Z_{>0}$ be the ideal norm map.
For $Q\subset Q_D$, the map $\chi_Q\circ N\colon J_\OO^D\rightarrow \{\pm1\}$
gives a character of $\mathfrak{G}$.
Moreover, for $Q,Q'\subset Q_D$, we see that
$\chi_Q\circ N=\chi_{Q'}\circ N$ if and only if $Q'=Q$ or $Q'=Q_D\setminus Q$.
In particular, $\chi_Q\circ N$ is the trivial character of $\mathfrak{G}$
if and only if $Q=\emptyset$ or $Q=Q_D$.
So if we choose $\cc_1,\dots,\cc_l\in J_\OO^D$ such that 
they give a complete system of $\mathfrak{G}$, then we have
\[
\frac{1}{({\it Cl}_K:{\it Cl}_K^2)}
\sum_{j=1}^l \chi_{Q}(-N(\cc_j))=\left\{
\begin{aligned}
&1\iif Q=\emptyset,\\
&-1\iif Q=Q_D,\\
&0&\quad&\text{otherwise}
\end{aligned}
\right.
\]
since $\chi(-1)=-1$.
We may assume that $C_j=N(\cc_j)$ is a square free integer for each $j=1,\dots,l$.
Considering $({\it Cl}_K:{\it Cl}_K^2)^{-1}\sum_{j=1}^l I(\cc_j)$, 
by Corollary \ref{L(1/2)=<F,g*g>} and Lemma \ref{indep}, we have
\begin{align*}
&
a_{f_\emptyset}(D)L(1/2,f_\emptyset\times g)
-a_{f_{Q_D}}(D)L(1/2,f_{Q_D}\times g)
\\
&=\frac{2L(1,\chi)(4\pi)^{2\kappa+1}}{(2\kappa)!}
\frac{1}{\#({\it Cl}_K^2)}\frac{1}{({\it Cl}_K:{\it Cl}_K^2)}
\sum_{j=1}^l\sum_{[\mathfrak{a}]\in {\it Cl}_K^2}
\frac{\pair{F_{\mathfrak{a}\cc_j}|_{\Ha\times\Ha},g\times g_{AC_j}}}{\pair{g_{AC_j},g_{AC_j}}}\\
&=\frac{2L(1,\chi)(4\pi)^{2\kappa+1}}{(2\kappa)!}
\frac{1}{h_K}\sum_{[\cc]\in {\it Cl}_K}
\frac{\pair{F_{\cc}|_{\Ha\times\Ha},g\times g_{C}}}{\pair{g_{C},g_{C}}}.
\end{align*}
Now we find that
\begin{align*}
&a_{f_\emptyset}(D)L(1/2,f_\emptyset\times g)=a_{f}(D)L(1/2,f\times g)
,\\
&a_{f_{Q_D}}(D)L(1/2,f_{Q_D}\times g)=\overline{a_{f}(D)L(1/2,f\times g)}.
\end{align*}
Since $a_{f}(D)L(1/2,f\times g)\in\I\R$, we have
\[
a_{f_\emptyset}(D)L(1/2,f_\emptyset\times g)
-a_{f_{Q_D}}(D)L(1/2,f_{Q_D}\times g)
=2a_f(D)L(1/2,f\times g).
\]
This completes the proof of Theorem \ref{main} using
Lemma \ref{local} and Lemma \ref{I and J}.

\section{The local integrals}\label{local integrals}
In this section, we prove Lemma \ref{local} and Lemma \ref{I and J}.
We show these lemmas only when $v=\infty$, $v=p=2$ or $v=p\mid C_i$ (or $v=p\mid C$).
The other cases are shown similarly.

\subsection{Preliminaries}
Through this section, we use the notation
\[
B=\begin{pmatrix}
b_1&b_2+\sD b_3\\b_2-\sD b_3&b_4
\end{pmatrix}
\]
and we put $\xi=\det(B)$, $\beta=\beta(B)$, $m_i=\ord_p(b_i)$ and $l=\ord_p(\xi)$
when we prove Lemma \ref{local} as in Sect.~\ref{coeff of theta}.
We fix a place $v$ of $\Q$.
Let $\psi=\psi_v$ be the standard character of $\Q_v$.
\par

Since the anisotropic kernel of $V$ is isomorphic to the one of $V''$ over $\Q$, 
we find that $\gamma_{V(\Q_v)}=\gamma_{V''(\Q_v)}$  and 
$\chi_{V(\Q_v)}=\chi_{V''(\Q_v)}=\bchi_v$.
Moreover, we have $\gamma_{V'(\Q_v)}=1$ and $\chi_{V'(\Q_v)}=1$.
We simply denote $\gamma_{V(\Q_v)}$ by $\gamma_V$ and $\chi_{V(\Q_v)}$ by $\chi_V$. 
It is easily seen that
$\vol(V(\Z_p))=\vol(V''(\Z_p))=|4D|_p^{1/2}$ and $\vol(V'(\Z_p))=1$ if $v=p<\infty$,
and
\[
\gamma_V=\gamma_{V''}=(2,-1)_{\Q_v}\gamma_{\Q_v}(D,\psi)\gamma_{\Q_v}(\psi)^2.
\]
\par

Let $\varphi'\in\mathcal{S}(V'(\Q_v))$ and $\varphi''\in\mathcal{S}(V''(\Q_v))$
and we put $\varphi=\varphi'\otimes\varphi''\in\mathcal{S}(V(\Q_v))$.
Then we have
\[
\omega_V(g,1)\varphi=(\omega_{V'}(g,1)\varphi')\otimes(\omega_{V''}(g,1)\varphi'').
\]
for $g\in\SL_2(\Q_v)$.
If $v=p<\infty$, for open compact subsets $A,B\subset \Q_p$, we denote 
the characteristic function of of 
$Ae_3+Be_4\subset V''(\Q_p)$ by $\varphi''(A,B)\in\mathcal{S}(V''(\Q_p))$.
Let $\varphi'\in \mathcal{S}(V'(\Q_p))$ be the characteristic function of $V'(\Z_p)$. 
Then we find that 
$\varphi'$ is $\SL_2(\Z_p)$-invariant.
We put $\varphi(A,B)=\varphi'\otimes\varphi''(A,B)\in\mathcal{S}(V(\Q_p))$.
\par

Let $v=p<\infty$.
We put $q_p=(\SL_2(\Z_p):K_0(D;\Z_p))$.
It is well-known that for $N\in p\Z_p$,
a complete system of representatives of $\SL_2(\Z_p)/K_0(N;\Z_p)$
is given by
\[
\mathcal{A}(N;\Z_p)=
\left\{\left.
\begin{pmatrix}
1&0\\c&1
\end{pmatrix}
\right|c\in\Z_p/N\Z_p
\right\}\cup
\left\{\left.
\begin{pmatrix}
a&-1\\1&0
\end{pmatrix}
\right|a\in p\Z_p/N\Z_p
\right\}.
\]
Note that
\[
\begin{pmatrix}
1&0\\c&1
\end{pmatrix}
=-w\cdot n(-c)\cdot w
,\quad
\begin{pmatrix}
a&-1\\1&0
\end{pmatrix}
=n(a)\cdot w.
\]
If $c\not=0$, we have
$
-w\cdot n(-c)\cdot w
=t(c^{-1})\cdot n(c)\cdot w\cdot n(c^{-1})$.

\subsection{The archimedean case}
We will prove Lemma \ref{local} (\ref{arch}) and Lemma \ref{I and J} (1).
Let $v=\infty$. 
We define the functions $f$ and $q'$ on $V'(\R)$ and $q''$ on $V''(\R)$ by
\begin{align*}
f(u')&=-\I u_1+u_2-u_5+\I u_6\quad{\rm and }\\
q'(u')&=u_1^2+u_2^2+u_5^2+u_6^2,\quad
q''(u'')=2u_3^2+2Du_4^2=2Q''[u'']
\end{align*}
for $u'=u_1e_1+u_2e_2+u_5e_5+u_6e_6\in V'(\R)$ and $u''=u_3e_3+u_4e_4\in V''(\R)$.
We regard $f$ as a function on $V(\R)$ by
$f(u'+u'')=f(u')$
for $u'\in V'(\R)$ and $u''\in V''(\R)$.
We put $q(u)=q'(u')+q''(u'')$ for $u=u'+u''\in V(\R)$ with $u'\in V'(\R)$ and $u''\in V''(\R)$.
We consider 
\[
\varphi'(u')=f(u')^{2\kappa+2}e^{-\pi q'(u')}\in\mathcal{S}(V'(\R))
,\quad
\varphi''(u'')=e^{-\pi q''(u'')}\in\mathcal{S}(V''(\R)),
\]
and we put $\varphi=\varphi'\otimes \varphi''\in\mathcal{S}(V(\R))$.
\begin{prop}\label{SO(2)-inv}
For $k_\theta\in\SO(2)$, we have
\begin{align*}
\omega_{V'(\R)}(k_\theta,1)\varphi'(u')
=e^{-\I(2\kappa+2)\theta}\varphi'(u'),\quad
\omega_{V''(\R)}(k_\theta,1)\varphi''(u'')
=e^{\I\theta}\varphi''(u'').
\end{align*}
So we have
\[
\omega_{V(\R)}(k_\theta,\phi_1(k')\phi_2(t))\varphi(u)
=e^{-\I(2\kappa+1)\theta}\det(\alpha+\I\beta)^{2\kappa+2}\varphi(u)
\]
for
\[
k'=\begin{pmatrix}
\alpha&\beta\\-\beta&\alpha
\end{pmatrix}
\in \K_0
\]
and $t\in\C^1$.
In particular, the functions
\begin{align*}
&\SL_2(\R)\ni g\longmapsto \hat{\omega}_{V(\R)}(g,\phi_1((z{\bf 1}_4)n(X)m(A)k')\phi_2(t))
\hat{\varphi}(-\beta;0,1)W_{\ff,\infty}(a(\xi)g),\\
&\SL_2(\R)\ni g\longmapsto \Phi(g,s,\varphi'')W_{\ff,\infty}(a(\xi')g)\overline{W_{\g,\infty}(a(\xi)g)}
\end{align*}
are $\SO(2)$-invariant for all $z\in\C^\times$, $X\in\Her(\R)$, $A\in\GL_2(\C)$ with 
$\det(A)\in\R^\times$,
$k'\in\K_0$, $t\in\C^\times$ with $|t|=1$
, $\xi,\xi'\in\Q^\times$ and $s\in\C$.
\end{prop}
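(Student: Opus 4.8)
The plan is to deduce the combined identity for $\omega_{V(\R)}$ from the two factor identities together with a computation of the orthogonal action, and then to read off the $\SO(2)$-invariance of the two functions on $\SL_2(\R)$ from the resulting weight and the explicit Whittaker functions of Lemma \ref{explicit}. First I would set up the reduction. Since $\varphi=\varphi'\otimes\varphi''$ and $\omega_{V(\R)}(g,1)=\omega_{V'(\R)}(g,1)\otimes\omega_{V''(\R)}(g,1)$ for $g\in\SL_2(\R)$, the two displayed identities multiply to give $\omega_{V(\R)}(k_\theta,1)\varphi=e^{-\I(2\kappa+2)\theta}e^{\I\theta}\varphi=e^{-\I(2\kappa+1)\theta}\varphi$. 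To incorporate $h=\phi_1(k')\phi_2(t)$ I use $\omega_{V(\R)}(k_\theta,h)=\omega_{V(\R)}(k_\theta,1)\circ L(h)$ (valid since $\det k_\theta=1$), where $L(h)\varphi(u)=|\nu(h)|^{-m/4}\varphi(h^{-1}u)$. For $k'\in\K_0\subset\SU(\R)$ and $t\in\C^1$ one has $\nu(h)=\lam(k')^2N_{K/\Q}(t)=1$, so $L(h)\varphi(u)=\varphi(h^{-1}u)$. Because $\phi_2'(t)=\diag(N_{K/\Q}(t),1,N_{K/\Q}(t),1)$ is the identity on $V'$ when $N_{K/\Q}(t)=1$, while $\phi_2''(t)$ merely rotates $V''$ and fixes the radial Gaussian $\varphi''$, the element $\phi_2(t)$ fixes $\varphi$; thus only $\phi_1(k')$ can contribute a nontrivial factor.

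Next I would prove the two eigenvalue identities in the Schrödinger model, using the generators $\omega(n(b),1),\omega(t(a),1),\omega(w,1)$ of Section \ref{Weil rep} (equivalently, the infinitesimal action of the generator of $\mathfrak{so}(2)$). For $V''$, positive definite of dimension $2$, $\varphi''=e^{-\pi q''}$ is the standard Gaussian; since $\omega(w,1)$ acts by a Fourier transform that preserves Gaussians, tracking the metaplectic constant shows $\varphi''$ is an eigenvector of weight $(2-0)/2=1$, giving $e^{\I\theta}$. For $V'$, of signature $(2,2)$, the majorant Gaussian $e^{-\pi q'}$ has weight $(2-2)/2=0$, while $f=-\I u_1+u_2-u_5+\I u_6$ is a pluriharmonic linear form of weight $-1$; hence $f^{2\kappa+2}e^{-\pi q'}$ has weight $-(2\kappa+2)$ and transforms by $e^{-\I(2\kappa+2)\theta}$.

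For the orthogonal factor I would verify by direct substitution that $\phi_1(k')$, with $k'=\begin{pmatrix}\alpha&\beta\\-\beta&\alpha\end{pmatrix}\in\K_0$, fixes $\ii\in\Ht$ and therefore preserves the majorant $q'$ (so $e^{-\pi q'}$ is unchanged), and that it scales the linear form by $f\circ\phi_1(k')^{-1}=\det(\alpha+\I\beta)f$; raising to the power $2\kappa+2$ gives $L(\phi_1(k')\phi_2(t))\varphi=\det(\alpha+\I\beta)^{2\kappa+2}\varphi$, and composing with $\omega_{V(\R)}(k_\theta,1)$ yields the combined formula. Finally the $\SO(2)$-invariance follows by character bookkeeping. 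For the first function, the partial Fourier transform intertwines the representations, so $\hat\omega_{V(\R)}(gk_\theta,h)\hat\varphi=e^{-\I(2\kappa+1)\theta}\hat\omega_{V(\R)}(g,h)\hat\varphi$, while Lemma \ref{explicit} gives $W_{\ff,\infty}(a(\xi)gk_\theta)=e^{\I(2\kappa+1)\theta}W_{\ff,\infty}(a(\xi)g)$; the two characters cancel. For the second, $\Phi(gk_\theta,s,\varphi'')=e^{\I\theta}\Phi(g,s,\varphi'')$ (from the $V''$-identity, since $|a(\cdot)|$ is $\SO(2)$-invariant), $W_{\ff,\infty}(a(\xi')gk_\theta)=e^{\I(2\kappa+1)\theta}W_{\ff,\infty}(a(\xi')g)$ and $\overline{W_{\g,\infty}(a(\xi)gk_\theta)}=e^{-\I(2\kappa+2)\theta}\overline{W_{\g,\infty}(a(\xi)g)}$, so the product of characters is $e^{\I(1+(2\kappa+1)-(2\kappa+2))\theta}=1$.

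The main obstacle is the signature-$(2,2)$ computation for $V'$: because $V'$ is indefinite, the elementary ``Gaussian on a definite space'' argument does not apply, and one must work with the majorant together with the pluriharmonic polynomial $f$ and pin down, with consistent sign conventions, both that $f$ carries $\SL_2$-weight $-1$ and that $\phi_1(k')$ acts on $f$ by the character $\det(\alpha+\I\beta)$. Once these conventions are fixed the remaining steps are the routine bookkeeping described above.
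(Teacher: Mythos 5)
Your proposal is correct and follows essentially the same route as the paper: the paper likewise verifies $q\circ(\phi_2(t)^{-1}\phi_1(k')^{-1})=q$ and $f\circ(\phi_2(t)^{-1}\phi_1(k')^{-1})=\det(\alpha+\I\beta)f$ by direct substitution, and obtains the $\SO(2)$-weights of $\varphi'$ and $\varphi''$ from the Lie algebra action of $X_1-X_2$ (``classical Fourier analysis''), which is exactly the computation underlying the Gaussian/pluriharmonic weight formula you invoke; the closing character bookkeeping with the explicit Whittaker functions is identical. One small imprecision: since $\phi_1(k')$ need not preserve the splitting $V=V'\oplus V''$, the invariance must be asserted for the full majorant $q=q'+q''$ on $V$ rather than for $q'$ on $V'$ alone, but as $\varphi=f^{2\kappa+2}e^{-\pi q}$ this does not affect your conclusion.
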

\begin{proof}
Note that $k'^{-1}={}^t\overline{k'}$ and $\det(k')=\det(\alpha+\I\beta)\det(\alpha-\I\beta)=1$.
By a simple calculation, we have
\[
f(\phi_2(t)^{-1}\phi_1(k)^{-1}u)=\det(\alpha+\I\beta)f(u),
\quad
q(\phi_2(t)^{-1}\phi_1(k)^{-1}u)=q(u)
\]
for $u\in V(\R)$.
These equations give the actions of $k'$ and $t$.
\par

To study the action of $\SO(2)$ on $\mathcal{S}(V'(\R))$ and on $\mathcal{S}(V''(\R))$, 
consider the action of the Lie algebra ${\rm Lie}(\SL_2(\R))$.
Note that $k_\theta=\exp(\theta(X_1-X_2))$ with
\[
X_1=\begin{pmatrix}
0&1\\0&0
\end{pmatrix},\ 
X_2=\begin{pmatrix}
0&0\\1&0
\end{pmatrix}
\in{\rm Lie}(\SL_2(\R)).
\]
By  the classical Fourier analysis,
we can calculate the actions of $X_1$ and $X_2$.
\end{proof}

Now, we start to prove Lemma \ref{local} (\ref{arch}).
The proof of Lemma \ref{local} (\ref{arch}) is similar to that of Lemma 7.6 of \cite{Ichino}.
By Proposition \ref{SO(2)-inv}, we may assume that $x_\infty=n(X)m(A)$.
By \cite{Ichino} Lemma 7.4, we find that $\hat{\varphi}(x;y_1,y_2)$ is equal to
\begin{align*}
(2\sqrt{\pi})^{-(2\kappa+2)}
&H_{2\kappa+2}(\sqrt{\pi}(x_1-x_4+\I y_1+y_4))
\\&\times
\exp(-\pi((x_1-x_4+y_2)^2-2y_2(x_1-x_4)+2Q_1[x]+y_1^2))
\end{align*}
for $x={}^t(x_1,x_2,x_3,x_4)\in V_1(\R)$ and $y_1,y_2\in\R$.
Here, 
\[
H_n(x)=(-1)^n e^{x^2}\frac{d^n}{dx^n}\left(e^{-x^2}\right)
\]
is the Hermite polynomial.
For $A\in\GL_2(\C)$ with $\det(A)\in\R^\times$, we have
\[
\phi_1(m(A))=
\begin{pmatrix}
\det(A)&0&0\\
0&h_1&0\\
0&0&\det(A)^{-1}
\end{pmatrix}
\]
with some $h_1\in\GSO(3,1)(\R)$.
By a simple calculation, we have
\[
-(h_1^{-1}\beta)_1+(h_1^{-1}\beta)_4=\det(A)^{-1}\Tr(BA{}^t\overline{A})
=\det(A)^{-1}\Tr(BY).
\]
Let $v=v(X)$ as in Sect.~\ref{coeff of theta}.
Then we have $\phi_1(n(X))=\ell(v)$, $(v,\beta)_{V_1}=\Tr(BX)$
and $Q_1[h_1^{-1}\beta]=Q_1[\beta]=-\det(B)=-\xi$.
So we have
\begin{align*}
&\hat{\omega}\left(
t(x),\phi
\left(
n(X)m(A)
\right)
\right)
\hat{\varphi}(-\beta;0,1)\\
&=\chi_V(x)|x|_\infty^{3}
\int_{\R}\varphi
\begin{pmatrix}
x\det(A)^{-1}z-x\det(A)^{-1}(v,\beta)\\
-h_1^{-1}\beta x\\0
\end{pmatrix}
e^{2\pi\I z}dz\\
&=\chi_V(x)|x|_\infty^{2}|\det(A)|_\infty e^{2\pi\I \Tr(BX)}
\hat{\varphi}(-h_1^{-1}\beta x;0,\det(A)x^{-1})\\
&=\chi_V(x)|x|_\infty^{2}|\det(A)|_\infty e^{2\pi\I \Tr(BX)}
H_{2\kappa+2}(\sqrt{\pi}[\det(A)^{-1}x\Tr(BY)+\det(A)x^{-1}])\\
&\quad\times
(4\pi)^{-\kappa-1}
\exp(-\pi[\det(A)^{-1}x\Tr(BY)+\det(A)x^{-1}]^2)
\exp(2\pi\Tr(BY)+2\pi x^2\xi).
\end{align*}
By the formula of $W_{\ff,\infty}$ in Sect.~\ref{Whittaker},
we have $\W_{B,\infty}(n(X)m(A))=0$ if $\xi<0$.
If $\xi>0$, then $\W_{B,\infty}(n(X)m(A))$ is equal to
\begin{align*}
&\int_{\R^\times}\hat{\omega}(t(x),\phi_1(n(X)m(A)))\hat{\varphi}(-\beta;0,1)W_{\ff,\infty}(a(\xi)t(x))|x|_\infty^{-2}d^\times x\\
&=2\int_0^\infty \hat{\omega}(t(x),\phi_1(n(X)m(A)))\hat{\varphi}(-\beta;0,1)W_{\ff,\infty}(a(\xi)t(x))x^{-3}dx\\
&=2|\det(A)|^{2\kappa+2} e^{2\pi\I\Tr(B(X-Y\I))}(4\pi)^{-\kappa-1}\xi^{\kappa+(1/2)}
\\
&\quad\times 
\int_{0}^{\infty}x^{2\kappa}
e^{-\pi[x\Tr(BY)+x^{-1}]^2}
H_{2\kappa+2}(\sqrt{\pi}[x\Tr(BY)+x^{-1}])dx.
\end{align*}
By \cite{Ichino} Lemma 7.5, the last integral is equal to
\[\left\{
\begin{split}
&2^{2(2\kappa+2)-1}\pi^{\kappa+1}e^{4\pi\I\Tr(BY\I)}&\quad&{\rm if\ }B>0,\\
&0&\quad&{\rm if\ }B<0.
\end{split}
\right.\]
Since $Y=A{}^t\overline{A}$, we have $\det(Y)=|\det(A)|^2$.
This completes the proof of Lemma \ref{local} (\ref{arch}).

\par

Lemma \ref{I and J} (\ref{arch2}) follows from the equation
$\Phi(t(x),s,\varphi'')=\chi_{V''(\R)}(x)|x|_\infty^{s+1}$, 
the formulas of $W_{\ff,\infty}$ and $W_{\g,\infty}$ in Sect.~\ref{Whittaker},
and the well-known formula
\[
\int_{\R^\times}|x|^se^{-ax^2}d^\times x
=a^{-s/2}\Gamma(\frac{s}{2})
\]
for $a>0$.

\subsection{The unramified $2$-adic case}
We will prove Lemma \ref{local} (\ref{unram}) and Lemma \ref{I and J} (\ref{unram2}) for $p=2$.
Let $v=2\nmid D$. 
So we find that $D\equiv 3\bmod 4$.

In this case, we have
$(2,-1)_{\Q_2}=1$, $\gamma_{\Q_2}(D,\psi)=\I$ and $
\gamma_{\Q_2}(\psi)=\zeta_8^{-1}$.
Hence we have
$\gamma_{V}=\gamma_{V''}=1$.
\par

Let $\varphi''=\varphi''(\Z_2,\Z_2)+\varphi''(2^{-1}\Z_2^\times,2^{-1}\Z_2^\times)$
and $\varphi''_0=\varphi''(\Z_2,\Z_2)\in\mathcal{S}(V''(\Q_2))$. 
We put $\varphi=\varphi'\otimes\varphi''$ and $\varphi_0=\varphi'\otimes\varphi_0''$.
\begin{lem}\label{unram inv}
We have
\[
\varphi''(u)=2\int_{\SL_2(\Z_2)}\omega(k,1)\varphi''_0(u)dk
\quad {\rm and\ so}\quad
\varphi(u)=2\int_{\SL_2(\Z_2)}\omega(k,1)\varphi_0(u)dk.
\]
In particular, $\varphi$ and $\varphi''$ is $\SL_2(\Z_2)$-invariant.
Moreover $\varphi$ is also $\phi_1(\GSU(\Z_2))\phi_2((\OO\otimes_\Z\Z_2)^\times)$-invariant.
\end{lem}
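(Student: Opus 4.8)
The plan is to reduce the whole statement to an averaging identity on the anisotropic plane $V''$ and to reinterpret the relevant Schwartz functions as characteristic functions of orders in $K\otimes\Q_2$. First I would dispose of the $\varphi$-identity: since $\varphi=\varphi'\otimes\varphi''$ and $\varphi_0=\varphi'\otimes\varphi''_0$, and since $\omega_V(k,1)\varphi_0=(\omega_{V'}(k,1)\varphi')\otimes(\omega_{V''}(k,1)\varphi''_0)$ with $\varphi'$ being $\SL_2(\Z_2)$-invariant, the second identity follows from the first by pulling $\varphi'$ through the integral. So it suffices to prove $\varphi''=2\int_{\SL_2(\Z_2)}\omega(k,1)\varphi''_0\,dk$. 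The key observation is that, via $\ell\colon K\xrightarrow{\cong}V''$, the function $\varphi''_0$ is the characteristic function of the order $M_0=\Z_2+\Z_2\sD=\ell^{-1}(V''(\Z_2))$, while $\varphi''$ is the characteristic function of the maximal order $\OO\otimes_\Z\Z_2$. Indeed a short check, separately in the inert case $D\equiv 3\bmod 8$ and the split case $D\equiv 7\bmod 8$, shows
\[
\ell^{-1}(V''(\Z_2))\cup\ell^{-1}(2^{-1}\Z_2^\times e_3+2^{-1}\Z_2^\times e_4)=\OO\otimes_\Z\Z_2,
\]
using $[\OO\otimes_\Z\Z_2:M_0]=2$ and $D\equiv 3\bmod 4$. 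With this dictionary the claim becomes: averaging the characteristic function of the suborder $M_0$ over $\SL_2(\Z_2)$ produces $\tfrac12$ times the characteristic function of the maximal order.

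To prove the averaging identity I would compute the action of the generators. From $\psi_2|_{\Z_2}=1$, $Q''[M_0]\subseteq\Z_2$, the triviality of $\bchi_2=\chi_{V''}$ on units, and the stability of $M_0$ under unit scaling, one gets $\omega(b,1)\varphi''_0=\varphi''_0$ for $b$ in the Borel $B(\Z_2)$. Using $\gamma_{V''}=1$ and the self-dual measure, a direct Fourier transform gives $\omega(w,1)\varphi''_0=\tfrac12\mathbf{1}_{M_0^\#}$ with $M_0^\#=2^{-1}M_0$. Combining these, and using $\bchi_2(-1)=(-D,-1)_{\Q_2}=1$ (which holds because $D\equiv 3\bmod 4$), one checks that $\varphi''_0$ is invariant under $\omega(K_0(4;\Z_2))$, i.e.\ the level of $M_0$ is exactly $4$. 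The integral then collapses to the finite sum
\[
\int_{\SL_2(\Z_2)}\omega(k,1)\varphi''_0\,dk=\vol(K_0(4;\Z_2))\sum_{k_i\in\mathcal{A}(4;\Z_2)}\omega(k_i,1)\varphi''_0
\]
over the six representatives of $\SL_2(\Z_2)/K_0(4;\Z_2)$. Expressing each representative through $n,t,w$ as in the preliminaries, evaluating each $\omega(k_i,1)\varphi''_0$ by the generator formulas, and summing, one finds the total equals $3\cdot\mathbf{1}_{\OO\otimes_\Z\Z_2}$; multiplying by $\vol(K_0(4;\Z_2))=\tfrac16$ and by $2$ gives exactly $\varphi''$.

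For the invariance assertions, the $\SL_2(\Z_2)$-invariance of $\varphi$ and $\varphi''$ is immediate from the averaging formula by left translation and invariance of Haar measure. For $\phi_2((\OO\otimes_\Z\Z_2)^\times)$ I would argue directly: $\phi_2(\alpha)$ is block-diagonal for the orthogonal splitting $V=V'\perp V''$, acting on $V''\cong K$ as multiplication by $\bar{\alpha}\in(\OO\otimes_\Z\Z_2)^\times$ and on $V'$ by $\diag(N(\alpha),1,N(\alpha),1)$ with $N(\alpha)\in\Z_2^\times$; since $\varphi''$ is the characteristic function of the maximal order and $\varphi'$ that of $V'(\Z_2)$, both are preserved. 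For $\phi_1(\GSU(\Z_2))$, which lies in $\GSO(V)(\Z_2)$ with unit similitude and therefore preserves $V(\Z_2)$, I would use the relation $L(h)\,\omega(g,1)\,L(h)^{-1}=\omega(d(\nu(h))\,g\,d(\nu(h))^{-1},1)$ for $g\in\SL_2$, a consequence of $\omega$ being a representation of $R$, together with $L(\phi_1(h))\varphi_0=\varphi_0$; conjugating inside the average and changing variables $k\mapsto d(\nu(h))^{-1}k\,d(\nu(h))$ leaves the integral, hence $\varphi$, unchanged.

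The hard part will be the explicit $2$-adic computation of the six terms $\omega(k_i,1)\varphi''_0$ as functions rather than at a single point: one must carry the Gauss sums and the factor $\tfrac12$ so that the pieces supported on $M_0$, on $M_0^\#$, and on the intermediate cosets combine to be supported on exactly the maximal order $\OO\otimes_\Z\Z_2$, which is the isotropic part of the discriminant form $M_0^\#/M_0$, rather than on all of $M_0^\#$; and this bookkeeping must be carried out uniformly across the inert and split sub-cases.
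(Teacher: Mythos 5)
Your proposal is correct and follows essentially the same route as the paper: you establish $K_0(4;\Z_2)$-invariance of $\varphi''_0$, average over the six coset representatives of $\SL_2(\Z_2)/K_0(4;\Z_2)$ using the generator formulas and the Fourier transform $\omega(w,1)\varphi''_0=\tfrac12\mathbf{1}_{2^{-1}V''(\Z_2)}$, and observe that the character sum over $c\bmod 4$ cuts out exactly the support of $\varphi''$ (giving the total $3\varphi''$, hence the factor $2$), with the same reduction to $V''$ and the same invariance arguments at the end. The order-theoretic dictionary you use is only a repackaging of the paper's Lemma \ref{invariant}.
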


\begin{proof}
It is easy that $\varphi''_0$ is $K_0(4;\Z_2)$-invariant.
For $a,c\in\Z_2$, we have
\begin{align*}
\omega(-w\cdot n(-c)\cdot w,1)\varphi''_0(u)&=
\omega(-w,1)[(\omega(w,1)\varphi''_0](u))\psi(-cQ''[u]),\\ 
\omega(n(a)w,1)\varphi''_0&=[\omega(w,1)\varphi''_0](u)\psi(aQ''[u]).
\end{align*}
Now we find that
\begin{align*}
\sum_{c\in\Z_2/4\Z_2}\psi(-cQ''[u])
&=\left\{
\begin{aligned}
&4\iif Q''[u]\in\Z_2,\\
&0&\quad&\text{otherwise},
\end{aligned}
\right.
\end{align*}
Hence, we have
\[
\sum_{c\in\Z_2/4\Z_2}(\omega(w,1)\varphi''_0(u))\psi(-cQ''[u])
=4\vol(V''(\Z_2))\varphi''(u).
\]
Similarly, we have
\[
\sum_{a\in2\Z_2/4\Z_2}(\omega(w,1)\varphi''_0(u))\psi(aQ''[u])
=2\vol(V''(\Z_2))\varphi''(u).
\]
On the other hand, $\omega(w,1)\varphi''$ is equal to
\begin{align*}
&\omega(w,1)[\varphi''(2^{-1}\Z_2,2^{-1}\Z_2)-\varphi''(2^{-1}\Z_2,\Z_2)
-\varphi''(\Z_2,2^{-1}\Z_2)+2\varphi''(\Z_2,\Z_2)]\\
&=\vol(V''(\Z_2))[4\varphi''(\Z_2,\Z_2)-2\varphi''(\Z_2,2^{-1}\Z_2)
-2\varphi''(2^{-1}\Z_2,\Z_2)+2\varphi''(2^{-1}\Z_2,2^{-1}\Z_2)].
\end{align*}
Since $\vol(V''(\Z_2))=|\det(Q'')|^{1/2}=2^{-1}$, we have $\omega(w,1)\varphi''=\varphi''$.
Therefore we have
\begin{align*}
&\int_{\SL_2(\Z_2)}\omega(k,1)\varphi''_0(u)dk
\\&=(\SL_2(\Z_2):K_0(4;\Z_2))^{-1}\left[4\vol(V''(\Z_2))\varphi''(u)+2\vol(V''(\Z_2))\varphi''(u)\right]
=2^{-1}\varphi''(u).
\end{align*}
\par

The equation for $\varphi$ implies that 
$\varphi$ is $\SL_2(\Z_2)\times\phi_1(\GSU(\Z_2))$-invariant.
By Lemma \ref{invariant}, we find that $\varphi$ is $\phi_2((\OO\otimes_\Z\Z_p)^\times)$-invariant.
\end{proof}
\par

Now, we start to prove Lemma \ref{local} (\ref{unram}) for $p=2$. 
By Lemma \ref{GU(Zp)} and Lemma \ref{unram inv}, we find that $\W_{B,2}$ is right $\GU(\Z_2)$-invariant.
Moreover, we have
\begin{align*}
\W_{B,2}(1)&=\int_{\Q_2^\times}\hat{\omega}(t(x),1)\hat{\varphi}(-\beta;0,1)W_{\ff,2}(a(\xi)t(x))|x|_2^{-2}d^\times x\\
&=\int_{\Q_2^\times}[\chi_V(x)|x|_2^2\hat{\varphi}(-\beta x;0,x^{-1})][\bchi_2(x^{-1})W_{\ff,2}(a(\xi x^2))]|x|_2^{-2}d^\times x\\
&=\sum_{n\in\Z}\hat{\varphi}(-\beta 2^{-n};0,2^n)|\xi (2^{-n})^2|_2^{\kappa+1/2}a_f\left(\left(\xi (2^{-n})^2\right)_2\right).
\end{align*}
Note that $\varphi=\varphi(2^{-1}\Z_2,2^{-1}\Z_2)
-\varphi(2^{-1}\Z_2^\times,\Z_2)-\varphi(\Z_2,2^{-1}\Z_2^\times)$.
\begin{lem}
We have
\begin{align*}
\sum_{n\in\Z}\varphi(2^{-1}\Z_2^\times,\Z_2)\hat{}\ (-\beta 2^{-n};0,2^n)
|\xi 2^{-2n}|_2^{\kappa+1/2}a_f\left(\left(\xi 2^{-2n}\right)_2\right)
&=0,
\end{align*}
and
\begin{align*}
\sum_{n\in\Z}\varphi(\Z_2,2^{-1}\Z_2^\times)\hat{}\ (-\beta 2^{-n};0,2^n)
|\xi 2^{-2n}|_2^{\kappa+1/2}a_f\left(\left(\xi 2^{-2n}\right)_2\right)
&=0.
\end{align*}
\end{lem}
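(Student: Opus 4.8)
The plan is to write the partial Fourier transform $\hat\varphi$ explicitly and then to show that in each of the two sums at most one index $n$ yields a nonzero value of $\hat\varphi$, and that for this index the Fourier coefficient $a_f((\xi2^{-2n})_2)$ vanishes.

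First I would compute $\hat\varphi$ for $\varphi=\varphi'\otimes\varphi''(A,B)$. Since $V_0=\pair{e_1,e_6}$ is the hyperbolic plane sitting in the corners of $Q$, the partial Fourier transform of Sect.~\ref{change} integrates out the $e_1$-coordinate $z$ against $\psi(y_2z)$ while retaining the $V_1=\pair{e_2,e_3,e_4,e_5}$-part $x_1$ and the $e_6$-coordinate $y_1$. As $\varphi'$ is the characteristic function of $V'(\Z_2)$ (constraining the $e_1,e_2,e_5,e_6$-coordinates to $\Z_2$) and $\int_{\Z_2}\psi(y_2z)\,dz$ is the characteristic function of $\Z_2$ in $y_2$, the transform $\hat\varphi(x_1;y_1,y_2)$ is supported exactly where $y_1,y_2\in\Z_2$, the $e_2$- and $e_5$-coordinates of $x_1$ lie in $\Z_2$, and $\varphi''$ of the $(e_3,e_4)$-coordinates is nonzero. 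Using $\beta(B)={}^t(-b_4,-b_2,-b_3,b_1)$ in the $(e_2,e_3,e_4,e_5)$-coordinates, the evaluation at $(-\beta2^{-n};0,2^n)$ is thus nonzero precisely when $n\ge0$, $\ord_2(b_1)\ge n$, $\ord_2(b_4)\ge n$, $b_22^{-n}\in A$ and $b_32^{-n}\in B$.

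For the first sum $(A,B)=(2^{-1}\Z_2^\times,\Z_2)$, so a surviving term forces $\ord_2(b_2)=n-1$ and $\ord_2(b_3)\ge n$. The condition $\ord_2(b_2)=n-1$ determines $n=\ord_2(b_2)+1$ uniquely (and the sum is trivially $0$ if $b_2=0$), so at most one $n$ contributes. For that $n$ I would read off the $2$-adic valuation of $\xi=\det(B)=b_1b_4-b_2^2-Db_3^2$: using $\ord_2(b_1b_4)\ge 2n$, $\ord_2(b_2^2)=2n-2$, and $\ord_2(Db_3^2)=2\ord_2(b_3)\ge 2n$ (here $\ord_2(D)=0$ since $D\equiv 3\bmod 4$), the term $b_2^2$ has the strictly smallest valuation, whence $\ord_2(\xi)=2n-2$. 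The second sum $(A,B)=(\Z_2,2^{-1}\Z_2^\times)$ is handled identically with $b_2$ and $b_3$ interchanged: there $\ord_2(b_3)=n-1$ pins $n$ down and $Db_3^2$ becomes the unique term of valuation $2n-2$, again because $D$ is a $2$-adic unit, so once more $\ord_2(\xi)=2n-2$.

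In either case $\ord_2(\xi2^{-2n})=-2<0$, so $(\xi2^{-2n})_2=2^{-2}\notin\Z_{>0}$ and hence $a_f((\xi2^{-2n})_2)=0$ by the convention that $a_f$ vanishes off positive integers; the unique surviving term therefore carries a vanishing coefficient and each sum is $0$. I expect the only delicate point to be the bookkeeping of the partial Fourier transform under $\beta$---keeping track of which entry $b_i$ occupies which of the coordinates $e_2,\dots,e_5$---since the argument rests entirely on the fact that, once $n$ is fixed by the half-integral condition on $b_2$ (resp. $b_3$), the unramifiedness $\ord_2(D)=0$ forces exactly one of $b_2^2$, $Db_3^2$ to control $\ord_2(\xi)$.
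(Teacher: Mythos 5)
Your argument is correct and matches the paper's proof essentially step for step: both identify the support conditions $n\geq 0$, $m_1,m_4\geq n$, $m_2=n-1$ (resp.\ $m_3=n-1$), $m_3\geq n$ (resp.\ $m_2\geq n$) from the partial Fourier transform, deduce $\ord_2(\xi)=2n-2<2n$ because $D$ is a $2$-adic unit, and conclude that $a_f\bigl((\xi 2^{-2n})_2\bigr)=0$ by the convention that $a_f$ vanishes off $\Z_{>0}$. No gaps.
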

\begin{proof}
Since $\beta={}^t(-b_4,-b_2,-b_3,b_1)$, 
we have $\varphi(2^{-1}\Z_2^\times,\Z_2)\hat{}\ (-\beta 2^{-n};0,2^n)=0$ unless
\begin{align*}
n\geq 0,\quad m_4-n\geq 0,\quad m_2-n= -1,\ 
m_3-n\geq0,\quad \text{and}\quad m_1-n\geq0.
\end{align*}
In this case, we have $n=m_2+1\leq\min(m_1,m_3,m_4)$.
So we find that
\[
l=\ord_2(\xi)=\ord_2(b_1b_4-b_2^2-Db_3^2)=2m_2<2n.
\]
Therefore we have
$a_f\left(\left(\xi 2^{-2n}\right)_2\right)=0$.
The second assertion is proved similarly.
\end{proof}
Let $m_0=\min(m_1,m_2+1,m_3+1,m_4)$.
Then by this lemma, we have
\begin{align*}
\W_{B,2}(1)
&=\sum_{n\in\Z}\varphi(2^{-1}\Z_2,2^{-1}\Z_2)\hat{}\ (-\beta 2^{-n};0,2^n)
|\xi 2^{-2n}|_2^{\kappa+1/2}a_f\left(\left(\xi 2^{-2n}\right)_2\right)\\
&=|\xi|_2^{\kappa+1/2}\sum_{n=0}^{m_0}(2^{n})^{2\kappa+1}a_f\left(\left(\xi 2^{-2n}\right)_2\right).
\end{align*}
This completes the proof of Lemma \ref{local} (\ref{unram}) for $p=2$.
\par

Next, we start to prove Lemma \ref{I and J} (\ref{unram2}) (for any $p$).
Then we have
\begin{align*}
J_p(\cc,s,\xi,\xi')&=\int_{\Q_p^\times}
\left|\Phi(t(x),s,\varphi'')W_{\ff,p}(a(\xi' C)t(x))\overline{W_{\g,p}(a(\xi C)t(x))}\right||x|_p^{-2}d^\times x\\
&=\int_{\Q_p^\times}\left|[\chi_{V''(\Q_p)}(x)|x|_p^{s+1}][\bchi_p(x^{-1})|\xi'Cx^2|_p^{\kappa+1/2}a_f\left((\xi' C x^2)_p\right)]
\right.\\&\quad\times\left.
\left[\overline{|\xi C x^2|_p^{\kappa+1}a_g\left((\xi C x^2)_p\right)}\right]\right||x|_p^{-2}d^\times x\\
&=|\xi|_p^{\kappa+1}|\xi'|_p^{\kappa+1/2}\int_{\Q_p^\times}|x|_p^{\sigma+4\kappa+2}
\left|a_f\left((\xi' x^2)_p\right)\overline{a_g\left((\xi x^2)_p\right)}\right|d^\times x.
\end{align*}
Since $\ord_p(\xi)=0$ or $1$, this is equal to
\[
|\xi|_p^{\kappa+1}|\xi'|_p^{\kappa+1/2}\sum_{n=0}^\infty (p^{-n})^{\sigma+4\kappa+2}
\left|a_f\left((\xi' p^{2n})_p\right)\overline{a_g\left((\xi p^{2n})_p\right)}\right|.
\]
This is the desired formula for $J_p(\cc,s,\xi,\xi')$.
The formula for $I_p(\cc,s,\xi)$ can be proved similarly.
These complete the proof of Lemma \ref{I and J} (\ref{unram2}) (for any $p$).

\subsection{The ramified $2$-adic case}
Let $v=2\mid D$.
We will prove Lemma \ref{local} (\ref{ram}) and Lemma \ref{I and J} (\ref{ram}) for this case.
We put $d=\ord_2(D)\in\{2,3\}$ and $D'=D/2^d$.
Write $D'=\delta u_0$ with $\delta\in\{\pm1\}$ and $u_0\in 1+4\Z_2$.
Note that if $d=2$, then $\delta=1$.
In this case, we have
$(2,-1)_{\Q_2}=1$, $\gamma_{\Q_2}(\psi)=\zeta_8^{-1}$.
If $d=2$, then $\gamma_{\Q_2}(D,\psi)=\gamma_{\Q_2}(D',\psi)=1$.
If $d=3$, then
\[
\bchi_2(2)=\left\{
\begin{aligned}
&1\iif D'\equiv\pm1\bmod8,\\
&-1\iif D'\equiv\pm3\bmod8,
\end{aligned}
\right.
\ 
\gamma_{\Q_2}(D,\psi)
=\left\{
\begin{aligned}
&1     \iif D'\equiv 1\bmod 8,\\
&-\I  \iif D'\equiv 3\bmod 8,\\
&-1   \iif D'\equiv 5\bmod 8,\\
&\I   \iif D'\equiv 7\bmod 8.
\end{aligned}
\right.
\]
Hence we have
$\gamma_{V}\varepsilon
=\gamma_{V''}\varepsilon
=\bchi_2(2^d)$.
\par

Let $\varphi''=\varphi''(\Z_2,2^{-1}\Z_2)$, $\varphi''_0=\varphi''(\Z_2,\Z_2)$, 
$\varphi''_1=\varphi''(2^{-1}\Z_2,(2D)^{-1}\Z_2)\in\mathcal{S}(V''(\Q_2))$.
We put $\varphi=\varphi'\otimes\varphi''$ and $\varphi_i=\varphi'\otimes\varphi''_i\in\mathcal{S}(V(\Q_2))$ for $i=0,1$.
Clearly, $\varphi_0$ is $\phi_1(\GSU(\Z_2))$-invariant.
We find that
\[
\omega(w,1)\varphi''_0=\gamma_{V''}^{-1}\vol(V''(\Z_2))\varphi''_1
\quad {\rm and\ so}\quad
\omega(w,1)\varphi_0=\gamma_{V}^{-1}\vol(V(\Z_2))\varphi_1.
\]
For 
\[
k=\begin{pmatrix}
a&b\\c&d
\end{pmatrix}
=w^{-1}\cdot n(-c/a)\cdot w\cdot t(a)\cdot n(b/a)
\in K_0(4D;\Z_p), 
\]
we have
\[
\omega(k,1)\varphi_0''=\chi_{V''}(a)\varphi''_0
\quad {\rm and \ so}\quad
\omega(k,1)\varphi_0=\chi_{V}(a)\varphi_0.
\]
A complete system of representatives of $K_0(D;\Z_2)/K_0(4D;\Z_2)$ is given by
\[
\left\{\left.
\begin{pmatrix}
1&0\\c&1
\end{pmatrix}
=-w\cdot n(-c)\cdot w\right|c\in D\Z_2/4D\Z_2\right\}.
\]
\begin{lem}\label{mean}
We have
\[
\sum_{c\in D\Z_2/4D\Z_2}\omega(-w\cdot n(-c)\cdot w,1)\varphi_0''=2\varphi''
\quad {\rm and\ so}\quad
\sum_{c\in D\Z_2/4D\Z_2}\omega(-w\cdot n(-c)\cdot w,1)\varphi_0=2\varphi.
\]
In particular, $K_0(D;\Z_2)$ acts on $\varphi''$ $($resp.~$\varphi)$ by
the scalar multiplication of the character
\[
\begin{pmatrix}
a&b\\c&d
\end{pmatrix}
=w^{-1}\cdot n(-c/a)\cdot w\cdot t(a)\cdot n(b/a)\mapsto\chi_{V''}(a)
\quad ({\rm resp.~}
\chi_V(a)).
\]
Moreover, we find that $\varphi$ is $\phi_1(\GSU(\Z_2))\phi_2((\OO\otimes_\Z\Z_2)^\times)$-invariant.
\end{lem}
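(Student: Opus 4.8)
The plan is to prove the three assertions in order: the identity $\sum_{c}\omega(g_c,1)\varphi_0''=2\varphi''$, where $g_c=-w\cdot n(-c)\cdot w$ is the lower unipotent matrix with lower-left entry $c$, is the analytic core, while the $K_0(D;\Z_2)$-equivariance and the $\phi_1(\GSU(\Z_2))\phi_2((\OO\otimes_\Z\Z_2)^\times)$-invariance follow from it formally.

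For the identity I would apply the three Weil operators in order. By the formula $\omega(w,1)\varphi_0''=\gamma_{V''}^{-1}\vol(V''(\Z_2))\varphi_1''$ recorded above, and since $\omega(n(-c),1)$ acts by multiplication by $\psi(-cQ''[\cdot])$, one gets
\[
\sum_{c}\omega(g_c,1)\varphi_0''=\gamma_{V''}^{-1}\vol(V''(\Z_2))\,\omega(-w,1)\Bigl[\varphi_1''\cdot\sum_{c\in D\Z_2/4D\Z_2}\psi(-cQ''[\cdot])\Bigr].
\]
Writing $c=Dc_0$ with $c_0\in\Z_2/4\Z_2$, the inner sum is a character sum on $\Z_2/4\Z_2$ equal to $4$ where $DQ''[x]\in\Z_2$ and to $0$ elsewhere; on the support of $\varphi_1''$ this restricts the $e_4$-coordinate from $(2D)^{-1}\Z_2$ down to $D^{-1}\Z_2$, so the bracket is $4$ times the characteristic function of the lattice $L=2^{-1}\Z_2 e_3+D^{-1}\Z_2 e_4$. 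Finally I would apply $\omega(-w,1)=\omega(w,1)\omega(t(-1),1)$, a second Fourier transform: the dual lattice of $L$ for the pairing on $V''$ is exactly $\Z_2 e_3+2^{-1}\Z_2 e_4$, the support of $\varphi''$, so the transform returns a multiple of $\varphi''$. The statement for $\varphi$ then follows because $g_c\in\SL_2(\Z_2)$ and $\varphi=\varphi'\otimes\varphi''$ with $\varphi'$ the $\SL_2(\Z_2)$-invariant characteristic function of $V'(\Z_2)$, so the $V'$-factor is untouched.

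For the $K_0(D;\Z_2)$-action, the first part gives $\varphi''=\tfrac12\sum_{c}\omega(g_c,1)\varphi_0''$, and $\{g_c\}$ represents $K_0(D;\Z_2)/K_0(4D;\Z_2)$. For $k\in K_0(D;\Z_2)$ I would expand $\omega(k,1)\varphi''=\tfrac12\sum_{c}\omega(kg_c,1)\varphi_0''$, write each $kg_c=g_{c'}k''$ with $k''\in K_0(4D;\Z_2)$ and $c\mapsto c'$ a bijection, and invoke $\omega(k'',1)\varphi_0''=\chi_{V''}(a(k''))\varphi_0''$. Since $g_c$ is lower unipotent, the top-left entries satisfy $a(k'')=a(kg_c)=a(k)+b(k)c\equiv a(k)\bmod D\Z_2$, and $\chi_{V''}=\bchi_2$ is trivial on $1+D\Z_2$; hence the scalar $\chi_{V''}(a(k''))=\chi_{V''}(a(k))$ is independent of $c$, and factoring it out yields $\omega(k,1)\varphi''=\chi_{V''}(a)\varphi''$. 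The same computation with $\chi_V$ proves the claim for $\varphi$. The invariance of $\varphi$ under $\phi_1(\GSU(\Z_2))\phi_2((\OO\otimes_\Z\Z_2)^\times)$ is then formal: the $\SL_2$- and $\O(V)$-actions in $\omega$ commute, so $\phi_1(\GSU(\Z_2))$-invariance of $\varphi_0$ passes to $\varphi=\tfrac12\sum_c\omega(g_c,1)\varphi_0$, while by Lemma \ref{invariant} $\varphi''$ is the characteristic function of $\OO\otimes_\Z\Z_2$ and $\phi_2'(\alpha)=\rho({\bf 1}_2,d(N_{K/\Q}(\alpha)^{-1}))$ with $N_{K/\Q}(\alpha)\in\Z_2^\times$ fixes $\varphi'$ and $\varphi''$ factorwise.

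The main obstacle is the constant-chasing in the first part: one must confirm that the Weil index $\gamma_{V''}$ from the two Fourier transforms, the two self-dual volumes $\vol(V''(\Z_2))$ and $\vol(L)$, the sign $\chi_{V''}(-1)$ from $\omega(t(-1),1)$, and the factor $4$ from the Gauss sum all collapse to the single constant $2$. The key point is the identity $\gamma_{V''}^{-2}=\chi_{V''}(-1)$ (equivalently $\omega(w,1)^2=\omega(-{\bf 1}_2,1)$), which cancels the root-of-unity factors, after which the remaining powers of $2$ multiply to exactly $2$; the two ramified cases $\ord_2(D)=2,3$ must be checked to give the same answer.
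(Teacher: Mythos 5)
Your proposal is correct and follows essentially the same route as the paper: the first Weil operator sends $\varphi_0''$ to a multiple of $\varphi_1''$, the Gauss sum over $c\in D\Z_2/4D\Z_2$ cuts the support down to $2^{-1}\Z_2e_3+D^{-1}\Z_2e_4$, the final $\omega(-w,1)$ is a Fourier transform onto the dual lattice $\Z_2e_3+2^{-1}\Z_2e_4$ with the constants collapsing to $2$ via $\gamma_{V''}^{2}=\chi_{V''}(-1)$, and the $K_0(D;\Z_2)$-equivariance is obtained exactly as in the paper by writing $kg_c=g_{c'}k''$ and noting $\chi_{V''}(a+bc)=\chi_{V''}(a)$. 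One small imprecision: for the last assertion the $\SL_2$- and orthogonal actions literally commute only on $\O(V)$, whereas $\phi_1(\GSU(\Z_2))$ has similitude $\lam(g)^2\in\Z_2^{\times 2}$, so one should instead observe that conjugating $\omega(g_c,1)$ by $L(h)$ replaces $g_c$ by $g_{\nu(h)c}$, which merely permutes the set $\{g_c\}$ and hence still fixes $\varphi=\tfrac12\sum_c\omega(g_c,1)\varphi_0$.
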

\begin{proof}
We have
\[
\omega(n(-c)\cdot w,1)\varphi''(u)
=\gamma_{V''}^{-1}\vol(V(\Z_2))\varphi''_1(u)\psi(-cQ''[u]).
\]
Note that
\[
\sum_{c\in D\Z_2/4D\Z_2}\psi(-cQ''[u])
=\left\{
\begin{aligned}
&4\iif Q''[u]\in D^{-1}\Z_2,\\
&0\iif Q''[u]\not\in D^{-1}\Z_2.
\end{aligned}
\right.
\]
We find that
\[
\{u\in V''(\Q_p)|\varphi_1''(u)\not=0,\ Q''[u]\in D^{-1}\Z_2\}
=2^{-1}\Z_2e_3+D^{-1}\Z_2e_4.
\]
Hence, we have
\[
\sum_{c\in D\Z_2/4D\Z_2}\omega(n(-c)\cdot w,1)\varphi_0''
=4\gamma_{V''}^{-1}\vol(V(\Z_2))\varphi''(2^{-1}\Z_2,D^{-1}\Z_2).
\]
Since
\begin{align*}
\omega(w,1)\varphi''(2^{-1}\Z_2,D^{-1}\Z_2)
&=\gamma_{V''}^{-1}\vol(2^{-1}\Z_2e_3+D^{-1}\Z_2e_4)\varphi''
\\&=2^{1+d}\gamma_{V''}^{-1}\vol(V(\Z_2))\varphi'',
\end{align*}
we have
\[
\sum_{c\in D\Z_2/4D\Z_2}\omega(-w\cdot n(-c)\cdot w,1)\varphi_0''(u)
=2^{3+d}\vol(V(\Z_2))^{2}\varphi''(-u)
=2\varphi''(u).
\]
Let $k=w^{-1}\cdot n(-c/a)\cdot w\cdot t(a)\cdot n(b/a)\in K_0(D;\Z_2)$
and $c_1,c_2\in D\Z_2$.
Then
\[
\begin{pmatrix}
1&0\\c_1&1
\end{pmatrix}^{-1}
k
\begin{pmatrix}
1&0\\c_2&1
\end{pmatrix}
=\begin{pmatrix}
a+bc_2&*\\ *&*
\end{pmatrix}
\in K_0(D;\Z_2)
\]
and 
$\chi_{V''}(a+bc_2)=\chi_{V''}(a)$.
Therefore we have
\[
\omega(k,1)\varphi''=\chi_{V''}(a)\varphi''.
\]
The last assertion follows from Lemma \ref{invariant}.
\end{proof}
By this lemma, we find that the functions
\begin{align*}
&\SL_2(\Q_2)\ni g\longmapsto 
\hat{\omega}(g,1)\hat{\varphi}(-\beta;0,1)W_{\ff,2}(a(\xi)g),\\
&\SL_2(\Q_2)\ni g\longmapsto 
\Phi(g,s,\varphi'')W_{\ff,2}(a(\xi' C)g)\overline{W_{\g,2}(a(\xi C)g)}
\end{align*}
are right $K_0(D;\Z_2)$-invariant.
Hence we have
\begin{align*}
\W_{B,2}(1)&=(\SL_2(\Z_2):K_0(D;\Z_2))^{-1}\sum_{k\in\SL_2(\Z_2)/K_0(D;\Z_2)}\Omega_{B,2}(k),
\\
I_2(\cc,s,\xi)&=(\SL_2(\Z_2):K_0(D;\Z_2))^{-1}\sum_{k\in\SL_2(\Z_2)/K_0(D;\Z_2)}\Lam_{B,2}(k),
\end{align*}
where
\begin{align*}
\Omega_{B,2}(k)&=\int_{\Q_2^\times}\hat{\omega}(t(x)k,1)\hat{\varphi}(-\beta;0,1)W_{\ff,2}(a(\xi)t(x)k)|x|_2^{-2}d^\times x,\\
\Lam_{B,2}(k)&=\int_{\Q_2^\times}\Phi(t(x)k,s,\varphi'')W_{\ff,2}(a(\xi C)t(x)k)\overline{W_{\g,2}(a(\xi C)t(x)k)}|x|_2^{-2}d^\times x.
\end{align*}
\par

A complete system of representatives of $\SL_2(\Z_2)/K_0(D;\Z_2)$ is given by
\[
\mathcal{A}(D;\Z_2)=
\left\{-w\cdot n(-c)\cdot w,\ n(a)\cdot w
|c\in\Z_2/D\Z_2, a\in2\Z_2/D\Z_2
\right\}.
\]

\begin{lem}\label{AD}
Let $x\in\Q_2^\times$.
We put $n=\ord_2(x)$ and $l=\ord_2(\xi)$.
\begin{enumerate}
\item
For $k=-w\cdot n(-c)\cdot w$ with $c\in\Z_2^\times$, we have
\[
\omega(t(x)k,1)\varphi''(u)=2^{-d/2}\gamma_{V''}^{-1}\chi_{V''}(xc^{-1})
|x|_2\varphi''(2^{-1}\Z_2,D^{-1}\Z_2)(ux)\psi(c^{-1}Q''[ux])
\]
so that
\[
\omega(t(x)k,1)\varphi(u)=2^{-d/2}\gamma_{V}^{-1}\chi_{V}(xc^{-1})
|x|_2^3\varphi(2^{-1}\Z_2,D^{-1}\Z_2)(ux)\psi(c^{-1}Q[ux]),
\]
and $W_{\ff,2}(a(\xi)t(x)k)$ is equal to
\[
2^{-d/2}\varepsilon^{-1}\psi(\xi x^2c^{-1})\bchi_2(x^{-1}c)\bchi_2(-2^d\xi)
a_f(D_2)^{-1}|\xi x^2|_2^{\kappa+1/2}\overline{a_f\left((D\xi x^2)_2\right)}.
\]
\item
For $k=n(a)\cdot w$ with $a\in2\Z_2$, we have
\[
\omega(t(x)k,1)\varphi''(u)=2^{-d/2}\gamma_{V''}^{-1}\chi_{V''}(x)|x|_2
\varphi''(2^{-1}\Z_2,D^{-1}\Z_2)(ux)\psi(aQ''[ux])
\]
so that
\[
\omega(t(x)k,1)\varphi(u)=2^{-d/2}\gamma_{V}^{-1}\chi_{V}(x)|x|_2^3
\varphi(2^{-1}\Z_2,D^{-1}\Z_2)(ux)\psi(aQ[ux]),
\]
and $W_{\ff,2}(a(\xi)t(x)k)$ is equal to
\[
2^{-d/2}\varepsilon^{-1}\psi(a\xi x^2)\bchi_2(x^{-1})\bchi(-2^d\xi)a_f(D_2)^{-1}
|\xi x^2|_2^{\kappa+1/2}\overline{a_f\left((D\xi x^2)_2\right)}.
\]
\item
For $k=-w\cdot n(-2^{d-1})\cdot w=k_1$, we have
\[
\omega(t(x)k,1)\varphi''(u)=\chi_{V''}(x)|x|_2
\times\left\{
\begin{aligned}
&\varphi''(2^{-1}\Z_2^\times,2^{-2}\Z_2^\times)(ux),\iif d=2,\\
&\varphi''(\Z_2,2^{-2}\Z_2^\times)(ux),\iif d=3
\end{aligned}
\right.
\]
so that
\[
\omega(t(x)k,1)\varphi(u)=\chi_{V}(x)|x|_2^3
\times\left\{
\begin{aligned}
&\varphi(2^{-1}\Z_2^\times,2^{-2}\Z_2^\times)(ux),\iif d=2,\\
&\varphi(\Z_2,2^{-2}\Z_2^\times)(ux),\iif d=3
\end{aligned}
\right.
\]
and
\[
W_{\ff,2}(a(\xi)t(x)k)=
\left\{
\begin{aligned}
&\bchi_2(x^{-1})|\xi x^2|_2^{\kappa+1/2}a_f(2)^{-1}\iif l+2n=-1,\\
&0&\quad&{\rm otherwise}.
\end{aligned}
\right.
\]
\item
Assume that $d=3$ and
$k=-w\cdot n(\pm2)\cdot w$.
Then $\omega(t(x)k,1)\varphi''(u)$ is equal to
\begin{align*}
&
2^{-1}\chi_{V''}(x)|x|_2
\\&\times
\left[\varphi''(2^{-1}\Z_2^\times,2^{-2}\Z_2)
\pm\delta\I\left(\varphi''(2^{-1}\Z_2^\times,2^{-2}\Z_2^\times)-\varphi''(2^{-1}\Z_2^\times,2^{-1}\Z_2)\right)\right](ux)
\end{align*}
so that $\omega(t(x)k,1)\varphi(u)$ is equal to
\begin{align*}
&2^{-1}\chi_{V}(x)|x|_2^3
\\&\times
\left[\varphi(2^{-1}\Z_2^\times,2^{-2}\Z_2)
\pm\delta\I\left(\varphi(2^{-1}\Z_2^\times,2^{-2}\Z_2^\times)-\varphi(2^{-1}\Z_2^\times,2^{-1}\Z_2)\right)\right](ux),
\end{align*}
and $W_{\ff,2}(a(\xi)t(x)k)$ is equal to
\[
\left\{
\begin{aligned}
&\bchi_2(x^{-1})2^{-1/2}\bchi_2(\mp \eta)\varepsilon \zeta_8^{\pm \eta}
|\xi x^2|_2^{\kappa+1/2}a_f(2)^{-2}
\iif l+2n=-2,\\
&0&\quad&{\rm otherwise}.
\end{aligned}\right.
\]
\end{enumerate}
\end{lem}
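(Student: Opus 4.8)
The plan is to compute the three quantities appearing in each item by independent methods: the Whittaker value $W_{\ff,2}(a(\xi)t(x)k)$ is obtained from Lemma \ref{explicit} by reducing each coset representative to one of the distinguished elements $w$, $k_1$, $k_2$, while the Weil-representation value $\omega(t(x)k,1)\varphi''$ is obtained by applying the explicit action formulas of Sect.~\ref{Weil rep}; the companion formula for $\omega(t(x)k,1)\varphi$ will then follow at once from the factorization $\varphi=\varphi'\otimes\varphi''$.

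First I would handle the Whittaker factor. In items (1) and (2) I write $k=-w\,n(-c)\,w=t(c^{-1})n(c)\,w\,n(c^{-1})$ (resp.\ $k=n(a)w$), push the resulting leftmost unipotent across $a(\xi)$ and $t(x)$ using $t(x)n(c)=n(x^2c)t(x)$ and $a(\xi)n(b)=n(\xi b)a(\xi)$, and invoke the left $N$-equivariance $W_{\ff,2}(n(y)\alpha)=\psi(y)W_{\ff,2}(\alpha)$; this produces the phases $\psi(\xi x^2c^{-1})$ and $\psi(a\xi x^2)$. The trailing element $n(c^{-1})\in{\bf K}_0(D;\Z_2)$ is absorbed by $W_{\ff,2}(\alpha k')=\bchi_2(k')W_{\ff,2}(\alpha)$, and the leftover diagonal $a(\xi)t(xc^{-1})$ is rewritten through the central character $\bchi_2$ as $\bchi_2(x^{-1}c)W_{\ff,2}(a(\xi x^2c^{-2})w)$, to which Lemma \ref{explicit} (3) applies (with $D_2=2^d$ and squares dropping out of $\bchi_2$). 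Items (3) and (4) are more direct: there $k=k_1$ and $k=k_2$ literally, so after the same central-character rewriting Lemma \ref{explicit} (4) and (5) give the stated values, including the $\varepsilon$, $\zeta_8$ and $a_f(2)^{-j}$ factors.

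Second I would compute the Weil-representation factor $\omega(t(x)k,1)\varphi''$ by factoring $k$ into the generators $n(\cdot),t(\cdot),w$ and applying in turn $\omega(t(a),1)\varphi''(u)=\chi_{V''}(a)|a|_2\varphi''(ua)$, $\omega(n(b),1)\varphi''(u)=\varphi''(u)\psi(bQ''[u])$, and $\omega(w,1)\varphi''(u)=\gamma_{V''}^{-1}\int_{V''}\varphi''(y)\psi(-(u,y))dy$, carrying along the constants $\chi_{V''}=\bchi_2$ and $\gamma_{V''}$ recorded at the start of this subsection. The companion formula for $\varphi$ then follows immediately: since $\omega_V=\omega_{V'}\otimes\omega_{V''}$ and $\varphi'$ is the $\SL_2(\Z_2)$-invariant characteristic function of the self-dual lattice $V'(\Z_2)$ (so $\chi_{V'}=1$, $\gamma_{V'}=1$, and $\omega_{V'}(t(x)k,1)\varphi'=|x|_2^{2}\varphi'(\cdot\,x)$), multiplying the $V''$-formula by $|x|_2^{2}\varphi'(ux)$ upgrades each $|x|_2$ to $|x|_2^{3}$ and each $\varphi''(A,B)$ to $\varphi(A,B)=\varphi'\otimes\varphi''(A,B)$, with $\chi_V=\chi_{V''}$.

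The main obstacle will be the Fourier-transform step in the $w$-action, and specifically the $2$-adic Gauss-sum bookkeeping in items (3) and (4) with $d=3$. There the transform of a lattice characteristic function, twisted by the quadratic phase $\psi(cQ''[\cdot])$ coming from the inner $n(-c)$, is not merely a dilate of the original lattice: the precise supports ($2^{-1}\Z_2^\times$, $2^{-2}\Z_2^\times$, $2^{-2}\Z_2$) and the coefficients $\pm\delta\I$ in item (4) come from evaluating a ramified quadratic Gauss sum modulo a power of $2$, whose value depends on $D'=D/2^d \bmod 8$ through $\gamma_{\Q_2}(D,\psi)$ and $\delta$. I expect that once this delicate identification is carried out, the remaining cases—and the analogous odd-$p$ and unramified computations promised to go through similarly—are routine, so this Gauss-sum evaluation is where the real work lies.
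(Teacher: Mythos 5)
Your plan coincides with the paper's proof: the paper likewise decomposes each coset representative into the generators $n(\cdot)$, $t(\cdot)$, $w$, computes the $V''$-part of the Weil action generator by generator (with the dual-lattice volume and the $2$-adic Gauss-sum analysis of $\psi(cQ''[\cdot])$ carrying the real weight in items (3) and (4)), tensors with the $\SL_2(\Z_2)$-invariant $\varphi'$ to pass from $\varphi''$ to $\varphi$, and evaluates the Whittaker factors via Lemma \ref{explicit} after exactly the unipotent/central-character manipulations you describe. The only small slip is that in item (4) the representative is $a(\mp1)\cdot k_2\cdot a(\mp1)$ rather than $k_2$ itself, but the extra $a(\mp1)$'s are handled precisely as you handle the other ${\bf K}_0(D;\Z_2)$-elements, so this does not affect the argument.
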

\begin{proof}
(1)
Let $k=-w\cdot n(-c)\cdot w=t(c^{-1})\cdot n(c)\cdot w\cdot n(c^{-1})$.
Then we have
\begin{align*}
&\omega(t(x)k,1)\varphi''(u)
=\omega(t(xc^{-1})\cdot n(c)\cdot w,1)\varphi''(u)\\
&=\gamma_{V''}^{-1}\vol(\Z_2e_3+2^{-1}\Z_2e_4)
\omega(t(xc^{-1})\cdot n(c),1)\varphi''(2^{-1}\Z_2,D^{-1}\Z_2)(u)\\
&=2^{-d/2}\gamma_{V''}^{-1}\chi_{V''}(xc^{-1})|xc^{-1}|_2
\varphi''(2^{-1}\Z_2,D^{-1}\Z_2)(uxc^{-1})\psi(cQ''[uxc^{-1}])\\
&=2^{-d/2}\gamma_{V''}^{-1}\chi_{V''}(xc^{-1})|x|_2\varphi''(2^{-1}\Z_2,2^{-2}\Z_2)(ux)
\psi(c^{-1}Q''[ux]).
\end{align*}
On the other hand, since $a(\xi)\cdot t(xc^{-1})\cdot n(c)=n(\xi x^2c^{-1})a(\xi)t(xc^{-1})$, we have
\begin{align*}
&W_{\ff,2}(a(\xi)t(x)k)
=\psi(\xi x^2 c^{-1})\bchi_2(x^{-1} c)W_{\ff,2}(a(\xi x^2c^{-2})w)\\
&=\psi(\xi x^2 c^{-1})\bchi_2(x^{-1} c)\bchi_2(-D_2\xi x^2)\varepsilon^{-1}2^{-d/2}a_f(D_2)^{-1}
|\xi x^2|_2^{\kappa+1/2}\overline{a_f\left((D\xi x^2)_2\right)}.
\end{align*}
\par

(2) 
Let $k=n(a)\cdot w$ with $a\in2\Z_2$. Then we have
\begin{align*}
\omega(t(x)k,1)\varphi''(u)
&=\gamma_{V''}^{-1}2^{-d/2}\chi_{V''}(x)|x|_2
\varphi''(2^{-1}\Z_2,D^{-1}\Z_2)(ux)\psi(aQ''[ux]).
\end{align*}
On the other hand, since $a(\xi)t(x)n(a)=n(a\xi x^2)a(\xi)t(x)$,
we have
\begin{align*}
&W_{\ff,2}(a(\xi)t(x)k)
\\&=\psi(a\xi x^2)\bchi_2(x^{-1})\bchi(-D_2 \xi x^2)\varepsilon^{-1}2^{-d/2}a_f(D_2)^{-1}
|\xi x^2|_2^{\kappa+1/2}\overline{a_f\left((D\xi x^2)_2\right)}.
\end{align*}
\par

(3) 
We assume that $d=3$.
The proof for $d=2$ is similar.
Let $k=-w\cdot n(-4)\cdot w=k_1$.
The formula for $W_{\ff,2}(a(\xi)t(x)k_1)$ follows from Lemma \ref{explicit} immediately.
On the other hand, we have
\begin{align*}
\omega(n(-4)\cdot w,1)\varphi''(u)
&=\gamma_{V''}^{-1}\vol(\Z_2e_3+2^{-1}\Z_2e_4)\varphi''(2^{-1}\Z_2,2^{-3}\Z_2)(u)\psi(-4Q''[u]).
\end{align*}
Now we find that
\[
\psi(-4Q''[u_3e_3+u_4e_4])
=\left\{
\begin{aligned}
&1 \iif u_3\in2^{-1}\Z_2,u_4\in2^{-2}\Z_2,\\
&-1\iif u_3\in2^{-1}\Z_2,u_4\in2^{-3}\Z_2^\times.
\end{aligned}
\right.
\]
Hence we find that $\omega(n(-4)\cdot w,1)\varphi''(u)$ is equal to
\[
2^{-3/2}\gamma_{V''}^{-1}\left[\varphi''(2^{-1}\Z_2.2^{-2}\Z_2)-\varphi''(2^{-1}\Z_2,2^{-3}\Z_2^\times)\right](u).
\]
Note that
\begin{align*}
\omega(w,1)\varphi''(2^{-1}\Z_2,2^{-2}\Z_2)(u)
&=2^{1/2}\gamma_{V''}^{-1}\left[\varphi''(\Z_2,2^{-1}\Z_2)+\varphi''(\Z_2,2^{-2}\Z_2^\times)\right](u),\\
\omega(w,1)\varphi''(2^{-1}\Z_2,2^{-3}\Z_2^\times)(u)
&=2^{1/2}\gamma_{V''}^{-1}\left[\varphi''(\Z_2,2^{-1}\Z_2)-\varphi''(\Z_2,2^{-2}\Z_2^\times)\right](u).
\end{align*}
So we have
\[
\omega(w\cdot n(-4)\cdot w,1)\varphi''=\gamma_{V''}^{-2}\varphi''(\Z_2,2^{-2}\Z_2^\times).
\]
Since $\gamma_{V''}^2=\bchi_2(-1)$, we have
\[
\omega(t(x)k_1,1)\varphi''(u)=\chi_{V''}(x)|x|_2\varphi''(\Z_2,2^{-2}\Z_2^\times)(ux).
\]
\par

(4)
Assume that $d=3$.
Let $k=-w\cdot n(\pm2)\cdot w=a(\mp1)\cdot k_2\cdot a(\mp1)$.
We write $\xi =2^l\eta$ and $x=2^nu$ with $\eta, u\in\Z_2^\times$.
Then $W_{\ff,2}(a(\xi)t(x)k)$ is equal to
\[
\left\{
\begin{aligned}
&\bchi_2(x^{-1})2^{-1/2}\bchi_2(\mp \eta u^2)\varepsilon \zeta_8^{\pm \eta u^2}
|\mp\xi x^2|_2^{\kappa+1/2}a_f(2)^{-2}
\iif l+2n=-2,\\
&0&\quad&{\rm otherwise}.
\end{aligned}
\right.
\]
Since $u^2\equiv 1\bmod 8$ for $u\in\Z_2^\times$, we get the desired formula for $W_{\ff,2}(a(\xi)t(x)k)$.
On the other hand, we have
\begin{align*}
\omega(n(\pm2)\cdot w,1)\varphi''(u)
=\gamma_{V''}^{-1}\vol(\Z_2e_3+2^{-1}\Z_2e_4)\varphi''(2^{-1}\Z_2,2^{-3}\Z_2)(u)\psi(\pm2Q''[u]).
\end{align*}
Since $\pm2Q''[u_3e_3+u_4e_4]=\left(\pm2(u_3^2+8\delta u_0u_4^2)\right)$,
we have
\[
\pm2Q''[u_3e_3+u_4e_4]
\equiv\left\{
\begin{aligned}
&0                            &&\bmod \Z_2   \iif u_3\in\Z_2,u_4\in2^{-2}\Z_2,\\
&\pm\delta\cdot 4^{-1}  &&\bmod \Z_2    \iif u_3\in\Z_2,u_4\in2^{-3}\Z_2^\times,\\
&\pm2^{-1}             &&\bmod \Z_2   \iif u_3\in2^{-1}\Z_2^\times,u_4\in2^{-2}\Z_2,\\
&\pm2^{-1}\pm\delta\cdot4^{-1}
&&\bmod \Z_2    \iif u_3\in2^{-1}\Z_2^\times,u_4\in2^{-3}\Z_2^\times.
\end{aligned}
\right.
\]
Since $\psi(1/4)=-\I$, we find that $\omega(n(\pm2)\cdot w,1)\varphi''(u)$ is equal to
\begin{align*}
2^{-3/2}\gamma_{V''}^{-1}\Big[
&\varphi''(\Z_2,2^{-2}\Z_2)
-\varphi''(2^{-1}\Z_2^\times,2^{-2}\Z_2)
\\&
\pm\delta\I\varphi''(2^{-1}\Z_2^\times,2^{-3}\Z_2^\times)
\mp\delta\I\varphi''(\Z_2,2^{-3}\Z_2^\times)
\Big](u).
\end{align*}
Note that
\begin{align*}
&\omega(w,1)\varphi''(\Z_2,2^{-2}\Z_2)=
2^{-1/2}\gamma_{V''}^{-1}
\\&\times
\Big[\varphi''(\Z_2,2^{-1}\Z_2)+\varphi''(2^{-1}\Z_2^\times,2^{-1}\Z_2)
+\varphi''(\Z_2,2^{-2}\Z_2^\times)+\varphi''(2^{-1}\Z_2^\times,2^{-2}\Z_2^\times)\Big](u),\\
&\omega(w,1)\varphi''(2^{-1}\Z_2^\times,2^{-2}\Z_2)=
2^{-1/2}\gamma_{V''}^{-1}
\\&\times
\Big[\varphi''(\Z_2,2^{-1}\Z_2)-\varphi''(2^{-1}\Z_2^\times,2^{-1}\Z_2)
+\varphi''(\Z_2,2^{-2}\Z_2^\times)-\varphi''(2^{-1}\Z_2^\times,2^{-2}\Z_2^\times)\Big](u),\\
&\omega(w,1)\varphi''(2^{-1}\Z_2^\times,2^{-3}\Z_2^\times)=2^{-1/2}\gamma_{V''}^{-1}
\\&\times
\Big[\varphi''(\Z_2,2^{-1}\Z_2)-\varphi''(2^{-1}\Z_2^\times,2^{-1}\Z_2)
-\varphi''(\Z_2,2^{-2}\Z_2^\times)+\varphi''(2^{-1}\Z_2^\times,2^{-2}\Z_2^\times)\Big](u),\\
&\omega(w,1)\varphi''(\Z_2,2^{-3}\Z_2^\times)=2^{-1/2}\gamma_{V''}^{-1}
\\&\times
\Big[\varphi''(\Z_2,2^{-1}\Z_2)+\varphi''(2^{-1}\Z_2^\times,2^{-1}\Z_2)
-\varphi''(\Z_2,2^{-2}\Z_2^\times)-\varphi''(2^{-1}\Z_2^\times,2^{-2}\Z_2^\times)\Big](u).
\end{align*}
So we find that $\omega(w\cdot n(\pm2)\cdot w,1)\varphi''(u)$ is equal to
\[
2^{-1}\gamma_{V''}^{-2}
\left[\varphi''(2^{-1}\Z_2^\times,2^{-2}\Z_2)
\pm\delta\I\left(\varphi''(2^{-1}\Z_2^\times,2^{-2}\Z_2^\times)-\varphi''(2^{-1}\Z_2^\times,2^{-1}\Z_2)\right)\right](u).
\]
Since $\gamma_{V''}^{2}=\chi_{V''}(-1)$, we find that $\omega(t(x)k,1)\varphi''(u)$ is equal to
\begin{align*}
&2^{-1}\chi_{V''}(x)|x|_2
\\&\times
\left[\varphi''(2^{-1}\Z_2^\times,2^{-2}\Z_2)
\pm\delta\I\left(\varphi''(2^{-1}\Z_2^\times,2^{-2}\Z_2^\times)-\varphi''(2^{-1}\Z_2^\times,2^{-1}\Z_2)\right)\right](ux).
\end{align*}
This completes the proof.
\end{proof}
\begin{lem}\label{ord=d}
We put $l=\ord_2(\xi)$, $m_i=\ord_2(b_i)$ for $i=1,\dots,4$, 
\begin{align*}
m_0&=\min(\ord_2(b_1),\ord_2(b_2)+1,\ord_2(b_3)+d,\ord_2(b_4)),\\
m_0'&=\min(\ord_2(b_1),\ord_2(b_2),\ord_2(b_3)+1,\ord_2(b_4)),
\end{align*}
and
\begin{align*}
\delta_2&=\left\{
\begin{aligned}
&1\iif m_1,m_4\geq m_2+1=m_3+2,\\
&0&\quad&{\rm otherwise},
\end{aligned}
\right.\\
\delta_3&=\left\{
\begin{aligned}
&1\iif m_1,m_2,m_4\geq m_3+2 {\rm \ and\ } 
\ord_2(b_1b_4-b_2^2)>\ord_2(Db_3^2),\\
&0&\quad&{\rm otherwise},
\end{aligned}
\right.\\
\delta_0&=\left\{
\begin{aligned}
&1\iif m_1,m_3+2,m_4\geq m_2+1,\\
&0&\quad&{\rm otherwise}.
\end{aligned}
\right.
\end{align*}
\begin{enumerate}
\item
For $k={\bf 1}_2$, we have
\begin{align*}
\Omega_{B,2}({\bf 1}_2)&=|\xi|_2^{\kappa+1/2}\sum_{n=0}^{m_0'}(2^n)^{2\kappa+1}a_f\left((\xi 2^{-2n})_2\right),\\
\Lam_{B,2}({\bf 1}_2)&=|\xi|_2^{-s/2+1/2}\sum_{n=0}^\infty
|\xi 2^{2n}|_2^{s/2+2\kappa+1}a_f\left((\xi 2^{2n})_2\right)
\overline{a_g\left((\xi 2^{2n})_2\right)}.
\end{align*}
\item
For $k=-w\cdot n(-c)\cdot w$ with $c\in\Z_2^\times$
or $k=n(a)\cdot w$ with $a\in2\Z_2$, we have
\begin{align*}
\Omega_{B,2}(k)&=2^{-d}a_f(D_2)^{-1}|\xi|_2^{\kappa+1/2}\bchi_2(-\xi)
\sum_{n=0}^{m_0}(2^{n})^{2\kappa+1}\overline{a_f\left((D\xi 2^{-2n})_2\right)},\\
\Lam_{B,2}(k)&=2^{-d}a_f(D_2)^{-1}|\xi|_2^{-s/2+1/2}\bchi_2(-\xi C)
\\&\quad\times
\sum_{n=0}^\infty|\xi 2^{2n}|_2^{s/2+2\kappa+1}
\overline{a_f\left((D\xi 2^{2n})_2\right)a_g\left((\xi 2^{2n})_2\right)}.
\end{align*}
\item
For $k=-w\cdot n(-2^{d-1})\cdot w=k_1$, we have
\begin{align*}
\Omega_{B,2}(k)&=
\delta_d|\xi|_2^{\kappa+1/2}(2^{\ord_2(b_3)+2})^{2\kappa+1}a_f(2)^{-1},\\
\Lam_{B,2}(k)&=0.
\end{align*}
\item
Assume that $d=3$. 
Let $k_\pm=-w\cdot n(\pm2)\cdot w$. 
Then we have
\begin{align*}
\Omega_{B,2}(k_+)+\Omega_{B,2}(k_-)&=\delta_0|\xi|_2^{\kappa+1/2}(2^{\ord_2(b_2)+1})^{2\kappa+1}a_f(2)^{-2},\\
\Lam_{B,2}(k_\pm)&=0.
\end{align*}
\end{enumerate}
\end{lem}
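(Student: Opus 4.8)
The plan is to prove Lemma \ref{ord=d} by a direct, case-by-case evaluation of the one-dimensional integrals defining $\Omega_{B,2}(k)$ and $\Lam_{B,2}(k)$, feeding in the explicit formulas for the Weil action $\omega(t(x)k,1)\varphi$, $\omega(t(x)k,1)\varphi''$ and the Whittaker value $W_{\ff,2}(a(\xi)t(x)k)$ supplied by Lemma \ref{AD}. In every case the integrand is locally constant on $\Q_2^\times$, so writing $x=2^nu$ with $u\in\Z_2^\times$ turns $\int_{\Q_2^\times}(\cdots)|x|_2^{-2}\,d^\times x$ into a sum over $n\in\Z$; the factor $W_{\ff,2}(a(\xi)t(x)k)$, and for $\Lam$ the factor $\overline{W_{\g,2}(a(\xi C)t(x)k)}$, then contributes the Fourier coefficients $a_f((\cdots)_2)$ and $\overline{a_g((\cdots)_2)}$, while the characteristic functions appearing in Lemma \ref{AD} cut out the range of $n$.

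I would first dispose of $\Lam_{B,2}$. Since $\Phi(t(x)k,s,\varphi'')=[\omega(t(x)k,1)\varphi''](0)\,|a(t(x)k)|^s$, the series $\Lam_{B,2}(k)$ only feels the value of $\omega(t(x)k,1)\varphi''$ at the origin. For $k=\mathbf 1_2$ and for $k=-w\cdot n(-c)\cdot w$ with $c\in\Z_2^\times$ or $k=n(a)\cdot w$ with $a\in 2\Z_2$, Lemma \ref{AD}(1)--(2) expresses this value as the characteristic function of a lattice containing $0$; unfolding the resulting sum against $W_{\ff,2}$ and $W_{\g,2}$ yields the stated Dirichlet-type expressions, with the $\bchi_2(-\xi C)$- and $a_f(D_2)^{-1}$-twists read off Lemma \ref{AD}. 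For $k=k_1$ and $k=k_\pm$, by contrast, Lemma \ref{AD}(3)--(4) writes $\omega(t(x)k,1)\varphi''$ as a combination of characteristic functions of lattices whose relevant coordinate is confined to a unit coset $2^{-j}\Z_2^\times$, so the value at the origin vanishes; hence $\Phi(t(x)k,s,\varphi'')\equiv 0$ and $\Lam_{B,2}(k)=0$, as asserted.

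For $\Omega_{B,2}(k)$ the new ingredient is the partial Fourier transform $\hat\varphi(-\beta;0,1)$, which I would compute by integrating out the $e_1$-coordinate of $\omega(t(x)k,1)\varphi$ against $\psi$, reading the integrand off Lemma \ref{AD} (and, for $k=\mathbf 1_2$, off the elementary scaling formula $\omega(t(x),1)\varphi(u)=\chi_V(x)|x|_2^{3}\varphi(ux)$). The point $-\beta$ has $e_2,e_3,e_4,e_5$-coordinates $b_4,b_2,b_3,-b_1$, so the characteristic functions become congruence conditions on the $b_i$ after scaling by $x=2^n$; matching these against $m_i=\ord_2(b_i)$ produces the summation bounds $m_0'=\min(m_1,m_2,m_3+1,m_4)$ for $k=\mathbf 1_2$ and $m_0=\min(m_1,m_2+1,m_3+d,m_4)$ for the generic cosets. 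For $k=k_1$ the Whittaker factor is nonzero only when $l+2n=-1$, so the sum collapses to a single term and the lattice support conditions become the indicator $\delta_d$; the case $k=k_\pm$ is the one where the two terms $\Omega_{B,2}(k_+)$ and $\Omega_{B,2}(k_-)$ must be added, so that the phase $\zeta_8^{\pm\eta}$ of $W_{\ff,2}$ meets the $\pm\delta\I$ factor carried by the Weil action.

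The hard part will be precisely this last combination in the $k_\pm$ case: one must verify that the imaginary factors $\pm\delta\I$ and the phases $\zeta_8^{\pm\eta}$ conspire so that $\Omega_{B,2}(k_+)+\Omega_{B,2}(k_-)$ is real and reduces to $\delta_0\,|\xi|_2^{\kappa+1/2}(2^{\ord_2(b_2)+1})^{2\kappa+1}a_f(2)^{-2}$, which requires careful bookkeeping of the four support lattices $\varphi''(2^{-1}\Z_2^\times,2^{-j}\Z_2)$ and $\varphi''(2^{-1}\Z_2^\times,2^{-j}\Z_2^\times)$ against the coordinates of $-\beta$ under the constraint $\ord_2(\xi)+2\ord_2(x)=-2$. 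The remaining work is the routine translation of lattice-membership into the $\min$-of-orders formulas and the indicators $\delta_2,\delta_3,\delta_0$, carried out separately for $d=2$ and $d=3$ using the explicit values of $\gamma_{\Q_2}(D,\psi)$, $\varepsilon$ and $\bchi_2(2)$ recorded just before Lemma \ref{AD}; these present no conceptual difficulty once the $k_\pm$ cancellation is in hand.
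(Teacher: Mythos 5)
Your plan matches the paper's proof: parts (1)--(2) are read off Lemma \ref{AD} directly, $\Lam_{B,2}$ vanishes for $k_1,k_\pm$ because the Weil action produces functions supported away from the origin so $\Phi(t(x)k,s)\equiv 0$, and the substance lies in (3)--(4), where the lattice conditions on $-\beta x$ collapse the $n$-sum to a single term and the phases $\zeta_8^{\pm\eta}$ combine with the $\pm\delta\I$ factors. The one caveat is that the consistency checks you defer --- that the lattice support conditions automatically force $l+2n=-1$ (resp.\ $l+2n=-2$), and that they pin down $\eta\bmod 4$ so that exactly one of the two characteristic functions survives in $\Omega_{B,2}(k_+)+\Omega_{B,2}(k_-)$ --- are precisely the computations the paper carries out in detail (via $\ord_2(b_2^2+Db_3^2)=2\ord_2(b_2)+1$ and $\eta\equiv -(2^{-\ord_2(b_2)}b_2)^2-2\delta(\cdots)^2\bmod 4$), so they are the crux rather than routine bookkeeping.
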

\begin{proof}
(1) is easy.
(2) follows from Lemma \ref{AD}.
So we show (3) and (4).
\par

We show (3) only when $d=2$.
Let $k=-w\cdot n(-2)\cdot w=k_1$.
Then $\Omega_{B,2}(k)$ is equal to
\begin{align*}
\int_{\Q_2^\times}\left[\chi_{V}(x)|x|_2^2\varphi(2^{-1}\Z_2^\times,2^{-2}\Z_2^{\times})\hat{}\ (-\beta x;0,x^{-1})\right]
W_{\ff,2}(a(\xi)t(x)k_1)
|x|_2^{-2}d^\times x.
\end{align*}
Let $x\in\Q_2^\times$ and we put $n=\ord_2(x)$. 
Since $\beta={}^t(-b_4,-b_2,-b_3,b_1)$, we have 
$\varphi(2^{-1}\Z_2^\times,2^{-2}\Z_2^{\times})\hat{}\ (-\beta x;0,x^{-1})\not=0$
if and only if
\[
n\leq 0,\quad m_4+n\geq 0,\quad m_2+n=-1,\quad 
m_3+n=-2,\quad m_1+n\geq 0. 
\]
In this case, we find that $n=-\ord_2(b_2)-1=-\ord_2(b_3)-2$ and
\[
m_1,m_4\geq m_2+1=m_3+2.
\]
Moreover, since $\ord_2(b_2)=\ord_2(2b_3)$ and $D'=D/4\equiv 1\bmod4$, we have
\[
\ord_2(b_2^2+Db_3^2)=\ord_2(b_2^2+D'(2b_3)^2)
=2\ord_2(b_2)+1<\ord_2(b_1b_4).
\]
Hence, we have
$l=\ord_2(b_1b_4-b_2^2-Db_3^2)=2\ord_2(b_2)+1=-2n-1$.
So in this case, we have
\[
W_{\ff,2}(a(\xi)t(x)k_1)=\bchi_2(x^{-1})|\xi x^2|_2^{\kappa+1/2}a_f(2)^{-1}.
\]
Therefore, we have
\begin{align*}
\Omega_{B,2}(k_1)
&=\delta_2 |\xi|_2^{\kappa+1/2}(2^{-n})^{2\kappa+1}a_f(2)^{-1}.
\end{align*}
On the other hand, since
$\Phi(t(x)k_1,s)=0$
for all $x\in\Q_2^\times$ and $s\in\C$, 
we have
$\Lam_{B,2}(k_1)=0$.
The proof of (3) for $d=3$ is similar.
\par

(4) We assume that $d=3$.
Let $k_\pm=-w\cdot n(\pm2)\cdot w$. 
Then we find that $\Omega_{B,2}(k_\pm)$ is equal to
\begin{align*}
\int_{\Q_2^\times}
2^{-1}\chi_V(x)|x|_2^2
&\Big[
\varphi(2^{-1}\Z_2^\times,2^{-2}\Z_2)\pm\delta\I
\Big(\varphi(2^{-1}\Z_2^\times,2^{-2}\Z_2^\times)
\\&
-\varphi(2^{-1}\Z_2^\times,2^{-1}\Z_2)\Big)
\Big]\hat{}\ (-\beta x;0,x^{-1})
W_{\ff,2}(a(\xi)t(x)k_\pm)|x|_2^{-2}d^\times x.
\end{align*}
Write $\xi=2^l\eta$ with $\eta\in\Z_2^\times$.
We assume that $l+2\in2\Z$ and put $n=(-l-2)/2$.
Then we have
\begin{align*}
\Omega_{B,2}(k_\pm)
&=2^{-3/2}\bchi_2(\mp\eta)\varepsilon\zeta_8^{\pm\eta}|\xi 2^{2n}|_2^{\kappa+1/2}a_f(2)^{-2}
\Big[
\varphi(2^{-1}\Z_2^\times,2^{-2}\Z_2)
\\&
\pm\delta\I
\left(\varphi(2^{-1}\Z_2^\times,2^{-2}\Z_2^\times)-\varphi(2^{-1}\Z_2^\times,2^{-1}\Z_2)\right)
\Big]\hat{}\ (-\beta 2^n;0,2^{-n}).
\end{align*}
We put
$\rho=\varepsilon(\bchi_2(-\eta)\zeta_8^{\eta}+\bchi_2(\eta)\zeta_8^{-\eta})$ and 
$\rho'=\varepsilon\I\delta(\bchi_2(-\eta)\zeta_8^{\eta}-\bchi_2(\eta)\zeta_8^{-\eta})$.
Then by a simple calculation, we have
\[
\rho=\sqrt{2}\quad{\rm and }\quad
\rho'=\left\{
\begin{aligned}
&\sqrt{2} \iif \eta\equiv 1\bmod 4\Z_2,\\
&-\sqrt{2}\iif \eta\equiv 3\bmod 4\Z_2.
\end{aligned}
\right.
\]
Since $\varphi(2^{-1}\Z_2^\times,2^{-2}\Z_2)=
\varphi(2^{-1}\Z_2^\times,2^{-2}\Z_2^\times)+\varphi(2^{-1}\Z_2^\times,2^{-1}\Z_2)$,
 we find that
$\Omega_{B,2}(k_+)+\Omega_{B,2}(k_-)$ is equal to
\[
|\xi 2^{2n}|_2^{\kappa+1/2}a_f(2)^{-2}
\times\left\{
\begin{aligned}
&\varphi(2^{-1}\Z_2^\times,2^{-2}\Z_2^\times)\hat{}\ (-\beta 2^n;0,2^{-n})
\iif\eta\equiv 1\bmod 4\Z_2,\\
&\varphi(2^{-1}\Z_2^\times,2^{-1}\Z_2)\hat{}\ (-\beta 2^n;0,2^{-n})
\iif\eta\equiv 3\bmod 4\Z_2.
\end{aligned}\right.
\]
However, if $\varphi(2^{-1}\Z_2^\times,2^{-2}\Z_2^\times)\hat{}\ (-\beta 2^n;0,2^{-n})\not=0$, then we have
\[
n\leq0,\quad m_4+n\geq0,\quad m_2+n=-1,\quad m_3+n=-2,\quad m_1+n\geq0.
\]
In this case, we have
\[
l=\ord_2(b_1b_4-b_2^2-Db_3^2)=\ord_2(b_2^2)=-2n-2.
\]
So we have
\begin{align*}
\eta&=\xi 2^{-l}=4(2^nb_1)(2^nb_4)-(2^{-\ord_2(b_2)}b_2)^2-8D'(2^{n+1}b_3)^2\\
&\equiv -(2^{-\ord_2(b_2)}b_2)^2-2\delta(2^{n+2}b_3)^2\bmod 4\Z_2\\
&\equiv -1+2\equiv 1\bmod 4\Z_2.
\end{align*}
Note that $u^2\equiv 1\bmod4\Z_2$ for all $u\in\Z_2^\times$ and $\pm2\equiv 2\bmod4\Z_2$.
Similarly, if $\varphi(2^{-1}\Z_2^\times,2^{-1}\Z_2)\hat{}\ (-\beta 2^n;0,2^{-n})\not=0$, then
we have $l=-2n-2$ and $\eta\equiv -1\bmod4\Z_2$.
Therefore we have
\begin{align*}
\Omega_{B,2}(k_+)+\Omega_{B,2}(k_-)
&=|\xi 2^{2n}|_2^{\kappa+1/2}a_f(2)^{-2}\varphi(2^{-1}\Z_2^\times,2^{-2}\Z_2)\hat{}\ (-\beta 2^n;0,2^{-n})\\
&=\delta_0|\xi|_2^{\kappa+1/2}(2^{\ord_2(b_2)+1})^{2\kappa+1}a_f(2)^{-2}.
\end{align*}
On the other hand, since
$\Phi(t(x)k_\pm,s)=0$
for all $x\in\Q_2^\times$ and $s\in\C$, we have
$\Lam_{B,2}(k_\pm)=0$.
This completes the proof.
\end{proof}
\par

Now, we start to prove Lemma \ref{local} (\ref{ram}) for the case when $p=2\mid D$.
First, we assume that $d=\ord_2(D)=2$.
By Lemma \ref{GU(Zp)} and Lemma \ref{mean},
the function $\W_{B,2}$ is right $\GU(\Z_2)$-invariant.
We put $q_2=(\SL_2(\Z_2):K_0(D;\Z_2))$.
Then
\begin{align*}
\W_{B,2}(1)&=q_2^{-1}|\xi|_2^{\kappa+1/2}a_f(D_2)^{-1}
\left[a_f(D_2)\sum_{n=0}^{m_0'}(2^n)^{2\kappa+1}a_f\left((\xi 2^{-2n})_2\right)
\right.\\&\left.
+\sum_{n=0}^{m_0}(2^n)^{\kappa+1/2}\bchi_2(-\xi)\overline{a_f\left((D\xi 2^{-2n})_2\right)}
+a_f(D_2)\delta_2 (2^{m_3+2})^{2\kappa+1}a_f(2)^{-1}
\right].
\end{align*} 
For $0\leq n\leq m_0'$, we find that
$\xi 2^{-2n}\in\Z_2$. 
Note that 
\[
m_0'=\min(m_1,m_2,m_3+1,m_4)\leq 
m_0=\min(m_1,m_2+1,m_3+2,m_4)
\]
and $m_0'=m_0$ 
 if and only if 
$m_0'=\min(m_1,m_4)$.
In this case, we have $\delta_2=0$.
So we consider the case when $m_0\not=m_0'$.
We divide the case into the three cases as follows.
\par

If $m_1,m_4>m_2=m_3+1$, 
then we find that $\delta=1$, $m_0=m_2+1$ and 
\[
l= \ord_2(b_1b_4-b_2^2-Db_3^2)=\ord_2(b_2^2+D'(2b_3)^2)=2m_2+1.
\]
So we have $(D\xi 2^{-2m_0})_2= 2$. 
Moreover in this case we have
\begin{align*}
a_f(D_2)\delta_2 (2^{m_3+2})^{2\kappa+1}a_f(2)^{-1}
&=(2^{m_0})^{2\kappa+1}a_f(2).
\end{align*}
\par

If $m_1,m_4>m_2>m_3+1$, then we have 
$\delta_2=0$, $l=2(m_3+1)$ and $m_0=m_3+2$. 
So we have $(D\xi 2^{-2m_0})_2=1$. 
In this case, since
\begin{align*}
-D\xi 2^{-2m_0}\equiv (Db_3 2^{-m_0})^2\equiv 1\bmod 4\Z_2
\end{align*}
and $(\Z_2^\times,1+4\Z_2)_{\Q_2}=1$,
we have
\begin{align*}
\underline{\chi}_2(-\xi)&=(-D,-\xi)_{\Q_2}
=\left(-D,-D\xi 2^{-2m_0}\right)_{\Q_2}
=\left(-D',-D\xi 2^{-2m_0}\right)_{\Q_2}=1.
\end{align*}
\par

If $m_1,m_4>m_3+1>m_2$, by the same calculation, we have
$\delta=0$, $m_0=m_2+1$ and $(D\xi 2^{-2m_0})_2=1$.
Moreover, since
\[
-D\xi 2^{-2m_0}\equiv D'(2b_2 2^{-m_0})^2\equiv 1\bmod 4\Z_2,
\]
we have $\underline{\chi}_2(-\xi)=1$.
This completes the proof of Lemma \ref{ram} for the case when $p=2$ and $\ord_2(D)=2$.
The of Lemma \ref{ram} for the case when $p=2$ and $\ord_2(D)=3$ is similar.
\par

Next, we start to prove Lemma \ref{I and J} (\ref{ram2}) for the case when $p=2\mid D$.
We put $q_2=(\SL_2(\Z_2):K_0(D;\Z_2))$.
By Lemma \ref{ord=d}, we have
\begin{align*}
I_2(\cc,s,\xi)&=q_2^{-1}a_f(D_2)^{-1}|\xi|_2^{-s/2+1/2}
\\&\times
\sum_{n=0}^{\infty}|\xi 2^{2n}|_2^{s/2+2\kappa+1}
\left[a_f\left((D\xi 2^{2n})_2\right)+\bchi_2(-\xi C)\overline{a_f\left((D\xi 2^{2n})_2\right)}\right]
\overline{a_g\left((\xi 2^{2n})_2\right)}.
\end{align*}
This is the desired formula for $I_2(\cc,s,\xi)$.
By a calculation similar to that of $I_2(\cc,s,\xi)$, we find that $J_2(\cc,s,\xi,\xi')$ is equal to
\begin{align*}
&q_2^{-1}|\xi'|_2^{\kappa+1/2}|\xi|_2^{\kappa+1}|a_f(D_2)|^{-1}
\\&\times
\sum_{n=0}^\infty |2^{2n}|_2^{\sigma/2+2\kappa+1}
\left[\left|a_f(D_2)a_f\left((\xi' 2^{2n})_2\right)\right|+\left|\overline{a_f\left((D\xi' 2^{2n})_2\right)}\right|\right]
\left|\overline{a_g\left((\xi 2^{2n})_2\right)}\right|.
\end{align*}
Since 
$\left|a_f(D_2)a_f\left((\xi' 2^{2n})_2\right)\right|
\leq \left|a_f\left((D\xi' 2^{2n})_2\right)\right|$
for all $\xi'\in\Q^\times$, we get the desired estimation.
This completes the proof  of Lemma \ref{I and J} (\ref{ram2}) for the case when $p=2$.

\subsection{The case when $\ord_p(C)\not=0$}
Finally, we will prove Lemma \ref{local} (\ref{divC}) and Lemma \ref{I and J} (\ref{divC2}).
\begin{lem}\label{Haar}
For $a\in\Q_p^\times$ and $f\in L^1(\SL_2(\Q_p))$, we have
\[
\int_{\SL_2(\Q_p)}f(x)dx
=|a|_p\int_{B(\Q_p)}\int_{d(a)\SL_2(\Z_p)d(a)^{-1}}f(bk)dkdb.
\]
\end{lem}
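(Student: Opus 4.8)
The plan is to reduce the asserted identity to the very definition of $dg_p$, followed by two changes of variables: conjugation by $d(a)$ on $\SL_2(\Q_p)$ and on its Borel subgroup. Recall that $dg_p=|s|_p^{-2}dx\,d^\times s\,dk$ was defined through the Iwasawa decomposition $g=n(x)t(s)k$ with $k\in\SL_2(\Z_p)$. Unwinding this definition gives the integration formula
\[
\int_{\SL_2(\Q_p)}f(x)\,dx=\int_{B(\Q_p)}\int_{\SL_2(\Z_p)}f(bk)\,dk\,d_\ell b,
\]
where $d_\ell b=|s|_p^{-2}dx\,d^\times s$ for $b=n(x)t(s)$ and $dk$ is the probability Haar measure on $\SL_2(\Z_p)$; a one-line check shows $d_\ell b$ is indeed left-invariant on $B(\Q_p)$. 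I will apply this formula twice, once for $f$ and once for a conjugate of $f$.

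Next I would transport the compact group. Writing $k'=d(a)k\,d(a)^{-1}$ with $k\in\SL_2(\Z_p)$ identifies the inner integral of the right-hand side with $\int_{\SL_2(\Z_p)}f(b\,d(a)k\,d(a)^{-1})\,dk$, since the measure on $d(a)\SL_2(\Z_p)d(a)^{-1}$ is the probability measure transported from $\SL_2(\Z_p)$. The structural input is that $d(a)$ normalizes $B(\Q_p)$: a direct computation gives $d(a)^{-1}n(x)t(s)d(a)=n(ax)t(s)$, so $\alpha(b):=d(a)^{-1}b\,d(a)$ is an automorphism of $B(\Q_p)$. Using $b\,d(a)=d(a)\alpha(b)$ one rewrites $f(b\,d(a)k\,d(a)^{-1})=F(\alpha(b)k)$ with $F(x):=f(d(a)x\,d(a)^{-1})$, so that the right-hand side of the lemma becomes
\[
|a|_p\int_{B(\Q_p)}\int_{\SL_2(\Z_p)}F(\alpha(b)k)\,dk\,d_\ell b.
\]

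Then I compute the two Jacobians. In the coordinates $b=n(x)t(s)$ the automorphism $\alpha$ sends $(x,s)$ to $(ax,s)$; substituting $c=\alpha(b)$ therefore gives $d_\ell b=|a|_p^{-1}d_\ell c$, contributing a factor $|a|_p^{-1}$ that cancels the explicit prefactor $|a|_p$. What remains is $\int_{B(\Q_p)}\int_{\SL_2(\Z_p)}F(ck)\,dk\,d_\ell c=\int_{\SL_2(\Q_p)}F(x)\,dx$ by the Iwasawa formula applied to $F$. Finally, conjugation by $d(a)$ preserves the Haar measure of $\SL_2(\Q_p)$: the adjoint action of $d(a)$ on $\mathfrak{sl}_2$ has eigenvalues $a^{\mp1}$ on the two root spaces and $1$ on the torus direction, so its determinant is $1$ and the module of $\mathrm{Int}(d(a))$ is trivial. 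Hence $\int_{\SL_2(\Q_p)}F(x)\,dx=\int_{\SL_2(\Q_p)}f(y)\,dy$, which is exactly the left-hand side.

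The coordinate computations ($d(a)$ normalizing $B$, the action of $\alpha$, the adjoint eigenvalues) are routine; the only delicate point is the bookkeeping of the two measure factors, namely the $|a|_p$ produced by transporting the left Haar measure on $B(\Q_p)$ under $\alpha$ and the trivial module of $\mathrm{Int}(d(a))$ on $\SL_2(\Q_p)$. These must combine to cancel the explicit $|a|_p$ in the statement, and I expect this normalization bookkeeping to be the main, if minor, obstacle.
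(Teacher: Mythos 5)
Your proof is correct, and it is essentially the paper's argument: the paper disposes of this lemma with the single sentence ``This lemma is proved by a change of variables immediately,'' and your write-up is exactly that change of variables carried out explicitly (Iwasawa formula for the measure $|s|_p^{-2}dx\,d^\times s\,dk$, conjugation by $d(a)$ normalizing $B$ with Jacobian $|a|_p^{-1}$ cancelling the prefactor, and unimodularity of $\mathrm{Int}(d(a))$ on $\SL_2(\Q_p)$). The bookkeeping of the two measure factors is done correctly.
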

\begin{proof}
This lemma is proved by a change of variables immediately.
\end{proof}
Now, we start to prove Lemma \ref{local} (\ref{divC}).
Let $\cc$ be an integral ideal of $K$.
We assume that $\cc$ is prime to $2D$ and $p\mid C=N(\cc)$.
Let $t\in\A_{K,\fin}^\times$ such that
$\ord_\p(t_\p)=\ord_\p(\cc)$ for all prime ideals $\p$ of $K$.
Put $\gamma=r_t$.
Write $t_p=x_p\otimes 1+y_p\otimes \sD\in\Q_p\otimes K$. 
Note that $\varphi_p$ is $\phi_1(\GSU(\Z_p))\phi_2((\OO\otimes_\Z\Z_p)^\times)$-invariant by Lemma \ref{invariant}.
Therefore, by Lemma \ref{GU(Zp)}, we only have to calculate $\W_{B,p}(\gamma_p)$.
Recall that
$\W_{B,p}(\gamma_p)$ is equal to
\[
\int_{N(\Q_p)\bs\SL_2(\Q_p)}
\hat{\omega}(\alpha\cdot d(N_{K/\Q}(t_p)),\phi_2(t_p))\hat{\varphi}_p(-\beta;0,1)
W_{\ff,p}(a(\xi)\alpha\cdot d(N_{K/\Q}(t_p)))d\alpha.
\]
Since the integrand is right $d(N_{K/\Q}(t_p))\SL_2(\Z_p)d(N_{K/\Q}(t_p))^{-1}$-invariant,
by Lemma \ref{Haar}, we have
\begin{align*}
\W_{B,p}(\gamma_p)=|N_{K/\Q}(t_p)|_p\int_{\Q_p^\times}
&\hat{\omega}(t(x)d(N_{K/\Q}(t_p)),\phi_2(t_p))\hat{\varphi}_p(-\beta;0,1)
\\&\times
W_{\ff,p}(a(\xi)t(x)d(N_{K/\Q}(t_p)))|x|_p^{-2}d^\times x.
\end{align*}
Note that $|N_{K/\Q}(t_p)|_p=|C|_p$.
We have
\begin{align*}
&\hat{\omega}(t(x)d(N_{K/\Q}(t_p)),\phi_2(t))\hat{\varphi}_p(-\beta;0,1)
\\&=\int_{\Q_p}\chi_V(x)|x|_p^3|N_{K/\Q}(t_p)|_p^{-3/2}\varphi_p
\left(\phi_2(t_p)^{-1}
\begin{pmatrix}
zx\\-\beta x\\0
\end{pmatrix}\right)
\psi(z)dz.
\end{align*}
Since
\[
\phi_2(t_p)^{-1}=N_{K/\Q}(t_p)^{-1}
\begin{pmatrix}
1&&\\
&\phi'(t_p)&\\
&&N_{K/\Q}(t_p)
\end{pmatrix},
\]
we find that $\hat{\omega}(t(x)\cdot d(N_{K/\Q}(t_p)),\phi_2(t))\hat{\varphi}_p(-\beta;0,1)$ is equal to
\begin{align*}
\chi_V(x)|x|_p^2|C|_p^{-1/2}\hat{\varphi}_p(-N_{K/\Q}(t_p)^{-1}\phi'(t_p)\beta x;0,N_{K/\Q}(t_p)x^{-1}).
\end{align*}
On the other hand, we have
\begin{align*}
&W_{\ff,p}(a(\xi)t(x)d(N_K/\Q(t_p)))
\\&=\bchi_p(x^{-1}N_{K/\Q}(t_p))|\xi x^2 N_{K/\Q}(t_p)^{-1}|_p^{\kappa+1/2}
a_f\left((\xi x^2 N_{K/\Q}(t_p)^{-1})_p\right)\\
&=\bchi_p(x^{-1})|\xi x^2 N_{K/\Q}(t_p)^{-2}C|_p^{\kappa+1/2}
a_f\left((\xi x^2 N_{K/\Q}(t_p)^{-2}C)_p\right).
\end{align*}
Hence we have
\begin{align*}
\W_{B,p}(\gamma_p)
&=|C|_p^{\kappa+1}|\xi|_p^{\kappa+1/2}\int_{\Q_p^\times}
\hat{\varphi}_p(-\phi'(t_p)\beta x;0,x^{-1})|x^2|_p^{\kappa+1/2}a_f\left((\xi x^2C)_p\right)d^\times x\\
&=|C|_p^{\kappa+1}|\xi|_p^{\kappa+1/2}\sum_{n=0}^\infty
\hat{\varphi}_p(-\phi'(t_p)\beta p^{-n};0,p^n)(p^n)^{2\kappa+1}a_f\left((\xi Cp^{-2n})_p\right)
\end{align*}
as desired. 
The last assertion of Lemma \ref{local} (\ref{divC}) is easy.
\par

Next, we start to prove Lemma \ref{I and J} (\ref{divC2}).
Let $\cc\in J_\OO^D$.
We assume that $C=N(\cc)$ is a square free integer and $p\mid C$.
Let $t\in\A_{K,\fin}^\times$ such that
$\ord_\p(t_\p)=\ord_\p(\cc)$ for all prime ideals $\p$ of $K$.
Hence, we have $|N_{K/\Q}(t_p)|_p=|C|_p=p^{-1}$ and $C\cdot N_{K/\Q}(t_p)^{-1}\in\Z_p^\times$.
Note that for $k'\in K'=d(N_{K/\Q}(t_p))\SL_2(\Z_p)d(N_{K/\Q}(t_p))^{-1}$, we have
\begin{align*}
&\Phi(t(x)k',s,L(\phi_2''(t_p))\varphi''_p)\\
&=\chi_{V''}(x)|x|_p^{s+1}[\omega(k'\cdot d(N_{K/\Q}(t_p)),\phi_2''(t_p))\varphi_p''](0)|a(k')|_p^{s}\\
&=\chi_{V''}(x)|x|_p^{s+1}[L(\phi_2''(t_p))\varphi_p''](0)|a(k')|_p^{s}
=\chi_{V''}(x)|x|_p^{s+1}|N_{K/\Q}(t_p)|_p^{-1/2}|a(k')|_p^{s}.
\end{align*}
By Lemma \ref{Haar}, $J_p(\cc,s,\xi,\xi')$ is equal to
\begin{align*}
&\int_{\Q_p^\times}
\left(|N_{K/\Q}(t_p)|_p\int_{K'}\left|\chi_{V''}(x)|x|_p^{s+1}|N_{K/\Q}(t_p)|_p^{-1/2}|a(k')|_p^s\right| dk'\right)
\\&\times
\left|W_{\ff,p}(a(\xi' C)t(x)d(N_{K/\Q}(t_p)))\overline{W_{\g,p}(a(\xi C)t(x)d(N_{K/\Q}(t_p)))}
\right| |x|_p^{-2}d^\times x.
\end{align*}
Now $|a(k')|$ is right $\left(K'\cap\SL_2(\Z_p)\right)$-invariant.
Since $\ord_p(N_{K/\Q}(t_p))=1$, 
a complete system of representatives of $K'/K'\cap\SL_2(\Z_p)$ is given by
\[
\left\{\left.
\begin{pmatrix}
1&b/p\\0&1
\end{pmatrix},\ 
\begin{pmatrix}
0&p^{-1}\\-p&0
\end{pmatrix}
\right|b\in\Z_p/p\Z_p
\right\}.
\]
Since
\[
n(b/p)\in N(\Q_p)
\quad{\rm and}\quad
\begin{pmatrix}
0&p^{-1}\\-p&0
\end{pmatrix}
=\begin{pmatrix}
p^{-1}&0\\0&p
\end{pmatrix}\begin{pmatrix}
0&1\\-1&0
\end{pmatrix},
\]
we have
\[
\left|a\begin{pmatrix}
1&b/p\\0&1
\end{pmatrix}\right|=1
\quad{\rm and }\quad 
\left|a\begin{pmatrix}
0&p^{-1}\\-p&0
\end{pmatrix}\right|=|p^{-1}|_p=p.
\]
Hence, we have
\[
\int_{K'}|a(k')|^sdk=\frac{1}{p+1}(p+p^s)=\frac{p^s+p}{1+p}
\]
for $s\in\C$.
Therefore we find that  $J_p(\cc,s,\xi,\xi')$ is equal to
\begin{align*}
&p^{-1/2}\int_{\Q_p^\times}\left|\chi_{V''}(x)|x|_p^{\sigma+1}\frac{p^\sigma+p}{1+p}\right|
\\&\quad\times
\Big|\bchi_p(x^{-1}N_{K/\Q}(t_p))|\xi' Cx^2N_{K/\Q}(t_p)^{-1}|_p^{\kappa+1/2}
a_f\left(\xi' Cx^2N_{K/\Q}(t_p)^{-1}\right)\Big|
\\&\quad\times
\left|\overline{|\xi Cx^2N_{K/\Q}(t_p)^{-1}|_p^{\kappa+1}a_g\left(\xi Cx^2N_{K/\Q}(t_p)^{-1}\right)}\right|
|x|_p^{-2}d^\times x\\
&=p^{-1/2}\frac{p^\sigma+p}{1+p}|\xi'|_p^{\kappa+1/2}|\xi|_p^{\kappa+1}
\\&\quad\times
\int_{\Q_p^\times}|x|_p^{\sigma+4\kappa+2}
\left|a_f\left(\xi' Cx^2N_{K/\Q}(t_p)^{-1}\right)\overline{a_g\left(\xi Cx^2N_{K/\Q}(t_p)^{-1}\right)}\right|d^\times x\\
&=p^{-1/2}\frac{p^\sigma+p}{1+p}|\xi'|_p^{\kappa+1/2}|\xi|_p^{\kappa+1}
\sum_{n=0}^\infty (p^{-n})^{\sigma+4\kappa+2}
\left|a_f\left(\xi' p^{2n}\right)\overline{a_g\left(\xi p^{2n}\right)}\right|.
\end{align*}
This is the desired formula for $J_p(\cc,s,\xi,\xi')$.
The formula for $I_p(\cc,s,\xi)$ is proved similarly.
This completes the proof of Lemma \ref{I and J} (\ref{divC2}).

\end{document}